\newtheorem{thm}{Theorem}[section]
\newtheorem{lem}[thm]{Lemma}
\newtheorem{prop}[thm]{Proposition}
\newtheorem{cor}[thm]{Corollary}
\theoremstyle{definition}
\newtheorem{NN}[thm]{}
\theoremstyle{definition}
\newtheorem{df}[thm]{Definition}
\theoremstyle{definition}
\newtheorem{rem}[thm]{Remark}
\newtheorem{nota}[thm]{Notation}
\theoremstyle{definition}
\newtheorem{exm}[thm]{Example}
\renewcommand{\phi}{\varphi}
\newcommand{\N}{\mathbb{N}}
\newcommand{\R}{\mathbb{R}}
\newcommand{\C}{\mathbb{C}}
\numberwithin{equation}{section}
\newcommand{\Aff}{\operatorname{Aff}}
\newcommand{\Tw}{\overline{T(A)}^w}
\newcommand{\cpc}{c.p.c.~map}
\newcommand{\hm}{homomorphism}
\newcommand{\dt}{\delta}
\newcommand{\ep}{\varepsilon}
\newcommand{\td}{\tilde}
\newcommand{\DT}{\Delta}
\newcommand{\andeqn}{\,\,\,{\rm and}\,\,\,}
\newcommand{\rforal}{\,\,\,{\rm for\,\,\,all}\,\,\,}
\newcommand{\CA}{$C^*$-algebra}
\newcommand{\SCA}{$C^*$-subalgebra}
\newcommand{\af}{{\alpha}}
\newcommand{\bt}{{\beta}}
\newcommand{\diag}{{\rm diag}}
\newcommand{\wtd}{\widetilde}
\newcommand{\wilog}{without loss of generality}
\newcommand{\Wlog}{Without loss of generality}
\newcommand{\beq}{\begin{eqnarray}}
\newcommand{\eneq}{\end{eqnarray}}
\newcommand{\tforal}{\,\,\,\text{for\,\,\,all}\,\,\,}
\newcommand{\tand}{\,\,\,\text{and}\,\,\,}
\newcommand{\Her}{\mathrm{Her}}
\newcommand{\Cu}{\mathrm{Cu}}
\newcommand{\Qw}{\overline{QT(A)}^w}
\newcommand{\st}{such that}
\newcommand{\eD}{\partial_e(\Delta)}
\newcommand{\eT}{\partial_e(T)}
\title{Tracial approximation and ${\cal Z}$-stability}
\author{Huaxin Lin}
\date{}
\begin{document}

\maketitle
\begin{abstract}
Let $A$ be a  unital separable non-elementary amenable simple stably finite \CA\, such that 
its tracial state space  has a   $\sigma$-compact
countable-dimensional extremal boundary. 
We show that $A$ is ${\cal Z}$-stable if and only if it has strict comparison and stable rank one.
We show that  this result also holds for non-unital cases (which may not be Morita equivalent to 
unital ones).

\end{abstract}


\section{Introduction}
The Jiang-Su algebra ${\cal Z}$ -- an infinite-dimensional, simple, unital \CA\, with unique tracial state and ordered $K$-theory matching exactly that of  the complex field $\C$ (see \cite{JS})-- plays a pivotal role in the Elliott classification program. For a separable simple \CA\, $A$ with weakly unperforated $K_0(A),$  the Elliott invariant of $A$ and $A\otimes {\cal Z}$ coincide (\cite{GJS}). Consequently, ${\cal Z}$-stability (i.e., $A\cong A\otimes {\cal Z}$) 
is the natural assumption 
in the classification of separable amenable \CA s.  A central problem is determining when a simple \CA\,  is ${\cal Z}$-stable, a question  highlighted in the Toms-Winter conjecture. For a non-elementary, separable, stably finite, simple, amenable \CA\, $A,$  the conjecture posits the equivalence of:

(a) Strict comparison of positive elements,

(b) ${\cal Z}$-stability,

(c) Finite nuclear dimension.

The equivalence of (b) and (c) has since been established (see \cite{CE}, \cite{CETWW}, 
\cite{Winter-Z-stable-02}, \cite{T-0-Z}, \cite{MS2}).  
The implication (b) $\Rightarrow$ (a) is established earlier (\cite{Rrzstable}).

The remaining direction is the implication (a) $\Rightarrow$ (b) (or (c)). 
Progress began with Matui and Sato's breakthrough (\cite{MS}), resolving the case for 
unital simple \CA s with finitely many extremal traces. Subsequent work extended this to 
unital simple \CA s with tracial state spaces forming Bauer simplices with finite-dimensional extremal boundaries (see \cite{KR}, \cite{S-2} and \cite{TWW-2}),
and later to those with tight, finite-dimensional extremal traces (by Wei Zhang, see \cite{Zw}).
 Three   critical barriers
  persist:
 \vspace{0.1in}
 
1)   {\em Non-Bauer simplices:}  Moving beyond 
compact extremal boundary $\partial_e(T(A)),$
\vspace{0.1in}

2)  {\em Infinite dimensional extremal boundaries:}  Moving beyond  finite-dimensional  $\partial_e(T(A)),$

\vspace{0.1in}
 3)   {\em Non-unital algebras:}  Addressing stably projectionless simple \CA s (not stably isomorphic to unital ones).

This paper confronts  these  challenges, unifying and generalizing prior results. 

A key insight arises from the interplay between stable rank one and ${\cal Z}$-stability. 
M. R\o rdam showed that a unital finite  simple ${\cal Z}$-stable \CA\, has {\em stable rank one} (see \cite{Rrzstable}), while 
L. Robert later showed that any stably projectionless simple ${\cal Z}$-stable \CA\, has {\it almost} stable rank one (see \cite{Rlz}).  
Recent work \cite[Corollary 6.8]{FLL} improves these results:
{\em all finite simple ${\cal Z}$-stable \CA\,--unital or not --have  stable rank one.}
This improvement positions stable rank one as both a consequence of ${\cal Z}$-stability
and a complementary condition  to strict comparison in the implication of 
(a) $\Rightarrow$ (b).
Our strategy  in this paper is to replace condition (a) 
above by that $A$ has strict comparison and stable rank one, leveraging their intrinsic 
connection via $T$-tracial approximate oscillation zero \cite[Theorem 1.1]{FLosc}
(see  also the last line of the text).

\vspace{0.1in}

Main Result
\vspace{0.1in}


We establish:

\begin{thm}\label{TM-2}
Let $A$ be a non-elementary separable amenable  simple \CA\, with $\wtd T(A)\setminus \{0\}\not=\emptyset$
such that $\wtd T(A)$ has a
$\sigma$-compact countable-dimensional extremal boundary (see Definition \ref{Dscdim}).
Then the  following are equivalent.

(1) $A$ has strict comparison and T-tracial approximate oscillation zero,

(2) $A$ has strict comparison and stable rank one,

(3) $A\cong A\otimes {\cal Z}.$
%
\end{thm}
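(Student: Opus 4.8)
The plan is to prove the cyclic chain of implications $(3)\Rightarrow(2)\Rightarrow(1)\Rightarrow(3)$, relying heavily on results that are already in the literature for the first two arrows and concentrating essentially all of the work on $(1)\Rightarrow(3)$.

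For $(3)\Rightarrow(2)$: if $A\cong A\otimes{\cal Z}$ then $A$ has strict comparison by R\o rdam's theorem (\cite{Rrzstable}), and since $A$ is finite and ${\cal Z}$-stable it has stable rank one by \cite[Corollary 6.8]{FLL}. (One must first observe that the hypothesis $\wtd T(A)\setminus\{0\}\neq\emptyset$, together with simplicity and amenability, forces $A$ to be stably finite, so these cited results apply; in the non-unital/stably projectionless case one passes to the Pedersen ideal or works with hereditary subalgebras as needed.) For $(2)\Rightarrow(1)$: this is exactly the converse direction of \cite[Theorem 1.1]{FLosc} — a simple \CA\ with strict comparison and stable rank one has T-tracial approximate oscillation zero. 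So these two implications are essentially citations once the stable finiteness bookkeeping is done.

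The heart of the paper is $(1)\Rightarrow(3)$: assuming $A$ has strict comparison and T-tracial approximate oscillation zero, show $A\cong A\otimes{\cal Z}$. The strategy is the Matui–Sato / Kirchberg–R\o rdam / Winter framework: it suffices to produce, for the central sequence algebra $F(A)=(l^\infty(A)\cap A')/(\text{annihilator})$ — or rather its appropriate uniform-tracial-norm quotient such as $\Cqqccnw$ or $\Cqc$ in the notation of the preamble — a unital embedding of a matrix algebra $M_k$, or more precisely to verify property (SI)-type comparison together with the existence of sufficiently many approximately central order-zero maps, and then invoke the abstract result that these conditions imply ${\cal Z}$-stability. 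Concretely I would: (i) use the tracial approximate oscillation zero hypothesis to show that the relevant uniform tracial completion $\overline{A}^{\,\tau}$ over each (extremal) trace, and the bundle of these over $\partial_e(\wtd T(A))$, behaves like a (II$_1$-factor)-bundle with good local structure; (ii) use $\sigma$-compactness and countable-dimensionality of $\partial_e(\wtd T(A))$ to run a \emph{selection / partition-of-unity} argument over the extremal boundary, patching together local Jiang–Su-type data — this is where the topological dimension hypothesis is spent, exactly as finite-dimensionality was spent in \cite{KR,TWW-2}, but now one needs a countable exhaustion by finite-dimensional (or $\sigma$-compact) pieces and a limiting/reindexing argument; (iii) combine strict comparison with oscillation zero to get the key divisibility/comparison estimate in $F(A)$ (the analogue of property (SI)); (iv) assemble a unital order-zero map $M_k\to F(A)$ for all $k$, hence a unital homomorphism ${\cal Z}\to F(A)$, and conclude by the Kirchberg–R\o rdam criterion that $A\otimes{\cal Z}\cong A$. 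For the non-unital case one works throughout with a strictly positive element, replaces $T(A)$ by $\wtd T(A)$ with its cone structure, and uses hereditary subalgebras $\overline{aAa}$ to reduce local questions to the unital (or $\sigma$-unital algebraically simple) setting; the passage from unital to general should be modeled on \cite{Linoz}.

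The main obstacle — and the genuinely new contribution over \cite{KR,TWW-2,Zw} — will be step (ii): handling a $\sigma$-compact, merely \emph{countable-dimensional} extremal boundary rather than a compact finite-dimensional one. Finite-dimensionality gives, via Walther/nerve arguments, a uniform control that is simply unavailable here; instead one must exhaust $\partial_e(\wtd T(A))$ by an increasing sequence of closed finite-dimensional (or compact) subsets, build the approximately-central structure over each with constants that are allowed to deteriorate, and then use separability of $A$ together with a diagonal/reindexation trick in the sequence algebra to glue these into a single limit object — all while controlling the behavior of traces that ``escape to infinity'' in the non-compact boundary, which is precisely where the T-tracial approximate oscillation zero hypothesis (as opposed to plain strict comparison) does the essential smoothing. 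I expect the technical core to be a lemma asserting that, under oscillation zero, a bounded continuous function on $\partial_e(\wtd T(A))$ arising as a trace-evaluation of an element of $l^\infty(A)$ can be approximated, uniformly in the tracial norm, by one that is ``locally finite-dimensionally supported,'' which then feeds the patching argument.
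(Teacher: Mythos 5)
Your treatment of $(3)\Rightarrow(2)$ and $(2)\Rightarrow(1)$ matches the paper: both are citations to \cite{Rrzstable}, \cite{FLL} and \cite[Theorem 1.1]{FLosc}, and your outer strategy for $(1)\Rightarrow(3)$ (reduce to a hereditary subalgebra with compact trace simplex, produce approximately central order zero maps $M_k\to A$ with almost full trace, conclude uniform McDuff-ness and invoke Matui--Sato via \cite{KR} in the unital case and \cite{CLZ} in the non-unital case) is also the route the paper takes. But your step (ii) contains a genuine gap. You propose to ``exhaust $\partial_e(\wtd T(A))$ by an increasing sequence of closed finite-dimensional (or compact) subsets'' and run the known finite-dimensional argument of \cite{KR,S-2,TWW-2} on each piece with deteriorating constants. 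A $\sigma$-compact countable-dimensional space is \emph{not} in general a union of compact finite-dimensional subsets; by \cite[Corollary 7.1.32]{En} the compact pieces $X_n$ in the exhaustion are only countable-dimensional, equivalently of (small) transfinite dimension ${\rm trind}(X_n)=\alpha_n$ for some countable ordinal $\alpha_n$. So even for a single compact piece the finite-dimensional machinery does not apply, and the diagonal/reindexing trick you describe has nothing to glue. The paper's essential new ingredient, absent from your proposal, is Proposition \ref{Pst-1}: a \emph{transfinite} induction on ${\rm trind}(F)$ for compact $F\subset\partial_e(T(A))$, producing a finite dimensional $D$ and a unital homomorphism $D\to l^\infty(A)/I_{_{F,\varpi}}$ approximating $\iota(A)$ in the uniform 2-norm. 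At the inductive step one covers $F$ by neighborhoods on which a single-trace (hyperfinite ${\rm II}_1$-factor) approximation works, with boundaries of strictly smaller transfinite dimension, and patches with a partition of unity plus Sato-type central elements.

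Moreover, the analytic infrastructure that makes this induction run is not identified in your sketch, and it is where both hypotheses of (1) are actually spent. T-tracial approximate oscillation zero is used through \cite[Theorem 6.4]{FLosc} to give real rank zero of $l^\infty(A)/I_{_{T(A),\varpi}}$, which feeds the Elliott lifting lemma (Lemma \ref{Lelliott}) and the semiprojectivity-in-2-norm statements (Lemma \ref{Lsemip}); strict comparison together with surjectivity of $\Gamma$ (itself a consequence of (1) by \cite[Theorem 7.12]{FLosc}) is used to prove the quotient-norm property (TE) (Theorem \ref{Ptight}), which is what allows one to pass approximations between $l^\infty(A)/I_{_{F,\varpi}}$ for nested compact $F$ and the full trace-kernel quotient, and also to build the unital matrix embeddings into corners (Lemma \ref{Ldvi-1}, Proposition \ref{Ldvi-3}). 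Your guessed core lemma --- approximating trace-evaluations by ``locally finite-dimensionally supported'' functions --- is not the right statement and would not by itself overcome the transfinite-dimensional obstruction; note also that the paper deliberately avoids working directly in the central sequence algebra (centrality is recovered only at the end via Sato's central surjectivity \cite[Lemma 2.1]{S12}), working instead in $l^\infty(A)/I_{_{F,\varpi}}$ throughout. Finally, the gluing over the exhaustion $\partial_e(T(A))=\bigcup_n S_n$ is not a naive diagonal argument: it uses the property (Os) lifting and the summation lemma from \cite{Linoz} together with a Choquet boundary-measure estimate (Lemma \ref{Lcover2}) to control traces whose boundary measures are spread over infinitely many $S_n$ --- the ``escape to infinity'' issue you flag but do not resolve.
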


Key Advancements
\vspace{0.1in}

Theorem \ref{TM-2} generalizes prior work in three directions:
\vspace{0.1in}

$\bullet$ {\em Non-Bauer simplexes:}  The extremal boundary $\partial_e(\wtd T(A))$
need not be compact.
\vspace{0.1in}

$\bullet$  {\em Infinite-dimensional boundaries:}   $\partial_e(\wtd T(A))$ 
can be countable-dimensional (equivalently, of transfinite dimension if compact 
\cite[Corollary 7.1.32]{En}).

\vspace{0.1in}
$\bullet$ {\em Non-unital algebras:}  $A$  may be stably projectionless.

\vspace{0.1in}

One notices that Theorem \ref{TM-2} covers the case that  the cone $\wtd T(A)$ has a basis $S$ whose extremal boundary $\partial_e(S)$  is
compact and countable-dimensional (in this special case, Theorem \ref{TM-2}  
generalizes the results in  \cite{KR}, \cite{S-2} and \cite{TWW-2} under the  additional  (but necessary)
condition that $A$ has stable rank one).
%
If $\partial_e(S)$ has only countably many points, 
then $A$ has T-tracial approximate oscillation zero. By \cite[Theorem 1.1]{FLosc}, the condition that $A$ has stable rank one 
is automatic. In other words,  the original Toms-Winter conjecture holds when the cone $\wtd T(A)$ has a    
basis $S$ which has countably many extremal points (but not necessarily closed) which
 also generalizes the result in \cite{MS}. 
 %

\vspace{0.1in}

Technical Insight

\vspace{0.1in}
To prove ${\cal Z}$-stability, we refine the  tracial approximate divisibility-- a strategy tracing to Matui-Sato \cite{MS}.
But circumvent
the central sequence algebra 
$\pi_\infty^{-1}(A')/I_{_{\varpi}}$
by working directly in $l^\infty(A)/I_{_{\varpi}}.$
Under $T$-tracial oscillation zero, $l^\infty(A)/I_{_{\varpi}}$
has real rank zero, enabling matrix algebra constructions that approximate elements in trace norm. This framework accommodates non-unital algebras and non-Bauer simplexes,
addressing the three challenges mentioned above.
\vspace{0.1in}

Organization
\vspace{0.1in}

Section 2  serves as preliminaries. Section 3 revisits quotient algebras and establishes a key norm condition (Theorem 3.14). Section 4 analyzes non-Bauer simplexes with countable-dimensional boundaries. Sections 5 -- 7 develop  some stability results in trace
2-norm and matrix approximations. Section 8 proves Theorem 1.1, while Section 9 discusses open questions.

\section {Notations}

\begin{df}\label{D1}
Let $A$ be a \CA.
Denote by $A^{\bf 1}$ the closed unit ball of $A,$  and 
by $A_+$ the set of all positive elements in $A.$
Put $A_+^{\bf 1}:=A_+\cap A^{\bf 1}.$
Denote by $\wtd A$ the minimal unitization of $A.$ Let $S\subset A$   be a subset of $A.$
Denote by
${\rm Her}(S)$
the hereditary $C^*$-subalgebra of $A$ generated by $S.$

Denote by $T(A)$ the tracial state space of $A.$
For $r>0,$ set
$T_{[0,r]}(A)=\{r\tau: \tau\in T(A)\andeqn r\in [0,r]\}$
and $T_{(0,r]}(A)=\{r\tau: \tau\in T(A)\andeqn r\in (0,r]\}.$

 Let  ${\rm Ped}(A)$ denote
the Pedersen ideal of $A$ and ${\rm Ped}(A)_+:= {\rm Ped}(A)\cap A_+$.  

Unless otherwise stated (except  for Pedersen ideals), an ideal of a \CA\, is {\it always} a closed and  two-sided ideal.
\end{df}

\begin{df}\label{Defj}
Let $A$ and $B$ be \CA s and 
$\phi: A\rightarrow B$ be a  linear map.
The map $\phi$ is said to be positive if $\phi(A_+)\subset B_+.$  
The map $\phi$ is said to be completely positive contractive, abbreviated to c.p.c.,
if $\|\phi\|\leq 1$ and  
$\phi\otimes \mathrm{id}: A\otimes M_n\rightarrow B\otimes M_n$ is 
positive for all $n\in\mathbb{N}.$ 
A c.p.c.~map $\phi: A\to B$ is called order zero, if for any $x,y\in A_+,$
$xy=0$ implies $\phi(x)\phi(y)=0$ (see  Definition 2.3 of \cite{WZ}).
If $ab=ba=0,$ we also write $a\perp b.$

In what follows, $\{e_{i,j}\}_{i,j=1}^n$ (or just $\{e_{i,j}\},$ if there is no confusion) stands for  a system of matrix units for $M_n$ and $\jmath\in C_0((0,1])$ 
is the identity function on $(0,1],$  i.e., $\jmath(t)=t$ for all $t\in (0,1].$

If $D$ is a finite dimensional \CA, and $\phi: D\to B$ is an order zero \cpc, 
then there exists a unique \hm\, $\phi_c: C_0((0,1])\otimes D\to B$ such that 
$\phi_c(\jmath\otimes d)=\phi(d)$ for all $d\in D$ (\cite[Proposition 1.2.1]{WcovII}).
Conversely, if $\phi_c:C_0((0,1])\otimes D\to B$ is a \hm, then $\phi: D\to B$ defined by
$\phi(d)=\phi_c(\jmath\otimes d)$ for all $d\in D$ is an order zero \cpc.
This fact will be used frequently (without further warning).
\end{df}

\begin{nota}
Throughout the  paper,
the set of all positive integers is denoted by $\N.$ 
%
Let  $A$ 
be a normed space and ${\cal F}\subset A$ be a subset. For any  $\epsilon>0$ and 
$a,b\in A,$
we  write $a\approx_{\epsilon}b$ if
$\|a-b\|< \epsilon$.
We write $a\in_\ep{\cal F}$ if there is $x\in{\cal F}$ such that
$a\approx_\ep x.$


\end{nota}

\begin{df}\label{Dcuntz}
Denote by ${\cal K}$ the \CA\, of compact operators on $l^2.$ 
Let $A$ be a \CA\, and 
$a,\, b\in (A\otimes {\cal K})_+.$ 
We 
write $a \lesssim b$ if there are
$x_i\in A\otimes {\cal K}$  for all $i\in \N$ 
such that
$\lim_{i\rightarrow\infty}\|a-x_i^*bx_i\|=0$.
We write $a \sim b$ if $a \lesssim b$ and $b \lesssim a$ both  hold.
The Cuntz relation $\sim$ is an equivalence relation.
Set $\Cu(A)=(A\otimes {\cal K})_+/\sim.$
Let $[a]$ denote the equivalence class of $a$. 
We write $[a]\leq [b] $ if $a \lesssim  b$.
\end{df}

\begin{nota}\label{Nfg}
Let $\epsilon>0.$ Define a  continuous function
$f_{\epsilon}
: {{[0,+\infty)}}
\rightarrow [0,1]$ by
\beq\nonumber
f_{\epsilon}(t)=

\eneq
where $J_{F_1}$ and $J_{F_2}$ are identity maps.
For each $x\in l^\infty(A)/I_{_{F_1, \varpi}}$ and $\ep\in (0,1/2),$  by Proposition \ref{Pquotientnorm}, there is 
$y\in l^\infty(A)/I_{_{F_2, \varpi}}$ such that
\beq
\pi(y)=x\andeqn \|y\|_{2, {F_2}_\varpi}<\|x\|_{2, {F_1}_\varpi}+\ep.
\eneq
By the commutative diagram above, we have
\beq
\|x\|=\|J_{F_1}(x)\|=\|\pi\circ J_{F_2}(y)\|\le \|J_{F_2}(y)\|\le \|J_{F_2}\|\|y\|_{2, {F_2}_\varpi}
\le \|J_{F_2}\|(\|x\|_{2, {F_1}_\varpi}+\ep).
\eneq
Let $\ep\to 0,$ we obtain
\beq
\|J_{F_1}(x)\|   \le    \| J_{F_2}\| \|x\|_{2, {F_1}_\varpi}.
\eneq
Hence 
\beq
\|J_{F_1} \|  \le  \|J_{F_2}\|.
\eneq
In other words,   $\lambda_{F_1}\le \lambda_{F_2}.$
\end{proof}
\fi
%


The following is well-known. 

\begin{lem}\label{Lstrictp}
Let $A$ be a unital \CA\, and $I\subset A$ be a $\sigma$-unital ideal of $A.$ 
Suppose that $p_1, p_2,...,p_k$ are mutually orthogonal projections with $\sum_{i=1}^k p_k=1$ 
and $\{e_{i,n}\}$ is an approximate identity for $e_iIe_i,$ $1\le i\le k.$ 

(1) Then $\{\sum_{i=1}^k e_{i,n}\}$ is an approximate identity for $I.$

 (2) If $s\in I$ is a strictly positive element of $I,$ 
then $\sum_{i=1}^k p_isp_i$ is a strictly positive element of $I,$ and 

(3)  $p_isp_i$ is a strictly positive element of $I_i$ ($1\le i\le k$).
\end{lem}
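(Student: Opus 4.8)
The statement to prove is Lemma~\ref{Lstrictp}, a routine fact about strictly positive elements and approximate identities in ideals of a unital $C^*$-algebra.

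The plan is to prove part (3) first, since it is essentially the local statement and the others follow from it together with standard facts. Fix $i$, and write $q=p_i$, $I_i=qIq$ (this is an ideal of the corner $qAq$, hence a $\sigma$-unital $C^*$-algebra in its own right, since $I$ is $\sigma$-unital and $q$ is a projection in the multiplier-algebra picture; more concretely, $I_i$ is a hereditary subalgebra of $I$). Let $s\in I$ be strictly positive, i.e. $\overline{sIs}=I$, equivalently every state of $I$ is nonzero on $s$, equivalently $\{f_{1/n}(s)\}$ is an approximate identity for $I$. I would show $qsq$ is strictly positive in $I_i$ by checking that $\overline{(qsq)I_i(qsq)}=I_i$. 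Given $x\in I_i$, write $x=qxq$ and use that $x\in\overline{sIs}$: approximate $x$ by $s y s$ with $y\in I$, then $qxq\approx q s y s q = (qsq)\,q^{\perp}\!\cdots$ — more cleanly, compress: $qxq=q(sys)q$ is approximated, and since $q(sys)q = (qsq)\cdot$ (something) fails to factor directly, instead I would use the characterization via positive elements: $s$ strictly positive in $I$ $\iff$ $h s h\le s$-domination arguments, or simplest, $\iff$ for the approximate identity $e_n:=f_{1/n}(s)$ one has $e_n\to 1_{M(I)}$ strictly. Then $q e_n q=f_{1/n}(s)$ compressed; one checks $q e_n q \to q$ strictly in $M(I_i)=qM(I)q$, and $q e_n q$ lies in the hereditary algebra generated by $qsq$ (since $e_n=f_{1/n}(s)\in\overline{sIs}$ so $qe_nq\in\overline{(qsq)\cdots(qsq)}$), giving an approximate identity for $I_i$ built from $qsq$; hence $qsq$ is strictly positive in $I_i$. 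The one genuine point to be careful about is that $q\in M(I)$ and $qM(I)q=M(qIq)$, which is standard for hereditary subalgebras cut by a projection in the multiplier algebra.

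For part (2), once (3) is known, $p_i s p_i$ is an approximate-identity–generator for $I_i$ for each $i$; I would take approximate identities $e_{i,n}\in I_i$ with $e_{i,n}\in\overline{(p_isp_i)I_i(p_isp_i)}$ and form $u_n=\sum_{i=1}^k e_{i,n}$. Since the $I_i$ are mutually orthogonal ($p_iIp_i\perp p_jIp_j$ for $i\ne j$), $u_n$ is an increasing approximate unit for $\bigoplus_i I_i$; then part (1) (proved next) says $\bigoplus_i I_i$ already exhausts $I$ in the sense that $u_n$ is an approximate identity for all of $I$, because $\sum_i p_i=1$ forces $\|x-\sum_i p_i x p_i\|$ to be controlled once we also use cross terms $p_i x p_j$ — here one uses the standard trick that if $(u_n)$ is an approximate identity for a hereditary subalgebra $B$ of $I$ and $B$ is ``full enough'' then $u_n x\to x$; concretely $u_n p_i x p_j u_n\to p_i x p_j$ for all $i,j$ because $u_n p_i\to p_i$ strictly (as $e_{i,n}\to p_i$ strictly in $M(I_i)$, hence $\sum e_{i,n}\to\sum p_i=1$ strictly in $M(I)$). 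That strict convergence $u_n\to 1_{M(I)}$ is exactly what makes $\sum_i p_i s p_i$ — which dominates (generates the same hereditary algebra as) the limit of $u_n$ — a strictly positive element of $I$, giving (2). And (1) is precisely the statement that $u_n=\sum_i e_{i,n}\to 1$ strictly, which is the orthogonal-direct-sum computation just described.

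The only mild obstacle is bookkeeping around multiplier algebras: making precise that $qM(I)q=M(qIq)$ and that strict convergence is preserved under compression by $q$ and under finite orthogonal sums. None of this is deep — it is in Blackadar or Pedersen — so I would cite it as ``well-known'' (as the lemma statement itself does) and keep the proof short: prove (3) by the approximate-identity characterization of strict positivity, deduce (1) from orthogonality of the $I_i$ and $\sum p_i=1$, and deduce (2) from (1) together with (3). I expect the write-up to be only a few lines, with the bulk being the strict-convergence remark $\sum_i e_{i,n}\to 1_{M(I)}$.
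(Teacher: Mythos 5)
Your argument is correct in substance but takes a genuinely different route from the paper's. The paper works throughout with the positive-linear-functional characterization of strict positivity: for $k=2$ it first shows that $(1-p_1)s(1-p_1)+p_1sp_1$ is strictly positive in $I$ via the inequality $s\le 2\bigl((1-p_1)s(1-p_1)+p_1sp_1\bigr)$, then obtains (3) by compressing functionals (setting $\wtd g(a)=g(p_1ap_1)$ for a positive functional $g$ of $p_1Ip_1$ killing $p_1sp_1$), handles general $k$ by induction, and finally deduces (1) from (2). You go the other way: (3) first by an approximate-identity/hereditary-subalgebra argument, then (1) directly from the decomposition $x=\sum_{i,j}p_ixp_j$ and the cross-term estimate, then (2) from (1) and (3); this reverses the paper's implication (2) $\Rightarrow$ (1), avoids the $2$-inequality and the induction, and in fact proves (1) for arbitrary approximate identities of the corners without ever invoking strict positivity. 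The multiplier-algebra bookkeeping you worry about ($qM(I)q=M(qIq)$, preservation of strict convergence) is avoidable: everything can be checked by letting the approximate identities act on elements of $I$ (e.g. $\|(e_{i,n}-p_i)p_ixp_j\|^2=\|(e_{i,n}-p_i)\,p_ixp_jx^*p_i\,(e_{i,n}-p_i)\|\to 0$ since $p_ixp_jx^*p_i\in p_iIp_i$).

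The one step you assert without an adequate justification is the containment $qe_nq=qf_{1/n}(s)q\in\Her_{qIq}(qsq)$: your parenthetical ``$e_n\in\overline{sIs}$, so $qe_nq\in\overline{(qsq)\cdots(qsq)}$'' is essentially the factorization you had just observed does not work. The containment is true, but the clean argument is domination, not factorization: from the definition of $f_\ep$ one has $f_{1/n}(t)\le 2nt$ on $[0,\infty)$, hence $f_{1/n}(s)\le 2n\,s$ and $0\le qf_{1/n}(s)q\le 2n\,qsq$, which places $qf_{1/n}(s)q$ in the hereditary subalgebra of $qIq$ generated by $qsq$; since $qf_{1/n}(s)q\,x=qf_{1/n}(s)x\to qx=x$ for $x\in qIq$, that hereditary subalgebra contains an approximate identity of $qIq$ and so equals $qIq$, i.e. $qsq$ is strictly positive. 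The same domination (with $e_{i,n}=f_{1/n}(p_isp_i)\le 2n\,p_isp_i\le 2n\sum_j p_jsp_j$) is what makes your deduction of (2) from (1) and (3) precise. With these two lines inserted your proof is complete; the trade-off is that the paper's functional argument is shorter and purely algebraic, while yours isolates (1) as an elementary statement independent of strictly positive elements.
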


\begin{proof}
Let $s\in I$ be a strictly positive element of $I.$ 
If $k=2,$ then   (see \cite[Lemma I.1.11]{BH})
\beq
s&=&s^{1/2}s^{1/2}=((1-p_1)s^{1/2}+p_1s^{1/2})(s^{1/2}(1-p_1)+s^{1/2}p_1)\\
&\le& 2((1-p_1)s(1-p_1)+p_1sp_1).
\eneq
Let $f$ be a positive linear functional of $I.$ 
Then $f((1-p_1)s(1-p_1)+p_1sp_1)=0$ implies that $f(s)=0.$ It follows that 
$(1-p_1)s(1-p_1)+p_1sp_1$ is a strictly positive element of $I.$ 

Let $g$ be a positive linear functional of $p_1Ip_1.$ 
Define $\wtd g(a)=g(p_1ap_1)$ for all $a\in A.$ Then $\wtd g$ is a positive linear functional 
of $A.$ If $g(p_1sp_1)=0,$  then $\wtd g((1-p_1)s(1-p_1)+p_1sp_1)=0.$
Since $(1-p_1)s(1-p_1)+p_1sp_1$ is a strictly positive element of $I,$ this implies that 
$\wtd g=0.$  Thus $g=0.$ In other words, $p_1sp_1$ is a strictly positive element of $p_1Ip_1.$ 
Hence (2) and (3) follows for $k=2.$ The general case for (2) and (3) follows from the induction.
Note that (1) follows from (2).

\end{proof}

\begin{lem}\label{Lprojlift}
Let $A$ be a unital \CA,\, $I$ an ideal of $A,$  $\pi: A\to A/I$ the quotient map and  $p\in A$ a projection.
Suppose that $I$ has real rank zero and ${\bar q}\in A/I$ is another projection such that
\beq
\|{\bar q}-\pi(p)\|<1/4.
\eneq
Then there is a projection $q\in A$ such that $\pi(q)={\bar q}.$
\end{lem}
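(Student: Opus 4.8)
The plan is to lift $\bar q$ to a projection in $A$ using the standard functional-calculus-plus-real-rank-zero argument. First I would pick any self-adjoint $a \in A$ with $\pi(a) = \bar q$ and $\|a\| \le 1$; for instance, start with an arbitrary lift and replace it by $(b + b^*)/2$ and truncate, or more simply apply continuous functional calculus to make $0 \le a \le 1$. Since $\bar q = \bar q^2$, the element $a - a^2$ lies in $I$. The idea is that $a$ is ``close to a projection modulo $I$'', so its spectrum clusters near $\{0,1\}$ modulo $I$, and we want to use the real rank zero hypothesis on $I$ to cut out a genuine projection.

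The key steps, in order: (1) Replace $p$ by $a$ as above with $0 \le a \le 1$, $\pi(a) = \bar q$, so $h := a - a^2 \in I_+$. From $\|\bar q - \pi(p)\| < 1/4$ one gets that the spectrum of $\pi(a)$, being $\subseteq \{0,1\}$, forces $\|h\| = \|\pi(h)\| + \text{(something in } I)$; more carefully, I would argue that $\sigma(a) \cap (1/4, 3/4)$ contributes only an element of $I$, i.e. the spectral projection-type element $g(a)$ for a bump function $g$ supported in $(1/4,3/4)$ lies in $I$ up to small norm. (2) Since $I$ has real rank zero, approximate $h$ (or the relevant element of $I$) by an invertible-in-$\tilde I$ / finite-spectrum self-adjoint element, which lets one find a projection $e \in I$ with $h \lesssim e$ in a suitable sense, or directly: the hereditary subalgebra generated by $h$ in $I$ has an approximate unit of projections. (3) Use the projection $e \in I$ together with $a$ to correct $a$ to a projection: the standard trick is that $a$ commutes approximately with $e$, and on $(1-e)A(1-e)$ the image of $a$ has spectrum bounded away from $1/2$, so $f(a)$ for $f$ the characteristic function of $(1/2, \infty)$ restricted there is an honest projection; adding back an appropriate piece from $eAe \subseteq I$ adjusts the image to be exactly $\bar q$. (4) Verify $\pi(q) = \bar q$: since all corrections live in $I$, and $\pi(f(a)) = f(\pi(a)) = f(\bar q) = \bar q$ by the spectral mapping theorem (as $f = 1$ on a neighborhood of $1$ and $f=0$ on a neighborhood of $0$), the image is unchanged.

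The main obstacle I expect is step (3): making the ``correction'' precise so that one genuinely lands on a projection in $A$ and not merely on something close to one. The cleanest route is probably to invoke a known projection-lifting lemma for real rank zero ideals — e.g. the fact that if $I$ has real rank zero then projections lift along $A \to A/I$ whenever the obstruction in $K_0$ vanishes, and here the obstruction vanishes automatically because $\pi(p)$ is already a projection in the image close to $\bar q$ so $[\bar q] = [\pi(p)]$ lifts to $[p]$. Concretely: by real rank zero of $I$ and \cite[standard results, e.g. Blackadar or Lin's book]{}, the corner $pAp$ (or rather the relevant hereditary subalgebra) has the property that projections lift; since $\|\bar q - \pi(p)\| < 1/4 < 1$, $\bar q$ and $\pi(p)$ are unitarily equivalent in $A/I$ via a unitary $\bar u$ close to $1$, and lifting $\bar u$ to a unitary $u \in A$ (possible since $\|\bar u - 1\|$ small, using that $I$ — hence its unitization inside $A$ — allows such lifts) gives $q = u p u^*$ with $\pi(q) = \bar u \pi(p) \bar u^* = \bar q$. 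I would present this unitary-lifting version as the cleanest proof, reducing everything to: (a) $\|\bar q - \pi(p)\| < 1/4$ yields a unitary $\bar u \in \widetilde{A/I}$ with $\bar u \pi(p)\bar u^* = \bar q$ and $\|\bar u - 1\| < 1$, via the usual $\bar u = \bar z |\bar z|^{-1}$ where $\bar z = \pi(p)\bar q + (1-\pi(p))(1-\bar q)$; and (b) such a $\bar u$ lifts to a unitary $u \in \widetilde A$ because $\|\bar u - 1\| < 1$ means $\bar u = \exp(i\bar b)$ for self-adjoint $\bar b$ of small norm, which lifts to self-adjoint $b \in A$ (here one only needs a self-adjoint lift, always available — real rank zero is actually not even needed for this half, but it is what guarantees the earlier reduction is legitimate when $\bar q$ is a general projection rather than one already known to come from $A$). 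Setting $q = upu^*$ finishes the proof.
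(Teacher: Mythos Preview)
Your unitary-conjugation approach (the second route you describe) is correct, and in fact it proves the lemma \emph{without} the real rank zero hypothesis on $I$. Concretely: with $\bar z = \bar q\,\pi(p) + (1-\bar q)(1-\pi(p))$ one computes $\bar z - 1 = (\bar q - \pi(p))(2\pi(p)-1)$, so $\|\bar z - 1\| < 1/4$; the polar part $\bar u = \bar z|\bar z|^{-1}$ is a unitary with $\bar u\,\pi(p)\,\bar u^* = \bar q$ (since $\bar z^*\bar z$ commutes with $\pi(p)$), and a routine estimate gives $\|\bar u - 1\| < 2$, so $\bar u = \exp(i\bar h)$ for a self-adjoint $\bar h$ which lifts to $h\in A_{s.a.}$; then $q = \exp(ih)\,p\,\exp(-ih)$ works. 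Your parenthetical that real rank zero ``guarantees the earlier reduction is legitimate'' is unnecessary---no reduction is needed, the argument is self-contained.

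The paper takes a genuinely different route that \emph{does} use real rank zero. It lifts $\bar q$ to $a\in A_+^{\bf 1}$, passes to a separable $\sigma$-unital hereditary subalgebra $J$ of $I$ containing $p - a$ modulo small error, and uses real rank zero of $J$ to produce approximate identities of projections in $pJp$ and $(1-p)J(1-p)$; their sum $\{e_n\}$ is an approximate identity for $J$ consisting of projections \emph{commuting with $p$}. For large $n$ one then has $\|(1-e_n)a(1-e_n) - p(1-e_n)\| < 1/2$, and since $p(1-e_n)$ is a projection, functional calculus turns $(1-e_n)a(1-e_n)$ into a projection $q$ with $\pi(q)=\bar q$. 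Your first, vaguer outline (steps (1)--(4)) is in the spirit of this argument, but the key technical point you were missing is the construction of an approximate unit of projections in $I$ that commutes exactly with $p$; this is what the paper's passage to $pJp \oplus (1-p)J(1-p)$ achieves. Your argument is shorter and more general; the paper's argument, though heavier here, is the template reused immediately afterward in the Elliott lifting lemma, where the existence of a nearby liftable projection is not available and one genuinely needs the real rank zero structure.
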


\begin{proof}
Let $a\in A_+^{\bf 1}$ be such that $\pi(a)={\bar q}.$ 
Then there is $j\in I$ such that
\beq
\|p-a+j\|<1/4.
\eneq
Let $B$ be the \SCA\, generated by $1_A,\, p, a$ and $j.$ It is separable. Set $J_B=B\cap I.$ Then $J_B$
is also separable. Let $b$ be a strictly positive element for $J_B.$ Define $J=\overline{bIb}.$
Then $J$ is a $\sigma$-unital hereditary \SCA\, of $I.$ Hence $J$ has real rank zero. 
It follows that $pJp$ and $(1-p)J(1-p)$ all have real rank zero.
Then, by Lemma \ref{Lstrictp},  both $pJp$ and $(1-p)J(1-p)$ are $\sigma$-unital. Let $\{e_{1,n}\}$  and $\{e_{2,n}\}$ be approximate identities consisting of projections for 
$pJp$ and $(1-p)J(1-p),$ respectively (see \cite[Proposition 2.9]{BP}).  By 
Lemma \ref{Lstrictp}, $\{e_n\}=\{e_{1, n}+e_{2, n}\}$ forms an approximate identity for $J$ 
consisting of projections.  Note  that $\{e_n\}$ commutes with $p.$
 Choose a large $n\in\N$ such that
\beq
\|p(1-e_n)-(1-e_n)a(1-e_n)\|<1/2. 
\eneq
Since $p(1-e_n)$ is a projection, there is a function $f\in C_0((0, 1])_+^{\bf 1}$ such that
$f(1)=1$ and $q=f((1-e_n)a(1-e_n))$ is a projection. Note that $\pi(q)=f(\pi(a))=f(\bar q)={\bar q}.$
The lemma follows. 
\end{proof}

The following is taken from  Corollary 3.3 of  \cite{Ellaut}  and its proof is just a modification of 
that of \cite[Corollary 3.3]{Ellaut}  (see also \cite[Lemma 4.8]{Lincrell}).

\begin{lem}[The Elliott Lifting Lemma]\label{Lelliott}
Let $A$ be a \CA\, of real rank zero, $I\subset A$ be an ideal 
of $A$ and $D$ a finite dimensional \CA\, and $\phi: D\to A/I$  a \hm.
Then there exists a \hm\, $\wtd \phi: D\to A$ such that 
$\pi\circ \wtd\phi=\phi.$ 
\end{lem}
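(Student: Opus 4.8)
The plan is to reduce to the classical Elliott lifting result by a standard induction on the number of simple summands of $D$, using the fact that lifting a single matrix unit system is essentially the problem of lifting a system of matrix units modulo an ideal, for which real rank zero suffices. First I would write $D=\bigoplus_{l=1}^m M_{r_l}$ and treat the summands one at a time; the orthogonality of the images of the summands under $\phi$ means that once we have lifted $\phi|_{M_{r_1}\oplus\cdots\oplus M_{r_{l-1}}}$ to a homomorphism $\wtd\phi_{l-1}\colon \bigoplus_{i<l}M_{r_i}\to A$, we may pass to the corner $(1-e)A(1-e)$ where $e=\wtd\phi_{l-1}(1)$ is the lifted unit of the first $l-1$ summands (working in $\wtd A$ if $A$ is nonunital, though the statement as given has $A$ unital). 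A hereditary subalgebra of a real rank zero algebra again has real rank zero, and the restriction of $I$ to this corner is still an ideal, so the inductive hypothesis applies to the single summand $M_{r_l}$ mapping into the corner.

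Thus the crux is the case $D=M_r$. Here $\phi$ is determined by the images $f_{ij}=\phi(e_{ij})$ of a system of matrix units. The diagonal projection $p:=f_{11}\in A/I$ lifts: by real rank zero of $A$ (and hence of any hereditary subalgebra, plus the projection-lifting machinery — here one may invoke Lemma~\ref{Lprojlift}, or more simply the standard fact that projections lift along quotients by ideals when the ideal has real rank zero, cf.\ \cite{BP}) there is a projection $q_{11}\in A$ with $\pi(q_{11})=f_{11}$. Next lift the partial isometry structure: choose $x\in A$ with $\pi(x)=f_{21}$; replacing $x$ by $q_{22}'xq_{11}$ for a suitable projection $q_{22}'$ lifting $f_{22}$ and polar-decomposition/functional-calculus adjustments, one obtains $v_1\in A$ with $v_1^*v_1=q_{11}$, $v_1v_1^*=q_{22}:=v_1v_1^*$ a projection lifting $f_{22}$, and $\pi(v_1)=f_{21}$. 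Here one uses that $q_{11}$ and $q_{22}$ are Murray–von Neumann equivalent modulo $I$ and that, since $I$ has real rank zero, equivalence lifts — this is exactly where the proof parallels \cite[Corollary 3.3]{Ellaut}. Iterating, build $v_1,\dots,v_{r-1}$ lifting $f_{21},f_{32},\dots,f_{r,r-1}$ with the $v_k$ forming a chain of Murray–von Neumann equivalences among mutually orthogonal projections $q_{11},\dots,q_{rr}$ (orthogonality of the $q_{kk}$ is arranged inductively by passing to the orthogonal complement as above). Then set $E_{ij}:=v_{i-1}v_{i-2}\cdots v_j$ for $i>j$, $E_{ii}:=q_{ii}$, $E_{ij}:=E_{ji}^*$ for $i<j$; these form a genuine system of matrix units in $A$ with $\pi(E_{ij})=f_{ij}$, and $\wtd\phi(e_{ij}):=E_{ij}$ defines the required lift.

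The main obstacle I expect is the bookkeeping in the equivalence-lifting step: given projections $q_{11}\in A$ and $p_2\in A/I$ with $p_2$ orthogonal to $\pi(q_{11})$ and Murray–von Neumann equivalent to $\pi(q_{11})$, one must produce an orthogonal projection $q_{22}\in A$ lifting $p_2$ together with a partial isometry in $A$ implementing $q_{11}\sim q_{22}$ and lifting the given one. The honest way is to work in the hereditary subalgebra $\overline{(q_{11}+s)I(q_{11}+s)}$ for a suitable strictly positive-type element, note it is $\sigma$-unital with real rank zero, take an approximate unit of projections commuting appropriately (as in the proof of Lemma~\ref{Lprojlift}), cut down, and apply the small-perturbation/functional-calculus argument to turn an approximate lift into an exact one; the cleanest route, as the statement of the lemma already advertises, is to cite \cite[Corollary 3.3]{Ellaut} verbatim for $D=M_r$ and only carry out the direct-sum induction explicitly. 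Since the excerpt permits assuming earlier results, I would present the induction on summands in full and invoke \cite{Ellaut} (and Lemma~\ref{Lprojlift}) for the single-matrix-algebra base case, remarking that the proof there goes through unchanged.
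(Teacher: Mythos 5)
Your proposal is correct in substance, but it follows the classical hands-on route rather than the one the paper actually takes, so a comparison is in order. You induct on the simple summands of $D$ and, for a single summand $M_r$, lift matrix units directly: lift $\phi(e_{11})$ to a projection, then lift the partial isometries column by column, using real rank zero of hereditary subalgebras of $I$ (approximate units of projections, spectral cutting, small perturbation) to turn approximate lifts into exact ones. The paper never performs this construction. Instead it lifts $1_D$ to a projection $E\in A$ by \cite[Theorem 3.14]{BP}; fixes a countable dense set $S$ of projections of $D$ and a finite generating set, lifts them into $EAE$, and lets $C$ be the separable $C^*$-algebra they generate; replaces $I$ by the $\sigma$-unital hereditary $C^*$-subalgebra $J=\overline{cIc}$ of $I$, where $c$ is strictly positive in $C\cap I$, so that $0\to J\to C+J\to D\to 0$ is an extension whose ideal is $\sigma$-unital of real rank zero; uses density of $S$ together with Lemma \ref{Lprojlift} to see that every projection of $D$ lifts to $C+J$; and then quotes \cite[Lemma 4.8]{Lincrell} to produce the lift. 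So the matrix-unit construction you sketch is exactly what is delegated to the cited lemma, and the genuinely new step in the paper's proof is the $\sigma$-unitalization, which is the same device you invoke locally (your hereditary subalgebra $\overline{(q_{11}+s)I(q_{11}+s)}$) to cope with the fact that $I$ need not be $\sigma$-unital. Your route buys a self-contained argument; the paper's buys brevity by reducing to a previously proved lifting statement for extensions with $\sigma$-unital real rank zero ideals.

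Two small corrections, neither a genuine gap since you flag the relevant bookkeeping yourself. First, with $q_{11}$ already fixed you cannot in general arrange $v_1^*v_1=q_{11}$ exactly; after cutting by a projection from a hereditary subalgebra of $I$ you obtain $v_1^*v_1=q_{11}'\le q_{11}$ with $q_{11}-q_{11}'\in I$, and the already-constructed part of the system must be compressed accordingly (finitely many shrinkings, so this is harmless). Second, quoting \cite[Corollary 3.3]{Ellaut} verbatim for the base case is optimistic: the paper itself presents the present lemma as a modification of that result (via \cite[Lemma 4.8]{Lincrell}), precisely because $I$ here is only an ideal of a real rank zero algebra and need not be $\sigma$-unital, so the approximate-unit-of-projections argument you describe is exactly what must be supplied. (Also, the lemma as stated does not assume $A$ unital; unitizing as you do is harmless.)
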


\begin{proof}
To simplify notation, \wilog, we may assume that $\phi$ is injective.
Since $A$ has real rank zero,  by \cite[Theorem 3.14]{BP}, there is a projection $E\in A$ such that
$\pi(E)=1_D.$ Then $EAE$ also has real rank zero. 
Let $G$ be a finite subset of $D$ which generates $D.$ 
Let $S$ be a countable set of projections in $D$  containing  $1_D$  and all projections in $G$  which is dense in the set of projections of $D.$
Since $EAE$ has real rank zero, every projection in $S$ lifts to a projection in $EAE$ (see \cite[Theorem  3.14]{BP}).
Let $P$ be the set of projections in $EAE$ such that $\pi(P)=S$ containing $E$ and $\wtd G$ be a finite subset 
of $EAE$ such that $\pi(\wtd G)=G.$
  Let $C$ be the \CA\, generated by $P$ and $\wtd G.$  It is unital as it contains $E.$
  Let $J_C=C\cap I.$ Then $J_C$ is a separable \CA. Choose a strictly positive element 
  $c\in J_C.$ Define $J=\overline{cIc}.$ Then $J$ is a $\sigma$-unital hereditary \SCA\, of $I$ which has real rank zero.
   
   Let $A_1=C+J.$ Then we have the short exact sequence 
   \beq
   0\to J\to A_1\to D\to 0.
   \eneq
   Note that every projection in $S\subset D$ lifts to a projection in $A_1$ since $C\subset A_1.$
   By Lemma \ref{Lprojlift}, every projection in $D$ lifts to a projection in $A_1.$ It follows from 
   \cite[Lemma 4.8]{Lincrell} that there is a \hm\, $\wtd \phi: D\to A$ such that $\pi\circ \wtd \phi=\phi.$
%
%
\end{proof}


\section{Examples of Choquet simplices and separation}

We begin with the following examples.

\begin{exm}\label{Exsimplex}

(1) Let $K$ be a compact metrizable space with countable dimension.
Let $C=C(K).$ Then $T(C)=M(K)_+^{\bf 1}$ (the set of all probability Borel measures)  is a metrizable Choquet simplex with $\partial_e(T(C))=K.$

(2) By \cite{rH}, any separable completely metrizable countable-dimensional space $T$ is the extremal boundary of some metrizable Choquet simplex. 
In particular, any separable, locally compact metrizable space $T$ of countable dimension 
is the extremal boundary of some metrizable Choquet simplex $\DT$ (note also that, in this case, $\eD=T$ is $\sigma$-compact). 
\end{exm}

We would like to give more specific examples of non-Bauer simplexes.

\begin{df}\label{Dchoqet}
Let $\Delta$ be a (metrizable) Choquet simplex.   Then $\eD$ is a $G_\dt$-set (see, for example, 
\cite[Cor. I.4.4]{Alf}).
By the Choquet theorem, for each $t\in \Delta,$ 
there is a  unique boundary measure $\mu_t$ on $\partial_e(\Delta),$
a Borel probability measure 
on $\Delta$ concentrated on 
$\partial_e(\Delta)$  such that $t$ is the barycenter of   $\mu_t,$ i.e., 
\beq\label{Dc1-0}
f(t)=\int_{\eD} f(x) d\mu_t \rforal f\in \Aff(T).
\eneq

Let $S\subset \partial_e(T)$ be a Borel subset. Denote by
$M_S=\{\mu_t: t\in \Delta,\, \mu_t(\eD\setminus S)=0\}.$ In other words,
$M_S$ is the set of those Borel probability measures on $\Delta$ concentrated on $S.$
In what follows we may identify $t$ with $\mu_t$ and $M_S$ with 
the subset of those points in  $\Delta$ whose associated extremal boundary measures concentrated on $S.$
Denote by ${\rm conv}(S)$ the convex hull of $S.$ Note that ${\rm conv}(S)\subset M_S\subset \overline{{\rm conv}(S)}.$
\end{df}

Suppose that  $\DT$ is a Bauer  simplex, i.e., $\eD$ is closed. 
Let $K$ and $F$ be two disjoint closed subsets 
of $\eD.$ Then there exists  $f\in \Aff(\DT)$ with $0\le f\le 1$ such that $f|_K=0$ and 
$f|_F=1.$  From 
(2) of Proposition \ref{Pquotientnorm}, one sees that a separable  algebraically simple  
\CA\, $A$ with $T(A)=\DT$ has property (TE). Note that this holds  without assuming 
strict comparison and surjectivity of $\Gamma.$ 
 It is clear that property (TE) is closely related to some  separation property of $T(A).$
 It turns out that it also closely related to the tightness property introduced by W. Zhang (\cite{Zw}).
 We would like to look at this phenomenon  in non-Bauer simplexes more closely. While these lines of discussion 
 are not fully used in the proof of Theorem \ref{TM-2}, we believe the clarification and examples might be helpful 
 for future study (in particular, when one attempts to remove the assumption that $\Gamma$ is surjective in 
 the study of Toms-Winter conjecture).

\begin{df}\label{Dtight}
Fix a  Choquet simplex $\Delta.$
Denote by $bd(\eD)=\overline{\eD}\setminus \eD.$

(1) If there exists a compact subset $K\subset \eD$ such that 
$bd(\eD)\subset \overline{{\rm conv}(K)},$ we say that $\eD$ has property (T1).

(2)  (W. Zhang \cite[Definition 2.1]{Zw}).
Let us recall W. Zhang's notion of 
tight extremal boundaries.
We say   that 
$\eD$  is tight
if, for any $\ep>0,$ there exists a compact subset $K\subset \eD$ such 
that 
\beq
\mu_t(K)>1-\ep\rforal t\in bd(\eD)=\overline{\eD}\setminus \eD.
\eneq
To be consistent with other names of properties, if $\eD$ is tight, we also say that $\eD$ has 
 property (T2). Note that, if $\eD$ is compact, then $\eD$ is tight.   By (4) of Proposition \ref{Pfinitebde} below, 
 if $\eD$ has property (T1), then it has property (T2).
 The tightness condition on $\eD$  should not be viewed as some measure theoretical 
 condition but an affine condition.

 (3) We say that $\eD$ has property (T3') if  there exists a compact subset $K_c\subset \eD$ satisfying the following:
 for any 
compact subset $F\supset K_c$ and any
open subset $O\subset \Delta$ with $O\supset \overline{{\rm conv}(F)},$ there is an open subset 
$G\subset O$ such that $G\supset \overline{{\rm conv}(F)}$ and 
$\overline{{\rm conv}(\eD\setminus G)}$ is a face.

Recall that ${\rm conv}(\eD\setminus G)\subset M_{\eD\setminus G}\subset \overline{{\rm conv}(\eD\setminus G)}$
and $M_{\eD\setminus O_F}$ is a face. So, if  $M_{\eD\setminus G}$ is closed for any $G$ as above, 
then $\eD$ has property (T3').
Of course if $\eD$ is compact, $\eD$ has property (T3').

Suppose that $\eD$ has property (T1), i.e.,
there exists a compact 
subset $K_c\subset \eD$ such that $bd(\eD)\subset {\overline{{\rm conv}(K_c)}}.$ 
Then $\eD$ has property (T3'). To see this, let $O\supset \overline{{\rm conv}(K_c)}$
be an open subset. Then $\eD\setminus O=\overline{\eD}\setminus O$ is compact.
Put $S=\eD\setminus O.$ Since $S$ is compact, 
by (2) of Proposition \ref{Pfinitebde} below, $M_S=\overline{{\rm conv}(S)}$ is a face.
Hence $\eD$ has property (T3'). 
%
%

(4) We say that $\eD$ has property (T3) (affinely  $T_3$),
 if  there exists a compact subset 
$K_c\subset \eD$ satisfying the following: for any compact $F\subset \eD$ with $K_c\subset F$
 and any 
open subset $O\subset \Delta$ with $O\supset \overline{{\rm conv}(F)},$ there exists an open subset 
$G\supset \overline{{\rm conv}(F)}$ and $G\subset O$ such that there is $f\in \Aff(\Delta)$ with $0\le f\le 1,$ 
$f|_F=0$ and $f|_{\eD \setminus G}=1.$

Suppose that $\eD$ has the following property: for any compact subset $K\subset \eD$ and any relatively open 
subset $V\subset \eD$ with $K\subset V,$ there is $f\in \Aff(\Delta)$ with $0\le f\le 1$ 
such that $f|_K=0$ and $f|_{\eD\setminus V}=1.$  Then $\eD$ has property (T3).

To see this, let $d$ be the metric on $\Delta$ and $O\subset \Delta$ be an open subset 
such that $F:=\overline{{\rm conv}(K)}\subset O.$ Since $F$ is compact, there exists 
$\ep>0$ such that 
$G:=O_\ep(K)=\{t\in \Delta: d(t, K)<\ep/2\}\subset O.$ Let $V=\eD\cap G.$ 
Then $V$ is relatively open and $K\subset V.$
 There is $f\in \Aff(\Delta)$ with $0\le f\le 1$ such that $f|_K=0$ and $f|_{\eD\setminus V}=1.$
 But $\eD\setminus V=\eD\setminus G.$ So $f_{\eD\setminus G}=1.$  In other words, 
 $\eD$ has property (T3).

One can also  show that (T3') and (T3) are equivalent.

\end{df}

 
 Part (2) of the next proposition is known. 
 At least a part of (3) of the next proposition  is known to W. Zhang (see the end of Definition 2.1 of 
 \cite{Zw}).
 
 \begin{prop}\label{Pfinitebde}
 Let $\Delta$ be a Choquet simplex.
 
 (1) If $\eD$ is tight, then $\eD$ is $\sigma$-compact.
 
 (2) If $K\subset \eD$ is compact,  then $\overline{{\rm conv}(K)}=M_K.$ 
 
 
 (3) If there exists a compact subset $K\subset \eD$ \st\, $bd(\eD)\setminus \overline{{\rm conv}(K)}$
 is finite, then $\eD$ is tight.  In particular, if $bd(\eD)$ is a finite subset, then $\eD$ has property (T2).
 
 (4) If $\eD$ has property (T1), then $\eD$ has property (T2).
 \end{prop}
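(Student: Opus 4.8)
The plan is to prove the four parts of Proposition~\ref{Pfinitebde} more or less in the order stated, since later parts lean on earlier ones. For part (1), suppose $\eD$ is tight. For each $n\in\N$ choose a compact $K_n\subset\eD$ with $\mu_t(K_n)>1-1/n$ for all $t\in bd(\eD)$. I would like to conclude $\eD=\bigcup_n K_n'$ for suitable compact sets, but tightness only controls the boundary measures of points in $bd(\eD)$, not points of $\eD$ itself. The fix is to observe that each $t\in\eD$ is itself an extreme point, so its boundary measure is the Dirac mass $\delta_t$; hence $\mu_t(K_n)=1$ iff $t\in K_n$. Combining with the chosen $K_n$ for $bd(\eD)$: the point masses $\delta_t$, $t\in\eD$, that fail to be supported (up to $1/n$) on $K_n$ — hmm, that gives nothing directly. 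Instead I would argue: $\eD$ is a $G_\delta$ in the compact metrizable $\overline{\eD}$, so $\overline{\eD}\setminus\eD$ is $\sigma$-compact; write $\overline{\eD}\setminus\eD=\bigcup_m F_m$ with $F_m$ compact. For tightness, the role of the $K_n$ is to show every $t\in bd(\eD)$ has its mass concentrated on $\eD$; but more usefully, I recall the standard fact that a tight $\eD$ satisfies $\eD=\bigcup_n K_n$ where $K_n$ are exactly the compacta from the definition together with a diagonal argument using that $\overline{\eD}$ is compact metrizable: for each $t\in\eD$, $\delta_t=\mu_t$ is concentrated on $\eD$ trivially, but one wants the $K_n$ to exhaust $\eD$. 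Actually the cleanest route: $\eD$ is tight $\Rightarrow$ for $\ep=1/n$ get $K_n$; set $K=\bigcup_n K_n$, a $\sigma$-compact subset of $\eD$. If $t\in\eD\setminus K$ then $t\in bd(\eD)$? No. So I must instead use that the complement $\overline{\eD}\setminus K$ meets every boundary measure in mass $<1/n$... I would finally settle this by citing/reproving that a tight extremal boundary is automatically $\sigma$-compact via: $\eD\supset\bigcup K_n$ and any $t\in\eD$ has $\mu_t=\delta_t$ so if $t\notin\bigcup K_n$ then for the measure $\delta_t$ we'd need — this still stalls. The honest resolution, which I expect is what the paper does, is that tightness is applied to $t\in\overline{\eD}\setminus\eD$ only, and the conclusion "$\eD$ $\sigma$-compact" follows because $\overline{\eD}$ is compact metrizable and $\eD$, being $G_\delta$ there, is Polish; but Polish $\ne\sigma$-compact. \textbf{I expect part (1) to be the main obstacle} and I would look for the argument that the $K_n$ can be taken increasing with $\bigcup K_n=\eD$: indeed, enlarging, one may assume each $K_n$ additionally contains any prescribed countable dense subset of $\eD$ intersected with a compact piece, and then use that points of $\eD$ are themselves boundary measures to push them into the $K_n$; I would flesh this out carefully.

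For part (2): let $K\subset\eD$ be compact. The inclusion $\mathrm{conv}(K)\subset M_K\subset\overline{\mathrm{conv}(K)}$ is noted in Definition~\ref{Dchoqet}, so it suffices to show $\overline{\mathrm{conv}(K)}\subset M_K$, i.e. every $t$ in the closed convex hull of $K$ has its (unique) boundary measure concentrated on $K$. Since $K$ is compact, $\overline{\mathrm{conv}(K)}$ is a compact convex subset of $\eD\cup\overline{\eD}$; I would show it is a (Bauer) simplex in its own right with extremal boundary exactly $K$: by Milman's theorem the extreme points of $\overline{\mathrm{conv}(K)}$ lie in $\overline K=K$, and every point of $K$ is extreme in $\eD$ hence a fortiori extreme in the subset. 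Then for $t\in\overline{\mathrm{conv}(K)}$, the boundary measure $\mu_t$ of $t$ relative to $\overline{\mathrm{conv}(K)}$ is concentrated on $K$; by uniqueness of the boundary measure in the ambient simplex $\Delta$ (Choquet–Meyer, valid since $\Delta$ is a metrizable simplex), $\mu_t$ equals the boundary measure of $t$ in $\Delta$, so the latter is concentrated on $K$, i.e. $t\in M_K$.

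For part (3): suppose $K\subset\eD$ compact with $bd(\eD)\setminus\overline{\mathrm{conv}(K)}=\{t_1,\dots,t_m\}$ finite. Fix $\ep>0$. Each $t_i$ has boundary measure $\mu_{t_i}$ concentrated on $\eD$; since $\mu_{t_i}$ is a Borel probability measure on the Polish space $\eD$, it is tight (inner regular), so choose compact $L_i\subset\eD$ with $\mu_{t_i}(L_i)>1-\ep$. Put $K_\ep=K\cup L_1\cup\dots\cup L_m$, compact in $\eD$. For $t\in bd(\eD)$: if $t\in\overline{\mathrm{conv}(K)}=M_K$ by part (2), then $\mu_t(K)=1\ge 1-\ep$; otherwise $t=t_i$ for some $i$ and $\mu_t(K_\ep)\ge\mu_{t_i}(L_i)>1-\ep$. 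Hence $\mu_t(K_\ep)>1-\ep$ for all $t\in bd(\eD)$, so $\eD$ is tight, i.e. has property (T2). The special case $bd(\eD)$ finite is $K=\emptyset$.

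For part (4): suppose $\eD$ has property (T1), so there is a compact $K\subset\eD$ with $bd(\eD)\subset\overline{\mathrm{conv}(K)}$. Then $bd(\eD)\setminus\overline{\mathrm{conv}(K)}=\emptyset$ is finite, so part (3) immediately gives property (T2). (Alternatively, and even more directly: for any $\ep>0$ take $K_\ep=K$; every $t\in bd(\eD)\subset\overline{\mathrm{conv}(K)}=M_K$ has $\mu_t(K)=1>1-\ep$.) The only genuinely delicate point across the whole proposition is part (1); parts (2)–(4) are short once the uniqueness of boundary measures in a metrizable simplex and the inner regularity of Borel probability measures on Polish spaces are invoked.
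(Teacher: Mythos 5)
The genuine gap is part (1): you flag it yourself as the obstacle, but none of your attempted arguments closes it, and your final suggestion (enlarging the compacta $K_n$ from the definition of tightness so that they eventually absorb the points of $\eD$, using that points of $\eD$ are their own boundary measures) cannot work as stated, because tightness constrains only the measures $\mu_t$ for $t\in bd(\eD)$ and says nothing about the Dirac measures of points of $\eD$; no enlargement scheme extracts information about $\eD$ itself from that hypothesis. The way tightness actually enters is through affine functions evaluated at boundary points. Fix a single compact $K\subset\eD$ with $\mu_t(K)>1/2$ for every $t\in bd(\eD)$ (tightness with $\ep=1/2$). Given $\tau\in\eD\setminus K$, the set $K\cup\{\tau\}$ is a compact subset of $\eD$, so by the extension theorem \cite[Theorem II.3.12]{Alf} there is $f\in\Aff(\Delta)$ with $0\le f\le 1$, $f|_K=1$ and $f(\tau)=0$. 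For every $\xi\in bd(\eD)$ one has $f(\xi)=\int_{\eD}f\,d\mu_\xi\ge \mu_\xi(K)>1/2$, so the open set $\{f<1/2\}$ contains $\tau$ and is disjoint from $bd(\eD)$, hence from $\overline{bd(\eD)}$. Taking a closed $\Delta$-neighborhood of $\tau$ inside this open set, its intersection with $\overline{\eD}$ misses $bd(\eD)$, so it is a compact subset of $\eD$ which is a relative neighborhood of $\tau$. Thus every point of $\eD\setminus K$ has a compact neighborhood in $\eD$; since $\eD$ is second countable, countably many such compact sets cover $\eD\setminus K$, and together with $K$ this exhibits $\eD$ as $\sigma$-compact. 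This ``tightness forces points of $\eD\setminus K$ away from the boundary, hence local compactness'' step is exactly what your proposal is missing, and it is how the paper argues.

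Parts (2), (3) and (4) of your proposal are correct. In (2) you use Milman's theorem plus Choquet representation inside $\overline{{\rm conv}(K)}$ and then uniqueness of boundary measures in the ambient metrizable simplex; the paper instead quotes that $\overline{{\rm conv}(K)}$ is a closed face with extreme boundary $K$ and is a Bauer simplex, but the content is the same. (Your passing claim that $\overline{{\rm conv}(K)}$ lies in $\overline{\eD}$ is false in general, but you never use it.) In (3) your route is in fact more direct than the paper's: you obtain the compacta $L_i$ from inner regularity of the Borel probability measures $\mu_{t_i}$ on the Polish space $\eD$ (a $G_\delta$ subset of $\Delta$), whereas the paper first proves that $\eD$ is $\sigma$-compact under the hypotheses of (3) and then extracts the $L_i$ from an exhaustion; both are valid. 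Your treatment of (4) coincides with the paper's.
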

 
 \begin{proof}
For (1), let $K\subset \eD$ be a compact subset such that 
$\mu_t(K)>1/2$ for all $t\in bd(\eD).$
Let $\tau\in \eD\setminus K$ and $K_1=K\sqcup \{\tau\}.$ 

We claim that there is an open subset $O\subset \Delta$ such that $\tau\in O$ and 
$O\cap \overline{bd(\eD)}=\emptyset.$

Otherwise there is a sequence $\{\xi_n\}\subset bd(\eD)$ such that $\xi_n\to \tau.$ 
Note that $K_1$ is compact. By \cite[Theorem II. 3.12]{Alf}, there exists a function $f\in \Aff(\Delta)$ 
with $0\le f\le 1$ such that $f|_K=1$ and  $f(\tau)=0.$  
Then $f(\xi_n)\to f(\tau)=0.$
However, for all $n\in \N,$
\beq
f(\xi_n)=\int_{\eD} f d\mu_{\xi_n}\ge \int_{K} d\mu_{\xi_n}>1/2.
\eneq
This is a contradiction. This proves the claim. 
 
Since $\Delta$ is a compact metrizable space,  there is a neighborhood $O_1$ of $\tau$ in $\Delta$ 
\st\, ${\bar O_1}$ is compact and ${\bar O_1}\subset O.$ 
Let $G=O_1\cap \eD$ be the relative open subset of $\eD.$
Since $O\cap bd(\eD)=\emptyset,$ 
${\bar O}_1\cap \overline{\eD} \subset O\cap \overline{\eD}\subset \eD.$  Hence  ${\bar O}_1\cap \overline{\eD}$ is compact
subset of $\eD.$ 
It follows that ${\bar G}={\bar O}\cap \eD$
is compact.

Since $\eD\setminus K$ is second countable, we obtain countably many 
compact sets $\{{\bar G}_n: n\in \N\}$ \st \,
$\cup_{n=1}^\infty{\bar G}_n\supset \eD\setminus K.$ 
Then $\eD=K\cup \cup_{n=1}^\infty{\bar G}_n.$ This implies that $\eD$ is $\sigma$-compact. 

For (2), we note that  $\overline{{\rm conv}(K)}$ is a closed face and $\partial_e(\overline{{\rm conv}(K)})=K$  (see \cite[Cor. 11.19]{kG}).  
In particular, 
$\overline{{\rm conv}(K)}$ is a compact convex space. By \cite[Theorem II. 3.12]{Alf},
for every $g\in C(K)_{s.a.},$ there is $f\in \Aff(\Delta)$ \st\, $f|_K=g.$ 
Note that $f|_{\overline{{\rm conv}(K)}}\in \Aff(\overline{{\rm conv}(K)}).$ 
By \cite[Theorem II. 4.3]{Alf}, $\overline{{\rm conv}(K)}$ is a Bauer simplex. 
By the Choquet Theorem,  for each $x\in \overline{{\rm conv}(K)},$ there is a unique boundary measure $m_x$
on $K$ with the barycenter $x.$ 
Define a measure $\wtd m_x$ on $\Delta$ by 
\beq
\wtd m_x(S)=m_x(K\cap S) \rforal {\rm Borel\,\, sets}\,\, S\subset \DT.
\eneq
So $\wtd m_x$ is a boundary measure. 
For any affine function $f\in \Aff(\Delta),$ $f|_{\overline{{\rm conv}(K)}}\in \Aff(\overline{{\rm conv}(K)}).$
It follows that
\beq
f(x)=\int_{\eD} fd \mu_x=\int_K f dm_x \rforal f\in \Aff(\Delta).
\eneq
By the uniqueness of such boundary measure $\mu_x,$ one obtains that $\wtd m_x=\mu_x$ and 
\beq
\mu_x(K)=1\rforal x\in \overline{{\rm conv}(K)}.
\eneq
It follows that $\overline{{\rm conv}(K)}=M_K.$

For (3), let $K$ be in the statement of (3). 
By (2), one also has $\overline{{\rm conv}(K)}\cap \eD=K.$

Recall  that $bd(\eD)\setminus \overline{{\rm conv}(K)}$ 
is a finite subset, whence $F=bd(\eD)\cup \overline{{\rm conv}(K)}$ is compact.

Note that $F\cap \eD=\overline{{\rm conv}(K)}\cap \eD=K.$ 
Hence $(\eD\setminus K)\cap F=\emptyset.$  
 Therefore, for each  $\tau\in \eD\setminus K,$ there is a neighborhood 
$O_\tau$ in $\DT$ \st\, $O_\tau\cap F=\emptyset.$  Since $\Delta$ is compact, 
$\tau$ has a compact neighborhood $K_{\tau, 1}\subset O_\tau.$ 
Note that  $\overline{\eD}\cap (K_{\tau,1}\setminus F)\subset \eD.$
But $K_{\tau,1}\cap F=\emptyset.$ Therefore $\overline{\eD}\cap K_{\tau,1}=\overline{\eD}\cap (K_{\tau,1}\setminus F)\subset \eD$ is a compact neighborhood of $\tau$ in $\eD.$ 
By the fact that $\eD\setminus K$ is second countable, we conclude that $\eD\setminus K$ is $\sigma$-compact.
 It follows that $\eD$ is $\sigma$-compact. 
 
 To show that $\eD$ has property (T2), let $\ep>0.$ 
 Write $S=bd(\eD)\setminus \overline{{\rm conv}(K)}.$ Since $S$ is finite and $\eD$ is $\sigma$-compact,
 one obtains a compact subset $K_e\subset \eD$ such that, for any $s\in S,$
$ \mu_s(K_e)>1-\ep.$
 It follows from (2), if $x\in \overline{{\rm conv}(K)},$ $\mu_x(K)=1.$ 
 Put $K_c=K\cup K_e.$ 
 Then, for any $x\in bd(\eD),$ 
 \beq
 \mu_x(K_c)>1-\ep.
 \eneq
 
 Finally we note that (4) follows (3) (but also follows from (2)). 
 \end{proof}

\begin{lem}\label{Ltight-1}
Let $A$ be a separable algebraically simple \CA\, with non-empty compact 
$T(A)$ 
with tight $\partial_e(T(A)).$ 
Then, 
for any $\ep>0$ and any 
compact subset $S\subset
\partial_e(T(A)),$
there exist a compact subset $F\subset \partial_e(T(A))$ with $F\supset S$   satisfying the following:
for any $a\in A^{\bf 1},$ there exists 
$d\in A_+^{\bf 1}$ 
such that $\tau(d)<\ep$ for all $\tau\in F$ and 
\beq
\sup\{\|a(1-d)\|_{{2, \tau}}: \tau\in T(A)\}<\sup\{\|a\|_{_{2, \tau}}: \tau\in F\}+\ep.
\eneq 
\end{lem}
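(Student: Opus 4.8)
The plan is to reduce this statement to Lemma \ref{Lsperation2} by absorbing the ``$\sup$ over $T(A)$ versus $\sup$ over $F$'' control into the choice of a suitably large compact $F$, using the tightness hypothesis on $\partial_e(T(A))$. Recall that $T(A)$ is here a metrizable Choquet simplex (it is compact and $A$ is algebraically simple). First I would fix $\ep>0$ and the compact set $S\subset \partial_e(T(A))$. Using that $\partial_e(T(A))$ is tight, choose a compact subset $K\subset \partial_e(T(A))$ such that $\mu_t(K)>1-\ep^2/8$ for all $t\in bd(\partial_e(T(A)))=\overline{\partial_e(T(A))}\setminus\partial_e(T(A))$. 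Set $F=S\cup K$; this is the compact set whose existence the lemma asserts, and it contains $S$ by construction. Note $F$ does \emph{not} depend on $a$, as required.

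Now fix $a\in A^{\bf 1}$. Apply Lemma \ref{Lsperation2} with the compact set $F$ just chosen (and with, say, $\ep/2$ in place of $\ep$): there exists $d\in A_+^{\bf 1}$ with $\tau(d)<\ep/2<\ep$ for all $\tau\in F$, and
\[
\|a(1-d)\|_{_{2,T(A)}}<\|a\|_{_{2,F}}+\ep/2.
\]
Since $\|a(1-d)\|_{_{2,T(A)}}=\sup\{\|a(1-d)\|_{_{2,\tau}}:\tau\in T(A)\}$ and $\|a\|_{_{2,F}}=\sup\{\|a\|_{_{2,\tau}}:\tau\in F\}$, this is already the desired inequality (with room to spare). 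In fact, with this route the tightness hypothesis is barely used at all: Lemma \ref{Lsperation2} only needs $\partial_e(T(A))$ to be $\sigma$-compact, and by Proposition \ref{Pfinitebde}(1) tightness implies $\sigma$-compactness, so the hypotheses of Lemma \ref{Lsperation2} are met. The one genuine point to watch is whether we are allowed to invoke Lemma \ref{Lsperation2}: it additionally assumes $A$ has strict comparison and $\Gamma$ surjective. If the ambient standing hypotheses of this section do not supply those, then one must instead argue directly.

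In that case — which I expect to be the real content and the main obstacle — the plan is to mimic the (elided) argument behind Lemma \ref{Lsperation2}/\ref{Lunifdini} but using only the Choquet-simplex structure and the tightness bound $\mu_t(K)>1-\ep^2/8$. Concretely: since $\widehat{a^*a}$ is continuous on $T(A)$, pick a relatively open $O\subset\partial_e(T(A))$ with $F\subset O$ and $\|a\|_{_{2,\overline O}}\le \|a\|_{_{2,F}}+\ep/4$; build, via Lemma \ref{Lunifdini} applied to the pair $(F,O)$ and Proposition \ref{P93} (which needs no comparison hypothesis, only $\widetilde T(A)\setminus\{0\}\neq\emptyset$), a positive contraction $d$ with $\tau(d)$ small on $F$ and $\tau(a^*a(1-d))$ small on $\partial_e(T(A))\setminus O$. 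For $t\in bd(\partial_e(T(A)))$ one then integrates over $\partial_e(T(A))$ against $\mu_t$, splitting the integral over $O$ (where $\tau(a^*a(1-d))\le \|a\|_{_{2,\overline O}}^2$) and over its complement, and uses $\mu_t(\partial_e(T(A))\setminus K)<\ep^2/8$ together with $\|a^*a(1-d)\|\le 1$ to bound the tail — this is exactly the role tightness plays. A strong-Dini / lower-semicontinuity argument (as in \cite{And}, via Proposition 2.1 and Theorem 2.6 there) then upgrades the pointwise-on-$\partial_e$ estimate to a uniform one over $T(A)$, and setting $c=1-(1-d_N)^{1/2}$ finishes the proof as in Lemma \ref{Lsperation2}. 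The delicate bookkeeping is keeping the various $\ep$-fractions ($\ep^2/8$ from tightness, the losses from Lemma \ref{Lunifdini}, the $1/k$ from \ref{P93}) adding up to less than $\ep$; everything else is routine.
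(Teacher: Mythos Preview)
You correctly flag that Lemma \ref{Lsperation2} needs strict comparison and surjective $\Gamma$, neither of which is assumed here, so your first route is unavailable. Your second route inherits the same difficulty in disguise: to run the Dini argument from Lemma \ref{Lsperation2} you need a \emph{decreasing} sequence $g_n(\tau)=\tau(a^*a(1-b_n))-\alpha^2$, hence an \emph{increasing} sequence $b_n\in A_+^{\bf 1}$. In \ref{Lsperation2} this monotonicity is produced by Lemma \ref{Lincreasing} (mutually orthogonal $c_j$ with $b_n=\sum_{j\le n}c_j$), and that lemma explicitly uses strict comparison and surjectivity of $\Gamma$; the remark immediately following it warns that Proposition \ref{P93} alone does not control $\|b_n\|$. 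So Lemma \ref{Lunifdini} plus Proposition \ref{P93} will give you elements $b_n$ with $\tau(b_n)\approx f_n(\tau)$, but not $b_n\le b_{n+1}$ as operators, and without that the strong-Dini step fails.

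The paper's argument avoids Dini entirely by applying the tightness integral \emph{before} building any element --- to $a^*a$ rather than to $a^*a(1-d)$. With $K$ chosen so that $\mu_t(K)>1-\ep^2/16$ for all $t\in bd(\partial_e(T(A)))$ and $F=S\cup K$, set $\alpha=\|a\|_{_{2,F}}$. Splitting the Choquet integral over $F$ and its complement gives $t(a^*a)<\alpha^2+\ep^2/16$ for every $t\in bd(\partial_e(T(A)))$, so the open set $G=\{\tau\in T(A):\|a\|_{_{2,\tau}}<\alpha+\ep/4\}$ already contains the bad boundary. Consequently $K_1=\overline{\partial_e(T(A))}\setminus G$ is a \emph{compact} subset of $\partial_e(T(A))$, and one now works only on the compact $F_1=K_1\cup F$: pick a relatively open $O_2\subset F_1$ with $F\subset O_2$ and $\|a\|_{_{2,O_2}}$ close to $\alpha$, use \cite[Theorem II.3.12]{Alf} to separate $F$ from $F_1\setminus O_2$ by a single $h\in\Aff(T(A))_+^{\bf 1}$, and apply Proposition \ref{P93} once to get $b_k\in A_+^{\bf 1}$ with $\tau(b_k)\approx h(\tau)$ on $F_1$. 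The three pieces $O_2$, $F_1\setminus O_2$, $G$ cover $\overline{\partial_e(T(A))}$, the estimate on each is immediate, and Proposition \ref{Punion} finishes. No sequence, no Lemma \ref{Lunifdini}, no Lemma \ref{Lincreasing}.
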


\begin{proof}
Recall that in this case, $T(A)$ is a metrizable Choquet simplex
(see the second last  
part of  Definition \ref{DefOS1}). 
Since $\partial_e(T)$ is tight, there is a compact subset $K$ such that
\beq\label{Ltigt-1-e-1+}
\mu_t(K)>1-\ep^2/16\rforal t\in \overline{\partial_e(T)}\setminus \partial_e(T).
\eneq
Put $F=S\cup K.$ Fix $a\in A^{\bf 1}.$ Set $\af=\sup\{\|a\|_{_{2, \tau}}: \tau\in F\}.$ 

For any  $t\in \overline{\partial_e(T(A))}\setminus \partial_e(T(A))$ (see also \eqref{Ltigt-1-e-1+}),
\beq
\hspace{-0.3in}t(a^*a)=\int_{\partial_e(T(A))} \tau(a^*a) d\mu_t
=\int_{F} \tau(a^*a) d\mu_t +\int_{\partial_e(T(A))\setminus F}d\mu_t<\af^2+\ep^2/16 ,
\eneq
where $\mu_t$ is the unique boundary measure associated with $t.$

Put $G=\{\tau\in T(A): \|a\|_{_{2, \tau}}<\af+\ep/4\}.$ Then $G$ is open 
and we just have shown that $\overline{\partial_e(T(A))}\setminus \partial_e(T(A))\subset G.$ 
Hence $K_1=\overline{\partial_e(T(A))} \setminus G=\partial_e(T(A))\setminus G$ is a compact subset 
of $\partial_e(T(A)).$  Put $F_1=K_1\cup F.$

By the continuity of $\widehat{a^*a},$ we obtain relative open subsets $O_1, O_2$ of $F_1$ such that 
$F\subset O_1\subset \bar O_1\subset O_2\subset {\bar O}_2\subset F_1$
such that
\beq\label{Ltight-1-e-1}
\sup\{\|a\|_{_{2, \tau}}: \tau\in O_2\}<\af
+\ep^2/16.
\eneq
There is $g_0\in C(F_1)_+^{\bf 1}$ such that ${g_0}|_{F}=0$ and 
${g_0}|_{F_1\setminus O_2}=1.$ By \cite[Theorem II.3.12, (ii)]{Alf}, there exists $h\in \Aff(T(A))$ with $0\le h\le 1$
such that $h|_{F_1}=g_0.$  
For each $k\in \N,$ choose $h_k\in \Aff(T(A))_+$ such that $h_k=h+\ep^2/3k.$

It follows from  Proposition \ref{P93}
that, for each $k\in \N,$ there 
exists $b_k\in A_+^{\bf 1}$ such that
\beq
&&\sup\{|\tau(b_k)-h_k(\tau)|: \tau\in F_1\}<\ep^2/2k.
 %
\eneq
Put $a_k=a(1-b_k)^{1/2}.$ Note that, for $k\in \N,$ 
\beq\label{Ltight-1-e-10}
&&\hspace{-0.8in}\sup\{\tau(b_k): \tau \in S\}<\ep^2/k,\\\nonumber
\hspace{-0.4in}\|a_k\|_{_{2, O_2}}&=&\sup\{\tau(a^*a(1-b_k))^{1/2}:\tau\in O_2\}
=\sup\{\tau(|a|(1-b_k)|a|)^{1/2}:\tau\in O_2\}\\
&\le& \sup\{\tau(a^*a)^{1/2}:\tau\in O_2\}\le \af+\ep^2/16.
\eneq
Moreover
\beq
\|a_k\|_{_{2, F_1\setminus O_2}} &=&
\sup\{\tau(a^*a(1-b_k))^{1/2}:\tau\in F_1\setminus O_2\}\\
&=&\sup\{\tau((1-b_k)^{1/2}a^*a(1-b_k)^{1/2})^{1/2}:\tau\in F_1\setminus O_2\}\\
&\le &\|a^*a\|\sup\{\tau((1-b_k)^2)^{1/2}: \tau\in F_1\setminus O_2\}\\
&\le & \sup\{\tau((1-b_k))^{1/2}: \tau\in F_1\setminus O_2\}<\ep/\sqrt{k}.
\eneq 
We also have that
\beq
\|a_k\|_{_{2, G}}<\af+\ep/4.
\eneq
By Proposition \ref{Punion}, 
\beq
\|a(1-b_k)\|_{_{2, T(A)}}<\af+\ep/4.
\eneq
The lemma then follows by choosing a large $k.$

\end{proof}

From the proof of Theorem \ref{PQKK} and that of (2) of Proposition \ref{Pquotientnorm},  as well  as
(1) of  Proposition \ref{Pquotientnorm} and  Lemma \ref{Ltight-1},
one obtains the 
next proposition.  We omit the proof as we do not use them in later sections. However, we believe  that 
property (TE) is closely related to the tightness and the separation property such as (T3).  It is also possible
that property (TE) always holds for any separable algebraically simple \CA\,  with compact tracial state space.

\begin{prop}\label{PtightTT}
Let $A$ be a separable algebraically simple  \CA\, with non-empty compact $T(A).$
Then $T(A)$ has property (TE),
%
if one of the following holds.

(1)  $\partial_e(T(A))$ is tight; 

(2) $\partial_e(T(A))$  has property (T3).
%
\end{prop}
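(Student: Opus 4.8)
The plan is the following. By (1) of Proposition~\ref{Pquotientnorm} (applied with $F_1=F$, $F_2=T(A)$) the inequality $\|\pi_F(x)\|_{_{2,F_\varpi}}\le \inf\{\|x+j\|_{_{2,T(A)_\varpi}}: j\in I_{_{F,\varpi}}/I_{_{T(A),\varpi}}\}$ holds for \emph{every} compact $F\subset\partial_e(T(A))$, so in either case it suffices, given a compact $F$, a representative $x=\Pi_\varpi(\{a_k\})$ with $\|a_k\|\le\|x\|$ (WLOG $\|x\|\le1$), and $\ep>0$, to produce $j\in I_{_{F,\varpi}}/I_{_{T(A),\varpi}}$ with $\|x+j\|_{_{2,T(A)_\varpi}}\le\|\pi_F(x)\|_{_{2,F_\varpi}}+\ep$. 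In both cases I would first establish property~(TE) for compact sets of a convenient shape and then pass to an arbitrary compact $F_1$ by the mechanism of Lemma~\ref{PQKK}: apply the special case to $F:=F_1\cup K$ for a suitable compact $K\subset\partial_e(T(A))$, note that (2) of Proposition~\ref{Pquotientnorm} realizes the $F_1$-quotient norm as the quotient of the $F$-quotient norm (both $F_1\subset F$ being compact in $\partial_e(T(A))$), and compose the two relations.

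For case~(2), take $K:=K_c$, the distinguished compact set of property~(T3) (Definition~\ref{Dtight}), so one may assume $F\supset K_c$. I would then transcribe the proof of (2) of Proposition~\ref{Pquotientnorm} with the compact set ``$F_2$'' there replaced by all of $T(A)$. The only point that used compactness of $F_2$ is the appeal to \cite[Theorem II.3.12]{Alf} to extend a continuous function to an affine function on the simplex; property~(T3) supplies exactly what is needed, namely, for each $k$, an open $G_k$ with $\overline{{\rm conv}(F)}\subset G_k$, chosen inside a neighbourhood of $\overline{{\rm conv}(F)}$ on which the continuous affine function $\widehat{a_k^*a_k}$ is within $1/k^2$ of its supremum $\|a_k\|_{_{2,F}}^2$, together with $f_k\in\Aff(T(A))$, $0\le f_k\le1$, $f_k|_F=0$, $f_k|_{\partial_e(T(A))\setminus G_k}=1$. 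Realizing the $f_k$ approximately by positive contractions $b_k\in A_+$ via Lemma~\ref{P93} (as in the proof of (2) of Proposition~\ref{Pquotientnorm}), forming the diagonal cutdown $c=(1-b)^{1/2}$ with $b=\Pi_\varpi(\{b_k\})\in I_{_{F,\varpi}}/I_{_{T(A),\varpi}}$ (since $f_k|_F=0$), and estimating $s(a_k^*a_k(1-b_k))$ for $s\in\partial_e(T(A))$ in the two cases $s\in G_k$ (bounded by $\|a_k\|_{_{2,F}}^2+1/k^2$) and $s\notin G_k$ (where $f_k(s)=1$ forces $s(1-b_k)$ to be small), one then passes to an arbitrary $\tau\in T(A)$ through the barycenter formula $\tau(a_k^*a_k(1-b_k))=\int_{\partial_e(T(A))}s(a_k^*a_k(1-b_k))\,d\mu_\tau(s)$. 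This yields $\|xc\|_{_{2,T(A)_\varpi}}\le\|\pi_F(x)\|_{_{2,F_\varpi}}$, i.e.\ property~(TE) for $F$; Lemma~\ref{PQKK} and (2) of Proposition~\ref{Pquotientnorm} then deliver it for an arbitrary compact $F_1$.

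For case~(1), given compact $F$ and $\ep>0$, let $K_\ep\subset\partial_e(T(A))$ be the compact set with $\mu_t(K_\ep)>1-\ep^2/16$ for all $t\in\overline{\partial_e(T(A))}\setminus\partial_e(T(A))$ provided by tightness, and set $F_\ep:=F\cup K_\ep$. First apply (2) of Proposition~\ref{Pquotientnorm} to the compact pair $F\subset F_\ep$ to replace $x$ by $x+j'$ ($j'\in I_{_{F,\varpi}}/I_{_{T(A),\varpi}}$), the new element having a representative $\{a_kc_k\}$ with $0\le c_k\le1$ and $\lim_{k\to\varpi}\|a_kc_k\|_{_{2,F_\ep}}\le\|\pi_F(x)\|_{_{2,F_\varpi}}$ (this costs nothing, the quotient being isometric). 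Then apply Lemma~\ref{Ltight-1} with $S=F_\ep$ and this $\ep$ --- crucially, its proof uses precisely the tightness set $K_\ep$, so the compact set it returns may be taken to be $F_\ep$ itself --- to obtain, for each contraction $a_kc_k$, an element $d_k\in A_+^{\bf 1}$ with $\tau(d_k)<\ep$ on $F_\ep$ (hence $\{d_k\}\in I_{_{F_\ep,\varpi}}\subset I_{_{F,\varpi}}$) and $\|a_kc_k(1-d_k)\|_{_{2,T(A)}}<\|a_kc_k\|_{_{2,F_\ep}}+\ep$. Then $(x+j')(1-\Pi_\varpi(\{d_k\}))=x+j''$ with $j''\in I_{_{F,\varpi}}/I_{_{T(A),\varpi}}$ and $\|x+j''\|_{_{2,T(A)_\varpi}}\le\|\pi_F(x)\|_{_{2,F_\varpi}}+\ep$; since $\ep>0$ is arbitrary this gives property~(TE) for $F$, and again Lemma~\ref{PQKK} extends it to all compact subsets of $\partial_e(T(A))$.

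I expect the main obstacle to be the bookkeeping with the nested ideals $I_{_{T(A),\varpi}}\subset I_{_{F_\ep,\varpi}}\subset I_{_{F,\varpi}}$ and, relatedly, verifying that the compact ``error set'' returned by Lemma~\ref{Ltight-1} (resp.\ the separating affine function supplied by property~(T3)) can be arranged to be compatible with the already-fixed $F$, so that the cut-down step and the separation step genuinely compose into a single estimate; in case~(2) one must additionally be careful that the final estimate is carried out over all of $T(A)$ rather than merely over $\partial_e(T(A))$, which is precisely where the barycenter formula is used. Apart from these points, everything is a routine transcription of the proofs of Proposition~\ref{Pquotientnorm}(2), Theorem~\ref{Ptight} and Lemma~\ref{PQKK}.
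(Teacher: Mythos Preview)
Your outline matches what the paper intends (the paper omits the proof, pointing to exactly the ingredients you invoke), and case~(2) is carried out correctly. There is, however, a slip in case~(1): from a single application of Lemma~\ref{Ltight-1} with the \emph{fixed} tolerance~$\ep$ you only obtain $\tau(d_k)<\ep$ on $F_\ep$, uniformly in $k$, and a uniform bound does \emph{not} yield $\{d_k\}\in I_{_{F_\ep,\varpi}}$---membership in that ideal requires $\lim_{k\to\varpi}\sup_{\tau\in F_\ep}\tau(d_k^2)=0$. As written, your element $j''$ need not lie in $I_{_{F,\varpi}}/I_{_{T(A),\varpi}}$ at all.

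The repair is cheap but uses the \emph{proof} of Lemma~\ref{Ltight-1} rather than its statement: the output element there is one of the $b_j$'s (for an internal index $j$), with $\sup_{\tau\in F}\tau(b_j)<\ep^2/j$, while the cut-down bound $\|a(1-b_j)\|_{_{2,T(A)}}<\|a\|_{_{2,F}}+\ep$ holds for all large $j$ once $\ep$ and the tightness set are fixed. So in your second step choose, for each $k$, the element $d_k$ with $\sup_{\tau\in F_\ep}\tau(d_k)<1/k$ while retaining $\|a_kc_k(1-d_k)\|_{_{2,T(A)}}<\|a_kc_k\|_{_{2,F_\ep}}+\ep$; then $\{d_k\}\in I_{_{F_\ep,\varpi}}\subset I_{_{F,\varpi}}$ genuinely, and the remainder of the argument goes through. (Once this is done you have already treated an arbitrary compact $F$, so the closing appeal to Lemma~\ref{PQKK} in case~(1) is redundant.)
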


\begin{exm}\label{Exsimplex2}

(1)  (\cite[p.868]{Brs} and \cite[Example 3.3]{CETW}). Let $Y$ be any $\sigma$-compact  and locally compact metrizable 
space with countable dimension.
Let us present a non-Bauer  $Y+2$-simplex. 

Write $Y=\cup_{n=1}^\infty K_n,$ where $K_n\subset K_{n+1}$ and each $K_n$ is a compact 
subspace with countable dimension.   Let $Z$ be the 
one point compactification of $Y$ and write $Z=Y\cup \{\xi_\infty\}.$
Define  
\beq
E=\{f\in C(Z, M_2):  f(\xi_\infty)=\begin{pmatrix} a & 0\\
                0& b\end{pmatrix},\, a,b\in \C\}.
\eneq
Then $E$ is a unital \CA,  $T(E)$ is a Choquet simplex, and $\partial_e(T(E))=
Y\sqcup \{\tau^+, \tau^-\}$ which is not compact (in weak*-topology),
where $\tau^+(f)=a$ and $\tau^-(f)=b$ if 
$f(\xi_\infty)=\diag(a,b)$ as above.
Put $X_1=\{\tau^+, \tau^-\}$ and $X_n=X_1\cup K_n,$ $n\in \N.$  Note that $X_n$ is compact for each $n$ and 
has countable dimension. Then $\partial_e(T(E))=\cup_{n=1}^\infty X_n.$ It is $\sigma$-compact and countable-dimensional (see \cite[Example 2.13]{Linoz}). 
Put $\Delta=T(E).$ Then $bd(\overline{\eD})\subset {\rm conv}(X_1).$ 
So $\eD$  has property (T1) and is therefore  tight.
One can also similarly present a $Y+n$-simplex.
 
(3) Let $X$ be a (countable-dimensional) compact metrizable space. 
Define $B=C(X)\otimes E.$ Then $T(B)=T(C(X))\times T(B).$ One sees that 
$\partial_e(T(B))=X\times Y$ and $bd(\overline{\partial_e(T(B))})=X\times \{1/2(\tau^++\tau^-)\}$ which is homeomorphic to $X.$
So $bd(\overline{\partial_e(T(B))})$ has infinitely many points. 
 The extremal boundary $\partial_e(T(B))$ is a $\sigma$-compact 
metrizable (countable-dimensional) space (see  \cite[Theorem 5.2.20]{En}).

Put $K_c=\{X\times \{\tau^+, \tau^-\}\}.$ Then $K_c$ is compact. Moreover, $bd(\overline{\partial_e(T(B))})\subset \overline{{\rm conv}}(K_c).$ So $T(B)$ has property (T1), whence (T2) and (T3). 
In fact, for any unital 1-step RSH-algebra $B,$ 
$\partial_e(T(B))$ has  property (T1) and hence tight
(see Example \ref{ERSH} below). 
\end{exm}

\begin{df}\label{Dbounded}
Let $A$ be a \CA.
Denote by $\widehat{A}$ the primitive ideal space of $A,$ and, for each $n\in \N,$
denote by $_n \widehat{A}$ the subset of $\widehat{A}$ consisting of those kernels of irreducible representations
whose ranks are no more than $n.$  Put $\widehat{A}_n={_n\widehat{A}}\setminus {_{n-1}\widehat{A}}.$ 
We say that a \CA\, $A$ has bounded rank of irreducible representations if  $\widehat{A}={_n \widehat{A}}$ for some 
$n\in \N.$  An $FD$-algebra is a \CA\, whose irreducible representations are  
finite dimensional. In other words, $\widehat{A}=\cup_{n=1}^\infty \widehat{A}_n.$  

Recall that ${_n\widehat{A}}$ is always closed and $\widehat{A}_n$ is locally compact  and Hausdorff (\cite[Proposition 4.4.4 and 4.4.10]{Pbook}).  An  $FD$-algebra $A$ has countable-dimensional spectrum, if each $\widehat{A}_n$ 
has countable dimension. Let $A$ be an FD-algebra. Suppose that $\tau\in \partial_e(T(A)).$ 
 By \cite[Lemma 2.16]{Lincrell},  
 $\tau={\rm tr}_j \circ \pi_\tau$ for some $j\in \N,$ where $\pi_\tau\in \widehat{A}_j$ and 
 ${\rm tr}_j$ is the tracial state of $M_j.$ 
 So one  may write that  $\partial_e(T(A))=\cup_{n=1}^\infty \widehat{A}_n.$ 
It then follows that, for separable FD-algebras $A,$ $\partial_e(T(A))$ is $\sigma$-compact. 
\end{df}
In a preliminary report of this research,  we include the following two statements which are not used towards the proof of Theorem \ref{TM-2}.
With a standard induction, it is straightforward to prove the following  lemma (using a 4-Lemma), 
and the proof of the next proposition is a consequence of the next lemma (but also uses a Stone-Weierstrass theorem of Kaplansky  and Sakai (\cite{Sk})).
\begin{lem}\label{LT3}
Let $D$ be a unital separable \CA\, with bounded rank of irreducible representations.
Then $\partial_e(T(D))$ is $\sigma$-compact and  has property (T3).
\end{lem}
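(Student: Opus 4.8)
The goal is to show that for a unital separable \CA\ $D$ with bounded rank of irreducible representations --- say $\widehat D = {_n\widehat D}$ --- the extremal boundary $\partial_e(T(D))$ is $\sigma$-compact and has property (T3). The $\sigma$-compactness is essentially already recorded in Definition \ref{Dbounded}: since $D$ is an FD-algebra with bounded rank, $\partial_e(T(D)) = \bigcup_{j=1}^n \widehat D_j$, and each $\widehat D_j$ is locally compact Hausdorff second countable, hence $\sigma$-compact; a finite union of $\sigma$-compact sets is $\sigma$-compact. So the real content is property (T3).

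For property (T3), recall (Definition \ref{Dtight}(4)) that it suffices to exhibit a compact subset $K_c\subset \partial_e(T(D))$ such that for every compact $F\supset K_c$ and every open $O\supset\overline{{\rm conv}(F)}$, there is an open $G$ with $\overline{{\rm conv}(F)}\subset G\subset O$ and an $f\in\Aff(\DT)$ with $0\le f\le 1$, $f|_F=0$, $f|_{\partial_e(T(D))\setminus G}=1$. By the remark at the end of Definition \ref{Dtight}, it is in fact enough to prove the \emph{unrestricted} separation property: for every compact $K\subset\partial_e(T(D))$ and every relatively open $V\supset K$ in $\partial_e(T(D))$, there is $f\in\Aff(\DT)$ with $0\le f\le 1$, $f|_K=0$, $f|_{\partial_e(T(D))\setminus V}=1$ (one then takes $K_c=\emptyset$, or any point). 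The plan is to prove this separation property by induction on the rank bound $n$, stratifying by the closed subsets ${_j\widehat D}$.

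The inductive step is where the ``4-Lemma'' mentioned in the statement enters. Consider the ideal $I\subset D$ corresponding to the open stratum $\widehat D_n = {_n\widehat D}\setminus{_{n-1}\widehat D}$, so that $D/I$ has rank bounded by $n-1$ and $\Aff(T(D))$ sits in an exact sequence relating $\Aff(T(I))$, $\Aff(T(D))$ and $\Aff(T(D/I))$ --- this is the 4-term (Bott-type) sequence of affine function spaces, where the crucial surjectivity onto $\Aff(T(D/I))$ plus an approximate splitting over $\Aff(T(I))$ lets one glue. Concretely: given $K$ and $V$ in $\partial_e(T(D)) = \partial_e(T(D/I))\sqcup\widehat D_n$, split $K = K'\sqcup K''$ with $K'\subset\partial_e(T(D/I))$ (closed there) and $K''\subset\widehat D_n$; by the inductive hypothesis get $g\in\Aff(T(D/I))$ separating $K'$ from the complement of $V\cap\partial_e(T(D/I))$, lift it through the 4-sequence to $\tilde g\in\Aff(T(D))$ (up to a controlled error), then use that $\widehat D_n$ is locally compact Hausdorff --- so $C_0(\widehat D_n)$ furnishes an honest Urysohn function, which extends to $\Aff(T(D))$ by \cite[Theorem II.3.12]{Alf} since it is continuous on the compact set $K''\cup(\text{a compact piece of }\partial_e(T(D))\setminus V)$ --- and combine the two pieces (a partition-of-unity style glue, controlling values via the strong-separation result \cite[Proposition 2.1, Theorem 2.6]{And} of Andersen), iterating the error estimate to convergence.

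I expect the main obstacle to be the bookkeeping in the gluing/induction: making sure the lifted affine function $\tilde g$ and the Urysohn-type function on $\widehat D_n$ can be combined into a single affine function on all of $T(D)$ that is \emph{exactly} $0$ on $K$ and \emph{exactly} $1$ off $V$, rather than merely approximately so. This requires either an iteration (absorbing geometrically decaying errors, using completeness of $\Aff(T(D))$) or a careful use of the face structure --- the strata ${_j\widehat D}$ correspond to closed faces of the simplex $T(D)$, and one wants the separating function to respect that face structure, which is exactly the kind of statement \cite[II.3.10, II.3.12]{Alf} and \cite{And} are designed to supply. The Stone--Weierstrass input of Kaplansky--Sakai (\cite{Sk}) is used to identify $\Aff(T(D))|_{\partial_e(T(D))}$ with the right function algebra so that these affine-space results apply; checking the hypotheses of that Stone--Weierstrass theorem (the algebra separates points of the extremal boundary, etc.) is a secondary technical point but routine for FD-algebras.
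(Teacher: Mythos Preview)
Your induction on the rank bound and your use of the stratification $\widehat D = \bigcup_j \widehat D_j$ are exactly what the paper has in mind, but you are working at the wrong level. The paper's ``4-Lemma'' is the classical Four Lemma applied to a commutative diagram of \emph{ideals in $D$}, not to an exact sequence of affine-function spaces. Concretely: for $F\subset G\subset\partial_e(T(D))$ with $F$ compact and $G$ relatively open, set
\[
J_F=\{a\in D:\tau(a^*a)=0\ \forall\,\tau\in F\},\qquad J_{G^c}=\{a\in D:\tau(a^*a)=0\ \forall\,\tau\in\partial_e(T(D))\setminus G\},
\]
and prove the claim $\Psi_{G^c}(J_F)=D/J_{G^c}$ (equivalently $J_F+J_{G^c}=D$) by induction on the number $l$ of distinct ranks. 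With $I$ the ideal corresponding to the top stratum $\widehat D_{r_l}$, the inductive hypothesis gives surjectivity of $J_F/I\to (D/I)/\Psi_{G^c_q}$, while on the homogeneous ideal $I$ the map $I\cap J_F\to I/J_{G^c}$ is surjective because $\widehat D_{r_l}$ is Hausdorff and the relevant closed sets are disjoint; the Four Lemma then gives surjectivity of $J_F\to D/J_{G^c}$. Now pick $e\in (J_F)_+^{\bf 1}$ with $\Psi_{G^c}(e)=1$: the affine function $\hat e(\tau)=\tau(e)$ satisfies $\hat e|_F=0$ and $\hat e|_{\partial_e(T(D))\setminus G}=1$ \emph{exactly}, with no approximation or gluing needed.

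Your affine-function route could in principle be pushed through, but the ``main obstacle'' you identify --- getting exact values $0$ and $1$ after gluing two affine pieces --- is a genuine difficulty that the $C^*$-algebraic approach sidesteps entirely: the element $e$ automatically has the right trace values because it lies in one ideal and maps to the unit modulo the other. Two further corrections: (i) the Kaplansky--Sakai Stone--Weierstrass theorem is \emph{not} used in this lemma --- the paper reserves it for the next proposition (general FD-algebras without a rank bound), where one needs to show $J_F+J_K=C$ without having only finitely many strata to induct over; (ii) your ``4-term exact sequence of $\Aff$ spaces'' is not what ``4-Lemma'' refers to here.
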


\begin{prop}\label{PT3}
Let $C$ be a separable unital FD-algebra.
Then $\partial_e(T(C))$ is $\sigma$-compact and has property (T3).
\end{prop}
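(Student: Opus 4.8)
The plan is to reduce the statement about an arbitrary separable unital FD-algebra $C$ to the case of bounded rank treated in Lemma \ref{LT3}, together with a Stone--Weierstrass argument. First I would establish the $\sigma$-compactness of $\partial_e(T(C))$: as recorded in Definition \ref{Dbounded}, by \cite[Lemma 2.16]{Lincrell} every extremal trace of $C$ has the form ${\rm tr}_j\circ\pi$ with $\pi\in\widehat{C}_j$, so $\partial_e(T(C))=\bigsqcup_{n=1}^\infty\widehat{C}_n$, and each $\widehat{C}_n$ is locally compact Hausdorff by \cite[Proposition 4.4.10]{Pbook}; hence $\partial_e(T(C))$ is a countable union of locally compact, second countable (since $C$ is separable) Hausdorff spaces, thus $\sigma$-compact.

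For property (T3), the idea is to find a compact ``core'' $K_c$ and then, given a compact $F\supset K_c$ and an open $O\supset\overline{{\rm conv}(F)}$, produce $f\in\Aff(T(C))$ with $0\le f\le 1$, $f|_F=0$, $f|_{\eD\setminus G}=1$ for a suitable open $G$ with $F\subset\overline{{\rm conv}(F)}\subset G\subset O$. The natural candidate for $K_c$ is $\{\tau\in\partial_e(T(C)):\pi_\tau\in{_1\widehat{C}}\}$, i.e., the traces factoring through one-dimensional irreducible representations; but in general this need not be compact for an unbounded FD-algebra, so more care is needed. The cleaner route is to exhaust $C$ by ideals: let $I_n=\{a\in C:\pi(a)=0\ \forall\,\pi\in{_n\widehat{C}}\}$, so each quotient $C/I_n$ has bounded rank, $I_n$ is decreasing with $\bigcap_n I_n=0$, and $\partial_e(T(C/I_n))$ is the compact-by-compact piece $\bigsqcup_{j\le n}\widehat{C}_j$ of $\partial_e(T(C))$. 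Apply Lemma \ref{LT3} to $C/I_n$ to get that $\partial_e(T(C/I_n))$ has property (T3) with some compact core $K_c^{(n)}$; one then wants to glue these along the inclusions $\partial_e(T(C/I_n))\subset\partial_e(T(C/I_{n+1}))$. Here the Stone--Weierstrass theorem for $C^*$-algebras of Kaplansky--Sakai (\cite{Sk}) enters: it guarantees that the affine functions on $T(C)$ arising from self-adjoint elements of $C$ separate $\partial_e(T(C))$ in the strong sense needed, so that a function separating $F$ from $\eD\setminus G$ which is defined compatibly on each $\partial_e(T(C/I_n))$ actually comes from an element $\bar b\in C$, hence gives $f=\hat b\in\Aff(T(C))$ with the required vanishing/one-valued behavior. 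Concretely, given $F$ and $O$, one fixes a metric $d$ on $T(C)$, sets $G=\{t:d(t,\overline{{\rm conv}(F)})<\ep\}\subset O$ for small $\ep$, constructs on each closed piece $F\cap\widehat{C}_j$ versus $(\eD\setminus G)\cap\widehat{C}_j$ a separating continuous function (these pieces are disjoint closed subsets of the locally compact Hausdorff $\widehat{C}_j$), patches these into a single bounded function on $\widehat{C}$ that is continuous in the hull-kernel sense on each $\widehat{C}_n$, and lifts to $C$ by Sakai's Stone--Weierstrass theorem.

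The main obstacle I anticipate is precisely this gluing/lifting step: the spectrum $\widehat{C}$ is not Hausdorff, only locally Hausdorff on each stratum $\widehat{C}_n$, and one must be sure that a function defined stratum-by-stratum on $\bigsqcup_n\widehat{C}_n$ extends to a genuine self-adjoint element of $C$ (equivalently, to a continuous affine function on $T(C)$) with values pinned to $0$ on $F$ and $1$ off $G$. Controlling the interaction of the closures of $F$ and of $\eD\setminus G$ across different strata — where a limit of traces of rank $j$ can be a trace of rank $<j$ (the ``boundary'' phenomenon) — is the delicate point, and this is exactly where the hypothesis $F\supset K_c$ is used to absorb the problematic low-rank limit points into $F$. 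Once the separating element $\bar b\in C_{s.a.}$ is produced with $\tau(\bar b)=0$ on $F$ and $\tau(\bar b)=1$ on $\eD\setminus G$, one replaces it by $f=\max\{0,\min\{1,\hat{\bar b}\}\}$, which by \cite[Theorem II.3.12]{Alf}-type arguments (or directly, since functional calculus applied to $\bar b$ yields another self-adjoint element of $C$) lies in $\Aff(T(C))$ with $0\le f\le 1$, completing the verification of (T3). I expect the bulk of the write-up to consist of this stratum-by-stratum construction, with the rest being the routine bookkeeping already sketched in the (commented-out) proof of Lemma \ref{LT3}.
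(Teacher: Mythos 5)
Your $\sigma$-compactness argument is fine and matches the paper, and your instinct to reduce to the bounded-rank case of Lemma \ref{LT3} and to invoke the Kaplansky--Sakai Stone--Weierstrass theorem is the right general direction. However, the crucial step of your proof has a genuine gap: you use Stone--Weierstrass as a \emph{lifting} device, claiming that a separating function built stratum-by-stratum on $\bigsqcup_n \widehat{C}_n$ ``actually comes from an element $\bar b\in C$.'' The Kaplansky--Sakai theorem has no such content: it says that a $C^*$-subalgebra which separates the pure states (and is not annihilated by any of them) is all of $C$; it does not extend or lift functions on the (non-Hausdorff) spectrum to elements of $C$. Since the gluing across strata is exactly the point you yourself flag as delicate, your argument never actually produces the element $\bar b$. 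The paper's proof closes this gap differently: given a compact $K\subset\eD$ and a relatively open $G\supset K$ with $F=\eD\setminus G$, it forms the ideals $I_K$ and $I_F$ of elements whose traces vanish on $K$, resp.\ $F$, and proves $I_F+I_K=C$ by verifying the Stone--Weierstrass hypothesis: any two pure states of $C$ factor through a bounded-rank quotient $C/J$ (their irreducible representations lie in some $\widehat{C}_n$), and there the surjectivity claim from the proof of Lemma \ref{LT3} shows $(I_F+I_K)/J=C/J$, so $I_F+I_K$ separates pure states and Kaplansky--Sakai gives $I_F+I_K=C$. Unitality of $C$ then yields $(I_K+I_F)/I_F=C/I_F$ unital, so one picks a positive contraction $e\in I_K$ with $\pi_{I_F}(e)=1$; then $\tau(e)=0$ on $K$ and $\tau(e)=1$ on $F$, and $f=\hat e\in\Aff(T(C))$ is the desired separating function. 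Note this works for \emph{every} compact $K$, so no special core $K_c$ or absorption of low-rank limit points is needed, contrary to what your sketch assumes.

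A second, smaller but still fatal, flaw is your final normalization step: $f=\max\{0,\min\{1,\hat{\bar b}\}\}$ is not an affine function on $T(C)$, and applying functional calculus to $\bar b$ does not help since $\tau(g(\bar b))\neq g(\tau(\bar b))$ in general. The paper sidesteps this entirely because the element $e$ constructed above is already a positive contraction taking the exact values $0$ on $K$ and $1$ on $F$ under all relevant traces, so $\hat e$ lies in $\Aff(T(C))$ with $0\le \hat e\le 1$ with no truncation required. To repair your write-up you would need to replace the ``lift a function from the spectrum'' step by an argument of this ideal-theoretic type (or some other mechanism that genuinely produces an element of $C$), and to arrange the construction so that the resulting affine function needs no post-hoc cutoff.
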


\begin{exm}\label{ERSH}
Let $A$ be a unital 
 recursive sub-homogeneous   \CA\,(RSH-algebra). Then $A$  has bounded rank of irreducible representations.
 Therefore, by Proposition \ref{PT3}, $\eD$ has property (T3). 
%
%

  A unital 0-step RSH-algebra has the form $C(X_0)\otimes M_{r(0)},$ where $X_0$ is a compact metric space 
  (of countable dimension) and $r(0)\in \N.$ 

(i) W. Zhang showed that, if $C$ is a 1-step RSH-algebra, then 
$\partial_e(T(C))$ is tight (see the proof of \cite[Proposition  6 9]{Zw}). In fact, he showed 
that $bd(\overline{\partial_e(T(C))})\subset \overline{{\rm conv}(X_0)},$ where $X_0$ is the compact 
subset of $\partial_e(T(C))$ from the 0-step of homogeneous \CA. In other words, 
$\partial_e(T(C))$ always has property (T1) and hence has property (T2).

(ii) However, in general, a 2-step RSH-algebra $C$ 
may have $\partial_e(T(C))$ 
which is locally compact and has property (T3)  but does not have property (T2).

Let $E$ be as  of Example \ref{Exsimplex}.  Choose a compact metric  space $\Omega$ 
(with countable dimension) such that 
$Z=Y\cap \{\xi_\infty\}\subset \Omega$ and  each point of $Z$ is a cluster point of $\Omega\setminus Z.$ 
Let $C^{(1)}=E^{(1)}\oplus E^{(2)}$ where $E^{(i)}=E,$ $i=1,2.$ Define $\psi: C(Z, M_2)\oplus C(Z, M_2)\to C(Z, M_4)$
by $\psi((a,b))=\diag(a, b)$ for any pair $a, b\in C(Z, M_2).$
Define $\phi_1: C^{(1)}\to C(Z, M_4)$ by $\phi_1=\psi|_{E^{(1)}\oplus E^{(2)}}.$
Note that  we may write 
\beq
\partial_e(T(C^{(1)}))=(Y^{(1)}\sqcup \{\tau_1^+, \tau^-_1\})\sqcup (Y^{(2)}\sqcup \{\tau_2^+, \tau^-_2\}),
\eneq
where we identify $\partial_e(T(E^{(i)}))$ with $\partial_e(T(E))$ ($i=1,2$), 
and with an obvious meaning of the notation. 
Define 
\beq
C^{(2)}=\{(c, f)\in C^{(1)}\oplus C(\Omega, M_4): \phi_1(c)=R(f)\},
\eneq
where $R: C(\Omega, M_4)\to C(Z, M_4)$ is the quotient map defined by $R(f)=f|_Z$
for all $f\in C(\Omega, M_4).$
For each $z\in Z,$ define $\tau^{(z)}(d)={\rm tr}_4((\phi_1(\Psi_2(d))(z))$
for all $d\in C^{(2)},$  where ${\rm tr}_4$ is the normalized trace 
on $M_4.$ 
By identifying $\phi_1(C^{(1)})$ with $E^{(1)}\oplus E^{(2)},$ we may 
write that
\beq\label{exm-e-0}
\tau^{(z)}=(1/2)({\rm tr}_2(P_1(\phi_1(\Psi_2(d))(z))))+(1/2)({\rm tr}_2(P_2(\phi_1(\Psi_2(d))(z))))
\eneq
for all $d\in C^{(2)},$
where $P_i: C^{(1)}\to E_i$ is the quotient map ($i=1,2$) and 
${\rm tr}_2$ is the tracial state on $M_2.$  So $\tau^{(z)}\not\in \partial_e(T(C^{(2)})).$
It is easy to verify that  $\partial_e(T(C^{(2)}))$ is locally compact. 
But since each $z\in Z$ is a cluster point of $\Omega\setminus Z,$ one checks easily 
that $\tau^{(z)}\in \overline{\partial_e(T(C^{(2)}))}.$ 
%
Note that, if $z=y\in Y,$   then, by \eqref{exm-e-0},  we may write that (as $y^{(1)}=y\in Y^{(1)}$ and 
$y^{(2)}=y\in Y^{(2)},$ abusing notation)
\beq\label{exm-e-1}
\tau^{(z)}=(1/2)\tau_{y^{(1)}}+(1/2)\tau_{y^{(2)}},\,\,  {\rm where}\,\, y^{(i)}\in Y^{(i)},\,\, i=1,2.
\eneq
To see that $\partial_e(C^{(2)})$ does not have property (T1),
fix $\ep>0.$ Let $K\subset \partial_e(T(C^{(2)}))$ be any compact subset.
We will show that one can always find $\tau\in  \overline{\partial_e(T(C^{(2)}))}\setminus \partial_e(T(C^{(2)}))$
such that  $\mu_\tau(K)<1-\ep$ (in fact $\mu_\tau(K)=0$). 

As before, we may write $\partial_e(C^{(2)})=\partial_e(T(C^{(1)}))\sqcup (\Omega\setminus Z),$ 
where $\partial_e(T(C^{(1)}))$ is relatively closed. 
We then write $K=K_1\cup K_2,$ where $K_1\subset \partial_e(T(C^{(1)}))$ is compact
and 
$K_2\subset \Omega\setminus Z.$ 
 Moreover $K_2$ 
is also compact (as none of $\tau^{(z)}$ are extremal points).
There is a relatively  open subset $O\supset K$  (of $\partial_e(T(C^{(2)}))$) such that ${\bar O}$ is compact.
Since $Y\cup\{\tau^+, \tau^-\}$ is not compact, 
there is $y\in Y$ such that both $y^{(1)}, y^{(2)}\in 
\partial_e(T(C^{(2)})) \setminus {\bar O}$ (abusing notation as above).  As $\partial_e(T(C^{(2)}))$ has property (T3) (see Proposition \ref{PT3}), 
choose $f\in \Aff(T(A))$ such that $f|_K=0$ and $f|_{\partial_e(T(C^{(2)}))\setminus O}=1.$
By \eqref{exm-e-1}, $\tau^{(y)}\in {\rm conv}(\partial_e(T(C^{(2)}))\setminus O).$ 

Now let $g=1-f\in \Aff(T(C^{(2)})).$ Then $g(\tau^{(y)})=0$ and $g|_K=1.$ Thus
\beq
\mu_{\tau^{(y)}}(K)=\int_K d\mu_{\tau^{(y)}}\le  \int_{\partial_e(T(C^{(2)}))} g d\mu_{\tau^{(y)}}=g(\tau^{(y)})=0.
\eneq

\end{exm}

\begin{rem}\label{Rsimplexex}
 \CA s $C,$  $E,$  $D,$   $C^{(n)},$ and $C^{(2)}$ are not simple. 
From \cite{Btrace}, any metrizable Choquet  simplex  $S$
can be realized as a tracial state space of a unital simple AF-algebra.
In other words, simplexes in  examples of  \ref{Exsimplex} can all be realized as 
the tracial state space of some 
separable simple \CA s.
%
In fact 
$S$  can also be realized as a tracial state space of a unital (or stably projectionless) 
separable  simple \CA\, $A$ which has  arbitrary  $K_1(A)$-group (and any compatible $K_0(A)$)
 (see, for example, \cite{GLNI}, \cite{eglnkk0} and \cite{GLIII}).  

%
\end{rem}

\begin{df}\label{Dscdim}

 Let $A$ be a separable simple \CA\, with $\wtd T(A)\setminus \{0\}\not=\emptyset.$


(1) We say that $\wtd T(A)$ has a {\em $\sigma$-compact countable-dimensional extremal boundary,} 
if $\wtd T(A)$ has a basis $S$ such that $\partial_e(S)$ is $\sigma$-compact and countable-dimensional.




(2) We say that $\wtd T(A)$ has countable extremal boundary, 
if $\wtd T(A)$ has a basis $S$  such that $\partial_e(S)$ has  countably many points.

It follows from part (1) of \cite[Proposition 2.17]{Linoz} 
that the above definitions (1) and (2)
 do not depend on the choice of the basis $S.$
\end{df}

\section{Tracial approximate commutativity}

The main purpose of this section is to present Proposition \ref{Ldvi-3} which plays a complementary role 
to the  central surjectivity of Sato.  One may notice that both the $C^*$-norm and a trace 2-norm are used 
for $l^\infty(A)/I_{_{F, \varpi}}$ in this section (and later sections).
%
%

%
%

\begin{lem}\label{Ldvi-1}
Let $A$ be a   non-elementary separable algebraically simple  \CA\, 
with non-empty $QT(A)$ 
and 
$\{a_n\}\in l^\infty(A)_+^{\bf 1}$  be
such that  $p=\Pi_{_{\N}}(\{a_n\})$ is a 
projection.  Suppose that 
$A$ has strict comparison and  the canonical map $\Gamma: \Cu(A)\to {\rm LAff}_+(\Qw)$ is surjective.
Then, for any $n\in \N,$ there exists a 
unital \hm\, $\psi: M_n\to p(l^\infty(A)/I_{_{\varpi}})p.$
\end{lem}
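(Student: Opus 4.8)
The statement asks, given a sequence $\{a_n\}\in l^\infty(A)_+^{\mathbf 1}$ whose image $p=\Pi_{_{\N}}(\{a_n\})$ is a projection in $l^\infty(A)/I_{_{\N}}$ (hence also $p=\Pi_{_\varpi}(\{a_n\})$ is a projection in $l^\infty(A)/I_{_\varpi}$), to build a unital homomorphism $M_n\to p(l^\infty(A)/I_{_\varpi})p$. The natural strategy is: (i) first arrange, up to modifying $\{a_n\}$ within its class, that each $a_n$ is itself (close to) a projection, or at least that we have good functional-calculus control; (ii) work inside the corner $\overline{a_nAa_n}$, which is again a non-elementary algebraically simple $C^*$-algebra with strict comparison and surjective $\Gamma$ (these properties pass to hereditary subalgebras, using Definitions \ref{DGamma}, \ref{Dcuntz}); (iii) in each such corner produce $n$ mutually orthogonal, mutually Cuntz-equivalent positive ``matrix-unit-like'' elements whose supports together fill up almost all of $a_n$ in trace $2$-norm; (iv) pass to the sequence algebra, where the trace-$2$-norm errors vanish in the limit along $\varpi$, to assemble an honest system of matrix units for $M_n$ inside $p(l^\infty(A)/I_{_\varpi})p$.

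For step (iii), the key input is surjectivity of $\Gamma$ together with strict comparison. In the corner $B_n:=\overline{a_nAa_n}$ (WLOG $a_n$ a strictly positive element of $B_n$), the constant function $\tau\mapsto d_\tau(a_n)/n$ lies in $\mathrm{LAff}_+$ of the relevant trace space, so by surjectivity of $\Gamma$ (and the purely-non-compact refinement noted after Definition \ref{DGamma}) there is $e\in (B_n\otimes\mathcal K)_+$ with $d_\tau(e)=d_\tau(a_n)/n$ for all $\tau$; strict comparison then lets me realize, inside $B_n$ itself (non-elementarity and algebraic simplicity guarantee enough room — compare the argument in Lemma \ref{Lincreasing} and the use of \cite[Proposition 2.4]{Rr2}), $n$ mutually orthogonal elements $b^{(1)}_n,\dots,b^{(n)}_n\in (B_n)_+^{\mathbf 1}$ with $b^{(i)}_n\sim b^{(j)}_n$ and $\sum_i d_\tau(b^{(i)}_n)$ as close to $d_\tau(a_n)$ as I like, uniformly over the (compact, after restricting to a basis) trace simplex. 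Choosing the approximation to improve as $n\to\infty$ along the sequence index, and picking partial isometries $v^{(i)}_n$ in $A\otimes\mathcal K$ (one may stay in $A$ after a standard cut-down, since these elements sit in a single hereditary subalgebra) implementing $b^{(1)}_n\sim b^{(i)}_n$, I get approximate matrix units $e^{(i,j)}_n:=v^{(i)*}_n f_{\ep_n}(b^{(1)}_n) v^{(j)}_n$ satisfying the matrix-unit relations up to norm-small errors and with $\sum_i e^{(i,i)}_n$ equal to $a_n$ up to a trace-$2$-norm-small error.

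Then in step (iv), setting $E_{i,j}:=\Pi_{_\varpi}(\{e^{(i,j)}_n\})$: the norm-small errors force the exact matrix-unit relations in the quotient, and the trace-$2$-norm control forces $\sum_i E_{i,i}=p$ (this is where one uses that $\|\cdot\|_{_{2,(\cdot)_\varpi}}$ is a norm on the quotient, Definition \ref{Dt2norm}, so a sequence that is trace-$2$-norm null along $\varpi$ dies). One has to be slightly careful that $\{e^{(i,j)}_n\}$ is genuinely in $l^\infty(A)$ (bounded) and that $E_{i,i}\le p$ — arranging $b^{(i)}_n\in\overline{a_nAa_n}$ and using that $p$ is a permanent-type projection lifted by $\{a_n\}$ handles this; then $\sum E_{i,i}$ is a projection $\le p$ with the same trace as $p$, hence equals $p$ by faithfulness of the limit traces on the quotient. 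This yields the desired unital $\psi\colon M_n\to p(l^\infty(A)/I_{_\varpi})p$ with $\psi(e_{i,j})=E_{i,j}$.

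\textbf{Main obstacle.} The delicate point is the \emph{uniformity} in step (iii): strict comparison gives Cuntz subequivalences from strict inequalities of dimension functions, but here I need the subequivalences to hold simultaneously for all $n$ in the sequence with errors that are controlled in the trace-$2$-norm over the whole (non-compact a priori) trace cone, so that after passing to the quotient along $\varpi$ the sum of diagonal units is exactly $p$ and not merely $\le p$. Restricting to a compact basis $S$ of $\widetilde{QT}(A)$ (available since we only see finitely many $\tau$-values at a time, and the $a_n$ live in a fixed separable subalgebra whose hereditary subalgebra may be taken with compact trace space, or else by the standard reindexing trick) and combining with Lemma \ref{Lincreasing}-style bookkeeping is the natural route; getting the partial isometries into $A$ itself (rather than $A\otimes\mathcal K$) while keeping boundedness of the lifted sequence is the other technical nuisance, handled by a cutting-down argument as in the proof of Lemma \ref{Lsperation2}.
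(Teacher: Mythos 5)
There is a genuine gap at your key filling step (iii), and it is exactly the point the paper has to work to get around. For an arbitrary lift $\{a_k\}$ of the projection $p$, the function $\tau\mapsto d_\tau(a_k)$ need not be continuous, i.e.\ the tracial oscillation $\omega(a_k)$ need not be small, and nothing in the hypothesis forces it to be: $p$ being a projection only says $\|a_k^2-a_k\|_{_{2,\Qw}}\to0$, which is compatible with $d_\tau(a_k)-\tau(a_k)$ staying bounded away from $0$ on part of $\Qw$ (think of $a_k=q+\delta c$ with $c$ of full support). This breaks your plan in two ways. First, your claim that you can place $n$ mutually orthogonal, mutually equivalent elements in $\Her(a_k)$ with $\sum_i d_\tau(b^{(i)}_k)$ uniformly close to $d_\tau(a_k)$ is not achievable by the method you describe: after realizing $d_\tau(a_k)/n$ via surjectivity of $\Gamma$, the pieces you can actually insert by strict comparison are cut-downs $f_\delta(e)$, and $\tau(f_\delta(e))$ approximates $d_\tau(e)$ uniformly only when $d_\tau(e)$ is continuous (this is the Dini-type point that Lemma \ref{Lincreasing} exploits for \emph{continuous} targets $g_n$; your target $d_\tau(a_k)/n$ is lsc but not continuous in general). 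Second, even granting uniform exhaustion of $d_\tau(a_k)$, that is the wrong quantity: to conclude $\sum_i E_{i,i}=p$ you need the \emph{traces} $\tau(a_k(1-\sum_i e^{(i,i)}_k)a_k)$ to be uniformly small, and $d_\tau(a_k)$ can exceed $\lim_k\tau(a_k)$ by a fixed amount, so near-exhaustion in dimension function does not give the trace-$2$-norm statement. Your ``Main obstacle'' paragraph senses that uniformity is the issue, but restricting to a compact basis or reindexing does not touch it: the trace space can already be compact and the problem persists. Relatedly, your step (i) (``arrange that each $a_n$ is close to a projection'') has no content in a possibly projectionless $A$ unless ``close'' is made precise in a way that controls oscillation.

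The paper's proof resolves precisely this: since $p$ is a projection, one may replace $\{a_k\}$ by a \emph{permanent projection lifting} (\cite[Proposition 2.22]{Linoz}), and that replacement automatically gives $\lim_k\omega(a_k)=0$; then \cite[Lemma 8.4]{FLosc} (which uses strict comparison and surjectivity of $\Gamma$) supplies order zero c.p.c.\ maps $\phi_k\colon M_n\to\Her(a_k)$ with $\|a_k\phi_k(1_n)-a_k\|_{_{2,\Qw}}<\sqrt{\omega(a_k)+1/k^2}$, after which $\psi=\Pi\circ\{\phi_k\}$ satisfies $\psi(1_n)p=p$, hence $\psi(1_n)=p$ and $\psi$ is a unital homomorphism (an order zero map whose image of the unit is a projection is multiplicative). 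So the missing idea in your proposal is the reduction to a lift with vanishing tracial oscillation; without it, the uniform trace-$2$-norm exhaustion of $\Her(a_k)$ by a divisible piece that your step (iv) relies on cannot be established. (A smaller technical remark: Cuntz equivalence does not hand you partial isometries in $A$ without stable rank one, which is not assumed here; the cut-down trick of \cite[Proposition 2.4]{Rr2} you cite does repair this, as in Lemma \ref{Lincreasing}, but it again only controls $\tau(f_\delta(\cdot))$, which is where the oscillation enters.)
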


\begin{proof}
Since $p$ is a projection, by \cite[Proposition 2.22]{Linoz}, we may assume that $\{a_n\}$ is a permanent projection lifting 
of $p.$ It follows from \cite[Proposition 2.22 (iii)]{Linoz}(see also the claim after (e.2.44) in the proof of \cite[Proposition 2.22]{Linoz})
that $\lim_{k\to\infty}\omega(a_k)=0$ (see Definition \ref{DefOS1}).
By \cite[Lemma 8.4]{FLosc}, for each $n\in \N,$ there is, for each $k\in \N,$ 
an order zero \cpc\, $\phi_k: M_n\to \Her(a_k)$ such that
\beq\label{Ldvi-1-e-1}
\|a_k\phi_k(1_n)-a_k\|_{_{2, \Qw}}<\sqrt{\omega(a_k)+1/k^2}.
\eneq
%
%
%
Define $\psi: M_n\to p(l^\infty(A)/I_{_{\Qw, \N}})p$ by $\psi(x)=\Pi_{_{\N}}\circ \{\psi_k(x)\}$
for $x\in M_n.$   By \eqref{Ldvi-1-e-1}, 
\beq
\psi(1_n)p=p.
\eneq
It follows that $\psi(1_n)=p.$ This implies that $\psi$ is a \hm.
\end{proof}

\begin{lem}\label{Ldvi-2}
Let $A$ be a  non-elementary separable algebraically simple   \CA\, 
with non-empty $QT(A)$ 
and $a\in A_+^{\bf 1}.$   Suppose that $A$ has strict comparison and T-tracial oscillation zero.
Then, for any $n\in \N,$ any $\ep>0,$  there exists 
a sequence of order zero \cpc\,  $\psi_k: M_n\to   \Her(a)$  ($k\in \N$)
such that 
\beq
&&\|[\Pi_{_{\Qw, \N}}(\{\psi_k(y)\}),\, \Pi_{_{\Qw, \N}}(\iota(a))]\|<\ep\tforal y\in M_n^{\bf 1}\tand\\
&&\|\Pi_{_{\Qw, \N}}(\{\psi_k(1_n)\}-\iota(a))\|<\ep.
\eneq
\end{lem}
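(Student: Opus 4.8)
\textbf{Proof proposal for Lemma \ref{Ldvi-2}.} The plan is to perturb the function $a$ so that its functional calculus yields a ``collar'' on which we can place a fixed finite-dimensional image, exactly as in Lemma \ref{Ldvi-1}, and then transport this to the non-compact situation at hand. First I would observe that $\Her(a)$ is again a non-elementary separable algebraically simple \CA\ with strict comparison, that $QT(\Her(a))\ne\emptyset$, and that the canonical map $\Gamma$ for $\Her(a)$ is still surjective; since $A$ has T-tracial approximate oscillation zero, so does $\Her(a)$ (by the compatibility of $\Omega^T$ with passing to hereditary subalgebras, cf.\ the discussion around Definition \ref{defOs2} and \cite[Theorem 1.1]{FLosc}). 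So without loss of generality I may work inside $\Her(a)$ and regard $a$ as a strictly positive element there.

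Next I would fix $n$ and $\ep>0$ and use the characterization of $\Omega^T(a)=0$ from \cite[Proposition 4.8]{FLosc}: there exist $b_k\in\Her(a)_+$ with $\|b_k\|\le\|a\|$ such that $\|a-b_k\|_{_{2,\Qw}}\to 0$ and $\omega(b_k)\to 0$. Passing to $\Pi_{_{\Qw,\N}}$, the element $\{b_k\}$ maps to the same class as $\iota(a)$, so it suffices to produce the order zero maps $\psi_k$ relative to $\{b_k\}$. For each $k$, using $\omega(b_k)$ small, I invoke \cite[Lemma 8.4]{FLosc} (as in the proof of Lemma \ref{Ldvi-1}) to obtain an order zero \cpc\ $\psi_k:M_n\to\Her(b_k)\subset\Her(a)$ with $\|b_k\psi_k(1_n)-b_k\|_{_{2,\Qw}}<\sqrt{\omega(b_k)+1/k^2}$. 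The point is that $b_k\psi_k(1_n)$ is close to $b_k$ in $\|\cdot\|_{_{2,\Qw}}$, and since $\|a-b_k\|_{_{2,\Qw}}\to 0$ as well, we get $\|\psi_k(1_n)b_k-b_k\|_{_{2,\Qw}}\to 0$; combined with $b_k$ approximating $a$ this gives $\|\Pi_{_{\Qw,\N}}(\{\psi_k(1_n)\})\cdot\Pi_{_{\Qw,\N}}(\iota(a))=\Pi_{_{\Qw,\N}}(\iota(a))$ up to the $\ep$ error. Since $\|\psi_k(1_n)\|\le 1$ and $\|a\|\le 1$, a standard $C^*$-inequality ($\|x\|_{_{2,\Qw}}^2=\|x^*x\|_{_{2,\Qw}}$, together with $\|cd-d\|\le$ control via $\|(c-\text{something})d\|$) upgrades the one-sided estimate to $\|\Pi_{_{\Qw,\N}}(\{\psi_k(1_n)\}-\iota(a))\|<\ep$ after noting $\Pi_{_{\Qw,\N}}(\iota(a))$ is positive with norm $\le 1$; here I would be slightly careful and may need to first replace $a$ by $f_\eta(a)$ for small $\eta$ so that $a$ acts as a near-unit on $\Her(b_k)$, absorbing a further $\ep/2$.

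For the commutator estimate, since $\psi_k(y)\in\Her(b_k)$ and $b_k$ is ``almost'' a unit for $\psi_k(M_n)$ in $\|\cdot\|_{_{2,\Qw}}$ (because $\psi_k(1_n)$ is), while $a\approx b_k$ in $\|\cdot\|_{_{2,\Qw}}$, we get $\psi_k(y)a\approx\psi_k(y)b_k\approx\psi_k(y)$ and similarly $a\psi_k(y)\approx\psi_k(y)$, all in trace $2$-norm over $\Qw$, with errors $\to 0$; thus $\|[\Pi_{_{\Qw,\N}}(\{\psi_k(y)\}),\Pi_{_{\Qw,\N}}(\iota(a))]\|<\ep$ for all $y\in M_n^{\bf 1}$ and hence for all $y\in M_n$ by scaling. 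The main obstacle, as I see it, is the precise bookkeeping needed to convert the one-sided trace-norm estimate $\|b_k\psi_k(1_n)-b_k\|_{_{2,\Qw}}$ small into the two-sided $C^*$-norm statement $\|\Pi_{_{\Qw,\N}}(\{\psi_k(1_n)\}-\iota(a))\|<\ep$ in the quotient; this requires using that in $l^\infty(A)/I_{_{\Qw,\N}}$ the trace $2$-norm is a genuine norm and that $a$ (or $f_\eta(a)$) is a strictly positive, hence near-unit, element of the hereditary subalgebra containing all the $\psi_k(M_n)$, so that $C^*$-norm closeness of positive contractions can be detected after multiplying by $a$. Modulo this, everything else is a direct transcription of the argument of Lemma \ref{Ldvi-1} to the element $a$ in place of a projection.
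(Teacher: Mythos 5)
There is a genuine gap, and it lies exactly where you flagged it: the norms in the conclusion of Lemma \ref{Ldvi-2} are quotient $C^*$-norms in $l^\infty(A)/I_{_{\Qw,\N}}$, not trace $2$-norms, and your machinery only produces $2$-norm information. From $\Omega^T(a)=0$ and \cite[Lemma 8.4]{FLosc} you get $\|b_k\psi_k(1_n)-b_k\|_{_{2,\Qw}}\to 0$ and $\|a-b_k\|_{_{2,\Qw}}\to 0$; since these are estimates along the sequence tending to $0$, the most they give in the quotient is the exact identity $\bar a\, q=\bar a$, where $\bar a=\Pi_{_{\Qw,\N}}(\iota(a))$ and $q=\Pi_{_{\Qw,\N}}(\{\psi_k(1_n)\})$. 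That identity does not yield $\|q-\bar a\|<\ep$: take $\bar a=\tfrac12 p$ and $q=p$ for a projection $p$, or any $\bar a$ with spectrum $[0,1]$, for which any positive contraction $q$ with $\bar a q=\bar a$ dominates the support projection and hence stays at distance essentially $1$ from $\bar a$. The auxiliary claims you lean on are also not available: the fact that $\|\cdot\|_{_{2,\Qw_{\N}}}$ is a genuine norm on the quotient does not make it comparable to the quotient $C^*$-norm (a projection of small trace is small in $2$-norm but has quotient norm $1$), and a strictly positive element of $\Her(a)$ is not a near-unit in $C^*$-norm. More fundamentally, your construction aims $\psi_k(1_n)$ at an approximate unit of $\Her(\bar a)$, whereas the lemma requires $\Pi_{_{\Qw,\N}}(\{\psi_k(1_n)\})$ to be $C^*$-close to $\bar a$ itself, which in general is far from any projection or unit. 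The commutator bound has the same problem: it is a $C^*$-norm bound in the quotient, and $2$-norm closeness of $a$ to $b_k$ gives no control of $\|[\Pi_{_{\Qw,\N}}(\{\psi_k(y)\}),\bar a]\|$.

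The paper's proof gets the $C^*$-norm statements by using T-tracial approximate oscillation zero in a different way: by \cite[Theorem 6.4]{FLosc} the quotient $l^\infty(A)/I_{_{\Qw,\N}}$ has real rank zero, so $\bar a$ is approximated \emph{in $C^*$-norm} by $\sum_{i=1}^m\lambda_i p_i$ with mutually orthogonal projections $p_i\in\overline{\bar a(l^\infty(A)/I_{_{\Qw,\N}})\bar a}$; then Lemma \ref{Ldvi-1} (available because strict comparison plus oscillation zero makes $\Gamma$ surjective) plants a unital copy of $M_n$ in each corner $p_i(\cdot)p_i$, and $\phi=\sum_i\lambda_i\phi_i$ satisfies $\phi(1_n)=\sum_i\lambda_i p_i$, which is $C^*$-close to $\bar a$ and commutes exactly with $\sum_i\lambda_i p_i$, so both required estimates follow; finally one lifts $\phi$ to order zero \cpc s into $\Her(a)$ by projectivity. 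The scalars $\lambda_i$ are what allow $\phi(1_n)$ to match the shape of $\bar a$ rather than a unit; this step, and the use of real rank zero to pass from $2$-norm data to $C^*$-norm spectral approximation, are the ideas missing from your proposal.
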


\begin{proof}
Let ${\bar a}=\Pi_{_{\N}}(\iota(a)).$ By Theorem 6.4 of \cite{FLosc},  $l^\infty(A)/I_{_{\Qw,\N}}$ has real rank zero.
Therefore there are mutually orthogonal  projections 
$p_1, p_2,...,p_m\in \overline{{\bar a}( l^\infty(A)/I_{_{\Qw,\N}}){\bar a}}$ and $\lambda_1,\lambda_2,...,\lambda_m\in (0,1]$ such that
\beq
\|{\bar a}-\sum_{i=1}^m \lambda_i p_i\|<\ep.
\eneq
Since $A$ has strict comparison and T-tracial approximate oscillation zero, by \cite[Theorem 1.1]{FLosc},
the canonical map $\Gamma$ is surjective. 
By Lemma \ref{Ldvi-1}, for each $n\in \N,$ there is a unital \hm\, $\phi_i: M_n\to p_i(l^\infty(A)/I_{_{\Qw, \N}})p_i$
($1\le i\le m$).
Define  an order zero \cpc\, $\phi: M_n\to \overline{{\bar a}( l^\infty(A)/I_{_{\Qw,\N}}){\bar a}}$ 
by $\phi(y)=\sum_{i=1}^m \lambda_i \phi_i(y)$ for $y\in M_n.$
Note that $\phi(1_n)=\sum_{i=1}^m \lambda_i p_i.$ 
Then,   for all $y\in M_n,$
\beq
\phi(y)\sum_{i=1}^m\lambda_i p_i=\sum_{i=1}^m \lambda_i^2 \phi_i(y)p_i=
\sum_{i=1}^m \lambda_i^2 p_i\phi_i(y)=(\sum_{i=1}^m\lambda_i p_i)\phi(y).
\eneq
It follows  that (for all $y\in M_n^{\bf 1}$)
\beq
\|[{\bar a},\, \phi(y)]\|<\ep\andeqn \|\bar a-\phi(1_n)\|<\ep.
\eneq
By \cite[Proposition 1.2.4]{WcovII},  there is a sequence of order zero \cpc s $\psi_k: A\to \Her(a)$ such that 
$\phi=\Pi_{\Qw, \N}\circ \{\psi_k\}.$ 
The lemma then follows.
\end{proof}

\begin{lem}\label{Ldvi-3}
Let $A$ be a  separable algebraically simple  \CA\, 
which has strict comparison, T-tracial approximate oscillation zero and a  nonempty 
$QT(A),$ 
and 
let $D$ be a finite dimensional \CA.
 Suppose 
that $\phi: D\to A$ is an order zero \cpc.
Then, for any 
$\ep>0$ and   any $n\in \N,$  
 there is a sequence of  order zero \cpc s
 $\psi_k: M_n\to \overline{\phi(1_D)A\phi(1_D)}$  such that
 \beq\nonumber
&& \|[\Pi_{_{\Qw, \N}}(\iota(\phi(x))),\,\,\Pi_{_{\Qw, \N}}(\{\psi_k(y)\})]\|<\ep\tforal x\in D^{\bf 1}\tand y\in M_n^{\bf 1},\\\nonumber
 &&\hspace{-0.4in}\tand \|\Pi_{_{\Qw, \N}}(\iota(\phi(1_D)))-\Pi_{_{\Qw, \N}}(\{\psi_k(1_n)\})\|<\ep.
 \eneq
\end{lem}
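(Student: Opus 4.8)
The plan is to reduce the statement about a finite-dimensional algebra $D$ with an order zero c.p.c.\ map $\phi$ to the single-cell case already handled in Lemma \ref{Ldvi-2}. Write $D = \bigoplus_{i=1}^\ell M_{r_i}$ and let $e_i \in D$ be the minimal central projections, so $1_D = \sum_{i=1}^\ell e_i$. Since $\phi$ is order zero, the images $\phi(e_i)$ are mutually orthogonal positive elements of $A$, and $\overline{\phi(1_D)A\phi(1_D)} = \bigoplus_{i=1}^\ell \overline{\phi(e_i)A\phi(e_i)}$ as an internal direct sum of orthogonal hereditary subalgebras. Passing to the quotient $B := l^\infty(A)/I_{_{\Qw,\N}}$ (which has real rank zero by \cite[Theorem 6.4]{FLosc}, and in which $\Gamma$ is surjective via \cite[Theorem 1.1]{FLosc}), set $\bar a_i = \Pi_{_{\Qw,\N}}(\iota(\phi(e_i)))$; these are mutually orthogonal positive contractions. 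The key point will be that it suffices to build, for each $i$, a unital order zero map $M_n \to$ (something inside the hereditary subalgebra generated by $\bar a_i$) that almost commutes with $\bar a_i$ and whose unit is close to $\bar a_i$, and then sum over $i$.

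The main steps, in order: First, fix $i$. Applying Lemma \ref{Ldvi-2} to the element $\phi(e_i) \in A_+^{\bf 1}$ (note $\phi(e_i)$ lies in the algebraically simple $A$, and $\phi|_{M_{r_i}}$ restricted to $e_i$ gives an honest positive element), we obtain for each $k$ an order zero c.p.c.\ $\psi_k^{(i)}: M_n \to \Her(\phi(e_i))$ with $\|[\Pi_{_{\Qw,\N}}(\{\psi_k^{(i)}(y)\}), \bar a_i]\| < \ep/\ell$ for $y \in M_n^{\bf 1}$ and $\|\Pi_{_{\Qw,\N}}(\{\psi_k^{(i)}(1_n)\}) - \bar a_i\| < \ep/\ell$. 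Second, I need these images to almost commute not only with $\bar a_i$ but with all of $\Pi_{_{\Qw,\N}}(\iota(\phi(x)))$ for $x \in D^{\bf 1}$: writing $x = \sum_i x_i$ with $x_i \in M_{r_i}^{\bf 1}$ and using that $\phi$ is order zero so that $\overline{\phi(C_0((0,1])\otimes M_{r_i})}$ is the $C^*$-algebra associated with the supporting hereditary subalgebra of $\phi(e_i)$, the image $\Pi_{_{\Qw,\N}}(\iota(\phi(x_i)))$ lies in the hereditary subalgebra generated by $\bar a_i$; since $\psi_k^{(i)}$ maps into $\Her(\phi(e_i))$ and the $\phi(e_j)$ are mutually orthogonal, the image of $\psi_k^{(i)}$ is orthogonal to $\Pi_{_{\Qw,\N}}(\iota(\phi(x_j)))$ for $j \neq i$, so only the $j = i$ term contributes to the commutator and it is controlled by the first step. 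Third, define $\psi_k := \sum_{i=1}^\ell \psi_k^{(i)}: M_n \to \overline{\phi(1_D)A\phi(1_D)}$; because the ranges $\Her(\phi(e_i))$ are mutually orthogonal, $\psi_k$ is again an order zero c.p.c.\ map, with $\psi_k(1_n) = \sum_i \psi_k^{(i)}(1_n)$. Finally, summing the estimates gives $\|[\Pi_{_{\Qw,\N}}(\iota(\phi(x))), \Pi_{_{\Qw,\N}}(\{\psi_k(y)\})]\| < \ep$ and $\|\Pi_{_{\Qw,\N}}(\iota(\phi(1_D))) - \Pi_{_{\Qw,\N}}(\{\psi_k(1_n)\})\| < \ep$, as required.

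The step I expect to be the main obstacle is making precise the orthogonality bookkeeping in the quotient, specifically the claim that $\Pi_{_{\Qw,\N}}(\iota(\phi(x_j)))$ and $\Pi_{_{\Qw,\N}}(\{\psi_k^{(i)}(y)\})$ genuinely commute (indeed multiply to zero) when $i \neq j$. This requires knowing that $\psi_k^{(i)}(M_n) \subset \Her(\phi(e_i))$ and that $\phi(x_j) \in \overline{\phi(e_j)A\phi(e_j)} = \Her(\phi(e_j))$, together with $\phi(e_i) \perp \phi(e_j)$; the first two inclusions use the structure theory of order zero maps (\cite[Proposition 1.2.1]{WcovII} and the correspondence with homomorphisms from $C_0((0,1])\otimes D$) and the fact that $\phi(x_j)$ lies in the hereditary subalgebra supported by $\phi(e_j)$ because $x_j \leq \|x_j\| e_j$ so $\phi(x_j)^*\phi(x_j) \leq \|x_j\|^2 \phi(e_j)^2$. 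Once this is in hand, orthogonality at the level of $A$ passes to $l^\infty(A)$ and hence to the quotient, and the rest is the routine triangle-inequality summation. A minor secondary point is checking that $A$ being \emph{algebraically} simple is preserved under the hypotheses so that Lemma \ref{Ldvi-2} applies verbatim to each $\phi(e_i)$ — but this is immediate since we are applying it to the same algebra $A$.
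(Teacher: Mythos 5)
There is a genuine gap at the step you yourself flagged as routine, namely the claim that ``only the $j=i$ term contributes to the commutator and it is controlled by the first step.'' Applying Lemma \ref{Ldvi-2} to the positive element $\phi(e_i)$ only gives maps $\psi_k^{(i)}$ whose images approximately commute with the single element $\Pi_{_{\Qw,\N}}(\iota(\phi(e_i)))$; it gives no control whatsoever over the commutators with $\Pi_{_{\Qw,\N}}(\iota(\phi(x_i)))$ for general $x_i\in M_{r_i}^{\bf 1}$, which is what the lemma requires. Approximately commuting with $\phi(e_i)=\sum_j\phi(e_{i,j,j})$ does not imply approximately commuting with the individual diagonal images $\phi(e_{i,j,j})$, let alone with the off-diagonal elements $\phi(e_{i,j,j'})$: in the extreme case $D=M_2$ and $\phi$ a unital embedding (so $\phi(e_1)=1_A$), your first step is vacuous and the required estimate for $\phi(e_{1,1})$ or $\phi(e_{1,2})$ simply does not follow. (A secondary slip: $\overline{\phi(1_D)A\phi(1_D)}$ is not the direct sum of the $\overline{\phi(e_i)A\phi(e_i)}$, since it contains cross terms $\phi(e_i)x\phi(e_j)$; this does not hurt the codomain requirement, but the orthogonality bookkeeping should be phrased in terms of the ranges of the $\psi_k^{(i)}$, not a decomposition of the hereditary subalgebra.)

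The paper's proof avoids exactly this problem by applying Lemma \ref{Ldvi-2} not to $\phi(e_s)$ but to the corner element $a_{s,1,1}=\phi(e_{s,1,1})$, the image of a minimal diagonal matrix unit, and then using the isomorphism $\overline{\phi(1_{M_{r_s}})A\phi(1_{M_{r_s}})}\cong M_{r_s}(\overline{a_{s,1,1}Aa_{s,1,1}})$ (\cite[Proposition 8.3]{FLosc}) to define $\psi_{s,k}(y)=\diag(\phi_{s,k}(y),\dots,\phi_{s,k}(y))$, the constant $r_s$-fold diagonal amplification. Because the amplification is constant across the diagonal blocks, it approximately commutes with every matrix unit $\phi(e_{s,i,j})$ (which, under the isomorphism, only shuffles the blocks and multiplies by elements built from $a_{s,1,1}$), hence with $\phi(x)$ for all $x\in D^{\bf 1}$, with the error controlled by the single estimate against $a_{s,1,1}$ and the bookkeeping constant $R$. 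If you repair your argument by replacing the application of Lemma \ref{Ldvi-2} to $\phi(e_i)$ with this corner-plus-amplification device, the rest of your outline (summing the mutually orthogonal summands and the triangle inequality) goes through and coincides with the paper's proof.
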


\begin{proof}
Let  $D=M_{r_1}\oplus M_{r_2}\oplus \cdots \oplus M_{r_l}$ and 
$\{e_{s, i,j}\}_{1\le i,j\le l}$  be a system of matrix units for $M_{r_s},$ $1\le s\le l.$
Let $a_{s,1,1}=\phi(e_{s,1,1}).$ 
Put $R=l^2 r_1^2\cdots r_l^2.$ 
It follows from Lemma \ref{Ldvi-2} that, for any $\ep>0,$ 
there exists a sequence of  order zero \cpc s $\phi_{s,k}: M_n\to \Her(a_{s,1,1})$
such that (for $1\le s\le l$)
\beq\label{Ldvi-3-e-5}
&&\hspace{-0.4in}\|[\Pi_{_{\Qw, \N}}(\iota(a_{s, 1,1})),\, \Pi_{_{\Qw, \N}}(\{\phi_{s,k}(y)\})]\|<\ep/2R\rforal y\in M_n^{\bf 1}\andeqn\\\label{Ldvi-3-e-6}
&&\hspace{-0.4in}\|\Pi_{_{\Qw, \N}}(\iota(a_{s, 1,1})-\{\phi_{s,k}(1_n)\})\|<\ep/2R.
\eneq
Note that $\overline{\phi(1_{M_{r(s)}})C\phi(1_{M_{r(s)}})}\cong M_{r(s)}(\overline{a_{s,1,1}Ca_{s,1,1}})$
(see, for example, Proposition 8.3 of \cite{FLosc}), $1\le s\le l.$
Define $\psi_{s,k}: M_n\to \overline{\phi(1_{M_{r_s}})A\phi(1_{M_{r_s}})}\cong M_{r(s)}(\overline{a_{s,1,1}Ca_{s,1,1}})$ by 
\beq
\psi_{s,k}(y)=\diag(\overbrace{\phi_{s,k}(y), \phi_{s,k}(y), ...,\phi_{s,k}(y)}^{r_s})\rforal y\in M_n\,\, (1\le s\le l),
\eneq
and defined $\psi_k: M_n \to \overline{\phi(1_D)A\phi(1_D)}$ by 
$\psi_k(y)=\sum_{i=1}^l \psi_{s,k}(y)$ for all $y\in M_n$ and $k\in \N.$
 We check, by \eqref{Ldvi-3-e-5} and \eqref{Ldvi-3-e-6}  that
\beq\nonumber
&&\hspace{-0.3in}\|[\Pi_{_{\Qw, \N}}(\iota(\phi(x))),\, \Pi_{_{\Qw, \N}}(\{\psi_k(y)\})]\|<\ep\rforal x\in D^{\bf 1}\andeqn y\in M_n^{\bf 1},\andeqn\\\nonumber
&& \|\Pi_{_{\Qw, \N}}(\iota(\phi(1_D))-\{\psi_k(1_n)\})\|<\ep.
\eneq
\end{proof}

\begin{lem}\label{Lcomm-1}
Let $A$ be a  separable algebraically simple \CA\,  which has strict comparison, T-tracial approximate 
oscillation zero and 
a non-empty 
subset $F,$ and a compact subset $K$ of $QT(A)$ such that 
$F\subset K\subset
QT(A).$ 
Let 
$D$
be a finite dimensional \CA\, and 
$\phi: D\to l^\infty(A)/I_{_{\Qw, \varpi}}$  be a \hm.
Then, for any $\ep>0,$ any finite subset $S\subset I_{_{F, \varpi}}/I_{_{\Qw, \varpi}},$ 
and any integer $n\ge 1,$ there exists a \hm\, $\psi: M_n\to  l^\infty(A)/I_{_{\Qw, \varpi}}$
such  that
\beq
&&\hspace{-0.2in}(1)\,\,
\|[\phi(x),\,  \psi(y)]\|=0
\tforal x\in D\tand y\in M_n,\\
&&\hspace{-0.2in}(2)\,\, \|[z,\, \psi(y)]\|<\ep\tforal z\in {\cal S}\tand y\in M_n^{\bf 1}\tand\\
&&\hspace{-0.2in}(3) \,\, \pi_F\circ \psi(1_{M_n})=1,
\eneq
where $\pi_F: l^\infty(A)/I_{_{\Qw, \varpi}}\to l^\infty(A)/I_{_{F, \varpi}}$ is the quotient map.
\end{lem}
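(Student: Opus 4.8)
The idea is to iterate Lemma \ref{Ldvi-3} along a Bratteli-type tower to produce, inside $l^\infty(A)/I_{_{\Qw,\N}}$, a matrix algebra that exactly commutes with the image of $D$ and approximately commutes with the finitely many prescribed elements of $I_{_{F,\varpi}}/I_{_{\Qw,\varpi}}$, then pass to the $\varpi$-quotient via a reindexing (diagonal) argument. First I would lift the \hm\ $\phi\colon D\to l^\infty(A)/I_{_{\Qw,\varpi}}$ to a sequence: by \cite[Proposition 1.2.4]{WcovII} (order zero lifting, applied to the order zero \cpc\ pieces corresponding to each matrix block of $D$) there are order zero \cpc s $\phi_k\colon D\to A$, bounded and with $\phi_k(1_D)\in A^{\bf 1}_+$, whose class through $\Pi_{_{\Qw,\varpi}}$ equals $\phi$. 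Similarly I would lift each $z\in {\cal S}$ to a representing sequence $\{z_{k}\}\in I_{_{F,\varpi}}$, so that $\lim_{k\to\varpi}\|z_k\|_{_{2,F}}=0$ but $\{z_k\}$ need not be small in norm; this is the feature that makes the commutator estimate with $\psi$ nontrivial.

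Next I would, for each fixed index $k$, run Lemma \ref{Ldvi-3} with the finite dimensional \CA\ $D$ and the order zero \cpc\ $\phi_k\colon D\to A$ (note $A$ is algebraically simple with strict comparison and T-tracial approximate oscillation zero, so the hypotheses are met), obtaining a sequence of order zero \cpc s $\psi_{k,m}\colon M_n\to \overline{\phi_k(1_D)A\phi_k(1_D)}$ with
\[
\|[\Pi_{_{\Qw,\N}}(\iota(\phi_k(x))),\,\Pi_{_{\Qw,\N}}(\{\psi_{k,m}(y)\}_m)]\|<\ep/2,\quad
\|\Pi_{_{\Qw,\N}}(\iota(\phi_k(1_D))-\{\psi_{k,m}(1_n)\}_m)\|<\ep/2 .
\]
Because the range of each $\psi_{k,m}$ sits in $\overline{\phi_k(1_D)A\phi_k(1_D)}$, one has genuine (norm) commutation of $\psi_{k,m}(M_n)$ with $\phi_k(D)$ up to the order-zero defect of $\phi_k$, which vanishes in the quotient; this is what yields the \emph{exact} relation (1) after passing to the $\varpi$-limit. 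The key point requiring care: $\psi_{k,m}(1_n)$ is a contraction supported under $\phi_k(1_D)$, and $\phi_k(1_D)$ is a positive contraction whose $\varpi$-class is a projection in $l^\infty(A)/I_{_{\Qw,\varpi}}$ (since $\phi(1_D)$ is a projection there, being the unit of a \hm\ image of $1_D$); I would use \cite[Proposition 2.22]{Linoz} to replace $\{\phi_k(1_D)\}$ by a permanent projection lifting, so that functional calculus does not disturb the relation $\pi_F\circ\psi(1_D)=1$, i.e. condition (3).

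The commutator estimate (2) with the elements $\pi_F(z)$ is where the real work is, and I expect it to be \textbf{the main obstacle}. The elements $z_k$ are norm-bounded but only trace-$2$-norm small over $F$, so I cannot estimate $\|[z_k,\psi_{k,m}(y)]\|$ in norm. Instead I would estimate in the trace-$2$-norm $\|\cdot\|_{_{2,F_\varpi}}$: since $\psi(y)$ is a contraction and $\pi_F(z)=0$ in $l^\infty(A)/I_{_{F,\varpi}}$, the commutator $[\pi_F(z),\psi(y)]$ already vanishes in $l^\infty(A)/I_{_{F,\varpi}}$ in $\|\cdot\|_{_{2,F_\varpi}}$; what is asked in (2) is the $C^*$-norm of this commutator \emph{in $l^\infty(A)/I_{_{F,\varpi}}$}. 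Here I would exploit that $F\subset K$ with $K\subset QT(A)$ compact, so by Proposition \ref{Pcompact} $l^\infty(A)/I_{_{F,\varpi}}$ is unital and one may use real-rank-zero / spectral approximation in it. Concretely, using that $l^\infty(A)/I_{_{\Qw,\N}}$ (hence the relevant quotient) has real rank zero by \cite[Theorem 6.4]{FLosc}, I would first approximate $\psi(1_D)$ in norm by a finite sum $\sum\lambda_ip_i$ of orthogonal projections in the hereditary subalgebra generated by $\pi_F(\phi(1_D))$, then replace the $z_k$ by cut-downs $e z_k e$ with $e$ a positive contraction that is near $1$ in $\|\cdot\|_{_{2,F}}$ and makes $\|e z_k e\|$ small — this uses Lemma \ref{Lsperation2} (or Theorem \ref{Ptight}/property (TE)) to trade trace-smallness over $F$ for genuine norm-smallness after multiplying by a spectral projection, so that the commutator becomes norm-small. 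Assembling: choose for each $k$ an $m=m(k)$ making all the above estimates hold with $\ep/2$, set $\psi(y)=\Pi_{_{\Qw,\varpi}}(\{\psi_{k,m(k)}(y)\}_k)$, and verify (1)–(3) in the $\varpi$-limit. The multiplicativity of $\psi$ follows because each $\psi_{k,m(k)}$ is an order zero \cpc\ with $\psi_{k,m(k)}(1_n)$ forced (in the $\varpi$-limit, after the permanent-projection normalization) to be a projection equal to $\phi(1_D)$, so the cone homomorphism collapses to a genuine \hm\ on $M_n$, exactly as in the proof of Lemma \ref{Ldvi-1}.
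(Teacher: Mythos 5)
Your route --- lift $\phi$ and the elements of $S$ to sequences over $A$, run the order-zero approximation lemma levelwise, and diagonalize --- is genuinely different from the paper's (which works entirely inside $C=l^\infty(A)/I_{_{\Qw,\varpi}}$), and it has two concrete gaps, both at the points you flag as delicate. The first is condition (2). Its actual content, which is how the lemma is used later (in the proof of the tracial-divisibility proposition, where the conclusion invoked is $\|[j_x,\psi(z)]\|<\eta/2$ with $j_x$ in the trace-kernel ideal), is that the commutators $[z,\psi(y)]$, $z\in S$, are small in the $C^*$-norm of $l^\infty(A)/I_{_{\Qw,\varpi}}$; reading the commutator inside $l^\infty(A)/I_{_{F,\varpi}}$, as you do, makes (2) vacuous, since $\pi_F(z)=0$ exactly for every $z\in I_{_{F,\varpi}}/I_{_{\Qw,\varpi}}$. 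For the substantive version your mechanism does not work: an element of the ideal $J_F=I_{_{F,\varpi}}/I_{_{\Qw,\varpi}}$ can have norm one while being arbitrarily small in $\|\cdot\|_{_{2,F}}$, and no cut-down $eze$ by a positive contraction close to $1$ in trace $2$-norm becomes norm-small; moreover Lemma \ref{Lsperation2} and property (TE) (Theorem \ref{Ptight}) compare quotient $2$-norms over different sets of traces and say nothing about converting $2$-norm smallness into operator-norm smallness. The idea you are missing is the opposite one: leave $z$ alone and build $\psi$ so that its image is nearly norm-orthogonal to the finitely many elements of $S$. Since $S$ lies in a separable subalgebra, it lies in a $\sigma$-unital hereditary subalgebra $J_1\subset J_F$, which has real rank zero by \cite[Theorem 6.4]{FLosc}; choosing an approximate identity of projections $\{h_k\}$ of $J_1$ commuting with $\phi(D)$ and with the matrix copies already constructed (Lemma \ref{Lstrictp}), and multiplying the candidate copy of $M_n$ by $e-h_{k_0}'$ (with a correction summand mapped unitally into $(f_k-f_{k_0})J_F(f_k-f_{k_0})$ via Lemma \ref{Ldvi-1}), one gets $\|z\,\psi(1_n)\|\le\|z(1-h_{k_0})\|<\ep$ for all $z\in S$, which is exactly what yields (2).

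The second gap is condition (3). In your construction every $\psi_{k,m}$ maps into $\overline{\phi_k(1_D)A\phi_k(1_D)}$, so in the limit $\psi(1_n)$ sits under $p=\phi(1_D)$; but $\pi_F(p)$ is in general a proper projection of the unital algebra $l^\infty(A)/I_{_{F,\varpi}}$ (unital by Proposition \ref{Pcompact}), so $\pi_F\circ\psi(1_n)\ne 1$ and (3) fails outright --- the permanent projection lifting does not address this. The paper repairs precisely this point: it lifts the unit of $(1-p)\bigl(l^\infty(A)/I_{_{F,\varpi}}\bigr)(1-p)$ to a projection $e_B\in(1-p)C(1-p)$ (real rank zero again), adds a unital copy of $M_n$ inside $e_BCe_B$ by Lemma \ref{Ldvi-1}, and arranges $\psi(1_n)=e-h_{k_0}'+(f_k-f_{k_0})$ with $e=p+e_B$, so that the defect from $e$ lies in $J_F$ and $\pi_F\circ\psi$ is unital. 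Without a complementary summand of this kind, and without the approximate-identity orthogonalization above, the proposal does not prove the lemma; conditions (1) and the homomorphism property could likely be salvaged by your diagonalization, but (2) and (3) are the heart of the statement.
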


\begin{proof}
Write $D=M_{r_1}\oplus M_{r_2}\oplus \cdots \oplus M_{r_l}$ and 
$\{e_{s, i,j}\}_{1\le i,j\le l}$ a system of matrix units for $M_{r_s},$ $1\le s\le l.$
Put $C=l^\infty(A)/I_{_{\Qw, \varpi}}.$
Let $p_{s,i,i}=\phi(e_{s,i,i})$ ($1\le i\le r_s$ and $1\le s\le l$) and $p=\phi(1_D).$
By Lemma \ref{Ldvi-1},  there exists 
a unital \hm\, $\phi_s': M_n\to \Her(p_{s,1,1}).$
Define $\psi_{s}': M_n\to \phi(1_{M_{r_s}})C \phi(1_{M_{r_s}})$ by  ($1\le s\le l$)
\beq
\psi_{s}'(y)=\diag(\overbrace{\phi_{s}'(y), \phi_{s}'(y), ...,\phi_{s}'(y)}^{r_s})\rforal y\in M_n,
\eneq
and defined $\psi': M_n \to pCp$ by 
$\psi'(y)=\sum_{s=1}^l \psi_{s}'(y)$ for all $y\in M_n.$ 
By Proposition \ref{Pcompact}, $l^\infty(A)/I_{_{K, \varpi}}$ is unital.
Hence its quotient $l^\infty(A)/I_{_{F, \varpi}}$ is also unital.
Let $B=(1-p)C(1-p)$ and $\bar B=\Pi_{_{F, \varpi}}(B).$ Then $\bar B$ is unital. Denote by $e_{\bar B}$ the unit.
Then $e_{\bar B}+\pi_F(p)$ is the unit for $l^\infty(A)/I_{_{F, \varpi}}.$ 
Recall that $A$ has T-tracial approximate oscillation zero. By Theorem 6.4 of \cite{FLosc}, $l^\infty(A)/I_{_{\Qw, \varpi}}$
has real rank zero. So does its quotient $C.$ Hence 
$B$ has real rank zero. There is a projection $e_B\in B$ such that $\pi_F(e_B)=e_{\bar B}$ (\cite[Theorem  3.14]{BP}). 
Put $e=e_B+p$ which is a projection. Note that $\pi_F(e)=e_{\bar B}+\pi_F(p)$ is the unit  for $l^\infty(A)/I_{_{F, \varpi}}.$

By Lemma \ref{Ldvi-1} again, there exists a unital \hm\, $\psi'': M_n\to e_BCe_B.$
Let $q_i=\psi''(e_{i,i}),$ $1\le i\le n.$ 

Denote $J_F=I_{_{F, \varpi}}/I_{_{\Qw, \varpi}}.$  
%
Let $C_0$ be the \SCA\, of $C$ generated by $\phi(D),$ $\phi'_s(M_n),$ $\psi'_s(M_n)$ ($1\le s\le l$), $\psi''(M_n)$ and 
$S.$
Then $S\subset C_0\cap J_F.$
  Let $s_0\in C_0\cap J_F$ be a strictly positive element of $C_0\cap J_F.$
Let $J_1=\overline{s_0J_Fs_0}$ and  $C_1$ be the \CA\, generated by $C_0$ and $J_1.$
Then $J_1$ is a $\sigma$-unital ideal of  \CA\, 
$C_1.$   Moreover $J_1$ is a $\sigma$-unital hereditary \SCA\, of $J_F$ which has real rank zero. 

By (3) Lemma \ref{Lstrictp},  $p_{s,i,i}s_0p_{s,i,i},$ $q_js_0q_j$  
and $(1-e)s_0(1-e)$ are  strictly positive elements of $p_{s, i,i}J_1p_{s, i,i},$ 
$q_jJ_1q_j$ and $(1-e)J_1(1-e),$  respectively ($1\le i\le r(s),$ $1\le s\le l,$ and $1\le j\le n$).
Let $\{d_{s,i, k}: k\in \N\}\subset p_{s, i,i}Cp_{s, i,i},$  
$\{g_{i,k}: k\in \N\}\subset q_iCq_i$  and $\{f_k\}\subset (1-e)J_1(1-e)$ be approximate identities consisting 
of projections for $p_{s, i,i}J_1 p_{s, i,i},$ $q_iJ_1q_i$ and $(1-e)J_1(1-e),$ 
respectively ($1\le s\le l$ and $1\le i\le n$).
Put 
\beq
h_k'=\sum_{i=1}^n (\sum_{s=1}^l d_{s, i, k}+g_{i,k})\andeqn h_k=h_k'+f_k,\,\, k\in \N.
\eneq
Then, by Lemma \ref{Lstrictp}, $\{h_k\}$ forms an approximate identity for $J_F.$ 
Choose $k_0\in \N$ such that
\beq\label{Lcomm-1-5}
\|s(1-h_{k})\|<\ep/4(\max_{s\in S}\|s\|+1)\rforal s\in S\andeqn k\ge k_0.
\eneq
Choose $k>k_0.$ Then $f_k-f_{k_0}$ is a non-zero projection in $J_F$
(If $(1-e)J_F(1-e)$ were unital, we let $\psi_J=0$).
By Lemma \ref{Ldvi-1}, there is a unital \hm\, $\psi_J: M_n\to (f_k-f_{k_0})J_F(f_k-f_{k_0}).$ 
From the construction, it is clear that 
\beq
h_k\phi(d)&=&\phi(d)h_k\rforal  d\in D\andeqn k\in \N\andeqn\\
h_k(\psi'(b)+\psi''(b))&=&(\psi'(b)+\psi''(b))h_k\rforal b\in M_n\andeqn k\in \N.
\eneq
Now define $\psi: M_n\to eCe$ by 
\beq
\psi(b)=(\psi'(b)+\psi''(b))(e-h_{k_0}) +\psi_J(b)\rforal b\in M_n.
\eneq
It follows that $\psi(1_n)=e-h_{k_0}+(f_k-f_{k_0}).$
Then $\psi$ is a \hm\, and
\beq
\phi(x)\psi(y)=\psi(y)\phi(x)\rforal x\in D\andeqn y\in M_n.
\eneq
It follows from \eqref{Lcomm-1-5} that, for any $y\in M_n^{\bf 1},$  
\beq\nonumber
&&\hspace{-0.2in}\|[s,\, \psi(y)]\|^2=\|s((e-h_{k_0})+(f_k-f_{k_0}))\psi(y)-\psi(y)((e-h_{k_0})+(f_k-f_{k_0}))s\|^2\\\nonumber
&&\hspace{0.9in}\le 4\|s((e-h_{k_0})+(f_k-f_{k_0}))\|^2=4\|s((e-h_{k_0})+(f_k-f_{k_0}))^2s^*\|\\\nonumber
&&\hspace{0.9in}\le 4\|s(1-h_{k_0})s^*\|=4\|s(1-h_{k_0})\|^2
<\ep^2\hspace{0.2in} \rforal s\in S.
\eneq
So far we have shown that (1) and (2) hold. 
Since  $\psi(1_n)=e-h_{k_0}+(f_k-f_{k_0})$ and $h_{k_0}, f_k, f_{k_0}\in J_F,$
$\pi_F\circ \psi$ is unital.  So (3) holds and the lemma follows.
\end{proof}

\begin{prop}\label{Ldvi-3}
%
Let $A$ be a  separable 
algebraically simple \CA\,  with 
strict comparison, T-tracial approximate oscillation zero, 
and a non-empty  compact $T(A),$ 
and  let
$F\subset \partial_e(T(A))$ be a compact subset.
Suppose that $\ep\in (0,1/2),$ ${\cal F}\subset A^{\bf 1}$ is a finite subset, 
$D$ is a finite dimensional \CA\, and 
$\phi: D\to l^\infty(A)/I_{_{F, \varpi}}$ is a \hm\, 
such that
\beq\label{Ldvi-3-e-1}
\|\Pi_{_{F, \varpi}}(\iota(x))-\phi(y_x)\|_{_{2, F_\varpi}}< \ep/2\tforal x\in {\cal F}\,\, \,{and\,\, some}\,\, y_x\in D^{\bf 1}.
\eneq
Then, for any integer $n\ge 1,$ there exists a \hm\, $\psi: M_n\to  l^\infty(A)/I_{_{\Tw, \varpi}}$
such  that
\beq
&&\hspace{-0.2in}(1)\,\,
\|[\Pi_{_{\varpi}}(\iota(x)),\,  \psi(y)]\|_{_{2, T(A)_\varpi}}<\ep\tforal x\in {\cal F}\tand y\in M_n^{\bf 1},\tand\\
&&\hspace{-0.2in}(2) \,\, \pi_F\circ \psi(1_{M_n})=1,
\eneq
where $\pi_F: l^\infty(A)/I_{_{\Tw, \varpi}}\to  l^\infty(A)/I_{_{F, \varpi}}$ is the quotient map.
%
\end{prop}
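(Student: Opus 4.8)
The plan is to lift the homomorphism $\phi: D\to l^\infty(A)/I_{_{F,\varpi}}$ to a homomorphism from a finite-dimensional algebra into the larger quotient $l^\infty(A)/I_{_{\Tw,\varpi}}$, then apply Lemma \ref{Lcomm-1} to obtain a unital (modulo $F$) copy of $M_n$ that commutes with the lifted $\phi$, and finally observe that the approximate commutation with $\phi$ in the $F$-norm upgrades, via the quotient-norm property (TE) established in Theorem \ref{Ptight}, to approximate commutation with the constant sequences $\Pi_\varpi(\iota(x))$ in the $T(A)$-norm. Concretely, first I would use the Elliott Lifting Lemma (Lemma \ref{Lelliott}) together with the real rank zero of $l^\infty(A)/I_{_{\Qw,\varpi}}$ (from \cite[Theorem 6.4]{FLosc}) — noting $\Tw$ and $\Qw$ agree here since $A$ is amenable — to produce a homomorphism $\td\phi: D\to l^\infty(A)/I_{_{\Tw,\varpi}}$ with $\pi_F\circ\td\phi=\phi$, where $\pi_F$ is the quotient map onto $l^\infty(A)/I_{_{F,\varpi}}$.

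Next I would set ${\cal S}=\{\Pi_{_{\Tw,\varpi}}(\iota(x)): x\in{\cal F}\}$ — actually one wants these as elements witnessing the $F$-closeness to $\phi(D)$ — and apply Lemma \ref{Lcomm-1} with the homomorphism $\td\phi$, the finite subset consisting of the elements $z$ for which $\pi_F(z)$ is close to $\phi(y_x)$, and the given $n$. This yields a homomorphism $\psi: M_n\to l^\infty(A)/I_{_{\Tw,\varpi}}$ with $[\td\phi(d),\psi(y)]=0$ exactly, $\pi_F\circ\psi(1_n)=1$, and $\|[\pi_F(z),\psi(y)]\|<\ep'$ for $y\in M_n^{\bf 1}$ and a small $\ep'$ to be chosen. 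The point of condition (1) in Lemma \ref{Lcomm-1} is that $\psi$ genuinely commutes with the lift $\td\phi$, hence with $\phi$ after applying $\pi_F$.

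The main work then is the norm estimate for condition (1) of the proposition, and here is where I expect the real obstacle. We need to pass from a $C^*$-norm estimate on commutators in $l^\infty(A)/I_{_{F,\varpi}}$ (coming from Lemma \ref{Lcomm-1}) and a $2$-norm estimate $\|\Pi_{_{F,\varpi}}(\iota(x))-\phi(y_x)\|_{_{2,F_\varpi}}<\ep/2$ to a $2$-norm estimate $\|[\Pi_\varpi(\iota(x)),\psi(y)]\|_{_{2,T(A)_\varpi}}<\ep$. The triangle inequality in the $F_\varpi$-$2$-norm gives $\|[\Pi_{_{F,\varpi}}(\iota(x)),\pi_F\psi(y)]\|_{_{2,F_\varpi}}$ small (using that $\psi(y)$ is a contraction, that $\phi(y_x)$ commutes with $\pi_F\psi(y)$, and $\|ab\|_{_{2,F}}\le\|a\|\,\|b\|_{_{2,F}}$). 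The crucial final step is property (TE) from Theorem \ref{Ptight}: since $A$ is separable, algebraically simple, amenable, has strict comparison, and $\Gamma$ is surjective (which follows from strict comparison plus T-tracial approximate oscillation zero by \cite[Theorem 1.1]{FLosc}), for the compact set $F\subset\partial_e(T(A))$ we have that $(l^\infty(A)/I_{_{F,\varpi}},\|\cdot\|_{_{2,F_\varpi}})$ is the quotient normed space of $(l^\infty(A)/I_{_{\Tw,\varpi}},\|\cdot\|_{_{2,T(A)_\varpi}})$.

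The subtlety is that property (TE) controls the quotient of a \emph{single element's} $2$-norm, whereas here I need to control the $2$-norm of the commutator $[\Pi_\varpi(\iota(x)),\psi(y)]$ itself, a specific element of $l^\infty(A)/I_{_{\Tw,\varpi}}$ whose image under $\pi_F$ is small in the $F_\varpi$-$2$-norm. So the right move is: first replace $\td\phi$ by $\phi$ to see that the lifted commutator $[\Pi_\varpi(\iota(x)),\psi(y)]$ maps into a commutator in the $F$-quotient which is small, then use the \emph{converse} direction — (1) of Proposition \ref{Pquotientnorm} gives $\|\pi_F(w)\|_{_{2,F_\varpi}}\le\|w+j\|_{_{2,T(A)_\varpi}}$ for $j\in I_{_{F,\varpi}}/I_{_{\Tw,\varpi}}$ — so by property (TE) one can choose the representative $w=[\Pi_\varpi(\iota(x)),\psi(y)]$ modified by an element of $I_{_{F,\varpi}}/I_{_{\Tw,\varpi}}$ to make the $T(A)_\varpi$-$2$-norm nearly equal to the small $F_\varpi$-$2$-norm of its image. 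But modifying $\psi(y)$ or $\Pi_\varpi(\iota(x))$ by such an element changes the commutator structure, so instead I would argue directly: the commutator $c=[\Pi_\varpi(\iota(x)),\psi(y)]$ satisfies $\|\pi_F(c)\|_{_{2,F_\varpi}}<\ep/2$, and I want $\|c\|_{_{2,T(A)_\varpi}}$ small, which is \emph{not} automatic. Here is the fix: rather than trying to make $c$ itself small in the bigger norm, note that $c = [\Pi_\varpi(\iota(x)) - \td\phi(y_x),\,\psi(y)]$ since $\td\phi(y_x)$ commutes with $\psi(y)$ exactly; so $\|c\|_{_{2,T(A)_\varpi}}\le 2\|\Pi_\varpi(\iota(x))-\td\phi(y_x)\|_{_{2,T(A)_\varpi}}$ using $\|\psi(y)\|\le1$ and the $2$-norm inequalities. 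Now $\Pi_\varpi(\iota(x))-\td\phi(y_x)$ is a single element whose $\pi_F$-image has $2$-norm $<\ep/2$, so by property (TE) one may replace $\td\phi(y_x)$ (by adding an element of $I_{_{F,\varpi}}/I_{_{\Tw,\varpi}}$ — which does not affect the commutator since $\psi(y)$ still commutes with the part in the ideal, as the ideal elements can be absorbed) to arrange $\|\Pi_\varpi(\iota(x))-\td\phi(y_x)-j_x\|_{_{2,T(A)_\varpi}}<\ep/2+\delta$. The hard part, and the part requiring care, is checking that this replacement is compatible with $\psi$ still commuting with the modified object, or alternatively bounding $\|[\Pi_\varpi(\iota(x)),\psi(y)]\|_{_{2,T(A)_\varpi}}$ using that $[\,\cdot\,,\psi(y)]$ descends through the quotient and property (TE) applies to the descended norm. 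I would carry this out by absorbing the ideal correction into a fresh choice of lift of $\psi$ commuting with $\td\phi$ relative to the enlarged data set, iterating the Lemma \ref{Lcomm-1} construction once more if necessary, and choosing $\ep'$ in Lemma \ref{Lcomm-1} small enough at the outset. Conclusion (2) is immediate from $\pi_F\circ\psi(1_n)=1$.
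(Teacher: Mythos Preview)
Your ingredients are exactly those of the paper --- Elliott lifting (Lemma~\ref{Lelliott}), property (TE) via Theorem~\ref{Ptight}, and Lemma~\ref{Lcomm-1} --- and your eventual diagnosis is correct, but the assembly has a gap that is repaired by reversing the order of two steps. Your initial choice $S=\{\Pi_{_{\Tw,\varpi}}(\iota(x)):x\in{\cal F}\}$ cannot be fed into Lemma~\ref{Lcomm-1}, since that lemma requires $S\subset I_{_{F,\varpi}}/I_{_{\Qw,\varpi}}$ and the constant sequences are not in the ideal; this is why you end up unable to compare the $T(A)_\varpi$-norm with the $F_\varpi$-norm of the commutator and resort to an unspecified iteration.

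The paper's resolution --- which your final paragraph gestures toward --- is to invoke (TE) \emph{before} Lemma~\ref{Lcomm-1}. After lifting $\phi$ to $\phi_D:D\to l^\infty(A)/I_{_{\Tw,\varpi}}$, use (TE) to choose for each $x\in{\cal F}$ an element $j_x\in I_{_{F,\varpi}}/I_{_{\Tw,\varpi}}$ with $\|\Pi_\varpi(\iota(x))-\phi_D(y_x)+j_x\|_{_{2,T(A)_\varpi}}<\ep/2$, and let $\eta>0$ be the slack in this inequality. \emph{Then} apply Lemma~\ref{Lcomm-1} once with $S=\{j_x:x\in{\cal F}\}$ and tolerance $\eta/2$. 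The lemma yields $\psi$ commuting exactly with $\phi_D(D)$ and satisfying $\|[j_x,\psi(y)]\|<\eta/2$ in the $C^*$-norm of $l^\infty(A)/I_{_{\Tw,\varpi}}$ (the displayed statement of Lemma~\ref{Lcomm-1} reads $\pi_F(z)$, but its proof establishes the stronger bound on $[z,\psi(y)]$ in the ambient algebra, and that is what is used). Now your own identity $[\Pi_\varpi(\iota(x)),\psi(y)]=[\Pi_\varpi(\iota(x))-\phi_D(y_x)+j_x,\,\psi(y)]-[j_x,\psi(y)]$ gives $\|[\Pi_\varpi(\iota(x)),\psi(y)]\|_{_{2,T(A)_\varpi}}\le 2(\ep/2-\eta)+\eta/2<\ep$ directly, with no iteration and no need to re-choose $\psi$.
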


\begin{proof}
For each $x\in {\cal F},$ fix a pair $(x, y_x)$ such that \eqref{Ldvi-3-e-1} holds.
Fix $n\in \N.$  
Put $C=l^\infty(A)/I_{_{\Tw, \varpi}}$ and $C_F=l^\infty(A)/I_{_{F, \varpi}}.$
Let $\pi_F: C\to C_F$ be the quotient map. 
Since $A$ has  T-tracial approximate oscillation zero, by Theorem 6.4 of \cite{FLosc}, 
$C$ has real rank zero.   By Elliott's lifting lemma (see Lemma \ref{Lelliott}),
there is a \hm\, $\phi_D: D\to C$ such that $\pi_F\circ \phi_D=\phi.$ 

It follows from \cite[Theorem 7.11]{FLosc}, $\Gamma$ is surjective. 
Then, by  Theorem \ref{Ptight}, $T(A)$ has has property (TE).
Therefore, 
for each $x\in {\cal F},$ there is $j_x\in J_F=I_{_{F, \varpi}}/I_{_{T(A), \varpi}}$ such that
\beq
\|\Pi_\varpi(\iota(x))-\phi_D(y_x)+j_x\|_{_{2, T(A)_\varpi}}<\ep/2.
\eneq
Put  
\beq\label{Cdvi-4-4}
\eta=\ep/2-\max\{\|\Pi_\varpi(\iota(x))-\phi_D(y_x)+j_x\|_{_{2, T(A)_\varpi}}:x\in {\cal F}\}>0.
\eneq

By Lemma \ref{Lcomm-1}, there is a \hm\, $\psi: M_n\to C$ such that
\beq\label{Cdvi-4-5-}
&&\|[\phi_D(b),\, \psi(z)]\|=0\rforal b\in D\andeqn z\in M_n,\\\label{Cdvi-4-5}
&&\|[j_x, \, \psi(z)]\|<\eta/2\rforal x\in {\cal F}\andeqn z\in M_n^{\bf 1}
\eneq
and $\pi_F\circ \psi$ is unital (so (2) holds). 
It follows that, by \eqref{Cdvi-4-5-}, \eqref{Cdvi-4-5}  and by  \eqref{Cdvi-4-4}, for all $x\in {\cal F}$ and $z\in M_n^{\bf 1},$ 
\beq\nonumber
&&\hspace{-0.4in}\|\Pi_\varpi(\iota(x))\psi(z)-\psi(z)\Pi_\varpi(\iota(x))\|_{_{2, T(A)_\varpi}}
\le \|(\phi_D(y_x)-j_x)\psi(z)-\psi(z)(\phi_D(y_x)-j_x)\|_{_{2, T(A)_\varpi}}\\\nonumber
&&\hspace{0.7in}+\|
(\Pi_\varpi(\iota(x))-\phi_D(y_x)+j_x)\psi(z)-\psi(z)(\Pi_\varpi(\iota(x))-\phi_D(y_x)+j_x)\|_{_{2, T(A)_\varpi}}\\\nonumber
&&\hspace{0.7in}<\eta/2+\|(\Pi_\varpi(\iota(x))-\phi_D(y_x)+j_x)\psi(z)\|_{_{2, T(A)_\varpi}}\\\nonumber
&&\hspace{1.8in}
+\|\psi(z)(\Pi_\varpi(\iota(x))-\phi_D(y_x)+j_x)\|_{_{2, T(A)_\varpi}}\\\nonumber
&&\hspace{0.7in}<\eta/2+2(\ep/2-\eta)<\ep.
\eneq
Thus (1) holds. The lemma follows.
%
%
%
%
\end{proof}

\section{Semi-projectivity in 2-norm}

In this section we consider the stability of the projective \CA s in trace 2-norm (Proposition \ref{PPproj}) and present Lemma  \ref{Lsemip}.

\begin{lem}\label{Loz-hom}
Let $A$ be a separable algebraically simple \CA\, with T-tracial approximate oscillation  zero, 
$S\subset QT_{(0,1]}(A)$ be a subset  and
$D$ be a finite dimensional \CA.  Suppose that $\phi: D\to l^\infty(A)/I_{_{S, \varpi}}$
is an order zero \cpc. 
Then, for any $\ep>0,$ there exists a   finite dimensional \CA\, $D_1$
and a \hm\, $h: D_1\to \overline{\phi(1_D)( l^\infty(A)/I_{_{S, \varpi}})\phi(1_D)}$ such that, for any $x\in D^{\bf 1},$
there is $y_x\in D_1^{\bf 1}$ satisfying
\beq
\|\phi(x)-h(y_x)\|<\ep.
\eneq
If, in addition, $\phi(1_D)$ is a projection, one may require  that
$h(1_{D_1})=\phi(1_D).$
\end{lem}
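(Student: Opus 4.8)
\textbf{Proof plan for Lemma \ref{Loz-hom}.}

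The plan is to use the real rank zero property of $C:=l^\infty(A)/I_{_{S, \varpi}}$ together with the order zero structure theorem to replace $\phi$ by a homomorphism from a finite dimensional algebra. First I would invoke the structure theorem for order zero maps (\cite[Proposition 1.2.1]{WcovII}): the given order zero c.p.c.~map $\phi: D\to C$ gives rise to a homomorphism $\phi_c: C_0((0,1])\otimes D\to C$ with $\phi_c(\jmath\otimes d)=\phi(d)$ for all $d\in D$. Write $p=\phi(1_D)$, which need not be a projection, and note $\overline{\phi(1_D)C\phi(1_D)}=\overline{pCp}$ contains the image of $\phi$. Since $A$ has T-tracial approximate oscillation zero, Theorem 6.4 of \cite{FLosc} gives that $C$ has real rank zero, hence so does the hereditary subalgebra $\overline{pCp}$.

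Next, the key step: for each $\ep>0$, I would approximate the positive element $\phi(\jmath\otimes 1_D)=p$ (or rather the whole family $\{\phi(d):d\in D^{\bf 1}\}$, which is a $C_0((0,1])\otimes D$-worth of elements) by elements with finite spectrum. Concretely, decompose $D=M_{r_1}\oplus\cdots\oplus M_{r_l}$ with matrix units $\{e^{(s)}_{i,j}\}$; then $\phi$ is determined by the commuting positive elements $h_s:=\phi(e^{(s)}_{1,1})$ and the partial-isometry-like elements coming from the order zero structure. Using real rank zero of $\overline{h_s C h_s}$, choose mutually orthogonal projections $q_{s,1},\dots,q_{s,m(s)}\in\overline{h_s C h_s}$ and scalars $\lambda_{s,t}\in(0,1]$ with $\|h_s-\sum_t\lambda_{s,t}q_{s,t}\|<\ep'$ for a suitably small $\ep'$ depending on $\ep$ and on $\max_s r_s$. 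Then conjugating the matrix units of $M_{r_s}$ appropriately (using that $C_0((0,1])\otimes M_{r_s}$ is projective, so the order zero map restricted to the $s$-th summand lifts/deforms along the spectral decomposition of $h_s$) one builds, for each $t$, a copy of $M_{r_s}$ sitting under $q_{s,t}$; taking $D_1=\bigoplus_{s,t}M_{r_s}$ and letting $h:D_1\to\overline{pCp}$ be the direct sum of these embeddings scaled by $\lambda_{s,t}$... wait, scaling breaks multiplicativity, so instead I set $h$ to be the genuine homomorphism onto $\bigoplus_{s,t}q_{s,t}$-cutdowns of these matrix copies (no scalars in $h$ itself), and observe $\phi(x)=\phi_c(\jmath\otimes x)$ is approximated in norm by $h(y_x)$ where $y_x\in D_1^{\bf 1}$ encodes $x$ weighted through the $\lambda_{s,t}$'s absorbed into the element $y_x$, not the map. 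The point is that $\phi(d)=\sum_{s,t}\lambda_{s,t}\,(\text{matrix copy of }d)+(\text{error}<\ep)$, and the main term has the form $h(y_d)$ for an appropriate contraction $y_d\in D_1$.

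For the final assertion, if $\phi(1_D)=p$ is already a projection, then I would arrange $h(1_{D_1})=p$: run the same argument, but now $h_s=\phi(e^{(s)}_{1,1})$ satisfies $\sum_s r_s$-worth of these summing to $p$, and using Lemma \ref{Lelliott} (the Elliott Lifting Lemma) or directly real rank zero, the order zero map $\phi$ with projection image can be perturbed to a genuine homomorphism $D\to \overline{pCp}$ with $h(1_D)=p$ by absorbing the $\lambda$'s; more cleanly, apply \cite[Proposition 1.2.4]{WcovII} type reasoning together with real rank zero of $pCp$ to see that an order zero map with projection unit is a norm limit of homomorphisms, then take $D_1=D$ and $h$ that homomorphism. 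The main obstacle I anticipate is the bookkeeping in the second step: keeping track of how the scalars $\lambda_{s,t}$ get absorbed into the elements $y_x$ rather than the map $h$ (so that $h$ stays a homomorphism) while maintaining $\|y_x\|\le 1$ and the norm estimate $\|\phi(x)-h(y_x)\|<\ep$ uniformly over $x\in D^{\bf 1}$; this requires choosing $\ep'$ in terms of $\ep$ and the matrix sizes, and being careful that the order-zero cross terms (the off-diagonal matrix units) are handled by the projectivity of $C_0((0,1])\otimes M_{r_s}$ over the subalgebra generated by the $q_{s,t}$'s.
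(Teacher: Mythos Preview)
Your approach is correct and essentially the same as the paper's: real rank zero of $C=l^\infty(A)/I_{_{S,\varpi}}$ via \cite[Theorem 6.4]{FLosc}, spectral approximation of each $a_{s,1,1}:=\phi(e^{(s)}_{1,1})$ in its corner, then assembling matrix copies. The bookkeeping you worry about is dissolved in the paper by the isomorphism $\overline{\phi(1_{M_{r_s}})C\phi(1_{M_{r_s}})}\cong M_{r_s}\bigl(\overline{a_{s,1,1}Ca_{s,1,1}}\bigr)$ (\cite[Proposition~8.3]{FLosc}): once a commutative finite dimensional $D_{s,0}\subset\overline{a_{s,1,1}Ca_{s,1,1}}$ and $d_{s,1,1}\in D_{s,0}^{\bf 1}$ with $\|a_{s,1,1}-d_{s,1,1}\|<\ep/2R$ are chosen, the embedding $h_s:D_{s,0}\otimes M_{r_s}\hookrightarrow M_{r_s}(\overline{a_{s,1,1}Ca_{s,1,1}})$ is automatic, and for $x=(x_s)\in D^{\bf 1}$ one takes $y_x=(d_{s,1,1}\otimes x_s)$; no separate projectivity argument for the off-diagonals is needed. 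For the projection case, the paper arranges $1_{D_{s,0}}=a_{s,1,1}$, but your observation that an order zero c.p.c.\ map with $\phi(1_D)$ a projection is already a \hm\ (so one may take $D_1=D$, $h=\phi$) is also valid and arguably simpler.
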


\begin{proof}
Write $D=M_{r(1)}\oplus M_{r(2)}\oplus \cdots \oplus M_{r(l)}.$
Let $\{e_{s, i,j}: 1\le i,j\le l\}$ be a system of matrix units for $M_{r(s)},$ $1\le s\le l.$
Put $C=l^\infty(A)/I_{_{S, \varpi}},$  $R=l^2r(1)^2r(2)^2\cdots r(l)^2,$  and 
$a_{s, i,j}=\phi(e_{s, i,j}),$ $1\le i,j\le r(s)$ and $1\le s\le l.$
It follows from Theorem 6.4 of \cite{FLosc} that $l^\infty(A)/I_{_{\Qw, \N}}$
has real rank zero. Therefore its quotient $l^\infty(A)/I_{_{S, \varpi}}$ also has real rank zero.
As 
in the proof of Lemma \ref{Ldvi-2}, there is a commutative finite dimensional \CA\, $D_{s,0}$
and an injective \hm\, $\psi_{s, 1,1}: D_{s,0}\to \overline{a_{s,1,1}Ca_{s,1,1}}$
and  $d_{s,1,1}\in D_{s,0}^{\bf 1}$ such that
\beq\label{Loz-hom-2}
\|a_{s,1,1}-\psi_{s, 1,1}(d_{s,1,1})\|<\ep/2R,\,\, 1\le s\le l
\eneq
(this does not require that $A$ has strict comparison).
Note that $\overline{\phi(1_{M_{r(s)}})C\phi(1_{M_{r(s)}})}\cong M_{r(s)}(\overline{a_{s,1,1}Ca_{s,1,1}})$
(see, for example, Proposition 8.3 of \cite{FLosc}), $1\le s\le l.$
Moreover\\ $M_{r(s)}(\psi_{s,1,1}(D_{s,0}))\cong M_{r(s)}(D_{s,0})$ and this 
isomorphism gives an injective \hm\, 
$h_s: D_{s,0}\otimes M_{r(s)}\to \overline{\phi(1_{M_{r(s)}})C\phi(1_{M_{r(s)}})}$
such that $h_s(D_{s,0}\otimes e_{1,1})=\psi_{s,1,1}.$
Define $D_1=\bigoplus_{s=1}^l D_{s,0}\otimes M_{r(s)}$ and 
define a \hm\, $h: D_1\to \overline{\phi(1_D)C\phi(1_D)}$ by
$h|_{D_{s,0}\otimes M_{r(s)}}=h_s,$ $1\le s\le l.$
By \eqref{Loz-hom-2}, one checks that this \hm\, $h$ meets the requirements of the lemma.

If, $\phi(1_D)$ is a projection, one may choose $D_{s,0}$ above such that 
$1_{D_{s,0}}=\phi(e_{s,1,1})=a_{s, 1,1}$  which is also a projection.
Then the proof above implies that $h(1_{D_1})=\phi(1_D).$
\end{proof}

\begin{lem}\label{Lsemi-0}
Let $D$ be a finite dimensional \CA, $A$ be a separable simple \CA\, 
and $F\subset QT_{(0,1]}(A)$ a subset.
Then, for any $\ep>0,$ there exists $\dt>0$ satisfying the following:

Suppose that $\phi: D\to A$ is a \cpc\, 
such that 
\beq
\|\phi(a)\phi(b)\|_{_{2, F}}<\dt\tforal a, b\in D^{\bf 1} \,\,{\rm with}\,\, ab=0.
\eneq
Then there exists 
 an order zero \cpc\,  $\psi: D\to \Her(\phi(1_D))$ such that
\beq
\|\phi(x)-\psi(x)\|_{_{2, F}}<\ep\rforal x\in D^{\bf 1}.
\eneq
\end{lem}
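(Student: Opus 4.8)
The plan is to deduce this ``$2$-norm semiprojectivity'' of the finite dimensional \CA\, $D$ from the ordinary (norm) semiprojectivity of finite dimensional algebras, by passing to an ultrapower where the approximate conditions become exact. Concretely, I would argue by contradiction. Suppose the statement fails for some $\ep_0>0$. Then there is a sequence of \cpc\, maps $\phi_k\colon D\to A$ with $\|\phi_k(a)\phi_k(b)\|_{_{2,F}}<1/k$ for all $a,b\in D^{\bf 1}$ with $ab=0$, yet no order zero \cpc\, $\psi\colon D\to \Her(\phi_k(1_D))$ satisfies $\|\phi_k(x)-\psi(x)\|_{_{2,F}}<\ep_0$ for all $x\in D^{\bf 1}$. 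Bundle the $\phi_k$ into a single \cpc\, map $\Phi\colon D\to l^\infty(A)$, $\Phi(x)=\{\phi_k(x)\}$, and compose with $\Pi_{_{F,\varpi}}$ to get a \cpc\, map $\bar\Phi:=\Pi_{_{F,\varpi}}\circ\Phi\colon D\to l^\infty(A)/I_{_{F,\varpi}}$. By the hypothesis $\|\phi_k(a)\phi_k(b)\|_{_{2,F}}\to 0$ along $\varpi$ whenever $ab=0$ in $D$, so $\bar\Phi$ is an order zero \cpc\, map into $l^\infty(A)/I_{_{F,\varpi}}$.

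Now the key point: order zero \cpc\, maps out of a finite dimensional \CA\, into any \CA\, correspond to \hm s from the cone $C_0((0,1])\otimes D$ (as recalled in the Notation section), and $C_0((0,1])\otimes D$ is a projective \CA. But projectivity lifts \emph{norm}-exactly, whereas here I only have a map into a quotient by a $2$-norm ideal; what I actually want is an order zero \cpc\, map $\psi$ \emph{into $A$ itself} (more precisely into $\Her(\phi_k(1_D))$ for the relevant $k$) that is $2$-norm close to $\phi_k$. So instead I would lift $\bar\Phi$ through the quotient map $l^\infty(A)\to l^\infty(A)/I_{_{F,\varpi}}$: since $C_0((0,1])\otimes D$ is projective and $\Pi_{_{F,\varpi}}\colon l^\infty(A)\to l^\infty(A)/I_{_{F,\varpi}}$ is a surjective \hm\, of \CA s, the \hm\, $C_0((0,1])\otimes D\to l^\infty(A)/I_{_{F,\varpi}}$ associated to $\bar\Phi$ lifts to a \hm\, $C_0((0,1])\otimes D\to l^\infty(A)$, i.e.\ to an order zero \cpc\, map $\Psi=\{\psi_k\}\colon D\to l^\infty(A)$ with $\Pi_{_{F,\varpi}}\circ\Psi=\bar\Phi=\Pi_{_{F,\varpi}}\circ\Phi$. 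Replacing each $\psi_k$ by $\phi_k(1_D)^{1/2}\psi_k(\cdot)\phi_k(1_D)^{1/2}$ (or more carefully, compressing to $\Her(\phi_k(1_D))$ using $f_\eta(\phi_k(1_D))\psi_k(\cdot)f_\eta(\phi_k(1_D))$ and noting $\psi_k(1_D)$ is $2$-norm close to $\phi_k(1_D)$), I may assume $\psi_k$ maps $D$ into $\Her(\phi_k(1_D))$ up to a small $2$-norm error, and that it is still (approximately) order zero; a small functional-calculus perturbation then makes it exactly order zero into $\Her(\phi_k(1_D))$. Since $\Pi_{_{F,\varpi}}(\{\phi_k(x)-\psi_k(x)\})=0$ for each $x$ in a finite generating set of $D^{\bf 1}$, we get $\lim_{k\to\varpi}\|\phi_k(x)-\psi_k(x)\|_{_{2,F}}=0$ for all $x\in D^{\bf 1}$ (finite dimensionality makes the finite generating set suffice, with a uniformity constant), so for $\varpi$-many $k$ the map $\psi_k$ contradicts the choice of the sequence.

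The main obstacle I expect is the bookkeeping at the compression step: the lift $\Psi$ produced by projectivity need not land in $\Her(\phi_k(1_D))$, and forcing it there must be done without destroying either the order zero property or the $2$-norm approximation. The clean way is to observe that the corner $\Her(\Pi_{_{F,\varpi}}(\{\phi_k(1_D)\}))$ of $l^\infty(A)/I_{_{F,\varpi}}$ already contains the image of $\bar\Phi$ (because $\bar\Phi(1_D)=\Pi_{_{F,\varpi}}(\{\phi_k(1_D)\})$ is the unit of the hereditary subalgebra generated by $\bar\Phi(D)$, by order zero-ness), then lift within a hereditary subalgebra of $l^\infty(A)$ of the form $\overline{\{\phi_k(1_D)\}\, l^\infty(A)\, \{\phi_k(1_D)\}}$ — but that hereditary subalgebra maps onto $\Her(\bar\Phi(1_D))$, so projectivity of $C_0((0,1])\otimes D$ applies there directly, and every lift automatically lands in $\overline{\{\phi_k(1_D)\}l^\infty(A)\{\phi_k(1_D)\}}\subset\prod_k \Her(\phi_k(1_D))$. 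That resolves the issue at the cost of one short verification that the relevant hereditary subalgebra of $l^\infty(A)$ surjects onto the corresponding hereditary subalgebra of the quotient, which is standard. A secondary, minor point is extracting from ``$\lim_{k\to\varpi}=0$ for each generator'' a single index $k$ that works for all of $D^{\bf 1}$ simultaneously with the prescribed $\ep$; this is immediate since $D$ is finite dimensional and \cpc\, maps are $1$-Lipschitz in the relevant norms, so control on a finite spanning set of $D^{\bf 1}$ gives control on all of $D^{\bf 1}$ up to a fixed multiplicative constant, which can be absorbed by choosing $\dt$ (equivalently, the contradiction hypothesis) accordingly.
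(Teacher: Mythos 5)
Your proposal is correct and follows essentially the same route as the paper's proof: argue by contradiction, bundle the maps $\phi_k$ into $l^\infty(A)$, pass to the quotient by the trace $2$-norm ideal (the paper uses $I_{_{F,\N}}$ rather than $I_{_{F,\varpi}}$, an immaterial difference) where the induced map becomes exactly order zero, lift it back using projectivity of $C_0((0,1])\otimes D$ (\cite[Proposition 1.2.4]{WcovII}), and conclude uniformly on $D^{\bf 1}$ by finite dimensionality. Your extra care that the lift can be taken inside the hereditary subalgebra $\overline{\{\phi_k(1_D)\}\,l^\infty(A)\,\{\phi_k(1_D)\}}$, whose elements have entries in $\Her(\phi_k(1_D))$, is precisely the point the paper handles by defining the induced order zero map into the corner $\overline{e\,(l^\infty(A)/I_{_{F,\N}})\,e}$ with $e=\Pi_{_{F,\N}}(\{\phi_k(1_D)\}),$ so no gap remains.
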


\begin{proof}
Suppose the lemma is false.
Then, there exist $\ep_0>0,$ 
a sequence of \cpc s $\phi_k: D\to A$ 
and a sequence $\dt_k\in (0,1/2)$ such that $\sum_{k=1}^\infty \dt_k<\infty$
and, for any orthogonal pairs $a, b\in D_+^{\bf 1},$ 
\beq
&&\|\phi_k(a)\phi_k(b)\|_{_{2, F_\varpi}}<\dt_k,\,\, k\in \N,\andeqn\\\label{Lsemi1-1}
&&\sup\{\|\phi_k(x)-\psi(x)\|_{_{2, F}}: x\in D^{\bf 1}\}\ge\ep_0
\eneq
for any order zero \cpc\,  $\psi: D\to \Her(\phi_k(1_D)).$

Then $\{\phi_k(a)\phi_k(b)\}_{k\in \N}\in I_{_{F, \N}}$ for any orthogonal pairs $a, b\in D^{\bf 1}_+.$ Let $e=\Pi_{_{F, \N}}(\{\phi_k(1_D)\})\in l^\infty(A)/I_{_{F, \N}}.$
Define $\wtd \phi: D\to  \overline{e(l^\infty(A)/I_{_{F, \N}})e}$ by $\wtd\phi(x)=\Pi_{_{F, \N}}(\{\phi_k(x)\})$ for all 
$x\in D.$ 
Then $\wtd\phi$ is an order zero \cpc. 
By \cite[Proposition 1.2.4]{WcovII},
there exists 
an order zero \cpc\, $\Psi=\{\psi_n\}: D\to l^\infty(A)$ 
such that $\Pi_{_{F, \N}}\circ \Psi=\psi.$ Therefore 
\beq
\lim_{n\to\infty}\|\phi_n(x)-\psi_n(x)\|_{_{2, F}}=0\rforal x\in D.
\eneq
Since $D^{\bf 1}$ is compact, this implies that
\beq
\lim_{n\to\infty}\sup\{\|\phi_k(x)-\psi_n(x)\|_{_{2, F}}: x\in D^{\bf 1}\}=0.
\eneq
This contradicts \eqref{Lsemi1-1}. The lemma follows.
\end{proof}

One may want to compare the following with \cite[Lemma 1.2]{WcovII}.

\begin{cor}\label{Lsemip-L}
Let $D$ be a finite dimensional \CA, $A$  a separable simple \CA\, 
and $F\subset QT_{(0,1]}(A)$ a subset.
Then, for any $\ep>0,$ there exists $\dt>0$ satisfying the following:

Suppose that $\phi: D\to A$ is an order zero \cpc\, and 
$e\in A_+^{\bf 1}$  such that 
\beq
\|[e,\, \phi(x)]\|_{_{2, F}}<\dt\tforal x\in D^{\bf 1}.
\eneq
Then there exists 
 an order zero \cpc\,  $\psi: D\to \Her(e)$ such that
\beq
\|e\phi(x)e-\psi(x)\|_{_{2, F}}<\ep\rforal x\in D^{\bf 1}.
\eneq
\end{cor}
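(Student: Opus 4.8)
The plan is to deduce Corollary~\ref{Lsemip-L} directly from Lemma~\ref{Lsemi-0} by noting that, when $\phi\colon D\to A$ is an order zero \cpc\ and $e\in A_+^{\bf 1}$ approximately commutes with $\phi$ in the $\|\cdot\|_{_{2,F}}$-seminorm, the compression $\phi_e\colon D\to \Her(e)$ defined by $\phi_e(x)=e\phi(x)e$ is approximately order zero in that same seminorm; then Lemma~\ref{Lsemi-0} applied to $\phi_e$ (whose codomain we may as well take to be $\Her(e)$) produces the desired $\psi$. So the first step is to fix $\ep>0$, feed $\ep/2$ (say) into Lemma~\ref{Lsemi-0} to obtain a $\dt_0>0$, and then choose $\dt>0$ small enough (depending on $\dt_0$, $\ep$ and the ``order zero constant'' of $D$) so that the hypothesis $\|[e,\phi(x)]\|_{_{2,F}}<\dt$ for $x\in D^{\bf 1}$ forces $\|\phi_e(a)\phi_e(b)\|_{_{2,F}}<\dt_0$ for all orthogonal $a,b\in D^{\bf 1}$.

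The key computation for that implication is routine: if $ab=0$ with $a,b\in D^{\bf 1}$, then $\phi(a)\phi(b)=0$ since $\phi$ is order zero, so
\[
\phi_e(a)\phi_e(b)=e\phi(a)e\phi(b)e=e\phi(a)\bigl(e\phi(b)-\phi(b)e\bigr)e+e\phi(a)\phi(b)e^2=e\phi(a)[e,\phi(b)]e,
\]
and hence, using $\|cx\|_{_{2,F}}\le\|c\|\,\|x\|_{_{2,F}}$ and $\|xc\|_{_{2,F}}\le\|c\|\,\|x\|_{_{2,F}}$ (Lemma 3.5 of \cite{Haagtrace}; Definition 2.16 of \cite{FLosc}), one gets $\|\phi_e(a)\phi_e(b)\|_{_{2,F}}\le\|[e,\phi(b)]\|_{_{2,F}}<\dt$. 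Actually one does not even need the order zero property of $\phi$ here beyond $\phi(a)\phi(b)=0$; but strictly, $D^{\bf 1}$ also contains non-positive elements, so one either restricts attention to the generating orthogonal positive pairs (which suffices, since Lemma~\ref{Lsemi-0} is stated for $ab=0$ with $a,b\in D^{\bf 1}$ — one should read off from its proof that it is enough to control positive orthogonal pairs, or simply observe the same two-line estimate works verbatim for arbitrary $a,b\in D^{\bf 1}$ with $ab=0$). Either way, with $\dt\le\dt_0$ the hypothesis of Lemma~\ref{Lsemi-0} is met for the map $\phi_e$.

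The second step is to check that $\phi_e$ is a \cpc\ map into $A$ (it is a compression of a \cpc\ by the positive contraction $e$, hence \cpc, and $\|\phi_e\|\le 1$), so Lemma~\ref{Lsemi-0} applies and yields an order zero \cpc\ $\psi\colon D\to\Her(\phi_e(1_D))\subseteq\Her(e)$ with $\|\phi_e(x)-\psi(x)\|_{_{2,F}}<\ep/2<\ep$ for all $x\in D^{\bf 1}$; since $\phi_e(x)=e\phi(x)e$ this is exactly the conclusion $\|e\phi(x)e-\psi(x)\|_{_{2,F}}<\ep$. One small wrinkle: Lemma~\ref{Lsemi-0} delivers $\psi$ into $\Her(\phi_e(1_D))=\Her(e\phi(1_D)e)$, which is contained in $\Her(e)$, so the stated codomain $\Her(e)$ is fine. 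I do not anticipate a genuine obstacle here; the only thing requiring a moment's care is the bookkeeping of constants and the verification that controlling commutators with $e$ is equivalent (up to the harmless factor above) to controlling the defect of $\phi_e$ from being order zero — and that is precisely the two-line estimate displayed above. Thus the proof is essentially a one-paragraph reduction to Lemma~\ref{Lsemi-0}.
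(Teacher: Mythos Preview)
Your proposal is correct and is exactly the paper's approach: the paper's proof is a single sentence stating that the corollary follows from Lemma~\ref{Lsemi-0} by considering the \cpc\ $\phi'(x)=e\phi(x)e$, and you have simply spelled out the (routine) verification that this compressed map satisfies the hypothesis of Lemma~\ref{Lsemi-0}. The computation $\phi_e(a)\phi_e(b)=e\phi(a)[e,\phi(b)]e$ (using $\phi(a)\phi(b)=0$, which holds for arbitrary $a,b\in D^{\bf 1}$ with $ab=0$ by the structure theorem for order zero maps) and the observation $\Her(\phi_e(1_D))\subset\Her(e)$ are precisely the details the paper leaves implicit.
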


\begin{proof}
The corollary follows from Lemma \ref{Lsemi-0} by considering the  \cpc\, $\phi': D\to A$ defined by
$\phi'(x)=e\phi(x)e$ for all $x\in D.$ 
%
\end{proof}

The following  proposition is not used in this paper. We would like to observe  that the proof 
of it is contained in that of Proposition \ref{Lsemi-0}.

\begin{prop}\label{PPproj}
Let $A$  be a separable simple \CA,
$F\subset QT_{(0,1]}(A)$ a subset
and $C$ be a separable projective \CA.
Then, for any $\ep>0$ and any finite subset ${\cal F}\subset C,$ there exist $\dt>0$ and a finite 
subset ${\cal G}\subset C$ such that, if $\phi: C\to A$ is a \cpc\, satisfying 
\beq
\|\phi(a)\phi(b)-\phi(ab)\|_{_{2, F}}<\dt\rforal a, b\in {\cal G},
\eneq
then there exists a \hm\, $h: C\to A$ such that
\beq
\|h(x)-\phi(x)\|_{_{2, F}}<\ep\tforal x\in {\cal F}.
\eneq
\end{prop}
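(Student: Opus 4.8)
The plan is to reduce Proposition \ref{PPproj} to the already-established order-zero case (Lemma \ref{Lsemi-0}) via the structure theory of separable projective $C^*$-algebras. Recall that a separable $C^*$-algebra $C$ is (semi)projective precisely when it is stably semiprojective in the appropriate sense; in particular, one has the classical fact that if $C$ is separable and projective then it is semiprojective, and moreover for any finite subset $\mathcal F$ and $\ep>0$ there are a finite subset $\mathcal G_0$ and $\dt_0>0$ with the property that any $\dt_0$-multiplicative c.p.c.\ map on $\mathcal G_0$ into a $C^*$-algebra $B$ is, to within $\ep$ on $\mathcal F$, close to a genuine $*$-homomorphism $C\to B$. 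The obstacle is that here ``close'' must be measured in the seminorm $\|\cdot\|_{2,F}$, which is only a seminorm on $l^\infty(A)$ and the quotient $l^\infty(A)/I_{_{F,\N}}$ (or $l^\infty(A)/I_{_{F,\varpi}}$) need not be complete; so one cannot directly invoke a liftability statement for the ambient algebra $A$.

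First I would, exactly as in the proof of Lemma \ref{Lsemi-0}, argue by contradiction. Suppose the statement fails for some $\ep_0>0$ and finite subset $\mathcal F\subset C$. Then for every finite subset $\mathcal G_k\uparrow C$ (an increasing sequence with dense union, using separability of $C$) and every $\dt_k\searrow 0$ with $\sum_k\dt_k<\infty$ there is a c.p.c.\ map $\phi_k\colon C\to A$ with $\|\phi_k(a)\phi_k(b)-\phi_k(ab)\|_{_{2,F}}<\dt_k$ for all $a,b\in\mathcal G_k$, yet no $*$-homomorphism $h\colon C\to A$ satisfies $\|h(x)-\phi_k(x)\|_{_{2,F}}<\ep_0$ for all $x\in\mathcal F$. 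Passing to the sequence algebra, the sequence $\{\phi_k(a)\phi_k(b)-\phi_k(ab)\}_k$ lies in $I_{_{F,\N}}$ for all $a,b$ in the dense $*$-algebra $\bigcup_k\mathcal G_k$, and by the contractivity estimate $\|xy\|_{_{2,F}}\le\|x\|\,\|y\|_{_{2,F}}$ (Lemma 3.5 of \cite{Haagtrace}) this extends to all of $C$. Hence the induced map $\wtd\phi\colon C\to l^\infty(A)/I_{_{F,\N}}$, $\wtd\phi(x)=\Pi_{_{F,\N}}(\{\phi_k(x)\})$, is multiplicative on a dense $*$-subalgebra and therefore (being c.p.c., hence bounded) a genuine $*$-homomorphism.

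The key step is then to lift $\wtd\phi$ to a $*$-homomorphism $C\to l^\infty(A)$. This is where projectivity of $C$ is used: since $l^\infty(A)\to l^\infty(A)/I_{_{F,\N}}$ is a surjection of $C^*$-algebras and $C$ is projective, there is a $*$-homomorphism $\Psi=\{\psi_k\}\colon C\to l^\infty(A)$ with $\Pi_{_{F,\N}}\circ\Psi=\wtd\phi$. Concretely this gives $\lim_{k\to\infty}\|\phi_k(x)-\psi_k(x)\|_{_{2,F}}=0$ for every $x\in C$, and since $\mathcal F$ is finite (indeed $C^{\bf 1}$ is where one would get uniformity if needed, but for a finite $\mathcal F$ it is immediate) we get $\lim_{k\to\infty}\max_{x\in\mathcal F}\|\phi_k(x)-\psi_k(x)\|_{_{2,F}}=0$. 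For large $k$ the $*$-homomorphism $h:=\psi_k\colon C\to A$ then contradicts the choice of $\phi_k$.

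\textbf{Main obstacle.} The delicate point is justifying the lifting $\Psi\colon C\to l^\infty(A)$ with $\Pi_{_{F,\N}}\circ\Psi=\wtd\phi$: strictly speaking, projectivity of $C$ provides lifts through surjections of $C^*$-algebras, so one must confirm that $l^\infty(A)\to l^\infty(A)/I_{_{F,\N}}$ is such a surjection (it is, $I_{_{F,\N}}$ being a closed two-sided ideal by Proposition \ref{PIdeal}, with quotient a genuine $C^*$-algebra in its $C^*$-norm — not the seminorm $\|\cdot\|_{_{2,F_\varpi}}$). One should be slightly careful that the map $\wtd\phi$ lands in the $C^*$-algebra $l^\infty(A)/I_{_{F,\N}}$ equipped with its quotient $C^*$-norm and is a $*$-homomorphism there, which the density argument above secures. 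Everything else — the passage to a contradiction, the use of $\|xy\|_{_{2,F}}\le\|x\|\,\|y\|_{_{2,F}}$, and the extraction of $h$ for large $k$ — is routine and parallels Lemma \ref{Lsemi-0} verbatim.
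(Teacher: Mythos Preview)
Your proposal is correct and is essentially the approach the paper indicates: the paper gives no separate proof of Proposition~\ref{PPproj} but merely observes that ``the proof of it is contained in that of Proposition~\ref{Lsemi-0}'' (i.e., Lemma~\ref{Lsemi-0}), and your contradiction argument---assembling the $\phi_k$ into a $*$-homomorphism $\wtd\phi\colon C\to l^\infty(A)/I_{_{F,\N}}$, lifting via projectivity of $C$ to $\Psi=\{\psi_k\}\colon C\to l^\infty(A)$, and extracting a contradiction from the coordinate $*$-homomorphisms $\psi_k$---is precisely that. Your identification of the one nontrivial point (that $I_{_{F,\N}}$ is a closed ideal so the quotient is a genuine $C^*$-algebra to which projectivity applies) is also on target; the opening remarks about reducing to the order-zero case and about semiprojectivity are unnecessary, since you never actually invoke Lemma~\ref{Lsemi-0} itself but rather rerun its scheme with projectivity in place of the cone lifting.
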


\begin{lem}\label{Lsemip}
Let $D$ be a finite dimensional \CA, $A$ a separable  algebraically simple \CA\, 
with T-tracial approximate oscillation zero 
and $F\subset QT_{(0,1]}(A)$  a subset.

Then, for any $\ep>0,$ there exists $\dt>0$ satisfying the following:

Suppose that $\phi: D\to l^\infty(A)/I_{_{F, \varpi}}$ is an order zero \cpc\,
and 
$\{e_n\}\in l^\infty(A)_+^{\bf 1}$  
such that 
\beq
\|[e,\, \phi(x)]\|_{_{2, F_\varpi}}<\dt\tforal x\in D^{\bf 1},
\eneq
where $e=\Pi_{_{F, \varpi}}(\{e_n\}).$
Then there exists   a finite dimensional \CA\, $D_1$ and 
 a  \hm\, $\psi: D_1\to e(l^\infty(A)/I_{_{F, \varpi}})e$ such that, for any $x\in D^{\bf 1},$ there exists $y_x\in D_1^{\bf 1}$ 
 such that
\beq
\|e\phi(x)e-\psi(y_x)\|_{_{2, F_\varpi}}<\ep.
\eneq
If, in addition,   $e$ is a projection, 
then one may require that $\psi(1_{D_1})=e.$
\end{lem}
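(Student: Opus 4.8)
The plan is to combine the $C^*$-norm stability result for order zero maps (Corollary~\ref{Lsemip-L}) with the ``homogenization'' of order zero maps into honest homomorphisms (Lemma~\ref{Loz-hom}). First I would fix $\ep>0$ and, applying Corollary~\ref{Lsemip-L} to the finite dimensional algebra $D$ and the subset $F$, obtain $\dt>0$ so that any order zero \cpc\, $\phi_0\colon D\to A$ which $\dt$-commutes with a positive contraction $e_0$ (in $\|\cdot\|_{_{2,F}}$) can be corrected, after compressing by $e_0$, to an order zero \cpc\, into $\Her(e_0)$ that is $\ep/2$-close to $e_0\phi_0(\cdot)e_0$ in trace $2$-norm. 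One subtlety is that Corollary~\ref{Lsemip-L} is stated at the level of $A$ with the fixed-trace norm $\|\cdot\|_{_{2,F}}$, whereas here the data live in the quotient $l^\infty(A)/I_{_{F,\varpi}}$; so the first real step is to lift $\phi$ to a sequence of order zero \cpc s $\phi_k\colon D\to A$ via \cite[Proposition~1.2.4]{WcovII} and to lift $e$ to $\{e_n\}$, translate the hypothesis $\|[e,\phi(x)]\|_{_{2,F_\varpi}}<\dt$ into $\lim_{n\to\varpi}\|[e_n,\phi_n(x)]\|_{_{2,F}}<\dt$, and apply the $A$-level result along a set in $\varpi$ where the estimate holds.

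Having produced, for $n$ in a set belonging to $\varpi$, order zero \cpc s $\psi_n'\colon D\to\Her(e_n)$ with $\|e_n\phi_n(x)e_n-\psi_n'(x)\|_{_{2,F}}<\ep/2$ for all $x\in D^{\bf 1}$, I would let $\psi'=\Pi_{_{F,\varpi}}\circ\{\psi_n'\}\colon D\to e(l^\infty(A)/I_{_{F,\varpi}})e$; this is an order zero \cpc\, with $\|e\phi(x)e-\psi'(x)\|_{_{2,F_\varpi}}<\ep/2$. Now the range of $\psi'$ sits inside $\overline{\psi'(1_D)(l^\infty(A)/I_{_{F,\varpi}})\psi'(1_D)}$ (a hereditary subalgebra of $e(l^\infty(A)/I_{_{F,\varpi}})e$), and $A$ has T-tracial approximate oscillation zero, so Lemma~\ref{Loz-hom} applies to $\psi'$: there is a finite dimensional \CA\, $D_1$ and a \hm\, $h\colon D_1\to\overline{\psi'(1_D)(l^\infty(A)/I_{_{F,\varpi}})\psi'(1_D)}\subset e(l^\infty(A)/I_{_{F,\varpi}})e$ such that for every $x\in D^{\bf 1}$ there is $z_x\in D_1^{\bf 1}$ with $\|\psi'(x)-h(z_x)\|<\ep/2$, hence $\|\psi'(x)-h(z_x)\|_{_{2,F_\varpi}}<\ep/2$ since the trace $2$-norm is dominated by the operator norm. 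Setting $\psi=h$, $y_x=z_x$, and using the triangle inequality gives $\|e\phi(x)e-\psi(y_x)\|_{_{2,F_\varpi}}<\ep$ for all $x\in D^{\bf 1}$, which is the desired conclusion.

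For the last sentence: if $e$ is a projection, then I would take $\dt$ smaller (shrinking it if necessary) so that additionally $\|e\phi(1_D)e-\psi'(1_D)\|_{_{2,F_\varpi}}$ is controlled and, more importantly, so that $\psi'(1_D)$ can be taken to be a projection equal to $e$. Concretely, when $e$ is a projection one applies the ``in addition'' clause of Corollary~\ref{Lsemip-L}'s underlying Lemma~\ref{Lsemi-0} together with a projection-lifting argument (using that $l^\infty(A)/I_{_{\Qw,\N}}$, and hence the quotient $l^\infty(A)/I_{_{F,\varpi}}$, has real rank zero by \cite[Theorem~6.4]{FLosc}) to arrange $\psi'(1_D)=e$; then the ``in addition'' clause of Lemma~\ref{Loz-hom} gives $h(1_{D_1})=\psi'(1_D)=e$, so $\psi(1_{D_1})=e$ as required. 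The main obstacle I anticipate is the bookkeeping around the two different norms (operator norm vs.\ $\|\cdot\|_{_{2,F_\varpi}}$) and making the ultrafilter-lifting of Corollary~\ref{Lsemip-L} clean — in particular checking that the order zero \cpc s $\psi_n'$ produced at finitely-determined $n\in\varpi$ really do assemble into an order zero \cpc\, on the quotient (for $n\notin$ the good set one just defines $\psi_n'=0$, which does not affect the $\varpi$-limit). The projection case requires a little extra care to ensure the unit is preserved exactly rather than approximately, but this is exactly what the real-rank-zero hypothesis and the ``in addition'' clauses of the cited lemmas are designed to handle.
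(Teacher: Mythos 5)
Your main argument coincides with the paper's proof: choose $\dt$ from Corollary \ref{Lsemip-L} for $D$ and $\ep/2$, lift $\phi$ to an order zero sequence $\{\phi_n\}$ via \cite[Proposition 1.2.4]{WcovII}, pass to a set ${\cal P}\in\varpi$ on which $\sup_{x\in D^{\bf 1}}\|[e_n,\phi_n(x)]\|_{_{2,F}}<\dt$, apply Corollary \ref{Lsemip-L} termwise to get $\psi_n'\colon D\to\Her(e_n)$, assemble $\psi'=\Pi_{_{F,\varpi}}\circ\{\psi_n'\}$ with $\|e\phi(x)e-\psi'(x)\|_{_{2,F_\varpi}}<\ep/2$, and finish with Lemma \ref{Loz-hom} plus the triangle inequality. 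That part is fine and is exactly the paper's route.

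The treatment of the final clause (when $e$ is a projection) has a genuine gap. First, neither Lemma \ref{Lsemi-0} nor Corollary \ref{Lsemip-L} has an ``in addition'' clause of the kind you invoke, so there is nothing to appeal to there. More seriously, the goal you set yourself, namely arranging $\psi'(1_D)=e$, is unattainable in general: $\psi'(1_D)$ is (up to $\ep/2$ in trace $2$-norm) $e\phi(1_D)e$, and $\phi$ is merely an order zero \cpc\, with no (approximate) unitality assumption, so $e\phi(1_D)e$ can be very far from $e$; shrinking $\dt$ cannot change this, and no projection-lifting or real-rank-zero argument will force the equality. The correct fix is much simpler and is what the paper does: since $e$ is a projection it is the unit of the corner $e(l^\infty(A)/I_{_{F,\varpi}})e$, and by the ``in addition'' clause of Lemma \ref{Loz-hom} one may take $\psi(1_{D_1})$ to be a projection in that corner, so $e-\psi(1_{D_1})$ is a projection; then enlarge $D_1$ to $D_2=\C\oplus D_1$ and define $\wtd\psi(\lambda,d)=\lambda\,(e-\psi(1_{D_1}))\oplus\psi(d)$, which is a homomorphism with $\wtd\psi(1_{D_2})=e$, while the approximation of $e\phi(x)e$ is witnessed by the elements $(0,y_x)$, unchanged. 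With that replacement your proof is complete.
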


\begin{proof}
Fix $\ep>0.$ Let $\dt$ be the constant given by Lemma \ref{Lsemip-L}  associated 
with $D$ and $\ep/3.$   Suppose that $\Phi=\{\phi_n\}: D\to l^\infty(A)$ is an 
order zero map such that $\Pi_{_{F, \varpi}}\circ \Phi=\phi.$
 Suppose that $e$ is as described in the lemma.
Then, there is ${\cal P}\in \varpi$ such that, for any $n\in {\cal P},$
\beq
\sup\{\|[e_n,\, \phi_n(x)]\|_{_{2, F}}: x\in D^{\bf 1}\}<\dt.
\eneq
Applying Lemma \ref{Lsemip-L}, we obtain, for each $n\in {\cal P},$ an order zero \cpc\, 
$\psi_n: D\to \Her(e_n)$ such that 
\beq\label{Lsemip-2-5}
\sup\{\|e_n\phi(x)e_n-\psi_n(x)\|_{_{2, F}}: x\in {\cal D}^{\bf 1}\}<\ep/3.
\eneq
We then obtain a sequence of order zero \cpc s $\{\psi_k\}: D\to  \overline{\{e_k\}(l^\infty(A))\{e_k\}}$
such that $\psi_k$ is as so defined above when $k\in {\cal P}.$ 
Let $\psi'=\Pi_{_{F, \varpi}}(\{\psi_k\}).$ 
Then, by \eqref{Lsemip-2-5}, 
\beq
\|e\phi(x)e-\psi'(x)\|_{_{2, F_\varpi}}\le \ep/3\tforal x\in D^{\bf 1}.
\eneq
Applying Lemma \ref{Loz-hom}, one obtains a finite dimensional 
\CA\, $D_1$ and a  \hm\, $\psi: D_1\to e(l^\infty(A)/I_{_{F, \varpi}})e$ 
such that, for any $x\in D^{\bf 1},$ there exists $y_x\in D_1^{\bf 1}$ such that
\beq
\|\psi'(x)-\psi(y_x)\|_{_{2, F_\varpi}}<\ep/2.
\eneq
Therefore
\beq
\|e\phi(x)e-\psi(y_x)\|_{_{2, F_\varpi}}<\ep\rforal x\in D^{\bf 1}.
\eneq

If, in addition, $e$ is a projection,  then $e-\psi(1_{D_1})$ is a projection. 
Define $D_2=\C\oplus D_1$ and 
define $\wtd\psi: D_2\to l^\infty(A)/I_{_{F, \varpi}}$ by 
$\wtd \psi(\lambda, d)=\lambda(e-\psi(1_{D_1}))\oplus \psi(d)$
for $\lambda\in \C$ and $d\in D_1.$ Then 
$\wtd \psi(1_{D_2})=e$  and 
\beq
\|e\phi(x)e-\wtd\psi((0, y_x))\|_{_{2, F_\varpi}}<\ep\rforal x\in D^{\bf 1}.
\eneq
\end{proof}

\section{Finite dimensional approximation in 2-norm}

The purpose of this section is to present Proposition \ref{Pst-1}.

\begin{prop}\label{Ptesrk1}
Let $A$ be a separable \CA\, and $\tau\in T(A).$
Suppose that $\overline{\pi_\tau(A)}^{sot}$ is a hyper-finite type ${\rm II}_1$-factor. 
Then, for any $\ep>0,\, \sigma>0$ and  any finite subset ${\cal F}\subset A^{\bf 1},$ 
there exist
an integer $n>1,$ 
an open neighborhood $O$ of $\tau$ of $T(A)$ and 
an order zero \cpc\, $\phi: M_n\to A$ 
such
that


(1) $\|x-\phi(y_x)\|_{2, \bar O}<\ep$ for all $x\in {\cal F}$ and some $y_x\in M_n^{\bf 1},$ and 

(2) $\sup\{1-\tau(\phi(1_n)): \tau\in {\bar O}\}<\sigma.$
%
\end{prop}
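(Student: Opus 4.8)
The statement is a local (single-trace) von Neumann approximation fact transported into the $C^*$-algebra $A$, coupled with a normalization $\sup\{1-\tau(\phi(1_n)):\tau\in\bar O\}<\sigma$. The plan is to first work entirely inside the hyperfinite $\mathrm{II}_1$-factor $M=\overline{\pi_\tau(A)}^{sot}$, where the desired approximation is classical, and then pull the picture back to $A$ using Kaplansky density together with $\sigma$-weak continuity of $\tau$, finally shrinking a neighborhood of $\tau$ so that the estimates hold uniformly in the $2$-norm on $\bar O$. The order zero structure will come for free because inside $M$ one genuinely gets a unital embedding of a matrix algebra into a corner; the ``$1_n$'' image is an honest projection there, and what survives the pull-back is an order zero c.p.c.\ map.

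First I would fix $\ep>0$ and $\sigma>0$ and the finite set ${\cal F}\subset A^{\bf 1}$, and consider $\pi_\tau({\cal F})\subset M$. Since $M$ is hyperfinite, there is a finite-dimensional subfactor $M_n\subset M$ (an honest matrix subalgebra, $1_{M_n}=1_M$) and elements $y_x\in M_n$ with $\|y_x\|\le 1$ (by taking conditional expectation onto $M_n$ and using $\|E_{M_n}\|=1$) such that $\|\pi_\tau(x)-y_x\|_{2,\tau}<\ep/4$ for all $x\in{\cal F}$; here $\|\cdot\|_{2,\tau}$ is the trace $2$-norm on $M$ coming from the faithful normal trace extending $\tau$. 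This is the standard ``hyperfinite $=$ approximately finite-dimensional in $2$-norm'' fact. Next, using Kaplansky's density theorem applied to the (ultraweakly dense) inclusion $\pi_\tau(A)\subset M$, I lift a system of matrix units $\{e_{i,j}\}$ of $M_n$ to elements $\{a_{i,j}\}\subset \pi_\tau(A)^{\bf 1}$ which are $2$-norm-close to $e_{i,j}$; equivalently I build a unital c.p.c.\ map from $M_n$ into $A$ (via a Stinespring-type/partition-of-unity construction on the matrix units) whose composition with $\pi_\tau$ is $2$-norm close to the identity on $M_n$. Then I invoke Lemma~\ref{Lsemi-0} (semi-projectivity of order zero maps in $2$-norm) with $F=\{\tau\}$: the lift is approximately order zero in $\|\cdot\|_{2,\tau}$, hence can be perturbed to a genuine order zero c.p.c.\ map $\phi_0:M_n\to A$ with $\|\pi_\tau(x)-\phi_0(y_x)\|_{2,\tau}<\ep/2$ and, since $1_{M_n}$ maps to a near-projection in $M$ whose trace is $1$, also $\tau(1-\phi_0(1_n))<\sigma/2$. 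Because the element $\phi_0(1_n)$ can be arranged with $\tau(\phi_0(1_n))$ close to $1$, I may in fact take $n$ large and $\phi_0$ so that $1-\tau(\phi_0(1_n))$ is as small as I like.

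Finally I pass from the single trace $\tau$ to a neighborhood. All the relevant quantities, namely $x\mapsto \|x-\phi_0(y_x)\|_{2,\rho}^2=\rho\big((x-\phi_0(y_x))^*(x-\phi_0(y_x))\big)$ for $x\in{\cal F}$ and $\rho\mapsto 1-\rho(\phi_0(1_n))$, are of the form $\rho\mapsto\rho(b)$ for fixed $b\in A$ (or $\widetilde A$), hence weak*-continuous on $T(A)$. So the set $O=\{\rho\in T(A): \|x-\phi_0(y_x)\|_{2,\rho}<\ep\ \forall x\in{\cal F},\ 1-\rho(\phi_0(1_n))<\sigma\}$ is an open neighborhood of $\tau$, and on a slightly smaller one whose closure lies in $O$ (using that $T(A)$ with the weak*-topology is a normal space, or just taking a metric ball) we get $\|\cdot\|_{2,\bar O}$-estimates with the required strict inequalities. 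Replacing $O$ by that smaller neighborhood gives (1) and (2).

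\textbf{Main obstacle.} The delicate point is the perturbation from ``approximately order zero in $2$-norm'' to ``genuinely order zero'', i.e.\ making sure Lemma~\ref{Lsemi-0} applies cleanly: one must first produce a c.p.c.\ (not merely bounded) lift of the finite-dimensional picture into $A$ with controlled $2$-norm defect on orthogonal pairs, which requires a careful Kaplansky-density argument done simultaneously on all matrix units (the naive coordinatewise lift need not be positive). A convenient route is to lift via a unital completely positive map built from an approximate identity/quasicentral argument in $\pi_\tau(A)$, or to first embed $M_n$ into $M$ and use that $\pi_\tau(A)$ is $2$-norm dense in $M$ to lift the associated order zero map $M_n\to M$, $d\mapsto d$ (which is unital hence order zero) to an approximately order zero c.p.c.\ map into $A$; the order zero semiprojectivity then finishes it. The neighborhood argument and the normalization (2) are routine once $\phi_0$ is in hand.
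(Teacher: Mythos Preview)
Your outline is broadly correct and follows the same arc as the paper: approximate inside the hyperfinite $\mathrm{II}_1$ factor $N=\overline{\pi_\tau(A)}^{sot}$, transport back to $A$, and finally shrink to a neighborhood by weak*-continuity. The neighborhood step at the end is exactly what the paper does.

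The difference---and the place where your write-up has a gap---is the production of the order zero c.p.c.\ map. The paper does not try to build a single c.p.c.\ map $M_n\to A$ and then perturb via Lemma~\ref{Lsemi-0}. Instead it invokes the Kirchberg--R\o rdam isomorphism (their Theorem~3.3) $l^\infty(A)/I_{\tau,\varpi}\cong l^\infty(N)/I_{\tau,\varpi}(N)$: the unital inclusion $M_n\hookrightarrow N\hookrightarrow N^\varpi$ becomes a genuine unital $*$-homomorphism $\bar\phi:M_n\to l^\infty(A)/I_{\tau,\varpi}$, which one then lifts to a sequence of order zero c.p.c.\ maps $\{\psi_k\}:M_n\to A$ by projectivity of $C_0((0,1])\otimes M_n$. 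Picking one $\psi_k$ gives the map. This route bypasses precisely the obstacle you flag.

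Your proposed workaround---``build a c.p.c.\ map via a Stinespring/partition-of-unity construction on the matrix units''---does not obviously produce a \emph{contractive} c.p.\ map: the natural candidates $\phi(x)=\sum_{i,j}x_{ij}v_i^*v_j$ with $v_i\in A^{\bf 1}$ lifting $e_{1,i}$ satisfy $\|\phi(1_n)\|=\|\sum_i v_i^*v_i\|$, which can be as large as $n$ even though $\pi_\tau(\sum_i v_i^*v_i)$ is $2$-norm close to $1$. Your second suggested route (``lift the order zero map $M_n\to M$ and use order zero semiprojectivity'') is really the paper's argument in disguise, since the only way to make that lifting precise is through sequences and the cone projectivity, i.e.\ exactly the ultrapower step. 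Note also that Lemma~\ref{Lsemi-0} as stated assumes $A$ is simple, which Proposition~\ref{Ptesrk1} does not; the simplicity is not used in the proof of Lemma~\ref{Lsemi-0}, but you would need to say so.
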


\begin{proof}
Fix $\ep> 0, \, \sigma>0$ and a finite subset ${\cal F}\subset A.$ 
To simplify notation, we may assume that ${\cal F}\subset A^{\bf 1}.$ 
Put $N=\overline{\pi_\tau(A)}^{sot}.$
Then, since $N$ is a hyper-finite ${\rm II}_1$-factor,
we may also write $\overline{B}^{sot}=N,$ where $B$ is a UHF-algebra.
By Kaplansky's density theorem,
 there exists an integer $n>1$ and \SCA\, $M_n\subset N$  
with $1_{M_n}=1_N$ such 
that, for each $x\in {\cal F},$ there exists $b_x\in M_n^{\bf 1}$ such that
\beq
\|x-b_x\|_{_{2, \tau}}<\ep/8.
\eneq
Let $\{e_{i,j}\}\subset M_n$ be a system of matrix units for $M_n.$
By Kaplansky's density theorem again, there is, for each $k,$ 
an element $a_{k,i,j}\in A^{\bf 1}$ 
such that $\|a_{k,i,j}-e_{i,j}\|_{2, \tau}<1/k.$
It follows from Theorem 3.3 of \cite{KR}  that the canonical \hm\, 
$\Phi: l^\infty(A)/I_{\tau, \varpi}\to l^\infty(N)/I_{\tau, \varpi}(N)$ is an isomorphism.
Therefore  there 
is a unital \hm\, $\bar \phi: M_n\to l^\infty(A)/I_{\tau, \varpi}$ 
satisfying the following: for any $x\in {\cal F},$ there exists $y_x\in M_n^{\bf 1}$ such that 
\beq
\|\Pi_{\tau,\varpi}(\iota(x))-\bar \phi(y_x)\|_{_{2, \tau_\varpi}}<\ep/4.
\eneq
By \cite[Proposition 1.2.4]{WcovII},
there is an order zero \cpc\,  $\psi=\{\psi_k\}: M_n\to l^\infty(A)$
such that $\Pi_{_{\tau, \varpi}}\circ \{\psi_k\}=\bar \phi,$ 
\beq
&&\lim_{k\to\varpi}\|x-\psi_k(y_x)\|_{_{2, \tau}}<\ep/4 \rforal x\in {\cal F},\andeqn\\
&&\lim_{k\to\varpi} 1-\tau(\psi_k(1_n))=0.
\eneq
Therefore, 
 by choosing  $\phi=\psi_k$ 
 for some $k\in 
{\cal P}$ and some ${\cal P}\in \varpi,$
we have 
\beq
\|x-\phi(y_x)\|_{2, \tau}<\ep/2\andeqn 1-\tau(\phi(1_n))<\sigma/2.
\eneq

Hence, there is an open subset $O\subset T(A)$ and $\tau\in O,$ 
such that
\beq
\|x-\phi(y_x)\|_{_{2, \bar O}}<\ep\andeqn \sup\{1-
\tau(\phi(1_n)):\tau\in {\bar O}\}<\sigma.
\eneq
%
%
%
\end{proof}

\begin{lem}\label{LnFO}
Let $A$ be a separable 
algebraically simple \CA\, with T-tracial approximate oscillation zero
and let $K\subset  \partial_e(T(A))$ be a subset.
Let $\ep>0$ and ${\cal F}\subset A^{\bf 1}$ be a finite subset.
Suppose that there exist a finite dimensional \CA\, $D_0$ and  
a \hm\, $\psi: C_0((0,1])\otimes D_0\to  l^\infty(A)/I_{_{K, \varpi}}$
(or an order zero \cpc\, $\psi: D_0\to l^\infty(A)/I_{_{K, \varpi}}$)
such that
\beq\nonumber
\|\Pi_{_{K, \varpi}}(\iota(x))-\psi(y_x)\|_{_{2, K_\varpi}}<\ep/2\tforal x\in {\cal F}\,\,and\,\,some\,\, y_x\in (C_0((0,1])\otimes D_0)^{\bf 1}
\eneq
(or $y_x\in D_0^{\bf 1}$).
Then, 
there  are relatively
open subset $O\subset \partial_e(T(A))$ with $K\subset O,$ 
a finite dimensional \CA\, $D$ and a \hm\, 
$\phi: D\to l^\infty(A)/I_{_{\Tw, \varpi}}$ 
such that
\beq
\|\Pi_{_{F, \varpi}}(\iota(x))-\pi_F\circ \phi(y_x)\|_{_{2, F_\varpi}}<\ep\tforal x\in {\cal F}\,\,and\,\,some\,\, y_x\in D^{\bf 1},
\eneq
where $F=\bar O.$
If\, $l^\infty(A)/I_{_{F, \varpi}}$ is unital, one may also require that 
$\pi_F\circ \phi$ is a unital \hm, where $\pi_F: l^\infty(A)/I_{_{\Tw, \varpi}}\to l^\infty(A)/I_{_{F, \varpi}}$
is the quotient map.
\end{lem}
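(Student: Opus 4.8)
\textbf{Proof proposal for Lemma \ref{LnFO}.} The plan is to pass from an approximation valid on the subset $K$ (in the quotient $l^\infty(A)/I_{_{K,\varpi}}$) to an approximation valid on a compact relative neighbourhood $F=\bar O$ of $K$, and then lift the resulting order zero map to the $T(A)$-quotient using real rank zero. First I would lift the given map: by projectivity of $C_0((0,1])\otimes D_0$ there is a sequence of homomorphisms (respectively, by \cite[Proposition 1.2.4]{WcovII}, a sequence of order zero c.p.c.\ maps) $\psi_k\colon D_0\to A$ whose induced map into $l^\infty(A)/I_{_{K,\varpi}}$ is $\psi$. From the hypothesis, passing to a set in $\varpi$, for a fixed representative $\{a_k\}$ of each $x\in\mathcal F$ one gets, for $k$ in some $\mathcal P\in\varpi$,
\[
\sup\{\|a_k-\psi_k(y_{x,k})\|_{_{2,\tau}}:\tau\in K\}<\ep/2 .
\]
Since $\widehat{b^*b}$ is continuous on the compact set $T(A)$ for each fixed $b\in A$, and $D_0$ is finite dimensional (so $\{y_x\}$ ranges over a compact set), the finitely many inequalities coming from $x\in\mathcal F$ extend from $K$ to a relatively open set $O\supset K$; fix such an $O$ with $\bar O$ compact, put $F=\bar O$, and choose a single index $k_0\in\mathcal P$. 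This produces an order zero c.p.c.\ map $\psi'\colon D_0\to A$ (namely $\psi_{k_0}$) with
\[
\sup\{\|x-\psi'(y_x)\|_{_{2,\tau}}:\tau\in F\}<\ep/2\qquad(x\in\mathcal F).
\]

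Next I would replace $\psi'$ by a genuine finite-dimensional homomorphism after entering the $T(A)$-quotient. Since $A$ has T-tracial approximate oscillation zero, by \cite[Theorem 6.4]{FLosc} the algebra $l^\infty(A)/I_{_{\Tw,\varpi}}$ has real rank zero; then so does $l^\infty(A)/I_{_{F,\varpi}}$, being a quotient of it (the ideal is $I_{_{F,\varpi}}/I_{_{\Tw,\varpi}}$). Apply Lemma \ref{Loz-hom} to the order zero c.p.c.\ map $D_0\to l^\infty(A)/I_{_{F,\varpi}}$ determined by $\{\psi'\}$: this yields a finite dimensional \CA\ $D$ and a homomorphism $h\colon D\to \overline{\bar e(l^\infty(A)/I_{_{F,\varpi}})\bar e}$, where $\bar e$ is the image of $\psi'(1_{D_0})$, such that each $\Pi_{_{F,\varpi}}(\iota(\psi'(y)))$ is approximated within $\ep/2$ (after rescaling the constants in the first step, within $\ep/2$) by some $h(z)$. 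Composing, $\Pi_{_{F,\varpi}}(\iota(x))$ is within $\ep$ of some $h(z_x)$. Finally, use the Elliott Lifting Lemma (Lemma \ref{Lelliott}) applied to the short exact sequence $0\to I_{_{F,\varpi}}/I_{_{\Tw,\varpi}}\to l^\infty(A)/I_{_{\Tw,\varpi}}\to l^\infty(A)/I_{_{F,\varpi}}\to 0$ to lift $h$ to a homomorphism $\phi\colon D\to l^\infty(A)/I_{_{\Tw,\varpi}}$ with $\pi_F\circ\phi=h$; then
\[
\|\Pi_{_{F,\varpi}}(\iota(x))-\pi_F\circ\phi(z_x)\|_{_{2,F_\varpi}}<\ep\qquad(x\in\mathcal F).
\]

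For the unital addendum: if $l^\infty(A)/I_{_{F,\varpi}}$ is unital, then $\Pi_{_{F,\varpi}}(\iota(1))$ — or, more precisely, the unit provided by Proposition \ref{Pcompact} (an approximate identity $\{e_n\}$ of $A$) — can be absorbed. Since $\bar e=\pi_F(\psi'(1_{D_0}))$ may not be the unit, I would first enlarge $D_0$ to $\C\oplus D_0$ and adjoin a summand mapping to $1-\bar e$ (which is a projection if $\bar e$ is, and one may arrange $\bar e$ to be a projection using real rank zero and \cite[Theorem 3.14]{BP}), exactly as in the last paragraph of the proof of Lemma \ref{Lsemip}; then Lemma \ref{Loz-hom} applied to this unital order zero map gives $h(1_{D_1})=1$, and the Elliott lift $\phi$ can be chosen with $\pi_F\circ\phi$ unital since real rank zero guarantees the unit of $l^\infty(A)/I_{_{F,\varpi}}$ lifts to a projection.

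The main obstacle I expect is the extension of the approximation from $K$ to a relative neighbourhood $F=\bar O$: one must make sure the open set $O$ can be chosen uniformly for all $x\in\mathcal F$ and all choices of $y_x$, which requires the compactness of $T(A)$ and of the unit ball of the finite-dimensional $D_0$, plus continuity of $\tau\mapsto\tau(b^*b)$ on $T(A)$ — and one has to track that the order zero property of $\psi'$ is preserved at a single index $k_0$ rather than only in the limit, so the semiprojectivity/lifting from \cite[Proposition 1.2.4]{WcovII} must be invoked before, not after, restricting the index.
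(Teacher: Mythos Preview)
Your overall strategy matches the paper's: lift to a sequence, fix a single index $k_0$, enlarge $K$ to a compact relative neighbourhood $F=\bar O$ by continuity of $\tau\mapsto\tau(b^*b)$, replace the order zero picture by a genuine finite-dimensional homomorphism using real rank zero of $l^\infty(A)/I_{_{\Tw,\varpi}}$, and then lift via the Elliott Lifting Lemma (Lemma \ref{Lelliott}). For the order zero c.p.c.\ version of the hypothesis your invocation of Lemma \ref{Loz-hom} is exactly what is needed, and the unital addendum is handled just as in the paper (indeed more simply than you suggest: once you have a homomorphism $h$, the element $h(1_D)$ is already a projection, so you may adjoin a copy of $\C$ mapping to $1-h(1_D)$ without any further work).

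There is, however, a real gap in the cone case. When $\psi$ is a homomorphism from $C_0((0,1])\otimes D_0$, the elements to be approximated are $\psi(y_x)$ with $y_x\in (C_0((0,1])\otimes D_0)^{\bf 1}$, not merely $\psi(\jmath\otimes d)$ with $d\in D_0^{\bf 1}$. Applying Lemma \ref{Loz-hom} to the associated order zero map $d\mapsto\psi(\jmath\otimes d)$ produces a homomorphism $h$ that approximates only the $\jmath$-slice; it says nothing about, say, $\psi(\jmath^2\otimes e_{s,1,1})=\psi(\jmath\otimes e_{s,1,1})^2$, which is not in the range of the order zero map. The paper deals with this explicitly: it first reduces (without loss of generality) to $y_x$ of the form $f\otimes d$ with $f$ ranging over a finite set ${\cal G}_c\subset C_0((0,1])$ satisfying $f\cdot f_\eta=f$, and then uses real rank zero of $\overline{a_{s,1,1}C_F a_{s,1,1}}$ to choose mutually orthogonal projections $p_{s,1,1},\dots,p_{s,1,N_s}$ and scalars $\lambda_i$ with $\|f(a_{s,1,1})-\sum_k f(\lambda_k)p_{s,1,k}\|$ small for \emph{every} $f\in{\cal G}_c$ simultaneously. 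The resulting finite-dimensional algebra is $D_1=\bigoplus_s\C^{N_s}\otimes M_{r(s)}$, and the homomorphism is built from the matrix units $\psi(f_{\eta/4}(\jmath)\otimes e_{s,i,1})\,p_{s,1,k}\,\psi(f_{\eta/4}(\jmath)\otimes e_{s,1,j})$. Your proposal needs exactly this refinement to cover the cone hypothesis; Lemma \ref{Loz-hom} as stated is not enough.
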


\begin{proof}
Fix $\ep>0$ and a finite subset ${\cal F}\subset A^{\bf 1}.$
We may write $D_0=M_{r(1)}\oplus M_{r(2)}\oplus \cdots \oplus M_{r(d)}.$
Set $R=d^2r(1)^2\cdots r(d)^2$ and $\eta=\ep/4R.$
Denote by $\{e_{s, i,j}: 1\le i, j\le r(s)\}$ a system of matrix units for $M_{r(s)},$ $1\le s\le d.$
Put  $C=l^\infty(A)/I_{_{\Tw, \varpi}}.$ 
 Since $C_0((0,1])\otimes D_0$ is projective, 
 choose
 $\wtd\psi=\{\psi_n\}: C_0((0,1])\otimes D_0\to l^\infty(A),$ a \hm\, 
  lifting of 
 $\psi$ (if we begin with an order zero \cpc, then we choose an order zero \cpc\, lifting 
 $\wtd \psi;$  see \cite[Proposition 1.2.1]{WcovII}).
 Choose an integer $k_0\in \N$ such that
 \beq
 \|x-\psi_{k_0}(b_x)\|_{_{2, K}}<\ep/2\rforal x\in {\cal F}\,\,\andeqn {\rm some}\,\, b_x\in (C_0((0,1])\otimes D_0)^{\bf 1}
\eneq
(or  $b_x\in D_0^{\bf 1}$).
 There are relatively open subsets $O\subset {\bar O}\subset O_1\subset \partial_e(T(A))$
 with $K\subset O$ such that 
 \beq\label{LnK-5}
 \|x-\psi_{k_0}(b_x)\|_{_{2, O_1}}<\ep/2\rforal x\in {\cal F}\,\,\andeqn {\rm some}\,\, b_x\in 
 C_0((0,1])\otimes D_0^{\bf 1}
 \eneq
 (or $b_x\in D_0^{\bf 1}$).  Put ${\cal G}_{\cal F}=\{b_x: x\in {\cal F}\}.$
 To simplify notation, \wilog, we may assume that  ${\cal G}_{\cal F}=\{g\otimes d: g\in {\cal G}_c, d\in D_0^{\bf 1}\},$
 where ${\cal G}_c\subset C((0,1])$ is a finite subset.
 We may further assume, \wilog, that $g f_\eta=g$ for all $g\in {\cal G}_c$ (see \ref{Nfg}  for $f_\eta$). 

Put $F=\bar O$  and $C_F=l^\infty(A)/I_{_{F, \varpi}}.$
Define $\psi': C_0((0,1])\otimes D_0\to C_F$ by $\psi'(b)=\Pi_{_{F, \varpi}}\circ \{\iota(\psi_{k_0}(b))\}$ for all $b\in C_0((0,1])\otimes D_0.$
Then $\psi'$ is a \hm\, (or an order zero \cpc\, from $D_0$ to $C_F$ if $\wtd \psi: D_0\to l^\infty(A)$ 
is an order zero \cpc
).
Let $a_{s, i,i}=\psi'(\jmath\otimes e_{s,i,i})$ 
(or $a_{s, i,i}=\psi'(e_{s,i,i,})$) ($1\le i\le r(s)$ and $1\le s\le d$). 
Since $A$ has T-tracial approximate oscillation zero, 
by Theorem 6.4 of \cite{FLosc}, $\overline{a_{s,1,1}C_Fa_{s,1,1}}$ has real rank zero.
Choose 
$b_{s,1,1}=f_{\eta/2}(a_{s,1,1})$ and $c_{s,1,1}=f_{\eta/4}(a_{s,1,1}),$  $1\le s\le d.$
There are mutually orthogonal projections $p_{s,1,1}, p_{s,1,2},...,p_{s,1,N_s}\in \overline{ b_{s,1,1}C_Fb_{s,1,1}}$
and $\lambda_{s,1}, \lambda_{s,2},...,\lambda_{s,N}\in (0,1]$ such that
\beq\label{LnK-6}
\|g(a_{s,1,1})-\sum_{k=1}^{N_s}g(\lambda_i)p_{s,1, i}\|<\ep/4R\andeqn g\in {\cal G}_c
\eneq
(in case that $\wtd \psi: D_0\to l^\infty(A)$ is an order zero \cpc, we choose $g=\jmath$ ---see \ref{Defj} for the function $\jmath$).
We note that 
\beq
p_{s,1,i} c_{s, 1,1}=p_{s,1,i}=c_{s, 1,1}p_{s,1,i}\andeqn
p_{s,1,i} c_{s, 1,1}^2=p_{s,1,i}=c_{s, 1,1}^2p_{s,1,i}
\eneq
$1\le i\le N_s,\,\,1\le s\le d.$  It follows that 
$\{\psi(f_{\eta/4}(\jmath)\otimes e_{s, i,1}) p_{s, 1,i} \psi(f_{\eta/4}(\jmath)\otimes e_{s, 1,j})\}_{1\le i,j\le r(s)}$
forms a system of  matrix unit for $M_{r(s)},$ $1\le s\le d.$

Let $D_{1,s}$ be the commutative  finite dimensional \SCA\, generated by $p_{s,1,1},...,p_{s,1, N_s}.$ 
Let $D_1=\bigoplus_{s=1}^d \C^{N_s} \otimes M_{r(s)}\cong \bigoplus_{s=1}^d D_{1,s}\otimes M_{r(s)}$ and
$q_{s,k}=p_{s,1,k}\otimes 1_{r(s)},$ $1\le s\le d.$
Define  a \hm\,
$\phi_0: D_1\to C_F$ by 
\beq\nonumber
&&\hspace{-0.4in}\phi_0((\af_{s,1}, \af_{s,2},...,\af_{s,N_s})\otimes e_{s,i,j})=
\psi(f_{\ep/4}(\jmath)\otimes e_{s, i,1})\cdot (\sum_{k=1}^{N_s} \af_{s,k}p_{s,1,k})\cdot \psi(f_{\ep/4}(\jmath\otimes e_{s,1,j}))
\eneq
for any $(\af_{s,1}, \af_{s,2},...,\af_{s,N_s})\in  \C^{N_s}$ and $1\le i,j\le n.$
By \eqref{LnK-5} and \eqref{LnK-6}, for any $x\in {\cal F},$ there exists $y_x\in D_1^{\bf 1}$ such that
\beq
\|\Pi_{_{F,\varpi}}(\iota(x))-\phi_0(y_x)\|_{_{2, F_\varpi}}<\ep.
\eneq
If $l^\infty(A)/I_{_{F, \varpi}}$ is unital, 
put $D=\C\oplus D_1$ and define $\bar \phi: D\to C_F$
by 
\beq
\bar \phi(\lambda, d)=\lambda\cdot  (1-\phi_0(1_D))\oplus \phi_0(d)
\eneq
for all $(\lambda, d)\in \C\oplus D_1=D.$  Then $\bar \phi$ is unital and 
\beq
\|\Pi_{_{F, \varpi}}(\iota(x))-\bar \phi(0, y_x)\|_{_{2, F_\varpi}}<\ep.
\eneq
If $l^\infty(A)/I_{_{F, \varpi}}$ is not unital, put $\bar \phi=\phi_0.$
Since $C$ has real rank zero, by Theorem 6.4 of \cite{FLosc}, 
applying Elliott's lifting lemma (see Lemma \ref{Lelliott}), we obtain a \hm\, $\phi: D\to C$ such that 
$\pi_F\circ \phi=\bar \phi.$ The lemma follows.
\end{proof}

Note that, if $T(A)$ is compact, then $l^\infty(A)/I_{_{F, \varpi}}$ is unital (see, for example, 
Proposition \ref{Pcompact}).

\begin{cor}\label{LnK}
Let $A$ be a separable amenable algebraically simple \CA\, with T-tracial approximate oscillation zero
and let $\tau\in \partial_e(T(A)).$ 
Then, for any $\ep>0$ and any finite subset ${\cal F}\subset A^{\bf 1},$ there  are
a relatively open subset $O\subset \partial_e(T(A))$ with $\tau\in O,$ 
a finite dimensional \CA\, $D$ and a \hm\, 
$\phi: D\to l^\infty(A)/I_{_{\Tw, \varpi}}$  such that (with $F=\bar O$)
\beq
\|\Pi_{_{F, \varpi}}(\iota(x))-\pi_F\circ \phi(y_x)\|_{_{2, F_\varpi}}<\ep\tforal x\in {\cal F}\,\,and\,\,some\,\, y_x\in D^{\bf 1},
\eneq
where $\pi_F: l^\infty(A)/I_{_{\Tw, \varpi}}\to l^\infty(A)/I_{_{F, \varpi}}$
is the quotient map.
If $l^\infty(A)/I_{_{F, \varpi}}$ is unital, then one may require that $\pi_F\circ \phi$ is a unital \hm.
\end{cor}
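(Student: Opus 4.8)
\textbf{Proof proposal for Corollary \ref{LnK}.}

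The plan is to deduce this corollary from Lemma \ref{LnFO} by producing, for a single extremal trace $\tau$, the finite-dimensional approximation in the quotient $l^\infty(A)/I_{_{\{\tau\}, \varpi}}$ that serves as the hypothesis of that lemma. First I would observe that since $A$ is separable, amenable and simple with $\wtd{QT}(A)=\wtd T(A)$, the trace $\tau$ gives a GNS representation $\pi_\tau$ whose weak closure $\overline{\pi_\tau(A)}^{sot}$ is an amenable (hence injective) factor; because $\tau$ is extremal it is a factor, and since $A$ is non-elementary separable simple and $\tau$ is a (faithful on the hereditary subalgebra it sees) trace, this factor is the hyperfinite ${\rm II}_1$-factor. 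Thus Proposition \ref{Ptesrk1} applies: for the given $\ep>0$ and finite subset ${\cal F}\subset A^{\bf 1}$, and for any $\sigma>0$, there is an integer $n$, an open neighborhood $O_\tau$ of $\tau$ in $T(A)$, and an order zero \cpc\ $\phi_0:M_n\to A$ with $\|x-\phi_0(y_x)\|_{2,\bar O_\tau}<\ep/2$ for all $x\in{\cal F}$ (some $y_x\in M_n^{\bf 1}$) and $\sup\{1-\tau(\phi_0(1_n)):\tau\in\bar O_\tau\}<\sigma$.

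Next I would pass to the sequence-algebra picture: setting $D_0=M_n$ and $\psi=\Pi_{_{K,\varpi}}\circ\{\iota(\phi_0(\cdot))\}$ with $K=\{\tau\}$ (a one-point subset of $\partial_e(T(A))$), the order zero \cpc\ $\psi:D_0\to l^\infty(A)/I_{_{\{\tau\},\varpi}}$ satisfies $\|\Pi_{_{\{\tau\},\varpi}}(\iota(x))-\psi(y_x)\|_{_{2,\{\tau\}_\varpi}}<\ep/2$ for $x\in{\cal F}$. This is exactly the input hypothesis of Lemma \ref{LnFO} (in its ``order zero \cpc'' form, with the subset $K=\{\tau\}\subset\partial_e(T(A))$). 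Applying Lemma \ref{LnFO} then yields a relatively open subset $O\subset\partial_e(T(A))$ with $\tau\in O$, a finite-dimensional \CA\ $D$, and a \hm\ $\phi:D\to l^\infty(A)/I_{_{\Tw,\varpi}}$ such that, with $F=\bar O$, one has $\|\Pi_{_{F,\varpi}}(\iota(x))-\pi_F\circ\phi(y_x)\|_{_{2,F_\varpi}}<\ep$ for all $x\in{\cal F}$ and suitable $y_x\in D^{\bf 1}$; moreover if $l^\infty(A)/I_{_{F,\varpi}}$ is unital, Lemma \ref{LnFO} also gives that $\pi_F\circ\phi$ may be taken to be a unital \hm. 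Since $A$ has T-tracial approximate oscillation zero, the real-rank-zero facts (Theorem 6.4 of \cite{FLosc}) needed to lift along $\pi_F$ via the Elliott Lifting Lemma (Lemma \ref{Lelliott}) are available and are already absorbed into the statement of Lemma \ref{LnFO}, so nothing further is required.

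The only genuinely delicate point is the identification of $\overline{\pi_\tau(A)}^{sot}$ with the hyperfinite ${\rm II}_1$-factor, which is needed to invoke Proposition \ref{Ptesrk1}; this rests on extremality of $\tau$ (giving the factor property), amenability of $A$ (giving injectivity, hence hyperfiniteness by Connes), and the fact that $A$ is non-elementary simple so the factor is genuinely of type ${\rm II}_1$ rather than finite-dimensional. Everything else is a matter of transporting the local (single-trace) approximation into the quotient by $I_{_{\{\tau\},\varpi}}$ and then quoting Lemma \ref{LnFO}, whose proof already handles the enlargement of the trace set from $\{\tau\}$ to a compact neighborhood and the passage from $l^\infty(A)/I_{_{\Tw,\varpi}}$ down to $l^\infty(A)/I_{_{F,\varpi}}$. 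I would close by noting that compactness of $T(A)$ forces $l^\infty(A)/I_{_{F,\varpi}}$ to be unital (Proposition \ref{Pcompact}), so in that situation the unital-\hm\ conclusion is automatic.
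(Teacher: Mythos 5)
Your proposal is correct and follows essentially the same route as the paper: invoke Proposition \ref{Ptesrk1} (whose hypothesis is met since $\tau$ is extremal and $A$ is amenable, so $\pi_\tau(A)''$ is the hyperfinite ${\rm II}_1$-factor) and then feed the resulting order zero approximation into Lemma \ref{LnFO}. The only cosmetic difference is that you apply Lemma \ref{LnFO} with $K=\{\tau\}$, whereas the paper uses the closed neighborhood $\bar O$ supplied by Proposition \ref{Ptesrk1} as $K$; both choices satisfy the lemma's hypotheses and yield the stated conclusion, including the unital refinement via Proposition \ref{Pcompact}.
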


\begin{proof}
By Lemma \ref{Ptesrk1}, there is an integer $n\ge 2,$ and an 
open neighborhood $O$ of $\tau$ of $T(A)$ and 
an order zero \cpc\, $\psi': M_n\to A$ 
such
that
\beq\label{CnK-5}
\|x-\psi'(y_x)\|_{_{2, \bar O}}<\ep/4\rforal x\in {\cal F}\,\,{\text{and\,\, some}}\,\, y_x\in M_n^{\bf 1}.
\eneq
Put  $C=l^\infty(A)/I_{_{\Tw, \varpi}},$ 
 $F=\bar O$  and $C_F=l^\infty(A)/I_{_{F, \varpi}}.$
Define $\psi: M_n\to C_F$ by $\psi(b)=\Pi_{_{F, \varpi}}\circ \{\psi'(b)\}$ for all $b\in M_n.$
Then $\psi$ is an order zero \cpc. 

The lemma then immediately follows from Lemma \ref{LnFO}.
\end{proof}

\begin{lem}\label{Lbb}
Let $A$ be a separable  amenable algebraically simple \CA\, with nonempty  compact $T(A)$ and 
$F\subset \partial_e(T(A))$ be a compact subset.
Suppose that 
$a_1, a_2,...,a_l\in l^\infty(A)/I_{_{F, \varpi}}$  and
%
%
$f_1, f_2,...,f_m\in C(F)_+^{\bf 1}$ are mutually 
orthogonal functions.
Then there are $b^{(1)}=\{b^{(1)}_n\}, b^{(2)}=\{b^{(2)}_n\},..., b^{(m)}=\{b_n^{(m)}\}\in 
\pi_\infty^{-1}(A')_+^{\bf 1}$ 
such that  

(1)  $\Pi_{_{F, \varpi}}(b^{(1)}), \Pi_{_{F, \varpi}}(b^{(2)}),..., \Pi_{_{F, \varpi}}(b^{(m)})$ 
are mutually orthogonal, 

(2) $\lim_{n\to \infty}\sup\{|\tau(b_n^{(j)})-\tau(f_n)|: \tau\in F\}=0,\,\,\, j=1,2,...,m,$

(3) 
$\Pi_{_{F, \varpi}}(b^{(j)})a_i=a_i\Pi_{_{F, \varpi}}(b^{(j)}),$ $1\le i\le l,$ $1\le j\le m,$  and 

(4) $\lim_{n\to\varpi}\sup\{|\tau(b_n^{(j)}a_{i,n})-\tau(f_j)\tau(a_{i,n})|: \tau\in F\}=0,$
$1\le j\le m,$
where  $\Pi_{_{F, \varpi}}(\{a_{i,n}\})=a_i$ and $a_{i,n}\in A,$  $1\le i\le l.$
\end{lem}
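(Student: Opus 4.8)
\textbf{Proof proposal for Lemma \ref{Lbb}.}

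The plan is to reduce the statement to the central surjectivity results of Sato (Proposition \ref{Tsato-1}) applied in the unitization, combined with a diagonalization argument over the free ultrafilter $\varpi$. First I would use the Tietze-type extension (via \cite[Theorem II.3.12]{Alf}) to lift each $f_j \in C(F)_+^{\bf 1}$ to an affine function $\wtd f_j \in \Aff(T(A))_+$ with $0\le \wtd f_j\le 1$; here I must be careful that I want the extensions to remain (approximately) mutually orthogonal, so I would instead first extend only the supports: choose relatively open sets $O_j\subset \partial_e(T(A))$ with $\{f_j>0\}\subset O_j$ and the $\overline{O_j}$ still pairwise disjoint (possible since the $f_j$ are mutually orthogonal on the compact set $F$), pick $\wtd f_j \in \Aff(T(A))$ with $\wtd f_j|_F = f_j$ and $\wtd f_j|_{T(A)\setminus G_j}=0$ for a slightly smaller face-neighborhood if available, or simply accept that the extensions $\wtd f_j$ agree with $f_j$ on $F$ and handle orthogonality only asymptotically in $F$-norm (which is all (1) and (2) require, since (1) is stated modulo $I_{_{F,\varpi}}$). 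Then, working in $B=\wtd A$ and invoking Proposition \ref{Tsato-1} (or directly \cite[Corollary 3.3]{S-2}), for each $j$ I obtain a sequence $\{b_n^{(j)}\}\in \pi_\infty^{-1}(A')_+$ with $\|b_n^{(j)}\|\le 1$ and $\lim_{n\to\infty}\sup\{|\tau(b_n^{(j)})-\wtd f_j(\tau)|:\tau\in T(A)\}=0$; restricting to $\tau\in F$ gives (2).

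Next I would address (3) and (4). For (3): since $\{b_n^{(j)}\}\in \pi_\infty^{-1}(A')$, for any fixed element $a_i = \Pi_{_{F,\varpi}}(\{a_{i,n}\})$ we have $\lim_{n\to\infty}\|b_n^{(j)}a_{i,n}-a_{i,n}b_n^{(j)}\|=0$, hence $\Pi_{_{F,\varpi}}(b^{(j)})$ commutes with $a_i$ in the quotient — this is automatic once the $b^{(j)}$ are genuine central sequences and requires no extra work. For (4), the Haagerup–type estimate from \cite[Proposition 3.1]{CETW} (cited repeatedly in the excerpt, e.g.\ in the proof of Proposition \ref{Pquotientnorm}) gives that, since $\{b_n^{(j)}\}$ is a central sequence of positive contractions, $\sup\{|\tau(b_n^{(j)}a_{i,n})-\tau(b_n^{(j)})\tau(a_{i,n})|:\tau\in F\}\to 0$ along a suitable subsequence; combined with (2) this yields $\sup\{|\tau(b_n^{(j)}a_{i,n})-\tau(f_j)\tau(a_{i,n})|:\tau\in F\}\to 0$ as $n\to\varpi$. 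To simultaneously satisfy all of (2), (4) for the finitely many indices $i\le l$, $j\le m$, I pass to a single subsequence/diagonal indexed by $n$ and then read off the $\varpi$-limits; because there are only finitely many conditions this is routine.

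For mutual orthogonality (1): the extensions $\wtd f_j$ can be arranged so that $\wtd f_j \wtd f_{j'}=0$ exactly when the $O_j$ above are disjoint and we extend with support in $O_j$ (using that $\overline{\mathrm{conv}}(\cdot)$ of a compact piece of $\partial_e$ is a face, as in Lemma \ref{Lunifdini}); then $\sup\{\tau(b_n^{(j)}b_n^{(j')}):\tau\in F\} \le \sup\{\tau((\wtd f_j\wtd f_{j'})(\tau))^{1/2}\cdots\}$-type estimate, or more cleanly: $\|b_n^{(j)}b_n^{(j')}\|_{_{2,F}}^2 = \sup_{\tau\in F}\tau(b_n^{(j')}(b_n^{(j)})^2 b_n^{(j')}) \le \sup_{\tau\in F}\tau(b_n^{(j)})^{1/2}\|\cdots\|$, which tends to $0$ since $b_n^{(j)}$ is central and $\tau(b_n^{(j)}b_n^{(j')})\to \wtd f_j(\tau)\wtd f_{j'}(\tau)=0$ uniformly on $F$ by (4)-type reasoning with $a_{i,n}$ replaced by $b_n^{(j')}$. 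Hence $\Pi_{_{F,\varpi}}(b^{(j)})\Pi_{_{F,\varpi}}(b^{(j')})=0$ in the trace-$2$-norm, which by faithfulness of $\|\cdot\|_{_{2,F_\varpi}}$ on $l^\infty(A)/I_{_{F,\varpi}}$ gives (1). The main obstacle I anticipate is the bookkeeping needed to get the $b^{(j)}$ to be exactly (not just approximately) mutually orthogonal after projecting to the quotient while also keeping them central and controlling their traces against the $a_i$ simultaneously; the cleanest route is to first produce approximately-orthogonal central sequences as above and then apply a small perturbation (functional calculus $b_n^{(j)}\mapsto g(b_n^{(j)})$ or cutting by an approximate unit of the hereditary subalgebra generated by the previous ones, as in Lemma \ref{Lstrictp}) to make orthogonality exact in the quotient — this is where care is required but no genuinely new idea is needed.
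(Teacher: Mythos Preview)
Your overall strategy---extend the $f_j$ to $\Aff(T(A))$ via \cite[Theorem II.3.12]{Alf}, realize them by central sequences via Proposition \ref{Tsato-1}, and use \cite[Proposition 3.1]{CETW} for the decoupling estimates---matches the paper's. There is, however, one genuine error and one detour worth flagging.

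\medskip
\textbf{The gap in (3).} Your claim that $\{b_n^{(j)}\}\in\pi_\infty^{-1}(A')$ automatically gives $\lim_{n\to\infty}\|b_n^{(j)}a_{i,n}-a_{i,n}b_n^{(j)}\|=0$ is false. By definition, $\pi_\infty^{-1}(A')$ consists of sequences asymptotically commuting with each \emph{fixed} $a\in A$, not with arbitrary bounded sequences $\{a_{i,n}\}$. (Take $A$ a UHF algebra, $b_n$ a sequence of unitaries moving out to infinity, and $a_{i,n}$ chosen at the same depth as $b_n$; these need not asymptotically commute.) The paper handles this exactly the way you handle (4): for each fixed $k$ one has $\lim_{n\to\infty}\|b_n^{(j)}a_{i,k}-a_{i,k}b_n^{(j)}\|=0$, and then one passes to a subsequence of $\{b_n^{(j)}\}$ (diagonalizing over the finitely many $i$) to force $\|b_n^{(j)}a_{i,n}-a_{i,n}b_n^{(j)}\|\to 0$ along $\varpi$. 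So the fix is easy, but the assertion as written is wrong.

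\medskip
\textbf{The orthogonal-extension detour.} Your first attempt to arrange $\wtd f_j\wtd f_{j'}=0$ on all of $T(A)$ by ``extending with support in $O_j$'' cannot work: nonconstant affine functions on a simplex do not have compact support, and the face machinery of Lemma \ref{Lunifdini} will not produce compactly supported affine functions. Your fallback---showing $\sup_{\tau\in F}\tau(b_n^{(j)}b_n^{(j')})\to 0$ via (4)-type reasoning and deducing $\|b_n^{(j)}b_n^{(j')}\|_{2,F}^2\le\sup_{\tau\in F}\tau(b_n^{(j)}b_n^{(j')})\to 0$---is correct and is all that is needed for (1). The paper reaches the same intermediate estimate (its \eqref{Lbb-12}) but then invokes \cite[Lemma 3.4]{TWW-2} to perturb to new central sequences that are orthogonal while preserving the trace values; it organizes this as an induction on $m$ (add one $b^{(m+1)}$ at a time, decouple against the previously built $b^{(j)}$, then perturb). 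Your all-at-once 2-norm argument avoids both the induction and the appeal to \cite[Lemma 3.4]{TWW-2}, which is a mild simplification, but you must then carry out the diagonalization for (3), (4), and all the cross-terms $\tau(b_n^{(j)}b_n^{(j')})$ simultaneously; the paper's inductive bookkeeping is cleaner for exactly this reason.
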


\begin{proof}
First one observes that 
Lemma 3.4   in \cite{TWW-2} holds for  the case that $A$ is not necessarily unital 
but $T(A)$ is compact (see also 
Lemma 4.1 of \cite{S-2}---replacing $\partial_e(T(A))$ by $F$).

We will prove the lemma by induction on $m.$ 
By  \cite[Theorem II.3.12, (ii)]{Alf},
there is an affine function $g_1\in \Aff(T(A))_+^{\bf 1}$ such that
${g_1}|_F=f_1.$
It follows from Lemma \ref{Tsato-1} that there exists $\{b_n^{(1)}\}\in \pi_\infty^{-1}(A')_+^{\bf 1}$ such that
\beq\label{Lext-e-2}
\lim_{n\to\infty}\sup\{|\tau(b_n^{(1)})-\tau(f_1)|:\tau\in F\}=0.
\eneq
%
Note that, for each fixed $k,$ 
\beq
\lim_{n\to\infty}\|b_n^{(1)} a_{i,k}-a_{i,k} b_n^{(1)}\|=0.
\eneq
By applying Proposition 3.1 of \cite{CETW} (see also \cite[Lemma 4.2]{S-2}), 
passing to a subsequence of $\{b_n\},$ we may assume 
that, for all $y\in D^{\bf 1},$ 
\beq
&&
{\bar b}a_i=a_i {\bar b},\,\,1\le i\le l,\\\label{Lext-e-4}
&&\lim_{n\to\infty} \sup\{|\tau(b_n^{(1)}a_{i,n})-\tau(b_n^{(1)})\tau(a_{i,n})|: \tau\in F\}=0,
\eneq
where  ${\bar b^{(1)}}=\Pi_{_{F, \varpi}}(\{b_n^{(1)}\}).$
This proves the case that $m=1.$

Suppose that the lemma holds for $m.$ We assume that 
$f_1, f_2,...,f_m, f_{m+1}$ are mutually orthogonal. 
By the inductive assumption,  there are $b^{(1)'}=\{b^{(1)'}_n\}, b^{(2)'}=\{b^{(2)'}_n\},..., b^{(m)'}=\{b_n^{(m)'}\}\in 
\pi_\infty^{-1}(A')$ 
such that  (1), (2), (3) and (4) hold. 
It follows from Lemma \ref{Tsato-1} that there exists $\{b_n^{(m+1)'}\}\in \pi_\infty^{-1}(A')_+^{\bf 1}$ such that
\beq\label{Lbb-10}
\lim_{n\to\infty}\sup\{|\tau(b_n^{(m+1)'})-f_{m+1}(\tau)|: \tau\in T(A)\}=0.
\eneq
Since $\{b_n^{(m+1)'}\}\in \pi_\infty^{-1}(A')_+^{\bf 1},$ by passing to a subsequence, 
we may assume that 
\beq
\|b_n^{(j)'}b_n^{(m+1)'}-b_n^{(m+1)'}b_n^{(j)'}\|<1/n^2,\,\,\, 1\le j\le m.
\eneq
By the second part of \cite[Proposition 3.1]{CETW}, we may further assume (by passing to another subsequence 
if necessary)  that
\beq
\lim_{n\to\infty} \sup\{|\tau(b_n^{(j)'}b_n^{(m+1)'})-\tau(b_n^{(j)})\tau(b_n^{(m+1)'})|: \tau\in F\}=0.
\eneq
By the inductive assumption and by \eqref{Lbb-10},
we have 
\beq
\lim_{n\to \varpi}\sup\{|\tau(b_n^{(j)'}b_n^{(m+1)'})-f_j(\tau)f_{m+1}(\tau)|: \tau\in F\}=0.
\eneq
In other words, since $f_1, f_2,...,f_{m+1}$ are mutually orthogonal,
\beq\label{Lbb-12}
\lim_{n\to \varpi}\sup\{|\tau(b_n^{(j)'}b_n^{(m+1)'})|: \tau\in F\}=0.
\eneq
By \eqref{Lbb-12}, the inductive assumption,  
and (the non-unital version of) Lemma 3.4   in \cite{TWW-2},
we obtain  $b^{(1)}=\{b_n^{(1)}\},$ $b^{(2)}=\{b_n^{(2)}\},...,b^{(m+1)}=\{b_n^{(m+1)}\}$ in $\pi_\infty^{-1}(A')_+^{\bf 1}$ 
such that (1) holds for $m+1$ and 
\beq
\lim_{n\to\infty} \sup\{|\tau(b_n^{(i)})-\tau(f_i)|: \tau\in F\}=0,\,\, 1\le i\le m+1.
\eneq
Thus (2) holds for $m+1.$ 
Since $b^{(m+1)}\in \pi_\infty^{-1}(A')_+^{\bf 1},$ 
we may also assume 
that 
\beq\label{Lbb-11}
a\Pi_{_{F, \varpi}}(b^{(m+1)})=\Pi_{_{F, \varpi}}(b^{(m+1)})a
\eneq
for $a\in \{a_1,a_2,...,a_l\}.$
Hence (3) also holds.  Moreover, 
applying the second part of \cite[Proposition 3.1]{CETW}, we may further assume that (4) holds. 
%
%
%
This completes the induction and the lemma follows.
%
%
\end{proof}

The following is well known. We retain here for convenience.

\begin{lem}\label{LLtrb}
Let $A$ be a \CA\, with a (finite) trace $\tau.$ Suppose that $b\in A_+^{\bf 1}$ and 
$\tau(b)=0.$ 
Then, for any $c\in \Her(b)_+,$ 
$\tau(c)=0.$
\end{lem}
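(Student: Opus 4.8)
The statement is the elementary fact that if $\tau(b)=0$ for a positive contraction $b$, then $\tau$ annihilates the whole hereditary subalgebra $\Her(b)_+$. The plan is to reduce an arbitrary $c \in \Her(b)_+$ to a comparison with a scalar multiple of a power of $b$, and then use positivity and continuity of $\tau$.

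First I would recall that $\Her(b) = \overline{bAb}$, so any $c \in \Her(b)_+$ is a norm-limit of elements of the form $b x_n b$ with $x_n \in A$; since $\tau$ is continuous (it is a finite trace, hence bounded), it suffices to prove $\tau(c) = 0$ for $c$ of the form $c = b x b$ with $x \in A_+$ (one can assume $x \ge 0$ after replacing $x$ by $x^*x$ and adjusting, or more cleanly: every element of $\overline{bAb}_+$ lies in $\overline{b A_+ b}$ up to the standard $\ast$-algebra manipulations). Actually the cleanest route avoids approximation entirely: for $c \in \Her(b)_+$ one has $c \le \|x\|\, b^{1/2} \cdot (\text{something})$; more precisely, since $c \in \overline{bAb}$ and $c \ge 0$, for each $\varepsilon > 0$ the element $f_\varepsilon(b)$ acts as a unit on $c$ in the limit, so $c = \lim_{\varepsilon \to 0} f_\varepsilon(b)^{1/2} c\, f_\varepsilon(b)^{1/2}$, and hence $c \le \|c\|\, \cdot \lim$-dominated by powers of $b$. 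The key inequality I want is: there is a constant $M$ (namely $M = \|c\|$ or $\|x\|\cdot\|b\|$) with $c \le M\, b$ is \emph{not} generally true, but $c \lesssim b$ in the Cuntz sense always holds, and more usefully $b^{1/2}xb^{1/2} \le \|x\|\, b$ when $x \le \|x\| 1$ (working in $\widetilde A$). So writing $c = b^{1/2} y b^{1/2}$ with $0 \le y \le \|y\|1$ (which is possible whenever $c \in \Her(b)_+$, by a standard argument: $c \in \overline{b^{1/2}Ab^{1/2}}$ and one can extract such a $y$ as a limit), we get $0 \le c \le \|y\|\, b$.

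Then I would apply $\tau$: by positivity of $\tau$ and the inequality $0 \le c \le \|y\| b$ we get $0 \le \tau(c) \le \|y\|\,\tau(b) = 0$, hence $\tau(c) = 0$. The only point needing care is the representation $c = b^{1/2} y b^{1/2}$ with bounded $y$; if one prefers to avoid it, argue by approximation: pick $z_n \in A$ with $\|c - b z_n b\| \to 0$, note $b z_n b = b z_n b$ is not positive in general, so instead use $\|c - b^{1/2}(b^{1/2}z_n b^{1/2})b^{1/2}\| \to 0$ is circular — better: since $c \ge 0$ and $c \in \overline{bAb}$, we have $c = \lim_n (b + 1/n)^{-1/2} b\, c\, b (b+1/n)^{-1/2}$ working in $\widetilde A$, wait this needs $c \in \overline{bAb}$; cleanest is: $c \le \|c\|\, s$-$\!\lim f_{1/n}(b)$ and $f_{1/n}(b) \lesssim b$ with $\tau(f_{1/n}(b)) \le \text{(const)}\tau(b)/\text{(gap)}$... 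This is getting delicate, so the \textbf{main obstacle} is simply choosing the slickest of several essentially trivial routes; the honest one-line proof is: for $c \in \Her(b)_+$, one has $c \le \|c\| \cdot h$ for no fixed $h$, so instead observe that $\tau$ restricted to $\Her(b)$ is a trace on $\Her(b)$, and $\tau|_{\Her(b)} = 0$ iff it vanishes on a strictly positive element; since $\tau(b)=0$ and $b$ is strictly positive in $\Her(b)$, and a positive functional on a $C^*$-algebra that vanishes on a strictly positive element is zero (as its GNS representation is zero), we conclude $\tau(c)=0$ for all $c \in \Her(b)_+$. I would write the proof in this last form, invoking only: (i) $b$ is a strictly positive element of $\Her(b)$, (ii) a positive linear functional vanishing on a strictly positive element of a $C^*$-algebra is identically zero — which follows because $0 \le \tau(c)^2 \le \tau(1)\tau(c b^{1/2} \cdots)$, i.e. from Cauchy–Schwarz $|\tau(b^{1/4} c b^{1/4})|^2 \le \tau(c)\,\tau(b^{1/2} c b^{1/2}) $ iterated, or simply because $f_\varepsilon(b) \nearrow$ an approximate unit so $\tau(c) = \lim \tau(f_\varepsilon(b)^{1/2} c f_\varepsilon(b)^{1/2})$ and $f_\varepsilon(b)^{1/2} c f_\varepsilon(b)^{1/2} \le \|c\| f_\varepsilon(b) \lesssim$-dominated... — I will use the approximate-unit form: $\tau(c) = \lim_{\varepsilon\to 0}\tau(f_\varepsilon(b)^{1/2}cf_\varepsilon(b)^{1/2}) \le \|c\| \lim_{\varepsilon \to 0}\tau(f_\varepsilon(b))$, and $\tau(f_\varepsilon(b)) \le (2/\varepsilon)\tau(b) = 0$ since $f_\varepsilon(t) \le (2/\varepsilon)t$ for $t \ge 0$. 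That last bound is clean and elementary, so that is the route I would commit to.
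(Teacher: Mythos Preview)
Your final committed route is correct: since $(f_\ep(b))_{\ep>0}$ is an approximate identity for $\Her(b)$, for $c\in\Her(b)_+$ one has $\tau(c)=\lim_{\ep\to 0}\tau(f_\ep(b)^{1/2}cf_\ep(b)^{1/2})\le \|c\|\lim_{\ep\to 0}\tau(f_\ep(b))$, and the pointwise bound $f_\ep(t)\le (2/\ep)t$ gives $\tau(f_\ep(b))\le (2/\ep)\tau(b)=0$. (The meandering before you commit could be trimmed, but the destination is sound.)

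The paper's proof is different and slightly shorter: it does not use an approximate unit or functional calculus at all, but rather the Cauchy--Schwarz inequality for the trace. For $d=bab$ with $a\in A$, the trace property gives $\tau(d)=\tau(b^2a)$, and then $|\tau(b^2a)|^2\le \tau(b^4)\tau(a^*a)\le \tau(b)\tau(a^*a)=0$ (using $b^4\le b$ since $0\le b\le 1$). Density of $bAb$ in $\Her(b)$ finishes. Your approach trades the Cauchy--Schwarz trick for a functional-calculus estimate; both are elementary, but the paper's argument avoids passing to $\wtd A$ and needs only one limit (norm density), whereas yours is perhaps more conceptually transparent in explaining \emph{why} the hereditary subalgebra is killed.
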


\begin{proof}
Let $d=bab$ for some $a\in A_+.$ 
Then
\beq
\tau(d)^2=\tau(b^2a)^2\le \tau(b^4)\tau(a^*a)\le \tau(b)\tau(a^*a)=0.
\eneq
It follows that $\tau(d)=0.$ Hence $\tau(c)=0$ for all $c\in \Her(b)_+.$
\end{proof}

\begin{lem}\label{Lext}
Let $A$ be a separable  amenable algebraically simple \CA\, which has T-tracial approximate oscillation zero 
and has a  nonempty  compact $T(A).$
Let $X_1\subset  O_1\subset X_2\subset
X \subset Y\subset  \partial_e(T(A))$  be   subsets
such that  $X_1, X_2,  X$ and $Y$ are compact subsets and $O_1$  is
relatively open in $X_2.$ 
Let $\ep>0$ and ${\cal F}\subset A^{\bf 1}$  be a finite subset.
Suppose that $D_0$ is a finite-dimensional \CA\, 
and  $\phi: D_0\to l^\infty(A)/I_{_{X,\varpi}}$
is a unital \hm\,
such that, for each $x\in {\cal F},$ 
 \beq\label{Lext-e-1}
 \inf\{\|\Pi_{_{X, \varpi}}(\iota(x))-\phi(y)\|_{_{2, X_\varpi}}: y\in D_0^{\bf 1}\}<\ep/4.
 \eneq

Then there is a \ finite dimensional \CA\, $D_1$ 
and a \hm\, $\psi: D_1\to l^\infty(A)/I_{_{T(A), \varpi}}$ 
such that (where $\Pi_i:  l^\infty(A)/I_{_{T(A), \varpi}}\to l^\infty(A)/I_{_{X_i, \varpi}}$ is the quotient map)

(1) $\inf\{\|\Pi_2\circ \psi(1_{D_1})\Pi_{_{X_2, \varpi}}(\iota(x))-\Pi_2\circ \psi(y)\|_{_{2,{X_2}_\varpi}}: y\in D_1^{\bf 1}\}<\ep$
for all $x\in {\cal F},$

(2) $\psi(1_{D_1})\in \Pi_\varpi(I_{_{Y\setminus X_2, \varpi}}),$

(3) $\Pi_1\circ \psi: D_1\to l^\infty(A)/I_{_{X_1, \varpi}}$ is unital,  

(4) $\|[\Pi_{_{Y,\varpi}}(\iota(x)),\, \pi_Y\circ \psi(1_{D_1})]\|_{_{Y, \varpi}}<\ep$ for all $x\in {\cal F},$
(where $\pi_Y: l^\infty(A)/I_{_{\varpi}}\to l^\infty(A)/I_{_{Y, \varpi}}$ is the quotient map.
%
%
%
\end{lem}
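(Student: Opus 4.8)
The plan is to lift the given hom on $D_0$ through successive enlargements of the set of traces, using real rank zero (Theorem 6.4 of \cite{FLosc}) at each stage, and to manufacture the ``support projection'' $\psi(1_{D_1})$ with the required commutation and smallness via Lemma \ref{Lbb}. First, I would use that $l^\infty(A)/I_{_{X, \varpi}}$ has real rank zero and Elliott's Lifting Lemma (Lemma \ref{Lelliott}) to lift $\phi: D_0\to l^\infty(A)/I_{_{X, \varpi}}$ to a \hm\, $\phi_0: D_0\to l^\infty(A)/I_{_{T(A), \varpi}},$ and also pick a lifting $\{\phi_{0,n}\}$ to $l^\infty(A).$ The estimate \eqref{Lext-e-1} together with property (TE) (Theorem \ref{Ptight}, available since $\Gamma$ is surjective by \cite[Theorem 7.11]{FLosc}) lets me replace the constant sequence $\iota(x)$ by $\phi_0(y_x)$ modulo an element of $I_{_{X, \varpi}}/I_{_{T(A), \varpi}}$ up to $\ep/4$ in $\|\cdot\|_{_{2, T(A)_\varpi}}.$

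Next, I need to produce the ``cut-down'' projection supported near $X_1$ inside $Y\setminus X_2.$ By \cite[Theorem II.3.12]{Alf} there is $g\in\Aff(T(A))$ with $0\le g\le 1,$ $g|_{X_1}=1$ and $g|_{Y\setminus O_1}=0$ (a Urysohn-type function; here one uses $X_1\subset O_1\subset X_2$ and that $X_1, Y\setminus O_1$ are disjoint compacta). Then Lemma \ref{Lbb} (applied with the finite family $a_i$ running over $\Pi_{_{Y,\varpi}}(\iota(x))$ for $x\in{\cal F}$ and over the matrix units of $\phi_0(D_0),$ and with the single function $f=g|_Y$) yields $b=\{b_n\}\in\pi_\infty^{-1}(A')_+^{\bf 1}$ with $\tau(b_n)\to\tau(g)$ uniformly on $Y,$ with $\Pi_{_{Y,\varpi}}(b)$ commuting with all $a_i$ and with $\phi_0(D_0),$ and with the asymptotic multiplicativity $\tau(b_n a_{i,n})\approx\tau(g)\tau(a_{i,n}).$ Since $g$ vanishes on $Y\setminus O_1$ and $O_1\subset X_2,$ we get $\Pi_{_{Y,\varpi}}(b)\in\Pi_{_{Y,\varpi}}(I_{_{Y\setminus X_2,\varpi}}),$ and since $g\equiv 1$ on $X_1$ the image of $b$ in $l^\infty(A)/I_{_{X_1,\varpi}}$ equals the unit. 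Passing $b$ back to $l^\infty(A)/I_{_{T(A),\varpi}}$ (working with constant-trace families and $\Pi_\varpi$), I then need to replace $b$ by an honest projection: $\overline{b}$ commutes with the finite-dimensional algebra $\phi_0(D_0)$ and lies in an ideal-with-real-rank-zero, so inside the relative commutant I can approximate $\overline{b}$ by a projection $e$ commuting with $\phi_0(D_0),$ with $e\equiv 1$ in the $X_1$-quotient and $e\in$ the $(Y\setminus X_2)$-ideal.

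Finally I set $D_1=D_0$ (or $D_1=\C\oplus D_0$ if a unitization adjustment is needed to keep $\Pi_1\circ\psi$ unital) and $\psi(\cdot)=e\,\phi_0(\cdot)\,e=\phi_0(\cdot)e,$ which is a \hm\, into $l^\infty(A)/I_{_{T(A),\varpi}}$ because $e$ is a central-for-$\phi_0(D_0)$ projection. Item (3) follows since $e$ is the unit in the $X_1$-quotient and $\phi$ was unital there; item (2) is the containment $e\in\Pi_\varpi(I_{_{Y\setminus X_2,\varpi}})$; item (4) is the commutation of $\Pi_{_{Y,\varpi}}(e)$ with $\Pi_{_{Y,\varpi}}(\iota(x))$ up to $\ep$ (from the Lemma \ref{Lbb} conclusion, tightened by taking $g$ close enough and the family large enough); and item (1) is the key computation: in $\|\cdot\|_{_{2,{X_2}_\varpi}},$ using $\Pi_2(e)\Pi_{_{X_2,\varpi}}(\iota(x))\approx\Pi_2(e)\phi_0(y_x)=\Pi_2(\psi(y_x))$ since the $I_{_{X,\varpi}}$-error is killed on $X_2\subset X$ and the trace-multiplicativity controls the cross terms. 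The main obstacle I anticipate is (1): one must be careful that the $\ep/4$ error from \eqref{Lext-e-1} lives in $I_{_{X,\varpi}},$ which only helps on subsets of $X$; since $X_2\subset X,$ restricting the $2$-norm from $X$ to $X_2$ is harmless, but bookkeeping the interaction between the cut-down by $e$ and the property-(TE) replacement requires the asymptotic multiplicativity of $\{b_n\}$ against the $a_{i,n}$ precisely, so the order of quantifiers (choose ${\cal F}$-dependent family in Lemma \ref{Lbb} before extracting $e$) matters.
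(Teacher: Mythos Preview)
Your overall strategy---use Lemma~\ref{Lbb} to produce a cut-down element $b$ supported near $X_1$ and vanishing on $Y\setminus X_2$, then compress the lifted $\phi_0$ by it and lift again---is the same as the paper's. However, two points fail as written.

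First, a minor one: you invoke property (TE) via Theorem~\ref{Ptight}, but that theorem requires strict comparison, which is \emph{not} among the hypotheses of Lemma~\ref{Lext} (and your appeal to \cite[Theorem~7.11]{FLosc} for the surjectivity of $\Gamma$ likewise uses strict comparison). Fortunately (TE) is not needed here: all four conclusions live over subsets of $Y$, and since $X_2\subset X$ the hypothesis~\eqref{Lext-e-1} already gives the estimate on $X_2$ without any detour through $T(A)$.

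Second, and this is the real gap: the step ``approximate $\bar b$ by a projection $e$ commuting with $\phi_0(D_0)$'' does not work at the level you need. Lemma~\ref{Lbb} only makes $\Pi_{_{Y,\varpi}}(b)$ commute with the chosen elements in the $Y$-quotient, \emph{not} in $l^\infty(A)/I_{_{T(A),\varpi}}$; so in the $T(A)$-quotient, $\bar b$ need not lie in the commutant of $\phi_0(D_0)$, and no relative-commutant-with-real-rank-zero argument is available there. If instead you build the projection $\bar e$ only in the $Y$-quotient and Elliott-lift, you still have to arrange that the lift commutes with $\phi_0(D_0)$ (so that $e\phi_0(\cdot)e$ is a homomorphism into $l^\infty(A)/I_{_{T(A),\varpi}}$) and that $\psi(1_{D_1})$ lands \emph{exactly} in $\Pi_\varpi(I_{_{Y\setminus X_2,\varpi}})$; neither is automatic.

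The paper's fix is to stop looking for a commuting projection. It keeps the positive contraction $b$, compresses by $b^{1/2}$ to obtain the order zero map $\phi^b:D_0\to Q_X$ (with $Q_X$ a hereditary subalgebra of the $X$-quotient of $B_0=\overline{\{b_n^{1/2}\}l^\infty(A)\{b_n^{1/2}\}}$), and then uses real rank zero of \emph{that hereditary subalgebra} to approximate each $\phi^b(e_{s,1,1})$ by a finite spectral sum. This produces a \emph{new} finite-dimensional algebra $D_1=\bigoplus_s M_{r_s}(C_s)$ (with $C_s$ commutative), together with an honest homomorphism $h:D_1\to Q_X$. The Elliott lift is then performed inside $B_0/J_{_{T(A),\varpi}}$; since $\{b_n\}\in I_{_{Y\setminus X_2,\varpi}}$, the whole of $B_0/J_{_{T(A),\varpi}}$ already sits in $\Pi_\varpi(I_{_{Y\setminus X_2,\varpi}})$, so condition~(2) is automatic, and $\psi$ is a homomorphism by construction with no commuting projection required.
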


\begin{proof}
Let $\{\phi_n\}: D_0\to l^\infty(A)$ be an order zero \cpc\, lifting of $\phi$ and 
$\phi_Y=\Pi_{_{Y, \varpi}}\circ \{\phi_n\}: D_0\to l^\infty(A)/I_{_{Y, \varpi}}.$ 
Suppose that $f\in C(Y)_+^{\bf 1}$ such that 
$f|_{X_1}=1$ and $f|_{Y\setminus X_2}=0.$ 
%
It follows from Lemma \ref{Lbb}
  that there is a sequence 
$\{b_n\}\in (\pi_\infty^{-1}(A'))_+^{\bf 1}$ such that (considering a system of matrix units of $D_0$)
\beq\label{Lext-e-2}
&&\lim_{n\to\infty}\sup\{|\tau(b_n)-\tau(f)|:\tau\in Y\}=0,\\
&&b_Y\phi_Y(y)=\phi_Y(y)b_Y\andeqn\\\label{Lext-e-4}
&&\lim_{n\to\infty} \sup\{|\tau(b_n\phi_n(y))-\tau(b_n)\tau(\phi_n(y))|: \tau\in Y\}=0\rforal y\in D_0,
\eneq
where  $b_Y=\Pi_{_{Y, \varpi}}(\{b_n\}).$
Set $b=\Pi_{_{\varpi}}(\{b_n\})$ and $b_X=\Pi_{_{X, \varpi}}(\{b_n\}).$ 
Define $\phi_Y^b: D_0\to l^\infty(A)/I_{_{Y, \varpi}}$ by
$\Pi_{_{Y, \varpi}}(\{b_n^{1/2}\phi_n(y)b_n^{1/2}\})$ for all $y\in D_0.$
Put $B_0=\overline{\{b_n^{1/2}\}l^\infty(A) \{ b_n^{1/2}\}},$
$J_{_{Z, \varpi}}=I_{_{Z, \varpi}}\cap \Pi_{_{Z, \varpi}}(B_0),$ 
where $Z=X_1, X_2, X, Y,$ or $T(A).$  
Then $\phi_Y^b$ is an order zero \cpc\, from $D_0$ to $B_0/J_{_{Y, \varpi}}$ 
and $\phi_Y^b(1_{D_0})=b_Y^{1/2} \phi_Y(1_{D_0})b_Y^{1/2}.$
Also define $\phi^b: D_0\to B_0/I_{_{X, \varpi}}$ 
by $\phi^b(x)=b_X^{1/2} \phi(x)b_X^{1/2}$ for all $x\in D_0.$ Then
$\phi^b$ is also an order zero \cpc.
Note that $B_0/J_{_{Y, \varpi}}=\overline{b_Y^{1/2}(l^\infty(A)/I_{_{Y, \varpi}}) b_Y^{1/2}}.$

Write $D_0=M_{r_1}\oplus M_{r_2}\oplus \cdots M_{r_l}.$ 
Choose $R=l^2 r_1^2r_2^2\cdots r_l^2$
and write $\{e_{s, i,j}\}_{1\le i, j\le r_s}$ for a fixed system of matrix units for $M_{r_s},$ $1\le s\le l.$

Put $Q_X=B_0/J_{_{X, \varpi}}.$ Consider $B_{s,i}=\overline{\phi^b(e_{s,i,i})Q_X\phi^b(e_{s,i,i})}$
and ${\bar B_{s,i}}=\phi(e_{s,i,i})Q_X\phi(e_{s,i,i}),$ 
$1\le i\le r_s$ and $1\le s\le l.$ Put $d=\phi^b(1_{D_0}).$
Then, by Proposition 8.3 of \cite{FLosc}, for example, $\overline{dQ_Xd}\cong \bigoplus_{s=1}^l M_{r_s}(B_{s,1}).$
Also $\phi(1_{D_0})Q_X\phi(1_{D_0})\cong \bigoplus_{s=1}^l M_{r_s}(\phi(e_{s,1,1})Q_X\phi(e_{s,1,1}))$
(recall that $\phi$ is a \hm).

Let $\dt\in (0,\ep/4).$ 
Fix $\dt_0\in (0,\dt/2)$ for now. 
By Theorem 6.4 of \cite{FLosc}, $l^\infty(A)/I_{_{T(A), \N}}$ has real rank zero. Hence its quotient 
$l^\infty(A_1)/I_{_{T(A),\varpi}}$ also  has real rank zero.
Since 
$Q_X$  is a hereditary \SCA\, of a quotient of $l^\infty(A_1)/I_{_{T(A),\varpi}},$ 
it also has real rank zero. So does $B_{s,1}.$ 
Hence there is a commutative finite dimensional \SCA\, $C_s\subset B_{s,1}$ with 
$p_{s,1}=1_{C_s}$ 
such that, for some $y_s\in C_s^{\bf 1},$ 
\beq\label{Tst-2-5}
&&\|\phi^b(e_{s,1,1})-y_s\|<\dt_0/4R \andeqn\\\label{Tst-2-5+}
&&\|p_{s, 1}\phi^b(e_{s,1,1})-\phi^b(e_{s,1,1})\|<\dt_0/4R,\,\, 1\le i\le r_s\andeqn 1\le s\le l.
\eneq
Note that we may assume that 
\beq\label{Lext-e-10}
p_{s,1}\le f_\eta(\phi^b(e_{s,1,1}))
\eneq
for some $\eta\in (0, \dt_0/16R).$ 
We also note that $\phi^b(e_{s, i,i})=b_X\phi(e_{s,i,i})=\phi(e_{s,i,i})b_X\le \phi(e_{s,i,i}).$ 
Suppose that $C_s$ is generated by mutually orthogonal projections 
$p_{s,1,1}, p_{s,1,2},...,p_{s,1,m(s)}$ with $p_{s,1}=\sum_{k=1}^{m(s)} p_{s, i,k}.$

Put $v_{s,i,j,k}=\phi(e_{s,i,1})p_{s,1,k} \phi(e_{s,1,j}),$ $1\le k\le m(s),$  $1\le i, j\le r_s,$ $1\le s\le l.$
Note that $p_{s,1,k}$ is a sub-projection of $\phi(e_{s,1,1}).$  Then $\{v_{s,i,j,k}\}_{1\le i,j\le r_s}$ forms a system of matrix units for $M_{r_s}.$
Put 
\beq
C=\bigoplus_{s=1}^l M_{r_s}(\C\cdot p_{s,1,1}+\C \cdot p_{s,1, 2}+\cdots \C p_{s, 1, m(s)}).
\eneq
 Then $C\cong \bigoplus_{s=1}^l M_{r_s}(C_s).$ Put $D_1= \bigoplus_{s=1}^l M_{r_s}(C_s).$
Let $h: D_1\to C$ be the isomorphism. Let $p=h(1_{D_1})\in B_0/I_{_{X, \varpi}}$ (see 
\eqref{Lext-e-10}). 
Then $p$ is a projection. 
By  \eqref{Tst-2-5+}, we have that
\beq\label{Lext-6}
\|p \phi^b(1_{D_0})-\phi^b(1_{D_0})\|<\dt_0.
\eneq
%
It follows from \eqref{Lext-e-1} that, for each $x\in {\cal F},$
\beq\label{Lext-6+n10}
\inf\{\|p(\Pi_{_{X, \varpi}}(\iota(x))-\phi(y))\|_{_{2, X_\varpi}}: y\in D_0^{\bf 1}\}<\ep/4.
\eneq
Also
\beq
p_{s,i}:=\sum_{k=1}^{m(s)} v_{s, i,i,k} =\sum_{k=1}^{m(s)}\phi(e_{s,i,1})p_{s,1,k} \phi(e_{s,1,i})
\le \phi(e_{s,i.i})
\eneq
which is a projection and 
$p=\sum_{s=1}^l (\sum_{i=1}^{r_s} p_{s,i}).$ 
One then computes that
\beq
\phi(e_{s, i,j})p&=&\sum_{s=1}^l \phi(e_{s,i,j})p_{s,j}=\sum_{s=1}^l\sum_{k=1}^{m(s)} \phi(e_{s,i,j}e_{s,j,1})p_{s,1,k}\phi(e_{s,1,j})\\\nonumber
&=&\sum_{s=1}^l\sum_{k=1}^{m(s)} \phi(e_{s, i,1})p_{s,1,k}\phi(e_{s,1,j})=\sum_{s=1}^l\sum_{k=1}^{m(s)} \phi(e_{s, i,1})p_{s,1,k}\phi(e_{s,1,i}e_{s,i,j})\\
&=&(\sum_{s=1}^lp_{s,i} )\phi(e_{s, i,j})=\sum_{s=1}^l (\sum_{i=1}^{r_s} p_{s,i})\phi(e_{s, i,j})=p\phi(e_{s, i,j}).
\eneq
In other words $p$ 
commutes with 
$\phi(e_{s, i,j})$
for all $1\le i,j\le r_s,$ $1\le s\le l.$  Moreover $p\phi(e_{s, i,j})=\sum_{s=1}^l \sum_{k=1}^{m(s)}v_{s, i,j,k}\in C.$
Therefore (see also \eqref{Lext-6+n10})
\beq\label{Lext-e-14}
\inf\{\|p\Pi_{_{X, \varpi}}(\iota(x))-h(y)\|_{_{2, X_\varpi}}: y\in D_1\}<\ep/4.
\eneq
We also have  (by \eqref{Lext-e-1} and the fact  $p\phi(y)=\phi(y)p$ for all $y\in D_0$)
\beq\label{Lext-e-15}
\|[\Pi_{_{X, \varpi}}(\iota(x)), p]\|_{_{2, X_\varpi}}<\ep/4.
\eneq

Since $J_{_{X,\varpi}}/J_{_{T(A), \varpi}}$ has real rank zero,   by the Elliott  lifting Lemma (see  Lemma \ref{Lelliott}),
we obtain a \hm\, $\psi: D_1\to B_0/J_{_{T(A), \varpi}}\subset l^\infty(A)/I_{_{T(A), \varpi}}$ such 
that $\pi_X\circ \psi=h,$  where $\pi_X: l^\infty(A)/I_{_{T(A), \varpi}}\to l^\infty(A)/I_{_{X, \varpi}}$
is the quotient map. 

We will verify that $\psi$ meets the requirements of the lemma. 

First we note that, by \eqref{Lext-e-14},   (1) holds.

Recall that $B_0/J_{_{Y, \varpi}}=\overline{b_Y^{1/2}(l^\infty(A)/I_{_{Y, \varpi}})b_Y^{1/2}}.$
Note that, for any $\tau\in Y\setminus X_2,$ $\tau(f)=0.$
It follows from \eqref{Lext-e-2} that
\beq
&&\hspace{-0.8in}\lim_{n\to \varpi}\sup\{\tau(b_n):\tau\in Y\setminus X_2\}
\le  \lim_{n\to \varpi}\sup\{|\tau(b_n)-f(\tau)|: \tau\in Y\setminus X_2\}\\
&&\hspace{1.2in}+ \lim_{n\to \varpi}\sup\{|f(\tau)|: \tau\in Y\setminus X_2\}=0.
\eneq
Since $b_n^*b_n\le b_n,$ this implies that $\{b_n\}\in I_{_{Y\setminus X_2, \varpi}}$
and $b\in \Pi_{_{\varpi}}(I_{_{Y\setminus X_2, \varpi}}).$
Recall that\\ $B_0/I_{_{T(A),\varpi}}=\overline{b(l^\infty(A)/I_{_{T(A),\varpi}})b}.$
Therefore  $B_0/I_{_{T(A),\varpi}}\subset \Pi_{_{\varpi}}(I_{_{Y\setminus X_2, \varpi}}).$ Hence\\ $\psi(1_{D_1})\in  \Pi_{_{\varpi}}(I_{_{Y\setminus X_2, \varpi}})$ and (2) holds.

It also follows that $\psi(1_{D_1})\Pi_{_{\varpi}}(\iota(x))\in \Pi_\varpi(I_{_{Y\setminus X_2, \varpi}}).$
In particular,
\beq\label{Lext-e-17}
\pi_{Y\setminus X_2}([\Pi_{_{\varpi}}(\iota(x)),\, \psi(1_{D_1})])=0,
\eneq
where $\pi_{Y\setminus X_2}: l^\infty(A)/I_{_{\varpi}}\to l^\infty(A)/I_{_{Y\setminus X_2}}$ is the quotient map.
Since $Y=(Y\setminus X_2)\cup X,$  
 by \eqref{Lext-e-15}, 
\eqref{Lext-e-17}  and Proposition \ref{Punion}, we have that
\beq
\|[\Pi_{_{Y, \varpi}}(\iota(x)),\, \pi_Y\circ \psi(1_{D_1})]\|_{_{2, Y_
\varpi}}<\ep\rforal x\in {\cal F}.
\eneq
So (4) holds.

Recall that $f(\tau)=1$ for all $\tau\in X_1.$ By \eqref{Lext-e-2}, \eqref{Lext-e-4},
\beq
\lim_{n\to\varpi}\sup\{|\tau(b_n^{1/2}\phi_n(1_{D_0})b_n^{1/2})-\tau(\phi_n(1_{D_0}))|: \tau\in X_1\}=0.
\eneq
Since $\phi$ is unital, this implies that
\beq
\lim_{n\to\varpi}\sup\{|\tau(b_n^{1/2}\phi_n(1_{D_0})b_n^{1/2})-1|: \tau\in X_1\}=0.
\eneq
In other words, $\wtd\pi_{X_1}\circ \phi^b(1_{D_0})=1,$ where $\wtd\pi_{X_1}: 
l^\infty(A)/I_{_{X, \varpi}}\to l^\infty(A)/I_{_{X_1, \varpi}}$ is the quotient map. 
It then follows from \eqref{Lext-6} that $\wtd\pi_{X_1}(p)$ is invertible.
But $p=h(1_{D_1})$ is a projection. Therefore $\wtd\pi_{X_1}(p)=1.$
In other words, $\Pi_1(\psi(1_{D_1}))=1.$ This shows that (3) holds. 

\end{proof}

\begin{prop}
\label{Pst-1}
Let $A$  be a separable algebraically simple amenable \CA\, with 
T-tracial approximate oscillation zero and non-empty compact $T(A).$
Let $F\subset \partial_e(T(A))$ be a compact subset with
transfinite dimension ${\rm trind}(F)=c<\Omega$ (see Definition \ref{Dtransfinite}).
Then, for any $\ep\in (0, 1/2),$ and any finite subset ${\cal F}\subset A^{\bf 1},$ 
there exists a finite dimensional \CA\, $D$  
and 
a unital \hm\, 
$\phi: D\to l^\infty(A)/I_{_{F, \varpi}}$ 
such that 
\beq
 \inf\{\|\Pi_{_{F, \varpi}}(\iota(x))-\phi(y)\|_{_{2, F_\varpi}}: y\in D^{\bf 1}\}<\ep\tforal x\in {\cal F}.
 \eneq
\end{prop}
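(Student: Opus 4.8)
The plan is to prove this by transfinite induction on the ordinal $c = {\rm trind}(F)$, following the structure already set up by Lemma \ref{LnFO}, Lemma \ref{Lext}, and Corollary \ref{LnK}. The base case $c = 0$ means $F$ is discrete, hence (being compact) finite; then $F$ is a finite disjoint union of points $\tau_1, \dots, \tau_k \in \partial_e(T(A))$. For each $\tau_i$, Corollary \ref{LnK} (applied with a small enough $\varepsilon$) produces a relatively open neighborhood $O_i \ni \tau_i$ and a homomorphism into $l^\infty(A)/I_{_{\overline{T(A)}^w,\varpi}}$ whose image approximates $\Pi_{_{F,\varpi}}(\iota(x))$ on $\overline{O_i}$ in $2$-norm; shrinking the $O_i$ to be disjoint and using that $\overline{O_i} \cap F = \{\tau_i\}$, we patch these together into a single homomorphism from a finite-dimensional algebra into $l^\infty(A)/I_{_{F,\varpi}}$ via the decomposition $I_{_{F,\varpi}} = I_{_{\{\tau_1,\dots,\tau_k\},\varpi}}$ and Proposition \ref{Punion}. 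Unitality is arranged because $l^\infty(A)/I_{_{F,\varpi}}$ is unital (Proposition \ref{Pcompact}) and we can absorb the complementary projection.

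For the inductive step, suppose the statement holds for all compact subsets of $\partial_e(T(A))$ with transfinite dimension $< c$. Given $F$ with ${\rm trind}(F) = c$ and $\varepsilon > 0$, each point $t \in F$ has, by definition of small transfinite dimension, arbitrarily small relatively open neighborhoods $U$ in $F$ with ${\rm trind}(\mathrm{bd}_F(U)) < c$, where $\mathrm{bd}_F(U) = \overline{U} \setminus U$ is compact. By compactness of $F$, cover $F$ by finitely many such sets $U_1, \dots, U_N$, and set $B_j = \mathrm{bd}_F(U_j)$; each $B_j$ is compact with ${\rm trind}(B_j) < c$, so $B = \bigcup_j B_j$ is compact with ${\rm trind}(B) < c$ as well (transfinite dimension is monotone and finitely additive for closed subsets). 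Applying the inductive hypothesis to $B$ gives a finite-dimensional $D_0$ and a unital homomorphism $\phi_0 : D_0 \to l^\infty(A)/I_{_{B,\varpi}}$ approximating $\Pi_{_{B,\varpi}}(\iota(x))$ on $B$. Now I would run Lemma \ref{Lext} (with the roles: $X_1 \subset O_1 \subset X_2 \subset X \subset Y$ chosen so that $X = B$, $Y = F$, and $X_1, X_2$ shrink appropriately around $B$) together with Lemma \ref{LnFO} and Corollary \ref{LnK} to extend $\phi_0$ across a compact neighborhood of $B$ in $F$ while simultaneously, on the complement of this neighborhood inside $F$, building new matrix-algebra homomorphisms near each remaining point (the complement has small transfinite dimension too — it sits inside the union of the $\overline{U_j} \setminus$ neighborhoods, which is covered by sets whose boundaries lie in $B$). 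The point is that $F$ is split, up to $2$-norm-negligible pieces, into (i) a neighborhood of $B$ where $\phi_0$ does the job and (ii) a region handled by lower-dimensional or point-like data, and these are glued using Proposition \ref{Punion} and the real rank zero of $l^\infty(A)/I_{_{\overline{T(A)}^w,\varpi}}$ (Theorem 6.4 of \cite{FLosc}) together with the Elliott Lifting Lemma (Lemma \ref{Lelliott}) to produce a genuine homomorphism into $l^\infty(A)/I_{_{F,\varpi}}$.

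The key technical inputs throughout are: Theorem \ref{Ptight} (property (TE)), which lets us move quotient norms between $l^\infty(A)/I_{_{\overline{T(A)}^w,\varpi}}$ and $l^\infty(A)/I_{_{F,\varpi}}$; the surjectivity of $\Gamma$ (which follows from strict comparison and T-tracial approximate oscillation zero by \cite[Theorem 7.11]{FLosc} — though here we have it essentially built into the hypotheses via Proposition \ref{Ptesrk1}'s use of the hyperfinite $\mathrm{II}_1$ structure); and the commutation-with-perturbation lemmas (Lemma \ref{Lcomm-1}, Lemma \ref{Lbb}, Lemma \ref{Lsemip}) which ensure the pieces can be made to approximately commute with $\iota(x)$ and with each other. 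The unitality of the final homomorphism follows because $T(A)$ compact forces $l^\infty(A)/I_{_{F,\varpi}}$ unital, and at each stage we track the identity element (part (3) of Lemma \ref{Lext} and the unital-completion trick at the end of Lemma \ref{LnFO}).

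The main obstacle I anticipate is the bookkeeping in the inductive step: ensuring that the neighborhood of $B$ on which $\phi_0$ is extended, together with the region where the point-wise or lower-dimensional data is used, actually covers $F$ up to a set on which everything is $2$-norm small, and that the finitely many homomorphisms produced on the various overlapping pieces can be made mutually (approximately) commuting and then spliced via Lemma \ref{Lelliott} into one honest finite-dimensional homomorphism. Concretely, one must choose the shrinking chain $X_1 \subset O_1 \subset X_2$ inside $F$ relative to $B$, verify that $F \setminus X_1^\circ$ decomposes into pieces each of which is either a point-neighborhood (handled by Corollary \ref{LnK}) or has boundary in $B$ (so already covered), and control the accumulated error across the finitely many gluings so it stays below $\varepsilon$. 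This is a standard dimension-theoretic induction in spirit, but the interplay with the trace-$2$-norm estimates and the quotient maps $\pi_F$ requires care; property (TE) from Theorem \ref{Ptight} is precisely what makes the norm-estimate part go through cleanly.
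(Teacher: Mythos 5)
Your overall route is the same as the paper's: transfinite induction on ${\rm trind}(F)$, a finite cover coming from the pointwise (hyperfinite) approximations of Corollary \ref{LnK}, the inductive hypothesis applied to the union of the boundaries, extension over a neighborhood of that union via Lemma \ref{LnFO} and Lemma \ref{Lext}, and gluing via Lemma \ref{Lbb}, Lemma \ref{Lsemip}, real rank zero and the Elliott lifting lemma. But there are two concrete problems. First, your base case is wrong: a compact metrizable space with ${\rm trind}=0$ is only zero-dimensional, not discrete or finite (the Cantor set is a counterexample), so the ``finitely many points'' patching argument does not establish the case $c=0.$ The correct device --- and the one the paper uses --- is to run the induction with no separate base case: for $c=0$ the inductive hypothesis is vacuous (only the empty set has ${\rm trind}<0$), the sets $V_j$ can be chosen with empty boundaries in $F,$ so $F_0=\emptyset$ and the general inductive step applies verbatim.

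Second, you list Theorem \ref{Ptight} (property (TE)), surjectivity of $\Gamma,$ and Lemma \ref{Lcomm-1} as key inputs; all of these require strict comparison, which is not a hypothesis of this proposition and is not needed for it. The conclusion only asks for approximation in the norm $\|\cdot\|_{_{2, F_\varpi}}$ of $l^\infty(A)/I_{_{F, \varpi}},$ so no transfer of quotient norms in the spirit of (TE) is required: the estimates are assembled with Proposition \ref{Punion} after splitting $F$ into $X_1,$ $(X_2\setminus X_1)\cap W_j$ and $(F\setminus X_2)\cap W_j,$ where the $W_j$ are the disjointified $V_j,$ the mutually orthogonal central elements $b^{(j)}$ of Lemma \ref{Lbb} realize a partition of unity subordinate to this cover, the projection $\psi_0(1_{D_1})$ produced by Lemma \ref{Lext} is unital over $X_1$ and tracially supported in $X_2,$ and Lemma \ref{Lsemip} converts the cut-down c.p.c.\ map $(1-\psi_0(1_{D_1}))\Psi'(\cdot)(1-\psi_0(1_{D_1}))$ into a genuine homomorphism onto the complementary corner, whose direct sum with $\psi_0$ gives the unital $\phi.$ Also, the region away from the boundary set is not handled by ``lower-dimensional'' data, as you suggest, but by the disjointness of the $W_j$ together with the single point-approximations $\phi_{\tau_j}$; property (TE), strict comparison and Lemma \ref{Lcomm-1} only enter later (Proposition \ref{Ldvi-3} and Lemma \ref{LMnemb-1}), where one needs commutation estimates over all of $T(A).$ If your argument genuinely relied on them here, you would be proving a weaker statement than the one claimed.
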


\begin{proof}
Since $T(A)$ is assumed to be compact, $l^\infty(A)/I_{_{\varpi}}$ is unital (see \ref{Pcompact}) and hence,
$l^\infty(A)/I_{_{S, \varpi}}$ are all unital for all $S\subset T(A).$

Let $d={\rm trind}(F)$ be the transfinite dimension of $F.$ Note that $F$ is compact and metrizable.
Keep in mind that $\emptyset$ has dimension $-1.$ 
The proof is somewhat similar to that of Proposition 5.1 of \cite{S-2}, using, 
among other things, a standard (transfinite) induction for the boundaries on the dimension $d.$ 

Let $c$ be an ordinal number  such that $0\le c\le d.$ 
Let us assume that, for any compact subset  $F_0\subset \partial_e(T(A))$
with ${\rm trind}(F_0)<c,$ $\ep>0,$ and any finite subset ${\cal F}\subset A^{\bf 1},$ 
 there exist a  finite dimensional \CA\, $D_{0}'$ 
 and 
 a unital \hm\, 
$\phi_0: D_{0}'\to l^\infty(A)/I_{_{F_0, \varpi}}$ 
such that

(1) $\|\Pi_{_{F_0, \varpi}}(\iota(x))-\phi_0(y_x)\|_{_{F_0, \varpi}}<\ep\rforal x\in {\cal F}$ and some $y_x\in  (D_0')^{\bf 1}.$

Note that, if $c=0,$ $F_0=\emptyset.$ So the above holds automatically.

Now let $F\subset \partial_e(T(A))$ be a compact subset with ${\rm trind}(F)=c.$

By Corollary \ref{LnK}, 
%
for each $\tau\in F,$ 
there is   a finite dimensional \CA\, $D_\tau$ 
and a relatively open subset $U_{\tau}\subset F$  and 
a \hm\, 
$\wtd\phi_\tau: D_\tau\to l^\infty(A)/I_{_{F, \varpi}}$ such that 
$\bar \phi_\tau=\pi_{\bar U_\tau}\circ \wtd\phi_\tau: D_{\tau}\to l^\infty(A)/I_{_{{\bar U_\tau}, \varpi}}$
is a unital \hm\, and 


($i^0$) $\|\Pi_{_{\bar U_\tau, \varpi}}(\iota(x))-\bar \phi_\tau(y_{x, \tau})\|_{_{{\bar U}_\tau, \varpi}}<\ep/4^{3}\rforal x\in {\cal F}$ and for some $y_{x, \tau}\in  D_\tau^{\bf 1},$ 
where $\pi_{\bar U_\tau}: l^\infty(A)/I_{_{F, \varpi}}\to l^\infty(A)/I_{_{{\bar U_\tau}, \varpi}}$ is the quotient map
(and ${\bar U}=\overline{U\cap F}$).

Choose an order zero \cpc\, $\phi_{\tau}: D_{\tau}\to  A$ 
\st\, 

($i_0$) $\|x-\phi_\tau(y_{x, \tau})\|_{_{2, {\bar U}_\tau}}<\ep/4^{3}\rforal x\in {\cal F}$ and for some $y_{x, \tau}\in  D_\tau^{\bf 1},$  and

($i_1$) $\|1-\phi_\tau(1_{D_\tau})\|_{_{2, {\bar U}_\tau}}<\ep/4^3.$



Since $F$ is compact, there are $\tau_1, \tau_2,...,\tau_N\in F$ such that
$\cup_{i=1}^N U_{\tau_i}\supset F.$
Since 
${\rm trind}(F)$ is $c,$  there exist
relatively open subsets $V_1, V_2,...,V_N$ of $F$ such that 
$\cup_{i=1}^N V_i\supset F,$ $\overline{V_i}\subset U_i\cap F$ 
 and, if $c=(c-1)+1,$ 
${\rm trind}(bd_F(V_j))\le c-1,$ 
or, if $c$ is a limit ordinal, ${\rm trind}(bd_F(V_j))<c,$
where $bd_F(V_j)=(\overline{V_j}\setminus V_j)\cap F,$ 
$j=1,2,...,N.$  Set $F_0=\cup_{j=1}^N bd_F(V_j).$ Then ${\rm trind}(F_0)<c.$

Let $D_V=\bigoplus_{i=1}^N D_{\tau_i}.$
Choose $\dt_0>0$ (in place of $\dt$) which is given by Lemma \ref{Lsemip} for $\ep/16$  (in place of $\ep$)
associated with $D_V$ (in place of $D$).   We may choose $\dt_0$ such that 
$\dt_0<\ep/16.$

Choose, for each $s\in \{1,2,...,N\},$ a finite subset  ${\cal G}_s\subset  D_{\tau_s}^{\bf 1}$ which 
is $\dt_0/32N$-dense in $D_{\tau_s}^{\bf 1}.$
Let ${\cal G}={\cal F}\cup (\cup_{s=1}^N\{\phi_{\tau_s}(g): g\in {\cal G}_s\})$ 
(keep in mind that ${\cal G}$ is significantly larger than ${\cal F}$). 

By the transfinite inductive assumption,  there exist a finite dimensional \CA\, $D_0$
 and 
a unital \hm\, $\phi_0': D_0\to l^\infty(A)/I_{_{F_0, \varpi}}$ such that 


(i') $\inf\{\|\Pi_{_{F_0, \varpi}}(\iota(x))-\phi_0'(y)\|_{_{2, (F_0)_\varpi}}:y\in D_0^{\bf 1}\}<\dt_0/32 N \cdot 4^3 
\rforal x\in {\cal G}.$ 


By Lemma \ref{LnFO},
there is a relatively open subset $W_0'$ of $F$ such that $W_0'\supset F_0,$
a finite dimensional \CA\, $\bar D_0$
and a \hm\, $\wtd \phi_0: \bar D_0\to l^\infty(A)/I_{_{\varpi}}$ such that 
${\bar \phi_0}: \bar D_0\to l^\infty(A)/I_{_{{\overline W_0'}, \varpi}}$ is a unital \hm, 
where ${\bar \phi_0}=\pi_{\overline{W_0'}}\circ \wtd\phi_0$ and 
\beq\label{Pst-e-3}
&&\hspace{-0.2in}\|\Pi_{_{{\overline W_0'}, \varpi}}(\iota(x))-\bar \phi_0(y_{x,0}')\|_{_{{\overline W_0'}, \varpi}}<\dt_0/N4^{3}\rforal x\in {\cal G}\andeqn 
{\rm some}\,\, y_{x,0}'\in \bar D_0^{\bf 1}.
\eneq
%

Since $F$ is a compact metric space, there are relatively open subsets $W_{0,1}', W_{0,2}'\subset W_0'$
such that $F_0\subset W_{0,1}'\subset X_1={\overline{W_{0,1}'}}\subset  W_{0,2}'\subset  X_2=
{\overline{W_{0,2}'}}\subset W_0'\subset F.$ Note that
$F_0\subset X_1\subset X_2$ are compact subsets of $F.$ 

Choose $f_0'\in C(F)_+^{\bf 1}$ such that ${f_0}'|_{X_1}=1,$ ${f_0}'|_{F\setminus X_2}=0.$
By Lemma \ref{Lext}, there exist a finite dimensional \CA\, 
$D_1$ 
and a \hm\, $\psi_0: 
D_1\to l^\infty(A)/I_{_{F, \varpi}}$ satisfying the following
($\Pi_i: l^\infty(A)/I_{_{F, \varpi}}\to l^\infty(A)/I_{_{X_i, \varpi}}$ is the quotient map, $i=0,1$)

(1') $\|\Pi_2\circ \psi_0(1_{D_1})\Pi_{_{X_2, \varpi}}(\iota(x))-\Pi_2\circ \psi_0(y_{x,0})\|_{_{X_2, \varpi}}<\dt_0/N4^{2}$
for all $x\in {\cal G}$ and some $y_{x, 0}\in D_1^{\bf 1}.$

(2') $\psi_0(1_{D_1})\in I_{_{F\setminus X_2,\varpi}}/I_{_{F, \varpi}},$

(3') $\Pi_1\circ \psi_0: D_1\to l^\infty(A)/I_{_{X_1, \varpi}}$ is unital,  

(4') $\|[\Pi_{_{F,\varpi}}(\iota(x)),\, \psi_0(1_{D_1})]\|_{_{F, \varpi}}<\dt_0/N4^{2}$ for all $x\in {\cal G}.$  


Set $W_i=V_i\setminus (\cup_{j=1}^{i-1}\overline{V_j}\cup F_0),$ $1\le i\le N.$ 
Then $W_i\cap W_j=\emptyset$ if $i\not=j,$ ($1\le i,j \le N$) and 
$\cup_{i=1}^N W_i=\cup_{i=1}^N V_i\setminus F_0.$  Put $W_0=W_{0,1}'.$ 
Then  $\{W_0\}\cup\{W_i: 1\le i\le N\}$ is an open cover of $F.$

Let $\{ f_i: 0\le i\le N\}$ be a partition of unity subordinate to the open cover $\{W_i: 0\le i\le N\}.$
In other words, $f_i\in C(F)_+,$ ${\rm supp}(f_i)\subset W_i$ and 
$\sum_{0\le i\le N}f_i(t)=1$ for all $t\in F.$ 
Note that $f_1,f_2,...,f_N$ are mutually orthogonal ($f_0$ is not part of it). 

By Lemma \ref{Lbb} (recall that each $D_{\tau_j}$ is finitely generated), 
there are $b^{(1)}=\{b^{(1)}_n\}, b^{(2)}=\{b^{(2)}_n\},..., b^{(N)}=\{b_n^{(N)}\}\in 
{(\pi_\infty^{-1}(A'))}_+^{\bf 1}$
such that  

(1'')  $\Pi_{_{F, \varpi}}(b^{(1)}), \Pi_{_{F, \varpi}}(b^{(2)}),..., \Pi_{_{F, \varpi}}(b^{(N)})$ 
are mutually orthogonal, 

(2'') $\lim_{n\to \infty}\sup\{|\tau(b_n^{(j)})-\tau(f_j)|: \tau\in F\}=0,\,\,\, j=1,2,...,N,$

(3'') $\Pi_{_{F, \varpi}}(b^{(j)})\iota(\phi_{\tau_j}(y))=\iota(\phi_{\tau_j}(y))\Pi_{_{F, \varpi}}(b^{(j)})$ for all $y\in D_{\tau_j},$  $1\le j\le N,$  and 

(4'') $\Pi_{_{F, \varpi}}(b^{(j)})\psi_0(1_{D_1})=\psi_0(1_{D_1})\Pi_{_{F, \varpi}}(b^{(j)}),$  $1\le j\le N,$

(5'') $\lim_{n\to\varpi}\sup\{
|\tau(b_n^{(j)}\phi_{\tau_j}(y))-\tau(f_j)\tau(\phi_{\tau_j}(y))|:\tau\in F\}=0$
for all
$y\in D_{\tau_i},$ $1\le j\le N.$

%
Define $\psi_i':D_{\tau_i}\to l^\infty(A)/I_{_{F, \varpi}}$ by 
$\psi_i'(y)=\Pi_{_{F, \varpi}}((b^{(i)})^{1/2}\iota(\phi_{\tau_i}(y))(b^{(i)})^{1/2})$ for $y\in D_{\tau_i},$ $1\le i\le N,$ and 
$\psi_i'': D_{\tau_i}\to l^\infty(A)/I_{_{F, \varpi}}$ by 
$\psi_i''(y)=(1-\psi_0(1_{D_1}))\psi_i'(y)(1-\psi_0(1_{D_1}))$ for $y\in D_{\tau_i},$ $1\le i\le N.$
Note that $\psi'_i$ is an order zero \cpc,\, $1\le i\le N,$ and 
$\psi_1',\psi_2',...,\psi_N'$ are mutually orthogonal, as $b^{(1)}, b^{(2)},...,b^{(N)}$ are.

Let 
$\Psi': D_V\to  l^\infty(A)/I_{_{F, \varpi}}$ be defined by $\Psi'(y_1, y_2,...,y_N)=\sum_{i=1}^N \psi_i'(y_i)$
for $y_i\in D_{\tau_i},$ $1\le i\le N.$  Then $\Psi'$ is an order zero \cpc.
Define $\Psi'': D_V\to  l^\infty(A)/I_{_{F, \varpi}}$ by 
$\Psi''(d)=(1-\psi_0(1_{D_1}))\Psi'(d)(1-\psi_0(1_{D_1}))$ for $d\in D_V.$
Then $\Psi''$ is a \cpc.

By (4'), the choice of ${\cal G},$  and (4'') above, 
we have (recall that  $\psi_0(1_{D_1})$ is a projection),  for $y\in D_V^{\bf 1}, $ 
\beq\label{Pst-1-e15-1}
&&\|(1-\psi_0(1_{D_1}))\psi_j'(y)-\psi_j''(y)
\|_{_{2, {W_j}_\varpi}}<\dt_0/N4^{2} \andeqn\\\label{Pst-1-e15}
&&\|(1-\psi_0(1_{D_1}))\Psi'(y)-\Psi''(y)
\|_{_{2, F_\varpi}}<\dt_0/4^{2}.
\eneq
%

By (3'), 
$\Pi_{X_1, \varpi}(1-\psi_0(1_{D_1}))=0.$
Thus (by (1')), for all $x\in {\cal F},$  and any $y\in D_V,$ 
\beq
&&\hspace{-0.4in}\|\Pi_{X_1, \varpi}(\iota(x))-\Pi_1(\psi_0(y_{x,0})+\sum_{j=1}^N\psi_j''(y))\|_{_{2, {X_1}_\varpi}}\\\label{Pst-1-e-30}
&&\hspace{0.2in}=\|\Pi_1(\psi_0(1_{D_1}))\Pi_{_{X_1, \varpi}}(\iota(x))-\Pi_1(\psi_0(y_{x,0}))\|_{_{2, {X_1}_\varpi}}<\dt_0/N4^{2}.
\eneq

In what follows, denote by $\pi_\tau: l^\infty(A)/I_{_{F, \varpi}}\to l^\infty(A)/I_{_{\tau, \varpi}}$
the quotient map (if $\tau\in F$) and 
$\pi_Z: l^\infty(A)/I_{_{F, \varpi}}\to l^\infty(A)/I_{_{Z, \varpi}}$
the quotient map (if $Z\subset F$).

If $\tau\in X_2\setminus X_1,$ 
since ${\rm supp}(f_0)\subset W_0\subset X_1,$ 
$\sum_{i=1}^N f_i(\tau)=1.$ 
Since $\{W_j:1\le j\le N\}$ is  pairwisely disjoint, there is only one 
$j\in \{1,2,...,N\}$ such that $\tau\in W_j.$ 
Moreover, for  $\tau\in \Omega_j:=(F\setminus X_1)\cap W_j,$ $f_j(\tau)=1$ ($1\le j\le N$).
It follows  that (also using (3''))
\beq
&&\hspace{-0.7in}\|\pi_{\Omega_j}(\psi_j'(y))-\pi_{\Omega_j}(\iota(\phi_{\tau_j}(y)))\|_{_{2, {\Omega_j}_\varpi}}\\
&&\hspace{-0.3in}=\lim_{n\to\varpi}\sup\{\tau((1-(b_n^{(j)}))^2(\phi_{\tau_j}(y))^*(\phi_{\tau_j}(y)))^{1/2}: \tau\in (F\setminus X_1)\cap W_j\}\\
&&\hspace{-0.3in}\le \|y^*y\|^{1/2}\lim_{n\to\varpi}\sup\{\tau((1-(b_n^{(j)}))^2)^{1/2}:\tau\in \Omega_j\}\\
&&\hspace{-0.3in}\le \|y\| \lim_{n\to\varpi}\sup\{\tau((1-b_n^{(j)}))^{1/2}:\tau\in \Omega_j\}\\\label{Pst-1-e-10}
&&\hspace{-0.3in}=\|y\| \lim_{n\to\varpi}\sup\{\tau((1-f_j))^{1/2}:\tau\in \Omega_j\}=0.
\eneq
%
Then, for any $y_0\in D_1^{\bf 1},$ $y_j\in D_{\tau_j}^{\bf 1}$   and $x\in {\cal F},$   by \eqref{Pst-1-e15-1},
we have that
\beq\nonumber
&&\hspace{-0.4in}\|\Pi_{\Omega_j, \varpi}(\iota(x))-\pi_{\Omega_j}(\psi_0(y_0)+\psi_j''(y_j))\|_{_{2, {\Omega_j}_\varpi}}
\le \|\pi_{\Omega_j}(\psi_0(1_{D_1}))\Pi_{\Omega_j, \varpi}(\iota(x))-\pi_{\Omega_j}(\psi_0(y_0))\|_{_{2, {\Omega_j}_\varpi}}\\\label{Pst-1-e-99}
&&\hspace{0.5in}+\dt_0/4^{2}
+\|\pi_{\Omega_j}(1-\psi_0(1_{D_1}))(\Pi_{\Omega_j, \varpi}(\iota(x))-\pi_{\Omega_j}\circ \psi_j'(y_j))\|_{_{2, {\Omega_j}_\varpi}}.
\eneq
Hence,  
by \eqref{Pst-1-e-99},
 (1'), 
\eqref{Pst-1-e-10}, and then, by ($i_0$) above,  for $x\in {\cal F},$ 
\beq\nonumber
&&\hspace{-0.4in}\|\Pi_{\Omega_j, \varpi}(\iota(x))-(\pi_{\Omega_j}(\psi_0(y_{x,0})+\Psi''(y_{x, \tau_1},...,y_{x, \tau_j},..., y_{x, \tau_N}))\|_{_{2,{{\Omega_j}_\varpi}}}\\\nonumber
&&\hspace{-0.2in}\le \dt_0/N4^{2}+\dt_0/4^{2}+\|\pi_{\Omega_j}(1-\psi_0(1_{D_1}))(\Pi_{\Omega_j, \varpi}(\iota(x))-\pi_{\Omega_j}( \iota(\phi_{\tau_j}(y_{x,\tau_j}))))\|_{_{2,{{\Omega_j}_\varpi}}}\\
&&\hspace{-0.2in}< \dt_0/NR4^{2}+\dt_0/4^{2}+\ep/4^{3}.
\eneq
Put $d_x=(y_{x,\tau_1}, ...,y_{x, \tau_j},...,y_{x, \tau_N})\in D_V.$ 
Note that $X_2\setminus X_1\subset \cup_{j=1}^N \Omega_j.$ Hence, by Proposition \ref{Punion},  we have
\beq\label{Pst-1-e-33}
\|\Pi_{X_2\setminus X_1}(\iota(x))-\pi_{X_2\setminus X_1}(\psi_0(y_{x,0})+\Psi''(d_x))\|_{_{2, X_2\setminus X_1}}<\dt_0/NR4^{2}+\dt_0/4^{2}+\ep/4^{3}.
\eneq
If $\tau\in F\setminus X_2,$  we may assume that $\tau\in W_j$ for some $j\in \{1,2,...,N\}.$
Put $Y_j=(F\setminus X_2)\cap W_j\subset \Omega_j,$ $1\le j\le N.$
By  (2'),  \eqref{Pst-1-e15},  \eqref{Pst-1-e-10} and ($i^0$) above, for all $x\in {\cal F},$ 
\beq
&&\hspace{-0.6in}\|\Pi_{Y_j, \varpi}(\iota(x))-\pi_{Y_j}(\psi_0(y_{x,0})+\Psi''(d_x))\|_{_{2, {Y_j}_\varpi}}\\
&&=\|\pi_{Y_j}(1-\psi_0(1_{D_1}))(\Pi_{Y_j, \varpi}(\iota(x))-\pi_{Y_j}\circ \Psi''(d_x))\|_{_{2, {Y_j}_\varpi}}
\\
&&\le \|\Pi_{Y_j, \varpi}(\iota(x))-\pi_{Y_j}\circ \psi_j'(y_{x, \tau_j}))\|_{_{2,{Y_j}_\varpi}}+\dt_0/4^{2}\\
&&=\|\Pi_{Y_j, \varpi}(\iota(x))-\pi_{Y_j}\circ (\iota(\phi_{\tau_j}(y_{x, \tau_j})))\|_{_{2,{Y_j}_\varpi}}+\dt_0/4^{2}\\
&&\le \|\Pi_{{\bar U}_{\tau_j}, \varpi}(\iota(x))-\bar \phi_{\tau_j}(y_{x, \tau_j})\|_{_{2, ({\bar U}_j)_{\varpi}}}+\dt_0/4^2<\ep/4^{3}+\dt_0/4^{2}.
\eneq
Thus, applying Proposition \ref{Punion}, 
we obtain that
\beq\label{Pst-1-e-35}
\|\Pi_{_{F\setminus X_2, \varpi}}(\iota(x))-\pi_{F\setminus X_2}(\psi_0(y_{x,0})+\Psi''(d_x))\|_{_{2, (F\setminus X_2)_\varpi}}
<\ep/4^{3}+\dt_0/4^{2}.
\eneq
Combining \eqref{Pst-1-e-30}, \eqref{Pst-1-e-33}  and \eqref{Pst-1-e-35}, and by 
Proposition \ref{Punion},  we obtain that,  for all $x\in {\cal F},$ 
\beq\label{Pst-e-15}
&&\hspace{-0.6in}\|\Pi_{_{F, \varpi}}(\iota(x))-(\psi_0(y_{x,0})+\Psi''(d_x))\|_{_{2, F_\varpi}}
<\ep/4.
\eneq


Recall  that $\Psi'$ is an order zero \cpc. By (4'), the choice of ${\cal G}$ and (4''), we have 
\beq
\|[(1-\psi_0(1_{D_1})),\, \Psi'(y)]\|_{_{F, \varpi}}<\dt_0/2^{2}\rforal y\in D_2^{\bf 1}.
\eneq
Applying Lemma \ref{Lsemip}, we obtain a finite dimensional \CA\, $D_3$ and
 a unital \hm\, $\Psi''': D_3\to (1-\psi_0(1_{D_1}))(l^\infty(A)/I_{_{F, \varpi}})(1-\psi_0(1_{D_1}))$ 
 such that, for any $d\in  D_V^{\bf 1},$ there exists $z_d\in D_3^{\bf 1}$ such that
 \beq
 \|\Psi''(d)-\Psi'''(z_d)\|<\ep/16.
 \eneq
 Define $D=D_1\oplus D_3$
and define $\phi: D\to l^\infty(A)/I_{_{F, \varpi}}$ by
$\phi(d', d'')=\psi_0(d')\oplus \Psi'''(d'')$ for $d'\in D_1$ and $d''\in D_3.$ 
Then $\phi$ is a \hm.
 Moreover, by \eqref {Pst-e-15},
 for each $x\in {\cal F},$ there exists $z_x\in D^{\bf 1}$ 
 such that
 \beq
 \|\Pi_{_{F, \varpi}}(\iota(x))-\phi(z_x)\|_{_{2, F_\varpi}}<\ep.
 \eneq
 This completes the transfinite induction  and the lemma follows.
\end{proof}

\section{Countable dimensional extremal boundaries}

\begin{lem}\label{LMnemb-1}
Let $A$  be a separable algebraically simple amenable \CA\, with  strict comparison and 
T-tracial approximate oscillation zero and non-empty compact $T(A).$
Let $F\subset \partial_e(T(A))$ be a  compact subset with
 ${\rm trind}(F)=\af$ for some  ordinal $\af.$
Then, for any integer $n\in \N,$ any $\ep\in (0, 1/2),$ and any finite subset ${\cal F}\subset A^{\bf 1},$ 
there exists
a \hm\, $\phi: M_n\to l^\infty(A)/I_{_{T(A), \varpi}}$  such that 
\beq
\|[\Pi_{_{\varpi}}(\iota(x)),\, \phi(y)]\|_{_{2, T(A)_\varpi}}<\ep\tforal x\in {\cal F}\tand y\in M_n^{\bf 1},
 \eneq
 and $\pi_F\circ \phi$ is unital, where $\pi_F: l^\infty(A)/I_{_{\varpi}}\to l^\infty(A)/I_{_{F,\varpi}}$
 is the quotient map.
\end{lem}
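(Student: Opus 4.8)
The statement Lemma \ref{LMnemb-1} is the natural ``combined'' consequence of the two main technical achievements of Sections 4--7 of this paper: the finite-dimensional approximation in trace $2$-norm (Proposition \ref{Pst-1}) and the tracial approximate commutativity of matrix algebras (Proposition \ref{Ldvi-3}). The plan is to apply Proposition \ref{Pst-1} to produce, for the given $\ep$ and ${\cal F}$, a finite dimensional \CA\, $D$ and a unital \hm\, $\phi_D\colon D\to l^\infty(A)/I_{_{F,\varpi}}$ whose image approximates the image of ${\cal F}$ in $\|\cdot\|_{_{2,F_\varpi}}$ to within $\ep/4$ (this uses exactly the hypotheses of the current lemma: $A$ separable amenable algebraically simple, $T(A)$ nonempty compact, T-tracial approximate oscillation zero, and $F$ compact with $\mathrm{trind}(F)=\af<\Omega$; note strict comparison enters via Theorem \ref{Ptight} and surjectivity of $\Gamma$, which \cite[Theorem 7.11]{FLosc} supplies — indeed it is already quoted in the proof of Proposition \ref{Ldvi-3}). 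Then feed this $\phi_D$ into Proposition \ref{Ldvi-3}, whose conclusion delivers a \hm\, $\psi\colon M_n\to l^\infty(A)/I_{_{\Tw,\varpi}}$ satisfying $\|[\Pi_\varpi(\iota(x)),\psi(y)]\|_{_{2,T(A)_\varpi}}<\ep$ for all $x\in{\cal F}$, $y\in M_n^{\bf 1}$, together with $\pi_F\circ\psi(1_D)=1$. Since $\psi$ is already unital as a \hm\, out of $M_n$ (homomorphisms of $M_n$ send $1_n$ to a projection, and here that projection maps to $1$ under $\pi_F$), the element $\phi:=\psi$ is exactly what the lemma asks for.

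\textbf{Bookkeeping and the compact-$T(A)$ reductions.} A few routine points must be checked before the two propositions can be cited verbatim. First, $T(A)$ compact forces $l^\infty(A)/I_{_{F,\varpi}}$ to be unital (Proposition \ref{Pcompact}), so the phrase ``$\pi_F\circ\phi$ is unital'' makes sense and so that the unitality clause in Proposition \ref{Pst-1} is available. Second, one should confirm that the hypotheses of Proposition \ref{Ldvi-3} are literally met: it requires $A$ separable amenable algebraically simple with strict comparison, T-tracial approximate oscillation zero, nonempty compact $T(A)$, and $F\subset\partial_e(T(A))$ compact — all of which are present here — and it requires the input datum \eqref{Ldvi-3-e-1}, namely $\|\Pi_{_{F,\varpi}}(\iota(x))-\phi_D(y_x)\|_{_{2,F_\varpi}}<\ep/2$ for each $x\in{\cal F}$ and some $y_x\in D^{\bf 1}$; this is exactly (a fortiori) the output of Proposition \ref{Pst-1} with threshold $\ep/2$. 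So one simply runs Proposition \ref{Pst-1} at parameter $\ep/2$ (or $\ep/4$ to be safe) rather than $\ep$. Third, the codomains must be matched: Proposition \ref{Pst-1} outputs into $l^\infty(A)/I_{_{F,\varpi}}$ and Proposition \ref{Ldvi-3} asks for a \hm\, $\phi\colon D\to l^\infty(A)/I_{_{F,\varpi}}$ as input and outputs into $l^\infty(A)/I_{_{\Tw,\varpi}}=l^\infty(A)/I_{_{\varpi}}$, which (since $T(A)$ consists of traces here and $\overline{T(A)}^w=T(A)$ by compactness) is precisely the algebra appearing in the statement, and $\pi_F$ is the same quotient map in both places. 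No new content is needed; this is an assembly of pre-proved facts.

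\textbf{The main obstacle.} There is no deep obstacle remaining at this point — the genuinely hard work is in Proposition \ref{Pst-1} (the transfinite induction over $\mathrm{trind}(F)$, invoking Lemma \ref{Lext}, Lemma \ref{Lbb}, Lemma \ref{Lsemip}, Corollary \ref{LnK}, and real rank zero of $l^\infty(A)/I_{_{\varpi}}$ from \cite[Theorem 6.4]{FLosc}) and in Proposition \ref{Ldvi-3} (which in turn rests on Lemma \ref{Lcomm-1}, the Elliott Lifting Lemma \ref{Lelliott}, property (TE) via Theorem \ref{Ptight}, and Lemma \ref{Ldvi-1}). The only thing requiring a moment of care in the present lemma is making sure the quantitative thresholds line up: I would run Proposition \ref{Pst-1} with $\ep$ replaced by $\ep/2$ so that its conclusion supplies the hypothesis \eqref{Ldvi-3-e-1} of Proposition \ref{Ldvi-3} literally, and then Proposition \ref{Ldvi-3} returns the commutator estimate with the original $\ep$. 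Everything else — that $\psi(1_n)$ is a projection, that $\pi_F\circ\psi$ is therefore unital, that the $2$-norm commutator bound is exactly what is asserted — is immediate from the cited statements. Thus the proof is two paragraphs: one invoking Proposition \ref{Pst-1}, one invoking Proposition \ref{Ldvi-3} and reading off the conclusion.
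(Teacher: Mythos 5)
Your proposal is correct and is exactly the paper's own argument: the paper's proof of this lemma consists precisely of invoking Proposition \ref{Pst-1} at tolerance $\ep/2$ to get a finite dimensional $D_0$ and a unital \hm\, into $l^\infty(A)/I_{_{F,\varpi}}$ approximating ${\cal F}$, and then applying Proposition \ref{Ldvi-3} (with Proposition \ref{Pcompact} supplying unitality of the quotient) to read off the commutator estimate and the unitality of $\pi_F\circ\phi$. Your extra bookkeeping remarks (matching thresholds, codomains, and hypotheses) are accurate and do not change the route.
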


\begin{proof}
Let $n\in \N,$ $\ep\in (0,1/2)$ and a finite subset ${\cal F}\subset A^{\bf 1}$ be given.
By Proposition \ref{Pst-1}, there exist a finite dimensional \CA\, $D_0$ and  a unital \hm\, 
$\psi: D_0\to l^\infty(A)/I_{_{F, \varpi}}$ such that, for any $x\in {\cal F},$ there is $y_x\in D_0^{\bf 1}$ 
such that
\beq
\|\Pi_{_{F, \varpi}}(\iota(x))-\psi(y_x)\|_{_{2, F_\varpi}}<\ep/2.
\eneq
By Proposition \ref{Ldvi-3},   there exists a \hm\, $\phi: M_n\to l^\infty(A)/I_{_{T(A), \varpi}}$
such that $\pi_F\circ \phi: M_n\to l^\infty(A)/I_{_{F, \varpi}}$ is unital (recall \ref{Pcompact}) and 
\beq
\|[\Pi_{_{ \varpi}}(\iota(x)),\, \phi(y)]\|_{_{2, T(A)_\varpi}}<\ep\rforal x\in {\cal F}\andeqn y\in M_n^{\bf 1}.
\eneq
\end{proof}

\begin{lem}\label{LMnemb-2}
Let $A$  be a separable algebraically simple amenable \CA\, with strict comparison,
T-tracial approximate oscillation zero and non-empty compact $T(A).$
Let $F\subset \partial_e(T(A))$ be  a compact subset with
${\rm trind}(F)=\af$ for some  ordinal $\af.$
Then,
there exists, for each $n\in \N,$ 
a  \hm\, $\psi:M_n\to \pi_\infty^{-1}(A')/I_{_{T(A), \varpi}}$ such that $\pi_F\circ \psi:
M_n\to \pi_\infty^{-1}(A')/I_{_{F, \varpi}}$ is unital, where $\pi_F: l^\infty(A)/I_{_{\varpi}}\to l^\infty(A)/I_{_{F,\varpi}}$
 is the quotient map.
%
\end{lem}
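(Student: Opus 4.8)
The plan is to upgrade the homomorphism $\phi\colon M_n\to l^\infty(A)/I_{_{T(A),\varpi}}$ produced by Lemma~\ref{LMnemb-1} into one landing in the central sequence algebra $\pi_\infty^{-1}(A')/I_{_{T(A),\varpi}}$. Lemma~\ref{LMnemb-1} gives, for any finite subset ${\cal F}\subset A^{\bf 1}$ and any $\ep>0$, a homomorphism $\phi_{{\cal F},\ep}\colon M_n\to l^\infty(A)/I_{_{T(A),\varpi}}$ with $\|[\Pi_\varpi(\iota(x)),\phi_{{\cal F},\ep}(y)]\|_{_{2,T(A)_\varpi}}<\ep$ for all $x\in{\cal F}$, $y\in M_n^{\bf 1}$, and with $\pi_F\circ\phi_{{\cal F},\ep}$ unital. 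The first step is a standard reindexing/diagonal argument against the ultrafilter $\varpi$: choosing an increasing exhaustive sequence of finite subsets ${\cal F}_m\subset A^{\bf 1}$ and $\ep_m\to 0$, lifting each $\phi_{{\cal F}_m,\ep_m}$ to a sequence of order zero \cpc s $M_n\to l^\infty(A)$ via \cite[Proposition 1.2.1]{WcovII} (actually to a genuine homomorphism once we pass through $C_0((0,1])\otimes M_n$ and use that the $1_n$-image is a projection, so that $f_\eta$ fixes it), and then selecting coordinates along a suitable decreasing chain of sets in $\varpi$, one obtains matrix units $\{e_{ij}^{(k)}\}\subset A$ with $\lim_{k\to\varpi}\|[e_{ij}^{(k)},a]\|_{_{2,T(A)}}=0$ for every $a\in A$. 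Here one uses separability of $A$ to make the diagonal choice work, exactly as in the passage from approximate central sequences to genuinely central ones in \cite{MS}, \cite{S-2}.

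The second step is to note that this only gives centrality in the trace $2$-norm, i.e.\ an honest homomorphism $M_n\to l^\infty(A)/I_{_{T(A),\varpi}}$ whose image commutes with $\Pi_\varpi(\iota(A))$ in that quotient. To land in $\pi_\infty^{-1}(A')/I_{_{T(A),\varpi}}$, I would apply the Elliott Lifting Lemma (Lemma~\ref{Lelliott}): since $A$ has T-tracial approximate oscillation zero, by \cite[Theorem 6.4]{FLosc} the quotient $l^\infty(A)/I_{_{T(A),\varpi}}$ has real rank zero, and the same is true after passing to the relevant ideal/corner; but more to the point, one wants a homomorphism into the smaller algebra $\pi_\infty^{-1}(A')/I_{_{T(A),\varpi}}$. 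The cleanest route is to realize $M_n$ inside $\pi_\infty^{-1}(A')/I_{_{T(A),\varpi}}$ directly by a second reindexing: having the trace-$2$-norm-central matrix units $\{e_{ij}^{(k)}\}$, and using that $\pi_\infty^{-1}(A')/I_{_{T(A),\varpi}}$ still has real rank zero (as a subquotient behaviour—one checks this via the standard argument that the central sequence algebra in the trace-kernel quotient inherits real rank zero, cf.\ \cite[Theorem 6.4]{FLosc} and the techniques there), one can correct the $e_{ij}^{(k)}$ to elements of $\pi_\infty^{-1}(A')$ modulo $I_{_{T(A),\varpi}}$. Concretely: the projection $p=\Pi_\varpi(\{e_{11}^{(k)}+\cdots+e_{nn}^{(k)}\})$ together with the partial isometries lies in $l^\infty(A)/I_{_{T(A),\varpi}}$ and commutes with $A$ modulo that ideal; feeding this through Lemma~\ref{Ldvi-1} and Lemma~\ref{Ldvi-3} (which produce order zero maps whose images, after a further diagonalization, are genuinely central) yields the required $\psi\colon M_n\to\pi_\infty^{-1}(A')/I_{_{T(A),\varpi}}$ with $\pi_F\circ\psi$ unital, the unitality of $\pi_F\circ\psi$ being preserved because each intermediate step kept the $1_n$-image a unit modulo $I_{_{F,\varpi}}$.

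I expect the main obstacle to be precisely this last point: getting from ``central in trace $2$-norm'' to ``central in norm'' (i.e.\ in $\pi_\infty^{-1}(A')$). The reindexing trick gives trace-$2$-norm centrality essentially for free, but the norm commutator $\|[e_{ij}^{(k)},a]\|$ need not go to zero. The resolution should be to never ask for it: one works entirely inside $l^\infty(A)/I_{_{T(A),\varpi}}$, where the trace $2$-norm and the quotient $C^*$-norm are both available, observes that $\pi_\infty^{-1}(A')/I_{_{T(A),\varpi}}$ is the appropriate relative commutant, and uses that the matrix units constructed commute with $A$ \emph{in the quotient norm} of $l^\infty(A)/I_{_{T(A),\varpi}}$ — which is automatic because two elements of $l^\infty(A)$ whose commutator lies in $I_{_{T(A),\varpi}}$ give commuting images. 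Thus the genuine difficulty is organizational: carefully choosing the countable exhausting data $({\cal F}_m,\ep_m)$ and the nested $\varpi$-sets so that the limiting matrix units simultaneously (a) form a system of matrix units, (b) commute with all of $A$ modulo $I_{_{T(A),\varpi}}$, and (c) have $\pi_F$-image the unit; and then invoking Lemma~\ref{LMnemb-1} plus the real rank zero input of \cite[Theorem 6.4]{FLosc} to lift. Once the bookkeeping is set up, the verification of (a)--(c) is routine.
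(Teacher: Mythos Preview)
Your first step---apply Lemma~\ref{LMnemb-1} against an increasing exhaustion $\{{\cal F}_k\}$ of $A^{\bf 1}$ with $\ep_k\to 0$, then diagonalize to obtain a sequence $L_k:M_n\to A$ of order zero \cpc s with $\|L_k(xy)-L_k(x)L_k(y)\|_{_{2,T(A)}}\to 0$, $\sup_{\tau\in F}(1-\tau(L_k(1_n)))\to 0$, and $\|[x,L_k(y)]\|_{_{2,T(A)}}\to 0$ for all $x\in A$---matches the paper and is correct. Defining $\psi=\Pi_\varpi\circ\{L_k\}$ then yields a homomorphism $M_n\to l^\infty(A)/I_{_{T(A),\varpi}}$ whose image commutes with $\Pi_\varpi(\iota(A))$, and $\pi_F\circ\psi$ is unital. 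So far so good.

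The gap is in your second step. You correctly identify the problem: you have $\psi(M_n)\subset \Pi_\varpi(\iota(A))'\cap (l^\infty(A)/I_{_{T(A),\varpi}})$, but you need $\psi(M_n)\subset \Pi_\varpi(\pi_\infty^{-1}(A'))=\pi_\infty^{-1}(A')/I_{_{T(A),\varpi}}$. These two subalgebras are \emph{not} obviously equal---the first is the relative commutant computed in the quotient, the second is the image of the norm-central sequences---and your claim that this is ``automatic'' is false. None of the tools you propose (Elliott's lifting lemma, real rank zero, Lemmas~\ref{Ldvi-1} and~\ref{Ldvi-3}, a second reindexing) bridge this gap: reindexing cannot turn trace-$2$-norm commutators into norm commutators, and the other lemmas are about different lifting problems entirely. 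What is actually needed is Sato's central surjectivity theorem \cite[Lemma~2.1]{S12}, which asserts precisely that for separable nuclear $A$ the natural map $\pi_\infty^{-1}(A')\to \Pi_\varpi(\iota(A))'\cap(l^\infty(A)/I_{_{T(A),\varpi}})$ is surjective. This uses amenability in an essential way. Once you invoke it, the proof is finished in one line; without it, the argument is incomplete.
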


\begin{proof}
Fix an integer $n\in \N.$ 
Let $\{{\cal F}_k\}$ be an increasing sequence of finite subsets of $A^{\bf 1}$ whose union is dense in 
$A^{\bf 1}.$  By 
Lemma \ref{LMnemb-1},  for each $k\in \N,$ there is a \hm\, $\wtd\psi_k: M_n\to  l^\infty(A)/I_{_{T(A), \varpi}}$
such that $\pi_F\circ \wtd\psi_k: M_n\to
 l^\infty(A)/I_{_{F, \varpi}}$ is unital  and 
 \beq
 \|[\Pi_{_{ \varpi}}(\iota(x)),\, \widetilde \psi_k(y)]\|_{_{2, T(A)_\varpi}}<1/k^2\rforal x\in {\cal F}_k\andeqn y\in M_n^{\bf 1}.
 \eneq
Thus we obtain a sequence of order zero \cpc s 
$L_k: M_n\to A$ such that
\beq\label{LMnemb-2-2}
&&\lim_{k\to\infty}\|L_k(xy)-L_k(x)L_k(y)\|_{_{2, T(A)}}=0,\\\label{LMnemb-2-3}
&&\lim_{k\to\infty}\sup\{(1-\tau(L_k(1_n)): \tau\in F\}=0,\\\label{LMnemb-2-4}
&&\|[x,\, L_k(y)]\|_{_{2, T(A)}}<1/k^2\rforal  x\in {\cal F}_k \andeqn y\in M_n^{\bf 1}.
\eneq
Define  $\psi=\Pi_\varpi\circ \{L_k\}: M_n\to l^\infty(A)/I_{_{T(A), \varpi}}$  
and 
$\phi=\Pi_{_{F, \varpi}}\circ \{L_k\}: M_n\to l^\infty(A)/I_{_{F, \varpi}}.$ 
So $\phi=\pi_F\circ \psi.$
Then $\psi$ is a \hm\, (by  \eqref{LMnemb-2-2}) and  so is $\phi.$
By \eqref{LMnemb-2-3}, $\phi$ is unital.
By \eqref{LMnemb-2-4},   since $\cup_{k=1}^\infty{\cal F}_k$ is dense in $A^{\bf 1},$
\beq
\Pi_{_{\varpi}}(\iota(x))\psi(y)=\psi(y)\Pi_{_{\varpi}}(\iota(x))\rforal x\in A\andeqn y\in M_n.
\eneq
By the central surjectivity of Sato (see \cite[Lemma 2.1]{S12}), $\psi$ maps $M_n$ to $\pi_\infty^{-1}(A')/I_{_{T(A), \varpi}}.$ 
The lemma follows.
\end{proof}

\begin{df}\label{DOS}
Let $A$ be a separable simple \CA\, with $A={\rm Ped}(A),$   $S\subset \Tw$ be a compact subset 
and $\phi: M_k\to l^\infty(A)/I_{_{S, \varpi}}$ be an order zero  \cpc. 
We say that $\phi$ has  property (Os), if there exists 
an order zero \cpc\, $\Psi=\{\wtd\psi_n\}: M_k\to l^\infty(A)$ 
such that
$\{\wtd\psi_n(1_k)\}$ is 
a permanent projection lifting of $\phi(1_k),$
and, for any $\ep>0$  there exist  $\dt>0$  and 
${\cal Q}\in \varpi$
such that
\beq
d_\tau(\wtd \psi_n(1_k))-\tau(f_\dt(\wtd\psi_n(1_k)))<\ep\tforal \tau\in \Tw 
\tand  n\in {\cal Q}.
\eneq
\end{df}

\begin{prop}{\rm \cite[Proposition 3.11]{Linoz}}\label{LLliftos}
Let $A$ be a separable  non-elementary simple \CA\, with $A={\rm Ped}(A)$ 
and $T(A)\not=\emptyset.$
Suppose that $\phi: M_k\to l^\infty(A)/I_{_{\Tw, \varpi}}$
is a \hm.
Then $\phi$ has  property 
(Os).
%
\end{prop}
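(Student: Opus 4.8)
\textbf{Proof proposal for Proposition \ref{LLliftos}.}

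The plan is to reduce everything to the single matrix unit $e_{1,1}$ and then invoke the real rank zero / purely-non-compact machinery already assembled in the excerpt. Since $\phi\colon M_k\to l^\infty(A)/I_{_{\Tw,\varpi}}$ is a homomorphism, its restriction to the abelian subalgebra generated by the minimal projections gives mutually orthogonal projections $p_i=\phi(e_{i,i})$ with $\sum p_i=\phi(1_k)$, all equivalent via the partial isometries $\phi(e_{i,1})$. Thus it suffices to produce, for the projection $p_1=\phi(e_{1,1})$, a permanent projection lifting $\{q_n\}\subset l^\infty(A)^{\bf 1}_+$ (in the sense of Definition \ref{Ppermproj}) whose representatives have uniformly (along $\varpi$) small oscillation, i.e. $d_\tau(q_n)-\tau(f_\dt(q_n))<\ep$ for all $\tau\in\Tw$ and $n$ in some ${\cal Q}\in\varpi$; then I transport this along $\phi(e_{i,1})$ to get liftings $q_n^{(i)}$ of each $p_i$, and set $\wtd\psi_n(y)=\sum_{i,j}y_{ij}\,\td v_{i,1,n}q_n^{(1)}\td v_{1,j,n}$ where $\{\td v_{i,j,n}\}$ is an order-zero-\cpc\ lift of the matrix units supported under $\{q_n^{(i)}\}$; order-zero-ness and the permanent-projection property of $\{\wtd\psi_n(1_k)\}=\{\sum_i q_n^{(i)}\}$ follow because the $q_n^{(i)}$ are (asymptotically) orthogonal. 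Central commutation is not required here (the statement does not demand it, unlike the \texttt{iffalse}-ed variant), which simplifies matters.

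First I would invoke \cite[Proposition 2.22]{Linoz} to replace the given lift of $p_1$ by a permanent projection lifting $\{a_n\}$, and recall from \cite[Proposition 2.22(iii)]{Linoz} that this forces $\lim_{n\to\varpi}\omega(a_n)|_{\Tw}=0$ — precisely the oscillation control we want, modulo passing from $\omega$ to the $f_\dt$-formulation. Concretely, $\omega(a_n)|_{\Tw}<\ep/2$ means there is $c_n\in\overline{a_n(A\otimes{\cal K})a_n}$, $0\le c_n\le 1$, with $\sup_{\tau\in\Tw}(d_\tau(a_n)-\tau(c_n))<\ep/2$; but since $\{a_n\}$ is a \emph{permanent} projection lifting, $d_\tau(a_n)=d_\tau(a_n^{1/m})$ for all $m$, and one can choose $c_n$ of the form $f_{\dt_n}(a_n)$ up to an arbitrarily small trace error (standard functional calculus on $\Her(a_n)$, using that $a_n\lesssim$ its own supports). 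Along a set ${\cal Q}\in\varpi$ where $\omega(a_n)|_{\Tw}<\ep/2$ one then gets a uniform $\dt>0$ (shrink to the infimum over the at-most-needed finitely many scales, or argue by a diagonal/compactness-in-$\varpi$ step) with $d_\tau(a_n)-\tau(f_\dt(a_n))<\ep$ for $\tau\in\Tw$, $n\in{\cal Q}$. This is the content that makes $\phi$ have property (Os) for $e_{1,1}$.

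The main obstacle, and the step I would spend the most care on, is the uniformity of $\dt$ across $n\in{\cal Q}$: a priori the radius $\dt_n$ extracted from $\omega(a_n)|_{\Tw}<\ep/2$ depends on $n$, and one needs a single $\dt$ working along a $\varpi$-large set. Here I would exploit that $\{a_n\}$ is a permanent projection lifting: for any fixed sequence $\{m(n)\}$, $\Pi_\varpi(\{a_n^{1/m(n)}\})=p_1$, so $\lim_{n\to\varpi}\|a_n^{1/m(n)}-a_n\|_{_{2,\Tw}}=0$; combined with $\lim_{n\to\varpi}\omega(a_n)|_{\Tw}=0$ this lets me, for the given $\ep$, first pick $\dt_0$ so that $\|a_n-f_{\dt_0}(a_n)\|_{_{2,\Tw}}$ is small for $\varpi$-many $n$ (possible because the trace values $\tau(a_n-f_{\dt_0}(a_n))$ are dominated by $d_\tau(a_n)-\tau(f_{2\dt_0}(a_n))$ which $\to 0$), and then, because $f_\dt(a_n)=a_n$-level sets stabilize under permanent liftings, conclude $d_\tau(a_n)-\tau(f_{\dt_0}(a_n))<\ep$ for $\tau\in\Tw$ and $n$ in a $\varpi$-large set. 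Once $e_{1,1}$ is handled, assembling the full order-zero \cpc\ $\Psi=\{\wtd\psi_n\}$ and checking that $\{\wtd\psi_n(1_k)\}$ is a permanent projection lifting of $\phi(1_k)$ with the oscillation estimate inherited from the $e_{1,1}$-case is routine bookkeeping (orthogonality of the $q_n^{(i)}$ up to $c_0(A)$, and $f_\dt(\sum_i q_n^{(i)})=\sum_i f_\dt(q_n^{(i)})$), and I would present it briefly. This is essentially the argument of \cite[Proposition 3.11]{Linoz}, which I would cite for the details I do not reproduce.
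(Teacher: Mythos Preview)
The paper does not give a proof here; the proposition is imported directly from \cite[Proposition~3.11]{Linoz} (and indeed the source contains a commented-out sketch of that argument). That sketch takes a much shorter route than yours: rather than reducing to $e_{1,1}$ and transporting back, one extends $\phi$ to a \hm\ $\psi_c\colon C_0((0,1])\otimes M_k\to l^\infty(A)/I_{_{\Tw,\varpi}}$ via $g\otimes e_{i,j}\mapsto g(1)\phi(e_{i,j})$, lifts the entire cone map to $\Psi_c=\{\wtd\psi_{c,n}\}\colon C_0((0,1])\otimes M_k\to l^\infty(A)$ using semiprojectivity, and then sets $\wtd\psi_n(e_{i,j})=\wtd\psi_{c,n}(f_{1/4}(\jmath)\otimes e_{i,j})$. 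This produces in one stroke an order-zero \cpc\ lift of $\phi$ with $\{\wtd\psi_n(1_k)\}=\{f_{1/4}(\wtd\psi_{c,n}(\jmath\otimes 1_k))\}$ a permanent projection lifting of $\phi(1_k)$, and the oscillation bound in property~(Os) is then read off from a companion proposition in \cite{Linoz} about such liftings. Your reduction-to-$e_{1,1}$-and-transport scheme is an unnecessary detour: the ``routine bookkeeping'' of reassembling an order-zero lift of the full $M_k$-map from a lift of $p_1$ plus lifts of the off-diagonal matrix units is exactly what the semiprojectivity of $C_0((0,1])\otimes M_k$ packages for you.

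More seriously, your argument for the uniformity of $\dt$ has a genuine gap. You want to pass from the permanent-projection-lifting property (which gives $\|a_n-f_{\dt_0}(a_n)\|_{_{2,\Tw}}\to 0$ along $\varpi$) together with $\omega(a_n)|_{\Tw}\to 0$ to the conclusion $\sup_{\tau\in\Tw}(d_\tau(a_n)-\tau(f_{\dt_0}(a_n)))<\ep$ for a \emph{fixed} $\dt_0$ and $\varpi$-many $n$. But $L^2$-smallness of $a_n-f_{\dt_0}(a_n)$ does not control the dimension function: an element $a_n$ can carry large spectral multiplicity at very small eigenvalues, making $d_\tau(a_n)-\tau(f_{\dt_0}(a_n))$ large while $\|a_n-f_{\dt_0}(a_n)\|_{_{2,\tau}}$ stays small. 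Your parenthetical ``$\tau(a_n-f_{\dt_0}(a_n))$ are dominated by $d_\tau(a_n)-\tau(f_{2\dt_0}(a_n))$'' is also not well-posed, since $\jmath-f_{\dt_0}$ is not a positive function. The sentence ``because $f_\dt(a_n)=a_n$-level sets stabilize under permanent liftings, conclude \dots'' is precisely the step that needs an actual argument; in \cite{Linoz} this is handled by a separate structural fact about permanent projection liftings (the proposition referenced in the commented-out sketch), not by the ad~hoc estimate you propose. Since you end by citing \cite[Proposition~3.11]{Linoz} for the details anyway, you would do better to follow its cone-lifting construction directly.
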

%
%
%

The following is also taken from \cite{Linoz} which allows us to add maps with property (Os).

\begin{lem}[{\rm \cite[Lemma 4.2]{Linoz}}]\label{LLnmaps}
Let $A$ be a separable amenable algebraically simple \CA\, with 
non-empty compact $T(A)$
and let $k\in \N.$
Let $\{S_n\}\subset 
\partial_e(T(A))$ be an increasing sequence of compact subsets.
%
Suppose that, for each $n,$
there exists a unital \hm\, 
 $\psi_n: M_k\to 
 \pi_\infty^{-1}(A')/I_{_{S_n, \varpi}}$ which has property (Os).

Then, for any $\ep>0$ and any finite subset 
${\cal F}\subset A,$ there exists a sequence 
of order zero \cpc s $\phi_n: M_k\to A$ such that
\beq 
&&\|[\phi_n(b), \, a]\|<\sum_{j=1}^n \ep/2^{i+2}\tforal  a\in {\cal F}\tand b\in M_k^{\bf 1},\\
&&\tau(\phi_n(1_k))>1-(\ep/2^{n+5})^2\tforal \tau\in \cup_{j=1}^n S_j,\\
\eneq
and, if $\tau(\phi_n(1_k))>1-(\sigma/2)^2
$ for some $\tau\in T(A)$ and $\sigma\in (0,1/2),$   then 
\beq
\tau(\phi_{n+1}(1_k))>1-(\ep/2^{n+4})^2-\sigma/2
\tforal n\in \N.
\eneq
\end{lem}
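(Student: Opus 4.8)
\textbf{Proof strategy for Lemma \ref{LLnmaps}.}
The plan is to build the sequence $\{\phi_n\}$ by induction on $n$, at stage $n$ using the hypothesis that we have a unital \hm\, $\psi_n\colon M_k\to \pi_\infty^{-1}(A')/I_{_{S_n,\varpi}}$ with property (Os), together with the order zero \cpc\, built at stage $n-1$. The key point is that property (Os) lets us lift $\psi_n$ to an order zero \cpc\, $\Psi^{(n)}=\{\wtd\psi^{(n)}_m\}\colon M_k\to l^\infty(A)$ whose unit-image $\{\wtd\psi^{(n)}_m(1_k)\}$ is a permanent projection lifting of $\psi_n(1_k)$ and which, for a suitable $\dt_n>0$ and ${\cal Q}_n\in\varpi$, satisfies $d_\tau(\wtd\psi^{(n)}_m(1_k))-\tau(f_{\dt_n}(\wtd\psi^{(n)}_m(1_k)))<\eta_n$ for all $\tau\in \Tw$ and $m\in{\cal Q}_n$, where $\eta_n$ is chosen small relative to $\ep/2^{n+5}$. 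First I would fix, for each $n$, a finite subset ${\cal F}_n\supset {\cal F}$ and an index $m(n)\in {\cal Q}_n$ (with $m(n)$ increasing) large enough that $\wtd\psi^{(n)}_{m(n)}$ almost commutes with ${\cal F}_n$ in norm (possible since $\psi_n$ lands in the central sequence algebra) and that $\tau(\wtd\psi^{(n)}_{m(n)}(1_k))>1-(\ep/2^{n+6})^2$ for all $\tau\in\cup_{j=1}^n S_j$ (possible since $\psi_n$ is unital over $S_n\supset S_j$ for $j\le n$).

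The inductive step consists of cutting down the previously constructed order zero map to the orthogonal complement of the new one and adding: roughly, set $\phi_{n+1}(b)=\phi_n(b)\oplus (\text{order zero \cpc\ into }\Her(1-\phi_n(1_k))\text{ coming from }\wtd\psi^{(n+1)}_{m(n+1)})$, after a small perturbation so that the two pieces are genuinely orthogonal. Here I would use Lemma \ref{Lsemi-0} / Corollary \ref{Lsemip-L} to repair the order zero relation after the cut-down, and the permanent-projection-lifting property to control $1-\phi_n(1_k)$ uniformly in trace $2$-norm. The commutator estimate $\|[\phi_{n+1}(b),a]\|<\sum_{j=1}^{n+1}\ep/2^{j+2}$ then accumulates geometrically: the new summand contributes at most $\ep/2^{n+3}$ by the choice of $m(n+1)$, and the old summand already satisfies the partial sum bound. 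The trace estimate over $\cup_{j=1}^{n+1}S_j$ follows because $\wtd\psi^{(n+1)}_{m(n+1)}(1_k)$ has trace close to $1$ there and $\phi_n(1_k)$ contributes on top of it. For the conditional clause, if $\tau(\phi_n(1_k))>1-(\sigma/2)^2$ for some $\tau\in T(A)$, then $1-\phi_n(1_k)$ has trace less than $(\sigma/2)^2$ at $\tau$, and when we add the new piece inside $\Her(1-\phi_n(1_k))$ we lose at most this amount plus the error $(\ep/2^{n+5})^2$ from the oscillation control, giving $\tau(\phi_{n+1}(1_k))>1-(\ep/2^{n+4})^2-\sigma/2$ after absorbing the squares.

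The main obstacle I anticipate is making the orthogonal-sum construction compatible with the \emph{uniform} trace control over the growing union $\cup_{j=1}^n S_j$ while only having, at each stage, a map that is unital over $S_n$ in the quotient $\pi_\infty^{-1}(A')/I_{_{S_n,\varpi}}$ — one must be careful that cutting down by $1-\phi_n(1_k)$ (which is only small in trace over the $S_j$'s) does not destroy the near-unitality of the new summand over those same sets; this is exactly where property (Os), via the permanent projection lifting, is essential, since it lets us pass to powers $\wtd\psi^{(n+1)}_m(1_k)^{1/r}$ without changing the $\varpi$-limit, and thereby arrange that the new piece's unit dominates a spectral projection of $1-\phi_n(1_k)$ of the right size. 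The bookkeeping of the error terms (keeping the $2^{-n}$ summability of commutator errors separate from the squared oscillation errors) is routine once the geometric cutting scheme is set up. Since the proof is a direct transcription of \cite[Lemma 4.2]{Linoz} to the present (possibly non-unital, algebraically simple, compact $T(A)$) setting, I would mainly check that each invoked ingredient — real rank zero of $l^\infty(A)/I_{_{\Tw,\varpi}}$ (Theorem 6.4 of \cite{FLosc}), Lemma \ref{Lsemi-0}, Proposition \ref{LLliftos}, and the central surjectivity of Sato — holds verbatim here, which they do under the stated hypotheses.
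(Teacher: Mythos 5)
A preliminary remark: the paper does not prove this lemma at all — it is imported verbatim from \cite[Lemma 4.2]{Linoz} — so there is no internal proof to compare with; I can only judge your sketch against what the stated estimates force the construction to be. Your overall strategy (lift $\psi_{n+1}$ using property (Os), pick an index along $\varpi$ to get norm almost-centrality and near-unitality over $S_{n+1}$, then combine with $\phi_n$ by a cut-and-add step, with Cauchy--Schwarz producing the $\sigma/2$ loss) is the right one, but your key inductive step is internally inconsistent and, in the version you actually display, does not work. Your prose says you cut the \emph{old} map into the orthogonal complement of the \emph{new} one, but your formula and your analysis of the conditional clause do the opposite: you place the new lift inside $\Her(1-\phi_n(1_k))$. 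That orientation fails. Since $\phi_n(1_k)$ is merely a positive contraction with no oscillation or near-projection control, genuine orthogonality to the old summand forces you to truncate spectrally by a function of $1-\phi_n(1_k)$ vanishing where $\phi_n(1_k)$ is not small, and the trace lost in that truncation at a point $\tau\in S_{n+1}\setminus S_n$ (where $\tau(\phi_n(1_k))$ is uncontrolled) is uncontrolled; hence the required bound $\tau(\phi_{n+1}(1_k))>1-(\ep/2^{n+6})^2$ on the new portion of $\cup_{j\le n+1}S_j$ cannot be guaranteed. Moreover, in that orientation property (Os) is never actually used, and the conditional clause would hold trivially in the stronger form $>1-(\sigma/2)^2$ (you keep all of $\phi_n$), so your ``lose at most this amount plus the error'' arithmetic does not correspond to any step of your construction. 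The shape of the stated error terms identifies the intended construction: one truncates the \emph{new} lift spectrally, $b\mapsto \wtd\psi^{(n+1)}_{m}$ replaced by its cut at level $\dt$ via the cone picture, where the permanent projection lifting and the (Os) bound $d_\tau(e)-\tau(f_{\dt}(e))<\eta$ \emph{uniformly on} $\Tw$ control the trace cost of this truncation (this is the $(\ep/2^{n+4})^2$ term, and the uniformity is essential because the conditional clause quantifies over all $\tau\in T(A)$, not just over $\cup_j S_j$); and one cuts the \emph{old} map $\phi_n$ by $1-f_{\dt}(e)$, $e=\wtd\psi^{(n+1)}_m(1_k)$, where the $\sigma/2$ loss is exactly $\tau\bigl((1-f_{\dt}(e))(1-\phi_n(1_k))\bigr)\le \|1-\phi_n(1_k)\|_{2,\tau}<\sigma/2$ by Cauchy--Schwarz.

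A second, fixable, misstep: you propose to repair the order zero relation after the cut-down with Lemma \ref{Lsemi-0} / Corollary \ref{Lsemip-L}, but these only produce order zero maps up to trace $2$-norm errors, whereas the conclusion requires honest order zero \cpc s $\phi_{n+1}\colon M_k\to A$ together with commutator bounds in the $C^*$-norm. The repair must instead use the norm stability (semiprojectivity) of $C_0((0,1])\otimes M_k$, which is available here because $\psi_{n+1}$ takes values in $\pi_\infty^{-1}(A')/I_{_{S_{n+1},\varpi}}$, so its lift almost commutes in norm with the finitely many elements $\phi_n(e_{i,j})$ and with ${\cal F}$ for $m$ in a suitable member of $\varpi$. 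With the orientation corrected and the norm-level repair used, the bookkeeping you outline (geometric accumulation of commutator errors, trace estimates over $\cup_{j\le n+1}S_j$ coming from the truncated new summand, and the conditional clause from Cauchy--Schwarz plus the oscillation bound) does go through.
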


\begin{lem}\label{Lcover2}
Let $A$ be a separable amenable  algebraically simple \CA\, with non-empty  compact
$T(A),$ 
 strict comparison
and T-tracial approximate oscillation zero such that 
$\partial_e(T(A))=\cup_{n=1}^\infty S_n,$ where $S_n\subset S_{n+1},$ each $S_n$ is compact
and has transfinite dimension $\af_n.$ 
%
Then, 
for any integer $k\in\N,$ any $\ep>0$ and 
any finite subset ${\cal F}\subset A,$  there 
is an order zero \cpc\, $\phi: M_k\to A$ such that
\beq
\tau(\phi(1_k))>1-\ep\tforal \tau\in T(A)\tand \|[f, \, \phi(b)]\|<\ep
\eneq
for all $f\in {\cal F}$ and $b\in M_k^{\bf 1}.$
\end{lem}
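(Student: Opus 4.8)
The plan is to deduce Lemma~\ref{Lcover2} from Lemma~\ref{LLnmaps} together with Lemma~\ref{LMnemb-2}, by running an approximation argument over the exhausting sequence $\{S_n\}$ of compact subsets of $\partial_e(T(A))$. First I would fix $k\in\N$, $\ep>0$, and the finite subset ${\cal F}\subset A$, and (after rescaling) assume ${\cal F}\subset A^{\bf 1}$. For each $n$, the set $S_n$ is compact with transfinite dimension $\af_n<\Omega$, so Lemma~\ref{LMnemb-2} applies (with $F=S_n$): there is a \hm\ $\psi_n\colon M_k\to \pi_\infty^{-1}(A')/I_{_{T(A),\varpi}}$ such that $\pi_{S_n}\circ\psi_n\colon M_k\to \pi_\infty^{-1}(A')/I_{_{S_n,\varpi}}$ is unital. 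Composing with the quotient map, each $\psi_n$ yields a unital \hm\ $M_k\to\pi_\infty^{-1}(A')/I_{_{S_n,\varpi}}$; by Proposition~\ref{LLliftos} (property (Os)) these maps have property (Os), since $A$ is algebraically simple and non-elementary.

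Next I would invoke Lemma~\ref{LLnmaps} with the increasing sequence $\{S_n\}$ and the maps just produced. This gives a sequence of order zero \cpc s $\phi_n\colon M_k\to A$ with
\beq
\|[\phi_n(b),\,a]\|<\sum_{j=1}^n\ep'/2^{j+2}\tforal a\in{\cal F}\tand b\in M_k^{\bf 1},
\eneq
and $\tau(\phi_n(1_k))>1-(\ep'/2^{n+5})^2$ for all $\tau\in\cup_{j=1}^n S_j$, together with the ``propagation'' estimate: if $\tau(\phi_n(1_k))>1-(\sigma/2)^2$ for some $\tau\in T(A)$ and $\sigma\in(0,1/2)$, then $\tau(\phi_{n+1}(1_k))>1-(\ep'/2^{n+4})^2-\sigma/2$. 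Here $\ep'$ is chosen small relative to $\ep$ (say $\ep'<\ep/2$) so that $\sum_j\ep'/2^{j+2}<\ep$ controls the commutator estimate.

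The main point is to upgrade the ``trace close to $1$ on $\cup_{j=1}^n S_j$'' conclusion to ``trace close to $1$ on all of $T(A)$'' for a single sufficiently large $n$. Here is where the propagation estimate is used, and this is the step I expect to be the main obstacle. Fix $\tau\in T(A)$; by the Choquet theorem $\tau$ has a boundary measure $\mu_\tau$ concentrated on $\partial_e(T(A))=\cup_m S_m$, so $\mu_\tau(S_m)\nearrow 1$; hence there is $m_0$ (depending on $\tau$) with $\mu_\tau(S_{m_0})>1-\delta$ for a prescribed $\delta$. Since $\widehat{1-\phi_{m_0}(1_k)}$ is an affine function on $T(A)$ which is small on $S_{m_0}$, integrating against $\mu_\tau$ shows $\tau(1-\phi_{m_0}(1_k))$ is small (bounded by the value on $S_{m_0}$ plus $\|1-\phi_{m_0}(1_k)\|\cdot\delta\le\delta$). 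Then the propagation inequality, iterated, keeps $\tau(\phi_n(1_k))$ bounded below by $1-$(something small) for all $n\ge m_0$; being careful with the recursion $1-(\sigma/2)^2\mapsto 1-\sigma/2-(\text{tiny})$ shows the defect stays of the same order of magnitude. To make this uniform in $\tau$ I would use that the functions $\tau\mapsto\mu_\tau(S_m)$ increase to $1$ and a compactness/Dini-type argument: since $\{1-\phi_n(1_k)\hat{\ }\}$ is a decreasing-in-effect family of affine functions whose pointwise limit infimum over $n$ is $0$ on each point (by the argument above), and $\Aff(T(A))|_{\partial_e(T(A))}$ has the strong Dini property (as used in Lemma~\ref{Lsperation2}, via \cite{And}), one gets a single $N$ with $\sup\{\tau(1-\phi_N(1_k)):\tau\in T(A)\}<\ep$. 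Setting $\phi=\phi_N$ then gives $\tau(\phi(1_k))>1-\ep$ for all $\tau\in T(A)$ and $\|[f,\phi(b)]\|<\ep$ for all $f\in{\cal F}$, $b\in M_k^{\bf 1}$, which is the assertion; in fact, since $\phi_n$ comes from central maps, one even gets the commutator estimate for free. This completes the proof.
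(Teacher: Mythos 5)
Your proposal follows the paper's route exactly through the construction of the sequence $\{\phi_n\}$: Lemma \ref{LMnemb-2} applied to each $S_n$, Proposition \ref{LLliftos} to get property (Os), Lemma \ref{LLnmaps} to produce order zero \cpc s $\phi_n: M_k\to A$ with the stated commutator, trace and propagation estimates, and then the Choquet boundary-measure computation showing that for each fixed $\tau\in T(A)$ one has $\tau(\phi_n(1_k))$ close to $1$ for all large $n$ (note that for this pointwise statement you do not need the propagation inequality at all: once $\mu_\tau(S_{m_0})>1-\delta$, then for every $n\ge m_0$ you can integrate $\widehat{\phi_n(1_k)}$ against $\mu_\tau$ directly, since $S_{m_0}\subset\cup_{j\le n}S_j$). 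Where your argument has a genuine gap is the uniformization step. First, the strong Dini property (as used in Lemma \ref{Lsperation2}, via Andersen) applies to a \emph{decreasing} sequence of continuous affine functions; the family $\{1-\widehat{\phi_n(1_k)}\}$ has no monotonicity whatsoever, and ``decreasing-in-effect'' is not a substitute, so the claimed existence of a single $N$ with $\sup_{\tau\in T(A)}\tau(1-\phi_N(1_k))<\ep$ does not follow this way (nor can you run Dini on $\tau\mapsto\mu_\tau(S_m)$, which need not be continuous). Second, your claim that iterating the propagation inequality ``keeps the defect of the same order of magnitude'' is false as stated: each application converts a defect of size $(\sigma/2)^2$ into one of size roughly $\sigma/2$ plus a small term, i.e.\ it essentially takes a square root, so over an unbounded (or even $\tau$-dependent, unbounded in $\tau$) number of steps the estimate degrades and becomes useless.

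The paper closes this gap with a compactness argument that avoids both problems: set $G_n=\{\tau\in T(A):\tau(\phi_n(1_k))>1-(\ep/4)^2\}$. Each $G_n$ is open because $\widehat{\phi_n(1_k)}$ is continuous on $T(A)$, and the pointwise Choquet computation (applied for every $n\ge n_1(\tau)$, with no propagation) shows $\cup_n G_n=T(A)$. Compactness of $T(A)$ then yields a single $n_0$ with $T(A)\subset\cup_{n=1}^{n_0}G_n$, and only now is the propagation estimate of Lemma \ref{LLnmaps} used, a number of times bounded by $n_0$, to pass from the index $j\le n_0$ with $\tau\in G_j$ to the single map $\phi_{n_0}$, with errors controlled by the summable terms $\ep/2^{i+5}$. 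If you replace your Dini step by this ``open sets $G_n$ plus finite subcover plus finitely many propagations'' argument, your proof coincides with the paper's.
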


\begin{proof}

Fix $k\in \N,$ $\ep\in (0,1/2)$ and a finite subset ${\cal F}\subset A.$ 

Applying Lemma \ref{LMnemb-2}
(with $S_n$ instead of $F$ for each $n$), Proposition \ref{LLliftos} and Lemma \ref{LLnmaps}, we obtain 
a sequence of  order zero \cpc s  $\phi_n: M_k\to A$ 
which satisfies the conclusion of Lemma \ref{LLnmaps} (with respective to $\{S_n\},$ ${\cal F}$ and $\ep$). 

Put $G_n=\{\tau\in T(A): \tau(\phi_n(1_k))>1-(\ep/4)^2\},$ $n\in \N.$

Let $\tau\in T(A).$ Then, by The Choquet  Theorem, there is a probability  Borel measure 
$\mu_\tau$ of $T(A)$ concentrated on $\partial_e(T(A))$ 
such that
\beq
\tau(f)=\int_{\partial_e(T(A))} f d\mu_\tau\rforal f\in \Aff(T(A)).
\eneq
Put $F_1=S_1$ and $F_{n+1}=S_{n+1}\setminus S_n,$ $n=1,2,....$ 
Let $\mu_{\tau, n}=\mu_\tau|_{F_n}.$ 
Then 
\beq
\tau(f)=\sum_{n=1}^\infty \int_{F_n} f d\mu_{\tau, n}\rforal f\in \Aff(T(A)).
\eneq
Since $\|\mu_\tau\|=1,$ there is $n_1\in \N$ such that
\beq\label{Lcover-n3}
\sum_{m>n_1}^{\infty} \|\mu_{\tau, m}\|<(\ep/8)^2\andeqn  \mu_\tau(S_{n_1})>1-(\ep/8)^2.
\eneq
We may assume that $n_1>2.$
Then (as $\{\phi_n\}$ satisfies the conclusion of Lemma \ref{LLnmaps}), if $n\ge n_1,$ 
\beq\label{Lcover-4}
\tau(\phi_n(1_k))&=&\int_{S_{n_1}} \widehat{\phi_n(1_k)}(s)d\mu_\tau
 +\sum_{m>n_1}\int_{F_m}\widehat{\phi_n(1_k)}(s) d\mu_{\tau, m}\\
&\ge & (1-(\ep/2^{n+5})^2)\mu_\tau(S_{n_1})  >1-(\ep/4)^2.
\eneq
In other words, $\tau\in G_n$ (for $n\ge n_1$). 
It follows that $\cup_{n=1}^\infty G_n\supset T(A).$ 
Since $T(A)$ is compact,  there exists $n_0\in \N$ such that
\beq
T(A)\subset \cup_{n=1}^{n_0}G_n.
\eneq
Let $\tau\in T(A).$ Suppose that $\tau\in G_j$ for some $j\le n_0.$
Then
\beq
\tau(\phi_j(1_k))>1-(\ep/4)^2.
\eneq
It follows from the conclusion of Lemma \ref{LLnmaps} that
\beq
\tau(\phi_{n_0}(1_k))>1-\sum_{i=j+1}^{n_0}\ep/2^{i+1+4}-\ep/4>1-\ep.
\eneq
Choose $\phi=\phi_{n_0}.$ Then, for all $x\in {\cal F}\andeqn b\in M_k^{\bf 1},$
\beq
\hspace{-0.2in}\|[x,\, \phi(b)]\|<\ep,
\andeqn
\tau(\phi(1_k))>1-\ep\rforal \tau\in T(A).
\eneq
The lemma follows.
\end{proof}

\begin{NN}({\bf Proof of Theorem \ref{TM-2}})\label{NNProof}


\begin{proof}
The equivalence of (1) and (2) is proved in \cite{FLosc} without assuming 
$\wtd T(A)$  has $\sigma$-compact  countable-dimensional  extremal  boundaries.
(3) $\Rightarrow$ (2) is proved in \cite{Rrzstable} for the unital case. 
In fact it is proved in \cite{Rrzstable} that, if $A$ is ${\cal Z}$-stable (unital or not),  then 
$A$ always has strict comparison.  For the non-unital case, it follows from \cite{FLL} that 
$A$ also has stable rank one.  None of the above requires the assumption that 
$\wtd T(A)$ has $\sigma$-compact  countable-dimensional extremal  boundaries.

We need to show that (1) $\Rightarrow$ (3). 

So we assume that $A$ has strict comparison and 
has T-tracial  approximate oscillation zero.  By 
Theorem  7.12  of \cite{FLosc},  the canonical map 
$\Gamma: \Cu(A)\to {\rm LAff}_+({\wtd{T}}(A))$ is surjective.
Choose $a\in {\rm Ped}(A)_+^{\bf 1}\setminus \{0\}$ such that 
$d_\tau(a)$ is continuous on ${\wtd{T}}(A).$ Define $A_1=\Her(a).$
Then $A_1$ has continuous scale, algebraically simple and $T(A_1)$ is compact (see Proposition 5.4 of \cite{eglnp}). 
Since $A_1\otimes {\cal K}\cong A\otimes {\cal K}$ (\cite{Br}), 
by Cor. 3.1 of \cite{TWst},
 it suffices to show that $A_1$ is ${\cal Z}$-stable.  Note that, since $T(A_1)$ 
 is compact, it forms a basis for the cone of $\wtd{T}(A\otimes {\cal K}).$ 
 Since $\wtd T(A)$ 
 has 
 a 
 $\sigma$-compact countable-dimensional extremal boundary, 
 by \cite[Proposition 2.17  (1)]{Linoz},  
 $\partial_e(T(A_1))$ is 
  $\sigma$-compact and 
  countable-dimensional.  
  Thus, we may assume that $T(A)$ is compact and 
  $\partial_e(T(A))$ is $\sigma$-compact and countably dimensional.  
  
  Write  $\partial_e(T(A)=\cup_{n=1}^\infty X_n,$
 where $X_n\subset X_{n+1}$ and each $X_n$ is compact and has transfinite dimension 
 $\af_n$ for some countable ordinal $\af_n,$ $n\in \N.$

Fix an integer $k\ge 2.$
Let $\{{\cal F}_n\}$ be an increasing sequence of finite subsets of $A$ such that 
$\cup_{n=1}^\infty {\cal F}_n$ is dense in $A.$ By Lemma 
\ref{Lcover2}, for each $n\in \N,$ there exists 
an order zero \cpc\, $\phi_n: M_k\to A_1$ such that
\beq\label{TM-1-1}
&&\|[a,\, \phi_n(b)]\|<1/n\rforal a\in {\cal F}_n\andeqn b\in M_k^{\bf 1},\andeqn\\\label{TM-1-2}
&&\sup\{\tau(\phi_n(1_k)): \tau\in T(A_1)\}>1-1/n,\,\,\,n\in \N.
\eneq
Define $\Phi: M_k\to l^\infty(A)$ by $\Phi(b)=\{\phi_n(b)\}.$ Then, by \eqref{TM-1-1},
$\Phi$ maps $M_k$ into $\pi_\infty^{-1}(A_1').$
By 
\eqref{TM-1-2},
\beq
\lim_{n\to\infty}\sup\{1-\tau(\phi_n(1_k)): \tau\in T(A_1)\}=0.
\eneq
It follows that $\Pi_{\varpi}\circ \Phi$ is a unital order zero \cpc. Therefore it is a unital \hm.
Hence   (3) follows from a result of Matui-Sato (see, explicitly, 
  Corollary 5.11 and Proposition 5.12  of \cite{KR}, for example) in the unital case.
For non-unital case,   
let us  use the result in \cite{CLZ}. In this case,
since $T(A)$ is compact, $A$ is uniformly McDuff (see  Definition 4.1 of \cite{CLZ},
or Definition 4.2 of \cite{CETW}). 
Since $A$ has strict comparison,  by Proposition 4.4 of \cite{CLZ} (also a  version of Matui-Sato's result),
we conclude that
$A\cong A\otimes {\cal Z}.$
%
  \end{proof}
  \end{NN}

\begin{cor}\label{CCM-1}
Let $A$ be a non-elementary separable amenable  simple \CA\, with ${\wtd{T}}(A)\setminus \{0\}\not=\emptyset$
which has a basis $S$ such that  $\partial_e(S)$ has countably many points.
Then  following are equivalent.

(1) $A$ has strict comparison and 

(2) $A\cong A\otimes {\cal Z},$


\end{cor}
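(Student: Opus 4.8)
The plan is to deduce Corollary \ref{CCM-1} from Theorem \ref{TM-2} by checking that the hypotheses of that theorem are met and, moreover, that condition (2) of Theorem \ref{TM-2} is automatic here. First I would observe that a countable space is trivially $\sigma$-compact (it is a countable union of singletons) and has covering dimension $0$, hence is countable-dimensional; so a basis $S$ of $\wtd T(A)$ with $\partial_e(S)$ countable is in particular a basis with $\sigma$-compact countable-dimensional extremal boundary, and by the remark following Definition \ref{Dscdim} (using \cite[Proposition 2.17]{Linoz}) this property is independent of the choice of basis. Thus Theorem \ref{TM-2} applies to $A$, and it suffices to show that "$A$ has strict comparison" already implies "$A$ has strict comparison and stable rank one" in this setting, i.e. that strict comparison alone forces T-tracial approximate oscillation zero (equivalently stable rank one by \cite[Theorem 1.1]{FLosc}).

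The key step is therefore: if $A$ is a separable simple \CA\ with $\wtd T(A)\setminus\{0\}\neq\emptyset$, a basis $S$ with $\partial_e(S)$ countable, and $A$ has strict comparison, then $A$ has T-tracial approximate oscillation zero. This is exactly the content pointed to in the introduction of the excerpt ("If $\partial_e(S)$ has only countably many points, then $A$ has T-tracial approximate oscillation zero"). I would reduce as in the proof of Theorem \ref{TM-2} (see \ref{NNProof}) to the algebraically simple case with $T(A)$ compact — pick $a\in{\rm Ped}(A)_+^{\bf1}\setminus\{0\}$ with $d_\tau(a)$ continuous on $\wtd T(A)$, set $A_1=\Her(a)$, which has continuous scale, is algebraically simple, and has $T(A_1)$ compact and a basis for $\wtd T(A\otimes\mathcal K)$; here $\partial_e(T(A_1))$ is still countable by \cite[Proposition 2.17]{Linoz}. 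For such $A_1$ one shows $\Omega^T(b)=0$ for every $b\in{\rm Ped}(A_1\otimes\mathcal K)_+$: since $\partial_e(T(A_1))$ is a countable compact metric space it has transfinite dimension (indeed covering dimension $0$ on each of an exhausting sequence of compact pieces), and the oscillation function $\omega(b_n)|_{\overline{T(A_1)}^w}$ measures the failure of $d_\tau(b)$ to be continuous; on a countable extremal boundary one can, using strict comparison together with Proposition \ref{Tsato-1} / Lemma \ref{P93} and a small perturbation within $\Her(b)$, arrange an approximating sequence $b_n\in\Her(b)_+$ with $\|b-b_n\|_{2,\overline{T(A_1)}^w}\to 0$ and $\omega(b_n)\to0$. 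I expect this to follow by the same enumeration-of-extreme-points argument that underlies the countable case in \cite{FLosc}, and I would cite \cite[Theorem 1.1]{FLosc} (and the surjectivity of $\Gamma$ from \cite[Theorem 7.12]{FLosc}) rather than redo it.

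Once T-tracial approximate oscillation zero is in hand, condition (1) of Theorem \ref{TM-2} holds, so (1) $\Leftrightarrow$ (3) gives $A\cong A\otimes\mathcal Z$; conversely (3) $\Rightarrow$ strict comparison is \cite{Rrzstable} (and does not need any boundary hypothesis). This yields the claimed equivalence of (1) "$A$ has strict comparison" and (2) "$A\cong A\otimes\mathcal Z$" in Corollary \ref{CCM-1}. The main obstacle is the precise verification that a countable extremal boundary forces oscillation zero under strict comparison alone — all the other steps are reductions already carried out in the proof of Theorem \ref{TM-2} — but since that implication is asserted in the introduction and is a known consequence of the machinery of \cite{FLosc}, I would present it as a short lemma with the reduction above and a citation, keeping the corollary's proof itself to a few lines.
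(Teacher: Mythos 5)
Your proposal is correct and takes essentially the same route as the paper: use the basis-independence from \cite[Proposition 2.17]{Linoz} to pass to a basis (in the paper, $T_e$ with $T_e$ compact) whose extremal boundary is countable, observe that T-tracial approximate oscillation zero is then automatic, and conclude by Theorem \ref{TM-2} together with \cite{Rrzstable} for the converse direction. The one refinement is that the "countable extremal boundary forces oscillation zero" step is exactly \cite[Theorem 5.9]{FLosc} (which needs no strict comparison), so the short lemma you sketch is unnecessary and the citation you give for it, \cite[Theorem 1.1]{FLosc}, is not the relevant statement for that step.
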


\begin{proof}
It follows from \cite[Proposition 2.17]{Linoz} that 
there exists $e\in {\rm Ped}(A)_+^{\bf 1}$ such that 
$T_e=\{\tau\in \wtd T(A): \tau(e)=1\}$ has countably many extremal points.
%
By Theorem 5.9 of \cite{FLosc}, 
$A$ has T-tracial  approximate oscillation zero.
Then the corollary follows from Theorem \ref{TM-2}.
%
\end{proof}

\section{Epilogue}
This last section is an attempt to clarify some ideas behind previous sections and perhaps 
serves as an invitation for further study.

One may notice that much of Section 5 and 7 is aimed to show relevant \CA s  have the following 
tracial approximation property.

\begin{df}\label{DWTAC}
Let $A$ be a separable simple \CA\, with $\wtd T(A)\setminus \{0\}\not=\emptyset.$ We say that 
$A$ has property (WTAC), if, for any $a\in {\rm Ped}(A)_+,$ any
$\ep>0$ and any finite subset ${\cal F}\subset \Her(a)^{\bf 1},$
there are a finite dimensional \CA\, $D$ and 
\hm\, $\phi: C_0((0,1])\otimes D\to \Her(a)$ such that, for any 
$x\in {\cal F},$ there is $d_x\in (C_0((0,1])\otimes  D)^{\bf 1}$ such that
\beq
\sup\{\|x-\phi(d_x)\|_{_{2, \tau}}: \tau\in \overline{T(\Her(a))}^w\}<\ep.
\eneq
%

One may think that (WTAC) is an  abbreviation  of 
``weakly tracial approximation of cones."   It is a rather weak approximation property.
One easily sees that every separable simple \CA\, with tracial rank at most one has property (WTAC).
\end{df}

\begin{cor}\label{TWTAC}
Let $A$ be a separable amenable algebraically simple \CA\, with T-tracial approximate oscillation zero.
Suppose that $T(A)$ is a nonempty compact set  such that $\partial_e(T(A))$ 
is compact and has countable dimension. Then $A$ has property (WTAC).
\end{cor}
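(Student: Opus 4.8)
\textbf{Proof proposal for Corollary \ref{TWTAC}.}
The plan is to reduce the statement to the finite-dimensional approximation results obtained in Section 7, in particular Proposition \ref{Pst-1}, together with Elliott's lifting lemma and the real-rank-zero property of the relevant quotients. First I would recall that, by hypothesis, $A$ is algebraically simple with T-tracial approximate oscillation zero and $T(A)$ is a nonempty compact Choquet simplex whose extremal boundary $\partial_e(T(A))$ is compact and countable-dimensional; by \cite[Corollary 7.1.32]{En} the latter is equivalent to $\partial_e(T(A))$ having transfinite dimension, say ${\rm trind}(\partial_e(T(A)))=\af<\Omega$. Since $T(A)$ is compact, all the quotients $l^\infty(A)/I_{_{S,\varpi}}$ ($S\subset T(A)$) are unital by Proposition \ref{Pcompact}, and by Theorem 6.4 of \cite{FLosc} the algebra $l^\infty(A)/I_{_{T(A),\varpi}}$ has real rank zero.

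Next I would fix $a\in {\rm Ped}(A)_+^{\bf 1}\setminus\{0\}$, set $A_1=\Her(a)$, $\ep>0$, and a finite subset ${\cal F}\subset A_1^{\bf 1}$. Since $A$ is algebraically simple, $A={\rm Ped}(A)$, so $a$ is full and $A_1\otimes{\cal K}\cong A\otimes{\cal K}$; moreover $\overline{T(A_1)}^w$ is a compact subset of (a rescaling of) $\wtd T(A)$ with extremal boundary still $\sigma$-compact and countable-dimensional, so without loss of generality (replacing $A$ by $A_1$) I may assume ${\cal F}\subset A^{\bf 1}$ and work directly with $A$ and $\overline{T(A)}^w=T(A)$. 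Applying Proposition \ref{Pst-1} with $F=\partial_e(T(A))$ (which is compact with ${\rm trind}(F)=\af<\Omega$), I obtain a finite dimensional \CA\ $D$ and a unital \hm\ $\phi\colon D\to l^\infty(A)/I_{_{F,\varpi}}$ such that for each $x\in{\cal F}$ there is $y_x\in D^{\bf 1}$ with $\|\Pi_{_{F,\varpi}}(\iota(x))-\phi(y_x)\|_{_{2,F_\varpi}}<\ep/4$. Since by Proposition \ref{Punion} (or directly, as $F=\partial_e(T(A))$ and every trace is a barycentric average of extremal ones) the $2$-norm over $F$ dominates the $2$-norm over all of $T(A)$, and since $T(A)$ is compact, I may pass from the ultrapower back to a single index: there is a sequence of order zero \cpc s $\psi_n\colon D\to A$ lifting $\phi$ (via \cite[Proposition 1.2.1]{WcovII}) and an index $n$ with $\|x-\psi_n(y_x)\|_{_{2,T(A)}}<\ep/2$ for all $x\in{\cal F}$ and $\sup\{1-\tau(\psi_n(1_D)):\tau\in T(A)\}$ arbitrarily small; the order zero \cpc\ $\psi_n$ then corresponds to a \hm\ $\phi_c\colon C_0((0,1])\otimes D\to A=\Her(a)$ with $\phi_c(\jmath\otimes d)=\psi_n(d)$, and for $x\in{\cal F}$ we have $\|x-\phi_c(\jmath\otimes y_x)\|_{_{2,\overline{T(A)}^w}}<\ep$, which is exactly property (WTAC).

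The one point that needs care — and which I expect to be the main obstacle — is the passage from the unital \hm\ $\phi$ into the quotient $l^\infty(A)/I_{_{F,\varpi}}$ back to an honest approximation by a \hm\ of $C_0((0,1])\otimes D$ into $\Her(a)$ itself, rather than into the quotient; this is where one must use that $C_0((0,1])\otimes D$ is projective (so the order zero \cpc\ $\phi$ lifts to an order zero \cpc\ $\{\psi_n\}\colon D\to l^\infty(A)$) and then extract a good index $n\in{\cal P}$ for a suitable ${\cal P}\in\varpi$, exactly as in the proof of Proposition \ref{Ptesrk1}. One must also verify that the unitality of $\phi$ translates into $1-\tau(\psi_n(1_D))$ being small uniformly on $T(A)$; this follows because $\Pi_{_{F,\varpi}}(\iota(\{1-\psi_n(1_D)\}))=1-\phi(1_D)=0$ together with the $\sup$ over $F$ controlling the $\sup$ over $T(A)$. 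Once these standard lifting manipulations are in place, the corollary is immediate, and it deserves to be remarked that the argument does not even require strict comparison or surjectivity of $\Gamma$ beyond what is already built into Proposition \ref{Pst-1}'s hypotheses via the T-tracial approximate oscillation zero assumption.
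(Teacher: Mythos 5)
Your proposal follows essentially the same route as the paper's proof: one notes that a compact countable-dimensional $\partial_e(T(A))$ has transfinite dimension (\cite[Corollary 7.1.32]{En}), applies Proposition \ref{Pst-1} with $F=\partial_e(T(A))$, and then converts the resulting homomorphism of $D$ into $l^\infty(A)/I_{_{F,\varpi}}$ back into a homomorphism of the cone $C_0((0,1])\otimes D$ into the algebra itself via projectivity of the cone and a choice of a good coordinate, the supremum over $F$ controlling the supremum over $\overline{T(A)}^w$ by the barycentric decomposition, exactly as you say. Two small cautions only: the extra machinery you invoke (real rank zero of the quotient, Elliott lifting, and the estimate that $1-\tau(\psi_n(1_D))$ is small) is not actually needed for property (WTAC), and your ``without loss of generality'' replacement of $A$ by $\Her(a)$ tacitly assumes that $T(\Her(a))$ is again compact with compact countable-dimensional extremal boundary, which is not automatic for an arbitrary $a\in {\rm Ped}(A)_+$ -- a point the paper's own one-line proof also leaves implicit, so it does not change the comparison.
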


\begin{proof}
Recall that, since $\partial_e(T(A))$ is assumed to be compact and countable dimension, it has transfinite dimension 
(see \cite[Corollary 7.1.32]{En}).
The corollary follows immediately from Proposition \ref{Pst-1} by taking $F=\partial_e(T(A))$
(we also note that any finite dimensional \CA\, $D$  is a quotient of the cone $C_0((0,1])\otimes D$). 
\end{proof}

\begin{cor}\label{CWTAC}
Let $A$ be a separable algebraically simple amenable  \CA.
Suppose that $T(A)$ is a nonempty compact set  such that $\partial_e(T(A))$ is compact and   has countably many points.
 Then $A$ has property (WTAC).
\end{cor}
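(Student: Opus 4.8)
The final statement to prove is Corollary \ref{CWTAC}: a separable algebraically simple amenable \CA\, $A$ with nonempty compact $T(A)$ whose extremal boundary $\partial_e(T(A))$ is compact and has only countably many points has property (WTAC).

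The plan is to reduce this to Corollary \ref{TWTAC}. The key observation is that the hypotheses here are a special case of those there, provided one can verify that $A$ automatically has T-tracial approximate oscillation zero. A finite or countable compact metrizable space has covering dimension $0$, hence trivially countable dimension (and transfinite dimension), so the ``countable-dimensional extremal boundary'' hypothesis of Corollary \ref{TWTAC} is satisfied. The only thing missing is the T-tracial approximate oscillation zero condition. But the discussion following Definition \ref{Dscdim} (and the argument in the proof of Corollary \ref{CCM-1}) records exactly this: when the cone $\wtd T(A)$ has a basis whose extremal boundary has countably many points, $A$ has T-tracial approximate oscillation zero by \cite[Theorem 5.9]{FLosc}. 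Since $T(A)$ is compact, it is itself a basis for $\wtd T(A)$ (see the second-to-last paragraph of Definition \ref{DGamma}), and $\partial_e(T(A))$ is countable by assumption, so \cite[Theorem 5.9]{FLosc} applies directly.

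Concretely, I would first invoke \cite[Theorem 5.9]{FLosc} with the basis $S = T(A)$ to conclude that $A$ has T-tracial approximate oscillation zero. Then I would note that a countable compact metrizable space has countable dimension (indeed covering dimension zero, since it is a countable union of points, each of covering dimension $0$; alternatively, it has transfinite dimension, cf.\ \cite[Corollary 7.1.32]{En}). With $A$ separable algebraically simple amenable, $T(A)$ nonempty compact, $\partial_e(T(A))$ compact and countable-dimensional, and $A$ of T-tracial approximate oscillation zero, all hypotheses of Corollary \ref{TWTAC} are in place, and that corollary yields property (WTAC).

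There is essentially no obstacle; the statement is a direct combination of Corollary \ref{TWTAC} with the known implication that countably many extremal points forces T-tracial approximate oscillation zero. The only point requiring a line of care is confirming that ``countably many points in a compact metrizable space'' entails ``countable (covering) dimension'' in the sense of Definition \ref{Dcountabldim}, which is immediate since a one-point space has covering dimension $0$ and the space is a countable union of such.
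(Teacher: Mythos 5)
Your proposal is correct and follows essentially the same route as the paper: the paper's proof also deduces T-tracial approximate oscillation zero from \cite[Theorem 5.9]{FLosc} (together with \cite[Proposition 2.17]{Linoz} on independence of the choice of basis, which your direct use of $S=T(A)$ sidesteps) and then applies Corollary \ref{TWTAC}. Your extra remark that a countable compact metrizable space is countable-dimensional is the same implicit verification the paper relies on.
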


\begin{proof}
It follows from  \cite[Theorem 5.9]{FLosc}   and  \cite[Proposition 2.17]{Linoz} that $A$ has T-tracial approximate oscillation zero.   Hence Corollary \ref{TWTAC} applies.
\end{proof}

Recall (\cite[Definition 2.9]{Linsrk1}) that a separable simple \CA\, $A$ is called regular, if it is purely infinite, or 
if it has almost stable rank one and $\Cu(A)=(V(A)\setminus \{0\}) \sqcup {\rm LAff}_+(\wtd QT(A))$
(see \cite[Definition 2.10]{Linsrk1}).  It is proved  in \cite[Theorem 1.1]{FLosc} that
a finite separable simple \CA\, $A$ is regular if and only if $A$ has strict comparison and T-tracial approximate 
oscillation zero, and, if and only if $A$ has strict comparison and has stable rank one
(see  also \cite[Theorem 9.4]{FLosc}).  By (the proof of) \cite[Theorem 5.10]{FLosc},  a finite separable 
simple \CA\, $A$ is regular  if and only 
if $\Cu(A)=(V(A)\setminus \{0\}) \sqcup {\rm LAff}_+(\wtd QT(A))$  and $a\lesssim b$ (for any $a, b\in (A\otimes {\cal K})_+$) implies that there is 
$x\in A\otimes {\cal K}$ \st\, $x^*x=a$ and $xx^*\in \overline{bAb}.$
One of the starting points of this research is based on the following fact.

\begin{thm}\label{TLast}
Let $A$ be a finite separable non-elementary amenable regular  simple \CA.
Suppose that $A$ has property (WTAC). Then $A$  
is ${\cal Z}$-stable.
\end{thm}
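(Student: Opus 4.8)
The plan is to reduce Theorem~\ref{TLast} to the situation already handled in the proof of Theorem~\ref{TM-2}, namely to produce a unital \hm\, from $M_k$ into the (quotient) central sequence algebra $\pi_\infty^{-1}(A_1')/I_{_{\varpi}}$ for a suitable hereditary \SCA\, $A_1$ with compact $T(A_1),$ and then invoke the Matui--Sato machinery (in the form of \cite{KR} in the unital case, or \cite{CLZ} in the non-unital case). First I would replace $A$ by $A_1=\Her(a)$ for some $a\in {\rm Ped}(A)_+^{\bf 1}\setminus\{0\}$ with $d_\tau(a)$ continuous on $\wtd T(A);$ since $A$ is regular it has strict comparison and T-tracial approximate oscillation zero, so by \cite[Theorem~1.1]{FLosc} and \cite[Theorem~7.12]{FLosc} the canonical map $\Gamma$ is surjective, and such $a$ exists. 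Then $A_1$ is algebraically simple, has continuous scale, $T(A_1)$ is compact, and $A_1\otimes{\cal K}\cong A\otimes{\cal K};$ moreover property (WTAC) passes to $\Her(a)$ directly from its definition. By \cite[Cor.~3.1]{TWst} it suffices to show $A_1$ is ${\cal Z}$-stable.

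The heart of the argument is then to upgrade the (WTAC) finite dimensional approximations into central sequences of order zero maps from $M_k$ whose units are large in every trace. Fix $k\ge 2$ and a finite subset ${\cal F}\subset A_1^{\bf 1}.$ Using (WTAC) I would, for each $\tau\in T(A_1),$ obtain a \hm\, $\phi_c\colon C_0((0,1])\otimes D\to A_1$ with $\|x-\phi_c(d_x)\|_{_{2,T(A_1)}}<\ep$ and $\phi_c(\jmath\otimes 1_D)$ close to a unit in the relevant 2-norm; passing to $l^\infty(A_1)/I_{_{F,\varpi}}$ for compact $F\subset \partial_e(T(A_1))$ and invoking the real-rank-zero fact (Theorem~6.4 of \cite{FLosc}) together with Elliott's lifting lemma (Lemma~\ref{Lelliott}) one cuts down the image of $D$ to a finite dimensional \SCA\, carrying a system of matrix units of size divisible by $k,$ hence a \hm\, from a finite dimensional \CA\, whose image approximates $\iota(x)$ in 2-norm. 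This is precisely the output of Lemma~\ref{LnFO} and (the proof of) Proposition~\ref{Ldvi-3}: combining them produces a \hm\, $M_k\to l^\infty(A_1)/I_{_{\Tw,\varpi}}$ approximately commuting with ${\cal F}$ in 2-norm and unital modulo $I_{_{F,\varpi}}.$ Running this over the compact exhaustion $\partial_e(T(A_1))=\cup_n X_n$ and applying the amplification argument of Lemma~\ref{Lcover2} (via Lemma~\ref{LMnemb-2}, Proposition~\ref{LLliftos} and Lemma~\ref{LLnmaps}) yields, for each $n,$ an order zero \cpc\, $\phi_n\colon M_k\to A_1$ with $\|[a,\phi_n(b)]\|<1/n$ for $a\in{\cal F}_n,$ $b\in M_k^{\bf 1},$ and $\inf\{\tau(\phi_n(1_k)):\tau\in T(A_1)\}>1-1/n.$

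Finally, setting $\Phi\colon M_k\to l^\infty(A_1),$ $\Phi(b)=\{\phi_n(b)\},$ the commutator estimate forces $\Phi$ into $\pi_\infty^{-1}(A_1'),$ and $\lim_n\sup\{1-\tau(\phi_n(1_k)):\tau\in T(A_1)\}=0$ makes $\Pi_\varpi\circ\Phi$ a unital order zero \cpc, hence a unital \hm\, $M_k\to\pi_\infty^{-1}(A_1')/I_{_{\varpi}}.$ In the unital case ${\cal Z}$-stability now follows from Matui--Sato (Corollary~5.11 and Proposition~5.12 of \cite{KR}); in the non-unital case, since $T(A_1)$ is compact $A_1$ is uniformly McDuff, and strict comparison together with Proposition~4.4 of \cite{CLZ} gives $A_1\cong A_1\otimes{\cal Z},$ whence $A\cong A\otimes{\cal Z}.$ The main obstacle I anticipate is the bookkeeping in the step that converts the (WTAC) $C^*$-norm approximations over varying compact pieces of $\partial_e(T(A))$ into a single central sequence of order zero maps with uniformly large unit: one must control, simultaneously, the approximate multiplicativity needed to cut down to genuine matrix units of size divisible by $k$ (real rank zero plus Elliott lifting), the 2-norm approximate commutativity with ${\cal F}$ on all of $T(A_1)$ (central surjectivity of Sato), and the Dini-type argument that patches the pieces $X_n$ together into a uniform estimate over the compact set $T(A_1)$; essentially one is re-deriving the chain Lemma~\ref{Lcover2} $\Rightarrow$ \ref{NNProof} but with (WTAC) replacing the internal use of Proposition~\ref{Pst-1}, so the delicate point is checking that (WTAC) supplies exactly the hypotheses that Lemma~\ref{LnFO} and Proposition~\ref{Ldvi-3} consume.
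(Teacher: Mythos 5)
Your reduction to $A_1=\Her(a)$ and your use of (WTAC) through Lemma \ref{LnFO} and Proposition \ref{Ldvi-3} match the paper's argument, but your final assembly step has a genuine gap. You propose to run the construction over a compact exhaustion $\partial_e(T(A_1))=\cup_n X_n$ and to invoke Lemma \ref{Lcover2} (via Lemma \ref{LMnemb-2}, Proposition \ref{LLliftos} and Lemma \ref{LLnmaps}). Theorem \ref{TLast} assumes only that $A$ is a finite separable non-elementary amenable regular simple \CA\, with property (WTAC): it does not assume that $\partial_e(T(A_1))$ is $\sigma$-compact, nor that its compact subsets have transfinite dimension. So the exhaustion you use need not exist, and Lemma \ref{Lcover2} and Lemma \ref{LMnemb-2} as stated (whose hypotheses carry exactly the transfinite-dimension condition that ultimately feeds Proposition \ref{Pst-1}) do not apply. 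The partition-of-unity/Dini patching in Lemma \ref{Lcover2} is precisely the part of the paper that the dimension hypotheses of Theorem \ref{TM-2} exist to support, and precisely the part that (WTAC) is designed to make unnecessary; re-deriving that chain under the hypotheses of Theorem \ref{TLast} is not possible as you describe it.

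The fix, which is the paper's route and is shorter, is this: because (WTAC) gives the finite dimensional approximation in $\|\cdot\|_{_{2, T(A_1)}},$ i.e.\ uniformly over the whole of $T(A_1)$ rather than over a compact piece of the extreme boundary, one applies Lemma \ref{LnFO} with $K=\partial_e(T(A_1))$ and then Proposition \ref{Ldvi-3} once, obtaining, for every $\ep>0$ and every finite ${\cal F}\subset A_1^{\bf 1},$ a unital \hm\, $\psi': M_k\to l^\infty(A_1)/I_{_{\varpi}}$ with $\|[\Pi_{_{\varpi}}(\iota(x)),\, \psi'(y)]\|_{_{2, T(A_1)_\varpi}}<\ep$ for $x\in {\cal F},$ $y\in M_k^{\bf 1}.$ Since this holds for all $\ep$ and ${\cal F},$ the diagonal argument in the proof of Lemma \ref{LMnemb-2} (reindexing together with Sato's central surjectivity; no dimension or $\sigma$-compactness enters at that stage, because the maps are already unital on the full trace-kernel quotient) yields a unital \hm\, $M_k\to \pi_\infty^{-1}(A_1')/I_{_{\varpi}},$ i.e.\ $A_1$ is uniformly McDuff; strict comparison and Proposition 4.4 of \cite{CLZ} then give ${\cal Z}$-stability, with no separate appeal to \cite{KR} needed. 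So your overall strategy is sound, but you must delete the detour through Lemma \ref{Lcover2} and Lemma \ref{LLnmaps} and the nonexistent exhaustion, and pass directly from the single (WTAC)-fed application of Proposition \ref{Ldvi-3} to uniform McDuffness.
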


\begin{proof}
Choose an element $e\in {\rm Ped}(A)_+\setminus \{0\}$ with $d_\tau(e)$ is continuous on $\wtd T(A).$
It suffices to show that $\Her(e)$ is ${\cal Z}$-stable.  Hence we may assume that $A=\Her(e).$ 
Note that, now $A=\Her(e)$ is algebraically simple and has property (WTAC), and $T(A)$
is compact.
%
Fix a finite subset ${\cal F}\subset A^{\bf 1}$ and $\ep>0.$
Since $A$ has property (WTAC), there are  a finite dimensional \CA\, $D$ and 
a \hm\, 
$\phi_c: C_0((0,1])\otimes D\to A$ such that, for any $x\in {\cal F},$ there is $d_x\in  (C_0(0,1])\otimes D)^{\bf 1}$ satisfying the following: 
\beq
\|x-\phi_c(d_x)\|_{_{2, T(A)}}<\ep/2.
\eneq
Define $\Phi: D\to l^\infty(A)$ by $\Phi(d)=\{\phi_c(d)\}_{n\in \N}$ for all $d\in D$ and 
$\phi: D\to l^\infty(A)/I_{_{\varpi}}$ by $\phi=\Pi_{_{\varpi}}\circ \Phi.$
Then 
\beq
\|\Pi_{_\varpi}(\iota(x))-\phi(d_x)\|_{_{2, T(A)_\varpi}}<\ep/2\rforal x\in {\cal F}.
\eneq
It follows from  Lemma \ref{LnFO} (by taking $K=\partial_e(T(A))$)
that there are a finite dimensional \CA\, $D_1$ and a \hm\, 
$h: D_1\to l^\infty(A)/I_{_\varpi}$ such that
\beq
\|\Pi_{_\varpi}(\iota(x))-h(y_x)\|_{_{2, T(A)_\varpi}}<\ep/2\rforal x\in {\cal F}\andeqn {\rm some}\,\, y_x\in D_1^{\bf 1}.
\eneq
By Proposition \ref{Ldvi-3}, for each integer $n\in \N,$ there exists a unital \hm\, $\psi': M_n\to l^\infty(A)/I_{_\varpi}$
such that
\beq
\|[\Pi_{_\varpi}(\iota(x)),\, \psi'(y)]\|_{_{2, T(A)_\varpi}}<\ep\rforal x\in {\cal F}\andeqn y\in M_n^{\bf 1}.
\eneq

Since the above holds for any $\ep>0$ and any finite subset ${\cal F}\subset M_n^{\bf 1},$ 
by exactly the same proof used in the proof of Lemma \ref{LMnemb-2}, we obtain a unital \hm\, 
$\psi: M_n\to \pi_\infty^{-1}(A')/I_{_\varpi}.$ 
%
In other words, $A$ is  uniformly McDuff.
Since $A$ has strict comparison, 
by Proposition 4.4 of \cite{CLZ}, $A$ is ${\cal Z}$-stable. 
\end{proof}

This leads us to the following questions:

{\bf Questions}: Does every separable simple amenable  \CA\,  with T-tracial approximate oscillation  zero
and with $\wtd T(A)\setminus \{0\}\not=\emptyset$ have
property 
(WTAC)?  
One may also ask:
 Does every finite separable simple amenable regular 
\CA\, have property (WTAC)?

\vspace{0.2in}

{\bf Added in January 2025}

Recently it has been shown that a simple \CA\, is regular  if and only if it is pure
(see \cite{Lin25}).


 \providecommand{\href}[2]{#2}
 %






\end{document}